\def\doi#1{{\small\href{https://doi.org/#1}{\path{doi:#1}}}}
\def\arxiv#1{{\small\href{http://www.arxiv.org/abs/#1}{\path{arXiv:#1}}}}
\def\url#1{{\small\href{#1}{\path{#1}}}}
\theoremstyle{plain}
\newtheorem{theorem}{\bf Theorem}[section]
\newtheorem{proposition}[theorem]{\bf Proposition}
\newtheorem{lemma}[theorem]{\bf Lemma}
\newtheorem{corollary}[theorem]{\bf Corollary}
\theoremstyle{definition}
\newtheorem{example}[theorem]{\bf Example}
\newtheorem{remark}[theorem]{\bf Remark}
\newcommand{\N}{\mathbb N}
\newcommand{\Z}{\mathbb Z}
\newcommand{\Q}{\mathbb Q}
\newcommand{\bdot}{\boldsymbol{\cdot}}
 \DeclareMathOperator{\ord}{ord}
\DeclareMathOperator{\lcm}{lcm} 
 \DeclareMathOperator{\Ca}{\mathsf {Ca}}
\DeclareMathOperator{\spec}{spec} \DeclareMathOperator{\supp}{supp}
\newcommand{\red}{{\text{\rm red}}}
\renewcommand{\t}{\, | \,}
\newcommand{\LK}{\,[\![}
\newcommand{\RK}{]\!]}
\newcommand{\Fc}{\mathcal F}
\newcommand{\vp}{\mathsf v}
\newcommand{\Sum}[2]{\underset{#1}{\overset{#2}{\sum}}}
\newcommand{\Summ}[1]{\underset{#1}{\sum}}
\newcommand{\la}{\langle}
\newcommand{\ra}{\rangle}
\newcommand{\be}{\begin{equation}}
\newcommand{\ee}{\end{equation}}
\newcommand{\und}{\;\mbox{ and }\;}
\numberwithin{equation}{section}
\subjclass[2010]{11B30, 11B50, 11B75, 20M13, 20M14}
\thanks{This work was supported by the Austrian Science Fund FWF, Projects W1230 and P33499.}
\begin{document}

\title[Product-one sequences  over dihedral groups]{On  product-one sequences  over dihedral groups}

\author{Alfred Geroldinger, David J. Grynkiewicz, Jun Seok Oh, and Qinghai Zhong}

\address{Institute for Mathematics and Scientific Computing \\
University of Graz, NAWI Graz \\
Heinrichstra{\ss}e 36\\
8010 Graz, Austria}
\email{alfred.geroldinger@uni-graz.at, junseok1.oh@gmail.com, qinghai.zhong@uni-graz.at}
\urladdr{https://imsc.uni-graz.at/geroldinger, https://imsc.uni-graz.at/oh, https://imsc.uni-graz.at/zhong/}

\address{Department of Mathematical Sciences, University of Memphis, Memphis, TN 38152, USA}
\email{diambri@hotmail.com}
\urladdr{https://diambri.org/}

\keywords{product-one sequences, Davenport constant, finite groups, partition theorem, sets of lengths, sets of distances, sets of catenary degrees}

\begin{abstract}
Let $G$ be a finite group. A sequence over $G$ means a finite sequence of terms from $G$, where repetition is allowed and the order is disregarded. A product-one sequence is a sequence whose elements can be ordered such that their product equals the identity element of the group. The set of all product-one sequences over $G$ (with concatenation of sequences as the operation) is a finitely generated C-monoid. Product-one sequences over dihedral groups have a variety of extremal properties. This article provides a detailed investigation, with methods from arithmetic combinatorics, of the arithmetic  of the monoid of product-one sequences over dihedral groups.
\end{abstract}

\maketitle

%%%%%%%%%%%%%%%%%%%%%%%%%%%%%%%%%%%%%%%%%%%%%%%%%%%%%%%%%%%%%%%%%%%%%%%%%
%%                                      %%%%%%%%%%%%%%%
%%%%%%%%%%%%%%%%%%%%%%%%%%%%%%%%%%%%%%%%%%%%%%%%%%%%%%%%%%%%%%%%%%%%%%%%%

\section{Introduction} \label{1}

Let $G$ be a finite group. A sequence over $G$ means a finite sequence of terms from $G$, where repetition of terms is allowed and their order is disregarded. A sequence is called product-one free if no subproduct of terms (in any order) equals the identity of the group, and it is called a product-one sequence if its
terms can be ordered such that their product equals the identity of $G$. The small Davenport constant $\mathsf d (G)$ is the maximal length of a product-one free sequence and the large Davenport constant $\mathsf D (G)$ is the maximal length of a minimal product-one sequence (a minimal product-one sequence is a product-one sequence that cannot be factorized, or say partitioned, into two nontrivial product-one sequences). The study of sequences, their sequence subproducts, and their structure under extremal properties is a classical topic in additive combinatorics.

If $G$ is  additively written and abelian, then we speak of zero-sum free sequences,  zero-sum sequences, and of
sequence subsums. Their study is a main objective of zero-sum theory, which has intimate connections to various areas of combinatorics, graph theory, finite geometry, factorization theory, and invariant theory. Although,  for a long time, the focus of study was on the abelian setting, the study of combinatorial invariants in the general setting dates back at least to the 1970s when Olson gave an upper bound for $\mathsf d (G)$ (\cite{Ol-Wh77}). There are recent studies on (small and large) Davenport constants, on the {E}rd{\H{o}}s-Ginzburg-Ziv constant $\mathsf s (G)$, and on the constant $\mathsf E (G)$, which asks for the smallest integer $\ell$ such that every sequence over $G$ of length at least $\ell$ has a product-one subsequence of length $|G|$ (e.g., \cite{Ba07b, Ga-Li10b, Ha15a, Br-Ri18a, Ha-Zh19a, Oh-Zh20a, Oh-Zh20b}). These investigations were pushed forward by new applications to invariant theory and to factorization theory. To begin with invariant theory, let $\boldsymbol \beta (G)$ denote the Noether number of $G$. If $G$ is abelian, then  B. Schmid  \cite{Sc91a} observed that  $\mathsf d (G)+1 = \boldsymbol \beta (G) = \mathsf D (G)$. If $G$ has a cyclic subgroup of index two, then it was shown  by Cziszter, Domokos, and by two of the present authors that $\mathsf d (G)+1 \le \boldsymbol \beta (G) \le \mathsf D (G)$ (\cite{Ge-Gr13a, Cz-Do14a}). For general groups the relationship between the Davenport constants and the Noether number is open, but in all cases studied so far we have $\mathsf d (G)+1 \le \boldsymbol \beta (G)$ (\cite{Cz-Do-Ge16a, Cz-Do-Sz18, Cz19a}).

To discuss the connection with factorization theory, we first observe that the set $\mathcal B (G)$ of product-one sequences over $G$ is a finitely generated (commutative and cancellative) monoid with concatenation of sequences as
its operation. The atoms (i.e., the irreducible elements) of $\mathcal B (G)$ are precisely the minimal product-one sequences over $G$. First, let $G$ be abelian and, for simplicity, suppose that $|G| \ge 3$. Then $\mathcal B (G)$ is a Krull monoid with class group (isomorphic to) $G$ and every class contains precisely one prime divisor. If $H$ is any Krull monoid with class group $G$ and prime divisors in each class, then there is a transfer homomorphism $\theta \colon H \to \mathcal B (G)$ implying that arithmetical invariants  (such as sets of lengths, catenary degrees, and more) of $H$ and of $\mathcal B (G)$ coincide. The arithmetic of $\mathcal B (G)$ is studied with methods of additive combinatorics and the long-term goal is to determine the precise value of arithmetical invariants in terms of the group invariants of $G$ and/or in terms of classical combinatorial invariants such as the Davenport constant. We refer to \cite{Ge-Ru09} for the interplay of the arithmetic of Krull monoids and additive combinatorics, and to the survey \cite{Sc16a} for a discussion of the state of the art.

Monoids of product-one sequences over finite groups are C-monoids. C-domains and C-monoids are submonoids of factorial monoids with finite class semigroup. They include Krull monoids with finite class group (and in that case the class semigroup coincides with the usual class group) but also classes of non-integrally closed noetherian domains (such as orders in number fields). The finiteness of the class semigroup yields abstract  finiteness results for arithmetical invariants, but so far  no combinatorial
description of invariants in terms of the class semigroup are available (as is the case for Krull monoids) let alone any sort of precise results.

In the present paper, we study the arithmetic of the monoid of product-one sequences over dihedral groups $G$ of order $2n$ for odd $n \ge 3$, and we obtain precise results. These dihedral groups were chosen because their arithmetic shows extremal behavior among all finite groups (see Proposition \ref{2.3} and Theorem \ref{6.6}), such as  cyclic groups and elementary $2$-groups do among all finite abelian groups. We do not involve algebraic considerations (structural results on the class semigroup of monoids of product-one sequences  were recently established in \cite[Section 3]{Oh20a}) but work with methods from additive combinatorics. We use substantially the recent characterization of minimal product-one sequences of maximal length (Proposition \ref{2.4}) and a recent refinement (\cite{Gr21a}) of the Partition Theorem (\cite[Chapters 14 and 15]{Gr13a}). Let $G$ be a dihedral group of order $2n$ for some odd $n \ge 3$. In the short Section \ref{3}, we do some necessary algebraic clarifications. Theorem \ref{4.1} states that $\omega (G)=2n$. Theorem \ref{5.1} states that the set of distances $\Delta (G)$ is equal to $[1, 2n-2]$, and the set of catenary degrees $\Ca (G)$ equals $[2, 2n]$. Theorems \ref{6.6} and \ref{6.7} give detailed information on crucial subsets of $\Delta (G)$ which describe the structure of sets of lengths. Our results are strong enough to characterize $\mathcal B (G)$ arithmetically with respect to some other classes of monoids (Corollary \ref{6.11}).

\section{Background on the arithmetic of monoids} \label{2}

Our notation and terminology are  consistent with \cite{Ge-HK06a, Gr13a}. We briefly gather some key notions and fix notation. We denote by $\N$ the set of positive integers. For rational numbers $a, b \in \Q$, $[a, b ] = \{ x \in \Z \colon a \le x \le b\}$ means the discrete interval between $a$ and $b$. For an additive group $G$ and  subsets $A, B \subset G$, $A+B = \{a+b \colon a \in A, b \in B \}$ denotes their sumset.
For $A\subseteq \Z$, the set of distances $\Delta (A)$ is the set of all $d \in \N$ for which there is $a \in A$ such that $A \cap [a, a+d] = \{a, a+d\}$. If $A \subset \N_0$, then $\rho (A) = \sup (A \cap \N)/\min (A \cap \N) \in \Q_{\ge 1} \cup \{\infty\}$ denotes the elasticity of $A$ with the convention that  $\rho ( A)=1$ if $A \cap \N = \emptyset$.

\subsection{\bf Monoids.} \label{Monoids}
Throughout this paper, a {\it monoid} means a commutative, cancellative semigroup with identity.  Let $H$ be a monoid. Then $H^{\times}$ denotes the group of invertible elements, $\mathcal A (H)$ the set of atoms of $H$,  $\mathsf q (H)$ the quotient group of $H$, and $H_{\red} = \{aH^{\times} \colon a \in H \}$ the associated reduced monoid of $H$. A submonoid $S \subset H$ is said to be {\it divisor-closed} if $a \in H, b \in S$ and $a \mid b$ implies that $a \in S$. For a subset $E \subset H$ we denote by
\begin{itemize}
\item $[E] \subset H$ the smallest submonoid of $H$ containing $E$, and by
\item $\LK E \RK \subset H$ the smallest divisor-closed submonoid of $H$ containing $E$.
\end{itemize}
Clearly, $\LK E \RK$ is the set of all $a \in H$ dividing some element $b \in [E]$. If $E = \{a_1, \ldots, a_m\}$, then we write $[a_1, \ldots, a_m] = [E]$ and $\LK a_1, \ldots, a_m \RK = \LK E \RK $. We denote by
\begin{itemize}
\item $H' = \{ x \in \mathsf q (H) \colon \ \text{there is $N \in \N$ such that $x^n \in H$ for all $n \ge N$} \}$ the {\it seminormal closure} of $H$,

\item $\widetilde H = \{x \in \mathsf q (H) \colon \ x^n \in H \ \text{for some} \ n \in \N \}$ the {\it root closure} of $H$, and by

\item $\widehat H = \{ x \in \mathsf q (H) \colon \text{there is $c \in H$ such that $cx^n \in H$ for all $n \in  \N$} \}$ the {\it complete integral closure} of $H$.
\end{itemize}
Then $H \subset H' \subset \widetilde H \subset \widehat H \subset \mathsf q (H)$, and  $H$ is called seminormal (root closed, resp. completely integrally closed) if $H = H'$ ($H = \widetilde H$, resp. $H = \widehat H$).
For a set $P$, we denote by $\mathcal F (P)$ the free abelian monoid with basis $P$ whose elements are written as
\[
a = \prod_{p \in P} p^{\mathsf v_p (a)} \in \mathcal F (P) \,,
\]
where $\mathsf v_p \colon H \to \N_0$ is the $p$-adic valuation of $a$. We call $|a| = \sum_{p \in P} \mathsf v_p (a) \in \N_0$ the {\it length} of $a$ and $\supp (a) = \{ p \in P \colon \mathsf v_p (a) > 0 \} \subset P$ the {\it support} of $a$.

The monoid $\mathsf Z (H) = \mathcal F (\mathcal A (H_{\red}))$ is the factorization monoid of $H$ and the unique epimorphism $\pi \colon \mathsf Z (H) \to H_{\red}$, satisfying $\pi (u) = u$ for all $u \in \mathcal A (H_{\red})$, denotes the factorization homomorphism. For $a \in H$, we denote by
\begin{itemize}
\item $\mathsf Z (a) = \pi^{-1} (aH^{\times})$  the {\it set of factorizations} of $a$,

\item $\mathsf L (a) = \{|z|  : z \in \mathsf Z (a) \} \subset \N_0$ the {\it set of lengths} of $a$, \quad \text{and}

\item $\mathcal L (H) = \{\mathsf L (a) \colon a \in H \}$ the {\it system of sets of lengths of $H$}.
\end{itemize}
Note that $\mathsf L (a) = \{0\}$ if and only if $a \in H^{\times}$ and that $\mathsf L (a) = \{1\}$ if and only if $a \in \mathcal A (H)$. The monoid $H$ is called {\it atomic} if $\mathsf Z (a) \ne \emptyset$ for all $a \in H$ (equivalently, if  every $a \in H \setminus H^{\times}$ has a factorization into atoms),   and   $H$ is called {\it half-factorial} if $|L (a)| = 1$ for all $a \in H$.
We denote by
\begin{equation} \label{def-set-of-distances}
\Delta (H) = \bigcup_{L \in \mathcal L (H)} \Delta (L) \ \subset \N
\end{equation}
the {\it set of distances} of $H$. If $\Delta (H) \ne \emptyset$, then
\begin{equation} \label{set-of-distances}
\min \Delta (H) = \gcd \Delta (H) \,.
\end{equation}
For an atomic monoid $H$ with $H \ne H^{\times}$ and every $k \in \N$, let
\begin{equation} \label{def-unions}
\mathcal U_k (H) = \bigcup_{k \in L \in \mathcal L (H)} L \quad \subset \N \quad
\end{equation}
denote the {\it union of sets of lengths} containing $k$. Then $\rho_k (H) = \sup \mathcal U_k (H)$ is the $k$-th {\it elasticity} of $H$ and (see \cite[Proposition 1.4.2]{Ge-HK06a})
\begin{equation} \label{elasticity}
\rho (H) = \sup \{\rho (L) \colon L \in \mathcal L (H) \} = \lim_{k \to \infty} \frac{\rho_k (H)}{k}
\end{equation}
is the {\it elasticity} of $H$. We define a distance function $\mathsf d$ on $\mathsf Z (H)$. If $z, z' \in \mathsf Z (H)$, then $z$ and $z'$ can be written uniquely in the form
\[
z = u_1 \cdot \ldots \cdot u_{\ell} v_1 \cdot \ldots \cdot v_{m} \quad \text{and} \quad z' = u_1 \cdot \ldots \cdot u_{\ell} w_1 \cdot \ldots \cdot w_n \,,
\]
where $\ell, m, n  \in \N_0$, all $u_i, v_j, w_k \in \mathcal A (H_{\red})$, and $\{v_1, \ldots, v_m\} \cap \{w_1, \ldots, w_n\} = \emptyset$, and we define $\mathsf d (z, z') = \max \{m,n\} \in \N_0$.

\subsection{\bf Product-one sequences over finite groups.} \label{Sequences}
Let $G$ be a multiplicatively written finite group with identity $1_G \in G$  and let $G_0 \subset G$ be a subset. Then $\langle G_0 \rangle \subset G$ is the subgroup generated by $G_0$ and $G' = \langle g^{-1}h^{-1}gh \colon g, h \in G \rangle \subset G$ is commutator subgroup of $G$. If $G$ is (additively written) abelian, then $\mathsf H (G_0) = \{ g \in G \colon g+G_0 = G_0\}$ denotes the {\it stabilizer} of $G_0$. We say that a subset $A\subseteq G$ is $H$-periodic if $H\leq \mathsf H(A)$, which is equivalent to $A$ being a union of $H$-cosets, and that $A$ is \emph{aperiodic} if $\mathsf H(A)$ is trivial.
 We use $\phi_H:G\rightarrow G/H$ to denote the natural homomorphism.
For every $n \in \N$, $C_n$ denotes a cyclic group of order $n$  and $D_{2n}$ denotes a dihedral group of order $2n$.

Elements of $\mathcal F (G_0)$ are called  {\it sequences} over $G_0$. Thus, in combinatorial language, a sequence means a finite sequence of terms from $G_0$ which is unordered
with the repetition of terms
allowed. In order to distinguish between the group operation in $G$ and the operation in $\mathcal F (G_0)$, we use the symbol $\bdot$ for the multiplication in $\mathcal F (G_0)$, so $G = (G, \cdot)$ and $\mathcal F (G) = (\mathcal F (G), \bdot)$. In order to avoid confusion between exponentiation of the group operation $\cdot$ in $G$ and exponentiation of the sequence operation $\bdot$ in $\mathcal F (G)$, we use brackets to denote exponentiation in $\mathcal F (G)$. Thus, for $g \in G$, $T \in \mathcal F (G)$, and $k \in \N_0$, we have
\[
g^{[k]} = \underbrace{g \bdot \ldots \bdot g}_{k} \in \mathcal F (G) \quad \text{and} \quad T^{[k]} = \underbrace{T \bdot \ldots \bdot T}_{k} \in \mathcal F (G) \,.
\]
Moreover, if $T^{[k]}$ divides a sequence $S$ in $\mathcal F (G)$, then $S \bdot T^{[-k]} = S \bdot (T^{[k]})^{[-1]} \in \mathcal F (G)$ denotes the subsequence of $S$ obtained by removing the terms of $T^{[k]}$ from $S$. Let
\[
S = g_1 \bdot \ldots \bdot g_{\ell} = {\prod}^\bullet_{g \in G_0} g^{[\mathsf v_g (S)]}
\]
be a sequence over $G_0$. Then
\begin{itemize}
\item $\mathsf h (S) = \max \{\mathsf v_g (S) \colon g \in G_0\} $ is the {\it maximum multiplicity} of a term of $S$,

\item $\mathsf k (S) = \sum_{i=1}^{\ell}  \frac{1}{\ord (g_i)} \in \Q$ is the {\it cross number} of $S$, and

\item $\pi (S) = \{ g_{\tau (1)} \cdot \ldots \cdot g_{\tau (\ell)} \in G \colon \tau \ \text{is a permutation of } \ [1, \ell] \} \subset G$ is the {\it set of products} of $S$,
\end{itemize}
and it is readily seen that $\pi (S)$ is contained in a $G'$-coset.
If $|S|= 0$, then we use the convention that $\pi (S) = \{1_G\}$. When $G$ is written additively with commutative operation, we likewise let $\sigma (S)=g_1+ \ldots + g_{\ell} \in G$ denote the {\it sum} of $S$. For $n \in \N_0$, the $n$-{\it sums} and $n$-{\it products} of $S$ are respectfully denoted by
\[
\Sigma_n (S) = \{\sigma (T) \colon T \mid S \ \text{and} \ |T|=n\} \subset G \quad \text{and} \quad \Pi_n (S) = \bigcup_{T\mid S, |T|=n} \pi (T) \subset G \,.
\]
The {\it sequence subsums} and {\it sequence subproducts} of $S$ are respectively denoted by
\[
\Sigma (S) = \bigcup_{n \ge 1} \Sigma_n (S) \quad \text{and} \quad \Pi (S) = \bigcup_{n \ge 1} \Pi_n (S) \subset G \,.
\]
A map of groups $\varphi \colon G \to H$ extends to a monoid homomorphism $\varphi \colon \mathcal F (G) \to \mathcal F (H)$ by setting $\varphi (S) = \varphi (g_1) \bdot \ldots \bdot \varphi (g_{\ell}) \in \mathcal F (H)$. Furthermore, we set $S^{-1} = g_1^{-1} \bdot \ldots \bdot g_{\ell}^{-1}$. If $G$ is written additively and $\varphi$ is the multiplication by some $m \in \N$, then we set $mS = (mg_1) \bdot \ldots \bdot (mg_{\ell})$.

For a  subset $X \subset G_0$, we let $$S_X={\prod}^\bullet_{g\in X}g^{[\vp_g(S)]}$$ denote the subsequence of $S$ consisting of all terms from $X$.
The sequence $S$ is called
\begin{itemize}
\item a {\it product-one sequence} if $1_G \in \pi (S)$,
\item {\it product-one free} if $1_G \notin \Pi (S)$.
\end{itemize}
The set
\[
\mathcal B (G_0) = \{ S \in \mathcal F (G_0) \colon 1_G \in \pi (S) \} \subset \mathcal F (G_0)
\]
is a finitely generated submonoid of $\mathcal F (G_0)$, called the {\it monoid of product-one sequences} over $G_0$. For all arithmetical invariants $* (H)$ defined for a monoid $H$, we write $* (G_0)$ instead of $* ( \mathcal B (G_0))$ (although  being an abuse of notation this is a usual convention that will not lead to confusion). Similarly, we say that $G_0$ is (non-)half-factorial if $\mathcal B (G_0)$ is (non-)half-factorial. The atoms of $\mathcal B (G_0)$ are also called {\it minimal product-one sequences}. Since $\mathcal B (G_0)$ is finitely generated, $\mathcal A (G_0)$ is finite,
\begin{itemize}
\item $\mathsf K (G_0) = \max \{\mathsf k (S) \colon S \in \mathcal A (G_0) \} \in \Q_{\ge 0}$ is the {\it cross number} of $G_0$,

\item $\mathsf D (G_0) = \max \{ |S| \colon S \in \mathcal A (G_0) \} \in \N_0$ is the {\it large Davenport constant} of $G_0$, and

\item $\mathsf d (G_0) = \max \{ |S| \colon S \in \mathcal F (G_0) \ \text{is product-one free} \} \in \N_0$ is the {\it small Davenport constant} of $G_0$.
\end{itemize}

It is easy to verify that
\begin{equation} \label{natural-bounds}
\mathsf d (G)+1 \le \mathsf D (G) \le |G| \quad \text{and} \quad \mathsf K (G) \le \max \bigl\{1, \frac{\mathsf D (G)}{2} \bigr\} \,.
\end{equation}
Let $G$ be a finite group and
$G_0<G$
a proper subgroup. Then $\mathsf d (G_0) < \mathsf d (G)$ and $\mathsf D (G_0) \le \mathsf D (G)$. If $G$ is abelian, then $\mathsf D (G_0) = 1+\mathsf d (G_0) < 1 + \mathsf d (G) = \mathsf D (G)$. If $G$ is not abelian, then we might have $\mathsf D (G_0) = \mathsf D (G)$, as it is outlined in the next example.

\smallskip

\begin{example}
	We consider the semidirect product
	\[
	G = C_5 \rtimes_2 C_4 = \langle a, b \colon a^5 = b^4 = 1_G, bab^{-1} = a^2 \rangle \,.
	\]
	Thus $G$ is a  group with $20$ elements and  Table 1 in \cite{Cz-Do-Sz18} shows  that $\mathsf D (G) = 10$.
	Let $G_0 = \langle a, b^{2} \rangle \subset G$. Then $G_0$ is  a dihedral group with $10$ elements, whence $\mathsf D (G_0) = 10 = \mathsf D (G)$.
\end{example}

Thus the example shows that the subgroup $G_0$ (with $G_0 <G$ proper and $\mathsf D (G_0) = \mathsf D (G)$) can be dihedral (for some consequences, see Proposition \ref{2.3}), but the next lemma shows that $G_0$ cannot be abelian.

\begin{lemma} \label{2.2}
Let $G$ be a finite group and $G_0 < G$
a subgroup
 with $\mathsf D (G_0) = \mathsf D (G)$.
\begin{enumerate}
\item  If $\mathsf D (G_0) = 1 + \mathsf d (G_0)$, then $G_0 = G$.

\item If $G$ is nilpotent but not a $2$-group, then $G_0$ is not generated by elements of order two.
\end{enumerate}
\end{lemma}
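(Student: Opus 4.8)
The plan is to treat the two parts separately. Part~(1) is a short deduction: suppose $G_0 \ne G$, so $G_0 < G$ is a proper subgroup; then $\mathsf d(G_0) < \mathsf d(G)$ (recorded above), hence $\mathsf D(G_0) = 1 + \mathsf d(G_0) \le \mathsf d(G)$, and combining with $\mathsf d(G) + 1 \le \mathsf D(G)$ from \eqref{natural-bounds} we obtain $\mathsf D(G_0) \le \mathsf D(G) - 1 < \mathsf D(G)$, contradicting $\mathsf D(G_0) = \mathsf D(G)$.

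For part~(2): since $G$ is finite nilpotent, it is the internal direct product $G = P \times Q$ of its Sylow $2$-subgroup $P$ with the product $Q$ of its odd Sylow subgroups, and $Q \ne \{1_G\}$ because $G$ is not a $2$-group. In this decomposition every element of order two has trivial $Q$-coordinate, since $|Q|$ is odd; hence a subgroup $G_0$ generated by elements of order two lies in $P \times \{1_G\}$, a proper subgroup of $G$, and so $\mathsf D(G_0) \le \mathsf D(P)$. It therefore suffices to prove the strict inequality $\mathsf D(P) < \mathsf D(G)$, which contradicts $\mathsf D(G_0) = \mathsf D(G)$. To obtain it, I would build an atom of $\mathcal B(G)$ longer than $\mathsf D(P)$ out of a maximal atom of $\mathcal B(P)$. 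Fix $U = h_1 \bdot \ldots \bdot h_D \in \mathcal A(P)$ with $D = \mathsf D(P)$ and the terms ordered so that $h_1 \cdots h_D = 1_G$, fix $q \in Q$ with $r := \ord(q) \ge 3$, and, identifying $G = P \times Q$, set
\[
W \;=\; (h_1, q) \bdot (h_2, 1_G) \bdot \ldots \bdot (h_D, 1_G) \bdot (1_G, q)^{[r-1]} \,,
\]
a sequence over $G$ of length $D + r - 1 > D$. Then $W \in \mathcal B(G)$: listing the copies of $(1_G, q)$ first and then $U$ in the chosen order gives product $(h_1 \cdots h_D,\, q^r) = (1_G, 1_G)$.

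The crux is that $W$ is an atom. Suppose $W = W_1 \bdot W_2$ with $W_1, W_2 \in \mathcal B(G)$. Applying the projection $G \to Q$ (which maps product-one sequences to product-one sequences) shows that the multiplicity of $q$ among the $Q$-coordinates of the terms of $W_i$ is divisible by $r$ for each $i$; since $W$ has exactly $r$ terms with $Q$-coordinate $q$, namely $(h_1, q)$ and the $r-1$ copies of $(1_G, q)$, all of these lie in one factor, say $W_1$. Then $W_2$ is supported on $\{(h_2, 1_G), \ldots, (h_D, 1_G)\}$, hence corresponds to a product-one subsequence $U_2$ of $h_2 \bdot \ldots \bdot h_D$ over $P$; and applying the projection $G \to P$ to $W_1$ shows that the complementary subsequence $U_1$ of $U$, which contains $h_1$, is likewise product-one over $P$. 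As $U = U_1 \bdot U_2$ is an atom of $\mathcal B(P)$ and $U_1$ is nontrivial, $U_2$, and hence $W_2$, is trivial. Therefore $W \in \mathcal A(G)$, and $\mathsf D(G) \ge |W| > D = \mathsf D(P)$, completing part~(2).

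I expect the only delicate point to be this atomicity verification: one must confirm that passing to the two coordinate projections really leaves no room for a nontrivial factorization of $W$, which is precisely where the shape of $W$, a single term with nontrivial $Q$-coordinate glued to $r-1$ pure $Q$-terms and to the rest of $U$, is used. Everything else, the monotonicity of $\mathsf d$ and $\mathsf D$ under passage to subgroups, the Sylow decomposition of a finite nilpotent group, and the behaviour of product-one sequences under projections, is routine.
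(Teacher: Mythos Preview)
Your proof is correct and follows the same overall strategy as the paper for both parts. For part~(2), the paper simply asserts $\mathsf D(G_1) < \mathsf D(G_1 \times G_2)$ for a nontrivial subgroup $G_2$ and cites \cite{G-K-O-R99} to conclude that a nilpotent group generated by elements of order two is a $2$-group, whereas you argue directly from the Sylow decomposition $G = P \times Q$ that elements of order two lie in $P$ and then supply an explicit atom of length $\mathsf D(P) + \ord(q) - 1$ witnessing the strict inequality; your version is thus more self-contained.
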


\begin{proof}
1. Let $S \in \mathcal F (G_0)$ be product-one free with
$|S|= \mathsf d (G_0)$. Assume to the contrary that there is an element $g  \in G \setminus G_0$. Then $S \bdot g$ is product-one free, whence
\[
\mathsf D (G) = \mathsf D (G_0) = 1 + \mathsf d (G_0) < 1 + |S \bdot g| \le 1 + \mathsf d (G) \le \mathsf D (G) \,,
\]
a contradiction.

2.
Let $G_1 \subset G$ be a subgroup that is generated by elements of order two. It suffices to show that $\mathsf D (G_1) < \mathsf D (G)$. Since finite nilpotent groups that are generated by elements of order $p$ are $p$-groups (\cite[Corollary 2.4]{G-K-O-R99}), it follows that $G_1$ is a $2$-group, thus contained in the Sylow $2$-group. As a finite nilpotent group is the direct product of its Sylow subgroups (which are each normal for nilpotent groups), there is a non-trivial group $G_2 <G$ (any nontrivial Sylow $p$-group with $p\neq 2$) such that $G_1 \times G_2 $ is a subgroup of $G$, which implies that $\mathsf D (G_1) < \mathsf D (G_1 \times G_2) \le \mathsf D (G)$.
\end{proof}

\smallskip
\begin{proposition} \label{2.3}~
Let $G$ be a finite group with $|G|>1$.
\begin{enumerate}
\item \begin{align*}
      \mathsf D(G)\left\{\begin{aligned}   &=|G|\quad \quad&&\text{$G$ is either cyclic or a dihedral group of order $2n$ for some odd $n \ge 3$}\,,\\
      &\le \frac{3|G|}{4} &&\text{otherwise}\,.
      \end{aligned}
      \right.
      \end{align*}

\item Consider the following two conditions{\rm \,:}
       \begin{enumerate}
       \item[(a)] $G$ is
       either an elementary $2$-group or  has a  subgroup $G_0 <G$ which is a dihedral group of order $2n$ for some odd $n \ge 3$  with $\mathsf D (G_0) = \mathsf D (G)$.

       \item[(b)] $\mathsf K (G) = \frac{\mathsf D (G)}{2}$.
       \end{enumerate}
       Then (a) implies (b). If $G$ is nilpotent and (b) holds, then $G$ is a $2$-group,
       and it is an elementary $2$-group in the abelian case.
\end{enumerate}
\end{proposition}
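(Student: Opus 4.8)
The plan is to obtain Part 1 from known results on the large Davenport constant, and to reduce Part 2, via the cross-number inequality in \eqref{natural-bounds}, to exhibiting a single atom all of whose terms have order two.

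For Part 1, the inequality $\mathsf D(G)\le|G|$ is \eqref{natural-bounds}. If $G$ is cyclic with generator $g$ of order $n$, then $g^{[n]}$ is a minimal product-one sequence, so $\mathsf D(G)=n=|G|$. If $G=D_{2n}=\langle\rho,\tau\rangle$ with $n\ge 3$ odd, $\rho^n=\tau^2=1_G$ and $\tau\rho\tau=\rho^{-1}$, then $U=\tau^{[n]}\bdot(\rho\tau)^{[n]}$ is product-one (ordering its terms alternately makes the product $(\tau\cdot\rho\tau)^n=(\rho^{-1})^n=1_G$), and by Proposition \ref{2.4} it is a minimal product-one sequence; hence $\mathsf D(G)=2n=|G|$. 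For every remaining finite group the estimate $\mathsf D(G)\le\frac{3|G|}{4}$ is the known classification of finite groups whose large Davenport constant is close to the group order, which I would cite as input rather than reprove; this is the only nontrivial ingredient of Part 1 that we take as given.

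For the implication (a) $\Rightarrow$ (b) of Part 2, note first that $|G|>1$ gives $\mathsf D(G)\ge 2$, so \eqref{natural-bounds} yields $\mathsf K(G)\le\mathsf D(G)/2$. By the definition of the cross number, every atom $S$ satisfies $\mathsf k(S)\le|S|/2\le\mathsf D(G)/2$, with equality forcing every term of $S$ to have order two and $|S|=\mathsf D(G)$; thus it suffices to produce an atom of length $\mathsf D(G)$ all of whose terms have order two. If $G\cong C_2^r$, I take $S=e_1\bdot\cdots\bdot e_r\bdot(e_1+\cdots+e_r)$ for a basis $(e_1,\dots,e_r)$ of $G$ (and $S=g^{[2]}$ with $g\neq 0$ if $r=1$), a minimal zero-sum sequence of length $r+1=\mathsf D(C_2^r)$ with no term equal to $0$. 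If instead $G$ has a subgroup $G_0\cong D_{2n}$ ($n\ge 3$ odd) with $\mathsf D(G_0)=\mathsf D(G)$, then $\mathcal B(G_0)$ is divisor-closed in $\mathcal B(G)$, so $\mathcal A(G_0)\subseteq\mathcal A(G)$, and the sequence $U$ formed inside $G_0$ is an atom of $\mathcal B(G)$ of length $2n=\mathsf D(G_0)=\mathsf D(G)$ with all terms of order two. In either case $\mathsf k(S)=\mathsf D(G)/2$, and therefore $\mathsf K(G)=\mathsf D(G)/2$.

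For the converse under nilpotency, suppose $G$ is nilpotent and $\mathsf K(G)=\mathsf D(G)/2$, and choose $S\in\mathcal A(G)$ with $\mathsf k(S)=\mathsf K(G)$. By the inequality above, $|S|=\mathsf D(G)$ and every term of $S$ has order two. Let $H\le G$ be the subgroup generated by the terms of $S$; then $S\in\mathcal A(H)$ since $\mathcal B(H)$ is divisor-closed in $\mathcal B(G)$, and $H\le G$ forces $\mathsf D(H)\ge|S|=\mathsf D(G)\ge\mathsf D(H)$, i.e. $\mathsf D(H)=\mathsf D(G)$. Now apply Lemma \ref{2.2}(2) with $G_0=H$: were $G$ nilpotent and not a $2$-group, $H$ could not be generated by elements of order two, which it is by construction; hence $G$ is a $2$-group. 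If $G$ is moreover abelian, then $H$ coincides with the elementary abelian subgroup $G[2]=\{g\in G:2g=0\}$, and since $\mathsf D(G_0)<\mathsf D(G)$ for every proper subgroup $G_0<G$ of an abelian group, $\mathsf D(G[2])=\mathsf D(G)$ forces $G[2]=G$, so $G$ is an elementary abelian $2$-group. The single point that is not pure bookkeeping (apart from the cited bound of Part 1) is the minimality of $U=\tau^{[n]}\bdot(\rho\tau)^{[n]}$: it possesses many proper product-one subsequences, and one must verify that none of them has a product-one complement — precisely the content of Proposition \ref{2.4}.
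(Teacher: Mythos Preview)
Your proof is correct and follows essentially the same line as the paper's: Part 1 is taken from the literature, (a) $\Rightarrow$ (b) is shown by exhibiting an atom of length $\mathsf D(G)$ consisting of order-two elements (the same sequence $U$ in the dihedral case, and an explicit minimal zero-sum sequence in the elementary $2$-group case where the paper simply cites \cite{Ge-HK06a}), and the converse under nilpotency proceeds by extracting an atom realizing $\mathsf K(G)$, observing all its terms have order two, and invoking Lemma~\ref{2.2}. One small imprecision: in the abelian case you assert that $H$ \emph{coincides} with $G[2]$, but a priori you only have $H\le G[2]$; this does not matter, since $\mathsf D(H)=\mathsf D(G)$ already forces $H=G$ directly (proper subgroups of abelian groups have strictly smaller Davenport constant), so $G=H$ is elementary abelian --- which is exactly what the paper does via Lemma~\ref{2.2}.1.
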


\begin{proof}
1. See \cite[Theorem 7.2]{Gr13b}.

2. (a) $\Rightarrow$ (b) If $G$ is elementary $2$-group, then by \cite[Corollaries 5.1.9 and 5.1.13]{Ge-HK06a}, we obtain that
\[
  \mathsf K (G) \, = \, \frac{1}{2} + \mathsf k (G) \, = \, \frac{1 + \mathsf d (G)}{2} \, = \, \frac{\mathsf D (G)}{2} \,.
\]
Suppose that  $G_0 = \langle \alpha, \tau \colon \alpha^{n} = \tau^{2} = 1_G \text{ and } \tau\alpha = \alpha^{-1}\tau \rangle <G$ is a dihedral group of order $2n$ for some odd $n \ge 3$ with $\mathsf D (G_0)=\mathsf D (G)$.
Then Equation \eqref{natural-bounds} shows that $\mathsf K (G) \le \frac{\mathsf D (G)}{2}$. On the other hand, it is easy to verify (or use Proposition \ref{2.4})  that $S = (\alpha \tau)^{[n]} \bdot \tau^{[n]}$
is a minimal product-one sequence  with
\[
|S| = 2n = \mathsf D (G_0) = \mathsf D (G) \quad \text{and} \quad \mathsf k (S) = \frac{2n}{2}  = \frac{\mathsf D (G)}{2} \,.
\]

Now suppose that $G$ is nilpotent and (b) holds. Note that $\mathsf K (G) \ge 1$ since $|G|>1$. Then there exists $S  = g_1 \bdot \ldots \bdot g_{\ell} \in \mathcal A (G)$ such that $\mathsf k (S) = \frac{\mathsf D (G)}{2}$.  Then $\ord (g_i) \ge 2$ for all $i \in [1,\ell]$, whence
\[
  \frac{\mathsf D (G)}{2} \, = \, \mathsf k (S) \, = \, \sum_{i=1}^{|S|} \frac{1}{\ord (g_i)} \, \le \, \frac{|S|}{2} \,.
\]
Therefore $|S| = \mathsf D (G)$ and $\ord (g_i) = 2$ for all $i \in [1, \ell]$. Thus $G_0 = \langle g_1, \ldots, g_{\ell} \rangle$ is generated by elements of order two and $\mathsf D (G_0)= \mathsf D (G)$. If $G_0$ is abelian, then $G_0$ is an elementary $2$-group and since $\mathsf D (G_0)=1+\mathsf d (G_0)$, Lemma \ref{2.2}.1 implies that $G=G_0$ is an elementary $2$-group. If $G$ is nilpotent, then Lemma \ref{2.2}.2 implies that $G$ is a $2$-group.
\end{proof}

The associated inverse problem with respect to the Davenport constant asks for the structure of minimal product-one sequences of length $\mathsf D (G)$. Even for abelian groups, the inverse problem is settled only for a small number of cases, namely for cyclic groups and elementary $2$-groups (for them the problem has a trivial answer), for groups of rank two, and for groups of the form $C_2 \oplus C_2 \oplus C_{2n}$ (\cite{Sc11b}). Dihedral
and dicyclic groups are the only non-abelian groups for which a characterization of product-one sequences of length $\mathsf D (G)$ is available. We cite the result for dihedral groups of order $2n$, where $n \ge 3$ is odd (see \cite[Theorem 4.1]{Oh-Zh20a}).

\begin{proposition} \label{2.4}
Let $G$ be a dihedral group of order $2n$, where $n \ge 3$ is odd. A sequence $S$ over $G$ of length $\mathsf D (G)$ is a minimal product-one sequence if and only if it has one of the following two forms{\rm \,:}
\begin{enumerate}
\item[(a)] There exist $\alpha, \tau \in G$ such that $G = \langle \alpha, \tau \colon \alpha^n = \tau^2 = 1_G \ \text{and} \ \tau \alpha = \alpha^{-1}\tau \rangle$ and $S = \alpha^{[2n-2]} \bdot \tau^{[2]}$.

\item[(b)]      There exist $\alpha, \tau \in G$ and $i,j \in [0,n-1]$ with $\gcd (i-j,n)=1$ such that $G = \langle \alpha, \tau \colon \alpha^n = \tau^2 = 1_G \ \text{and} \ \tau \alpha = \alpha^{-1}\tau \rangle$ and $S = (\alpha^i \tau)^{[n]} \bdot (\alpha^j \tau)^{[n]}$.
\end{enumerate}
\end{proposition}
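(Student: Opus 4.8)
The plan is to prove the two implications separately: the ``if'' direction by a short computation, the ``only if'' direction by extracting structure from the extremality $\mathsf D(G)=|G|=2n$. For sufficiency I would start from the elementary identity that a product in $G$ of an even number $s$ of reflections $\alpha^{b_1}\tau,\dots,\alpha^{b_s}\tau$, taken in any order and with arbitrary rotations inserted between them, equals $\alpha$ raised to a $\pm$-alternating combination of the $b_k$ and of the inserted rotation exponents. It follows that, for a sequence $S$ with rotation subsequence $W$ and (even-length) reflection subsequence $R$, membership $1_G\in\pi(S)$ depends only on the subset sums of $W$ and the ``equal-halves differences'' of the reflection exponents, and that in any factorization $S=S_1\bdot S_2$ into product-one sequences each factor carries an even number of reflections. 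For a sequence of the form (a) this forces both copies of $\tau$ onto one factor, so the other factor is $\alpha^{[k]}$ with $n\mid k$, hence $k=n$, and the first factor $\alpha^{[n-2]}\bdot\tau^{[2]}$ is then not product-one; thus (a) yields an atom, necessarily of length $\mathsf D(G)=2n$. For a sequence of the form (b), using $\gcd(i-j,n)=1$ one checks that a factor carrying $a$ copies of $\alpha^i\tau$ and $b$ copies of $\alpha^j\tau$ is product-one exactly when $a$ and $b$ are both even, while the complementary factor (with $n-a$ and $n-b$ copies) is product-one exactly when $a$ and $b$ are both odd; since $n$ is odd this cannot happen, so (b) also yields an atom of length $2n$.

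For necessity, the device I would rely on is the following: since $\mathsf D(G)=|G|=2n$, any atom $S$ of $\mathcal B(G)$ of length $2n$ has the property that, for every ordering $g_1\bdot\dots\bdot g_{2n}$ of $S$ with $g_1\cdots g_{2n}=1_G$, the partial products $1_G,\,g_1,\,g_1g_2,\,\dots,\,g_1\cdots g_{2n-1}$ are pairwise distinct, hence run through all of $G$; for if $p_i=p_j$ with $0\le i<j\le 2n$ and $(i,j)\ne(0,2n)$, then the segment $g_{i+1}\bdot\dots\bdot g_j$ and its complement in $S$ (ordered so that its product equals $p_i\cdot g_{j+1}\cdots g_{2n}=1_G$) would both be nontrivial product-one sequences, contradicting atomicity. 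In particular, applying this to $\langle\alpha\rangle\cong C_n$ (where $\mathsf D(C_n)=n$) and then transposing two terms, every atom of $\mathcal B(C_n)$ of length $n$ equals $g^{[n]}$ for an element $g$ of order $n$.

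Now let $S$ be an atom of $\mathcal B(G)$ of length $2n$ and let $r$ be its number of reflection terms. Then $r$ is even (the image of $S$ in $G/\langle\alpha\rangle\cong C_2$ is trivial), and $r\ne 0$ (otherwise $S$ would be an atom over $\langle\alpha\rangle\cong C_n$ of length $2n>\mathsf D(C_n)$). Writing a product-one ordering of $S$ as $B_0\,t_1\,B_1\,t_2\cdots t_r\,B_r$, with the $t_m$ the reflections and the $B_i$ the maximal rotation blocks, and noting that a partial product is a rotation exactly when an even number of reflections precede it, the device yields $\sum_{i\ \mathrm{even}}|B_i|=\sum_{i\ \mathrm{odd}}|B_i|=n-\tfrac r2$, so $|B_i|\le n-\tfrac r2$, and makes each $B_i$ a sequence over $C_n$ whose partial sums (after a fixed translation) are pairwise distinct. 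When $r=2n$, all $B_i$ are empty; pairing consecutive reflections gives $t_{2l-1}t_{2l}=\alpha^{\delta_l}$, and the even partial products are the partial sums of $(\delta_1,\dots,\delta_n)$, which is therefore an atom of $\mathcal B(C_n)$ of length $n$, so $\delta_1=\dots=\delta_n=\delta$ with $\ord(\delta)=n$; performing this for a fixed balanced splitting of the reflection-exponent multiset and swapping two of its elements between the halves forces the multiset to consist of $n$ copies of some $i$ and $n$ copies of some $j$ with $i-j=\delta$, i.e.\ $\gcd(i-j,n)=1$: this is case (b). When $r=2$, the unique odd block $B_1$ has size $n-1$, so $B_1=g^{[n-1]}$ with $\ord(g)=n$ by the $C_n$-rigidity; re-ordering $S$ (interchanging $t_1,t_2$ if need be) so that all rotations outside $B_1$ precede $t_1$ again gives a product-one ordering whose first rotation block, still of size $n-1$, must equal $h^{[n-1]}$ with $\ord(h)=n$; hence the rotation subsequence of $S$ is $g^{[n-1]}\bdot h^{[n-1]}$.

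Two points then remain, and I expect the real effort there. First, in the case $r=2$ one must still show $g=h$ and $\alpha^{b_1}\tau=\alpha^{b_2}\tau$ (after which renaming $\alpha$ and $\tau$ puts $S$ into form (a)); I would do this by producing explicit product-one factorizations --- if $g\ne h$, write $h=cg$ with $\gcd(c,n)=1$ and $1<c<n$, and split the rotations as $g^{[n-c]}\bdot h$ together with $g^{[c-1]}\bdot h^{[n-2]}$, the first being product-one by itself and the second being product-one once the two reflections are adjoined (its rotation subset sums already cover $C_n$); and if $g=h$ but $\alpha^{b_1}\tau\ne\alpha^{b_2}\tau$, split off $g^{[n]}$ and observe that the remaining $g^{[n-2]}\bdot(\alpha^{b_1}\tau)\bdot(\alpha^{b_2}\tau)$ is product-one because, $n$ being odd, its set of products is all of $\langle\alpha\rangle$. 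Second --- and this is the main obstacle --- one must rule out $4\le r\le 2n-2$. Here the largest rotation block has size at most $n-\tfrac r2\le n-2$, so the $C_n$-rigidity is too weak and the argument must be more hands-on: an atom of length $2n$ contains no term equal to $1_G$, so its rotation part is a $0$-free, and hence subset-sum-rich, sequence over $C_n$ of length $2n-r\ge 2$; one then selects two of the reflections and a sub-multiset $V$ of the rotations so that these form a product-one subsequence whose complement --- still carrying $r-2\ge 2$ reflections plus the remaining rotations --- is again product-one, contradicting atomicity. Pushing this selection through the possible multiplicity patterns of the rotation and reflection terms is the technical heart; with it in hand one gets $r\in\{2,2n\}$ and the classification is complete.
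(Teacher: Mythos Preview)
The paper does not prove this proposition; it quotes it from \cite[Theorem 4.1]{Oh-Zh20a}. So there is no in-paper argument to compare against, only your attempt to reprove the cited result.

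Your sufficiency argument and your handling of the extreme cases $r=2$ and $r=2n$ are essentially correct. The partial-product device (in a product-one ordering of a length-$|G|$ atom the partial products enumerate $G$) is exactly the right tool, and the reduction to the classification of extremal zero-sum and zero-sum-free sequences over $C_n$ is clean. One small correction in the $r=2n$ case: the swap that works is between two terms occupying positions of the \emph{same} parity (say two odd positions), not ``between the halves''. Swapping within one parity class leaves the alternating-sign sum, hence the product, unchanged; the device then applies to the new ordering and forces the two swapped exponents to coincide, so all odd-position exponents are equal (and then so are all even-position ones). Swapping across the halves would change the product and does not yield a product-one ordering unless the swapped terms already agree.

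The genuine gap is the intermediate range $4\le r\le 2n-2$, which you flag yourself. Your sketch---choose two reflections and a sub-multiset of the rotations so that both the chosen subsequence and its complement are product-one---is not a proof, and the obstruction is structural rather than merely technical: in a non-abelian group the complement of a product-one subsequence need \emph{not} be product-one (this is precisely what makes $\mathsf D(G)>\mathsf d(G)+1$ possible). ``Subset-sum richness'' of the rotation part controls $\pi(T_1)$ but says nothing about $\pi(S\bdot T_1^{[-1]})$. To make the idea work you would have to realize $T_1$ as a consecutive block in \emph{some} product-one ordering of $S$, so that its complement is automatically product-one; but the very device you are using shows that no proper nonempty consecutive block in any product-one ordering has product $1_G$. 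The argument in \cite{Oh-Zh20a} handles this range through a more detailed analysis based on the explicit product formula for dihedral sequences (the content of Proposition \ref{prop-add/mult-DefiningCorrespondance} here), not by a one-line selection of the kind you propose.
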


Finally, we will make ample use of Kneser's Theorem \cite[Chapter 6]{Gr13a}.

\begin{theorem}[Kneser's Theorem] Let $G$ be an abelian group, let $A_1,\ldots,A_n\subseteq G$ be finite, nonempty subsets, and let $H=\mathsf H(\Sum{i=1}{n}A_i)$. Then
$|\Sum{i=1}{n}A_i|\geq \Sum{i=1}{n}|A_i+H|-(n-1)|H|.$
\end{theorem}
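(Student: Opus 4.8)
The plan is to reduce the general case to $n=2$ and to prove the $n=2$ case by a Dyson $e$-transform argument. The starting point is a harmless normalization: since $\Sum{i=1}{n}A_i$ is $H$-periodic, replacing each $A_i$ by $A_i+H$ changes neither the sumset nor the quantity $\Sum{i=1}{n}|A_i+H|-(n-1)|H|$ (because $(A_i+H)+H=A_i+H$). Thus we may assume each $A_i$ is a union of $H$-cosets, and applying $\phi_H$ we reduce to the case where $\mathsf H\big(\Sum{i=1}{n}A_i\big)$ is trivial; there the claim becomes $\big|\Sum{i=1}{n}A_i\big|\ge\Sum{i=1}{n}|A_i|-(n-1)$. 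So it suffices to prove this ``aperiodic form''.

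Granting the aperiodic form for $n=2$, I would handle general $n$ by induction on $n$. Put $B=A_1+\ldots+A_{n-1}$ and $K=\mathsf H(B)$. If $K$ were nontrivial, then $B$ and hence $A_n+B=\Sum{i=1}{n}A_i$ would be $K$-periodic, contradicting the reduction to the aperiodic case; so $K$ is trivial and the induction hypothesis gives $|B|\ge\Sum{i=1}{n-1}|A_i|-(n-2)$. Applying the $n=2$ case to the pair $A_n,B$ (whose sum is aperiodic) yields $|A_n+B|\ge|A_n|+|B|-1$, and adding the two estimates closes the induction.

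For $n=2$ in the aperiodic case I would argue by minimal counterexample: assume $\mathsf H(A+B)$ is trivial but $|A+B|\le|A|+|B|-2$, and choose such a pair, over all abelian groups, with $|B|$ minimal. One checks easily that $|A|,|B|\ge2$ (the case of a singleton being an identity). Recall the Dyson $e$-transform $(A,B)\mapsto\big(A\cup(e+B),\,B\cap(A-e)\big)$: for $e\in A-B$ it keeps both sets nonempty, preserves $|A|+|B|$, and does not enlarge the sumset. If some $e\in A-B$ satisfies $e+B\not\subseteq A$, the transform strictly decreases $|B|$, and one appeals to minimality — after passing to the quotient by the stabilizer of the (now possibly periodic) new sumset. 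If instead $e+B\subseteq A$ for all $e\in A-B$, then $A$ is a union of cosets of $L:=\langle B-B\rangle$, so $A+B$ is $L$-periodic with $L$ nontrivial (as $|B|\ge2$), contradicting aperiodicity.

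The main obstacle is exactly this last inductive step: the $e$-transform can convert an aperiodic sumset into a periodic one, so the naive induction on $|B|$ does not close by itself, and one must sharpen the argument — either by also controlling $|A+B|$ in the choice of the counterexample, or by iterating $e$-transforms to a terminal configuration and analyzing the induced periodicity in the quotient $G/L$, or by substituting Petridis's compression-free argument at this point. I would follow the careful stabilizer bookkeeping carried out in \cite[Chapter 6]{Gr13a}.
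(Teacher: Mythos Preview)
The paper does not supply a proof of Kneser's Theorem at all: it simply states the result and refers to \cite[Chapter~6]{Gr13a}. Your proposal is therefore not being compared against an in-paper argument but against a bare citation, and in fact you end up deferring the delicate part to the very same reference.

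That said, your sketch is sound where it is explicit. The reduction to the aperiodic form is correct and standard. The induction from $n=2$ to general $n$ is clean and correct: if $\mathsf H\big(\Sum{i=1}{n}A_i\big)$ is trivial then $B=A_1+\ldots+A_{n-1}$ must also have trivial stabilizer (any period of $B$ is a period of $A_n+B$), so the inductive hypothesis applies to $B$ and the aperiodic $n=2$ case applies to $A_n+B$; the two estimates combine exactly as you say. For $n=2$ you correctly identify the genuine obstacle in the $e$-transform argument --- that the transform can introduce periodicity, so a naive minimal-$|B|$ counterexample does not close --- and you honestly flag that the fix requires the careful stabilizer bookkeeping of \cite[Chapter~6]{Gr13a} (or an alternative such as Petridis's argument). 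Since that is precisely what the paper itself invokes, your proposal is in effect aligned with the paper's treatment, just with more of the scaffolding made visible.
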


\smallskip
\section{Algebraic Properties} \label{3}
\smallskip

In this section, we study ideal theoretic properties of monoids of product-one sequences. Our references for ideal theory are \cite{HK98, Ge-HK06a}.
Let $H$ be a monoid. We denote by $s$-$\spec (H)$ the set of prime $s$-ideals of $H$ and by $\mathfrak X (H) \subset s$-$\spec (H)$ the set of minimal nonempty prime $s$-ideals of $H$. For a prime ideal $\mathfrak p \in s$-$\spec (H)$, we denote by $H_{\mathfrak p} = (H \setminus \mathfrak p)^{-1}H \subset \mathsf q (H)$ the localization at $\mathfrak p$.
The monoid $H$ is said to be {\it weakly Krull} if
\[
H = \bigcap_{{\mathfrak p} \in \mathfrak X (H)} H_{\mathfrak p}  \quad \text{and} \quad \{{\mathfrak p} \in \mathfrak X (H) \colon a \in {\mathfrak p}\} \quad \text{is finite for all} \ a \in H \,.
\]
The localizations $H_{\mathfrak p}$ are primary, and all primary monoids  are weakly Krull.
If all localizations $H_{\mathfrak p}$ are discrete valuation monoids, then $H$ is a Krull monoid.
A domain $R$ is weakly Krull if and only if its multiplicative monoid $R^{\bullet}$ of nonzero elements is weakly Krull.
Atomic domains having only finitely many non-associated atoms
 (i.e., $R^{\bullet}_{\red}$ is a finitely generated monoid) are called Cohen-Kaplansky domains (\cite[Theorem 4.3]{An-Mo92}), and they are weakly Krull  (\cite[Corollary 5]{An-An-Za92b}).  However, in contrast to the ring setting, finitely generated monoids are not  weakly Krull in general. Root-closed finitely generated monoids are Krull and hence weakly Krull. In this section, we show that the monoid of product-one sequences over a finite group  is weakly Krull if and only if the group  is abelian (Theorem \ref{3.3}).

\smallskip
\begin{proposition} \label{3.1}
Let $G$ be a finite group.
\begin{enumerate}
\item $\mathfrak X \big( \mathcal B (G) \big) = \{ {\mathfrak p}_g \colon g \in G\}$, where ${\mathfrak p}_g = \{ A \in \mathcal B (G) \colon \mathsf v_g (A) \ge 1 \}$ for each $g \in G$.

\item $$\widehat{\mathcal B (G)}=\{S \in \mathcal F(G) \colon \pi(S)\subset G'\}=\bigcap_{{\mathfrak p} \in \mathfrak X (\mathcal B (G))} \mathcal B (G)_{\mathfrak p}\,.$$
\end{enumerate}
\end{proposition}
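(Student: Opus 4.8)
\textbf{Plan of proof for Proposition \ref{3.1}.}

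For part (1), the plan is to verify directly that each $\mathfrak p_g$ is a prime $s$-ideal of $\mathcal B(G)$, that it is nonempty and minimal among nonempty prime $s$-ideals, and that every minimal nonempty prime $s$-ideal arises this way. That $\mathfrak p_g$ is an $s$-ideal follows because $\mathcal B(G) = \mathcal F(G) \cap \mathsf q(\mathcal B(G))$ is a saturated submonoid of the free abelian monoid $\mathcal F(G)$, so the valuation-theoretic structure of $\mathcal F(G)$ restricts cleanly; concretely, if $A \in \mathfrak p_g$ and $B \in \mathcal B(G)$ then $\mathsf v_g(A \bdot B) \ge \mathsf v_g(A) \ge 1$, and for the $s$-ideal (i.e.\ $t$-ideal/divisorial, as appropriate in the finitely generated setting) condition one uses that divisibility in $\mathcal B(G)$ is induced from $\mathcal F(G)$. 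Primality of $\mathfrak p_g$: if $A \bdot B \in \mathfrak p_g$ then $\mathsf v_g(A) + \mathsf v_g(B) \ge 1$, so one of them lies in $\mathfrak p_g$. Nonemptiness: pick any atom of $\mathcal B(G)$ containing $g$ (e.g.\ $g^{[\ord(g)]}$, or $g \bdot g^{-1}$). Minimality and exhaustiveness: the height-one primes of a saturated submonoid of $\mathcal F(G)$ correspond to the restrictions of the coordinate primes $\{a \in \mathcal F(G) : \mathsf v_g(a) \ge 1\}$, and since $\mathcal B(G)$ meets every such coordinate prime (again by the atoms above), one gets exactly $\{\mathfrak p_g : g \in G\}$, with no containments among them since the $g$ are distinct coordinates. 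I expect this to be a short, essentially bookkeeping argument referencing \cite{Ge-HK06a} on saturated submonoids and weakly Krull structure.

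For part (2), the plan has two halves. First I would identify $\widehat{\mathcal B(G)}$. The monoid $\mathcal F^* := \{S \in \mathcal F(G) : \pi(S) \subset G'\}$ is the full preimage of the abelian quotient: the map $\mathcal F(G) \to G/G'$ sending $S$ to the coset $\pi(S)$ is a monoid homomorphism onto the abelian group $G/G'$, and $\mathcal F^*$ is its kernel preimage over $0$, hence $\mathcal F^* = \mathcal B(G_0)$-type object; more to the point, $\mathcal F^*$ is a Krull monoid (being a saturated submonoid of $\mathcal F(G)$ defined as the kernel of a homomorphism to a group), so it is completely integrally closed and contains $\mathcal B(G)$. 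For $\widehat{\mathcal B(G)} \subseteq \mathcal F^*$: if $x \in \mathsf q(\mathcal B(G))$ with $cx^n \in \mathcal B(G)$ for all $n$, write $x = S \bdot T^{[-1]}$ with $S, T \in \mathcal B(G)$; since all $cx^n \in \mathcal B(G) \subseteq \mathcal F^*$ and $\mathcal F^*$ is completely integrally closed, $x \in \mathcal F^*$. Conversely $\mathcal F^* \subseteq \widehat{\mathcal B(G)}$: given $S \in \mathcal F^*$, one must produce $c \in \mathcal B(G)$ with $c \bdot S^{[n]} \in \mathcal B(G)$ for all $n$; the natural candidate is $c = \prod^\bullet_{g} (g \bdot g^{-1})^{[\mathsf v_g(S)]}$ or a similar "symmetrizing" sequence, using that $\pi(S) \subset G'$ to adjust products into $1_G$ — here one needs that for $S \in \mathcal F^*$ the sequence $S^{[n]} \bdot S'$ can be made product-one for a fixed auxiliary $S'$, which follows since $\pi$ of a concatenation lies in the product of cosets and cosets of $G'$ multiply via $G/G'$. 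Second, I would show $\mathcal F^* = \bigcap_{\mathfrak p \in \mathfrak X(\mathcal B(G))} \mathcal B(G)_{\mathfrak p}$: by part (1) this intersection is $\bigcap_{g \in G} \mathcal B(G)_{\mathfrak p_g}$, and localizing a saturated submonoid of $\mathcal F(G)$ at the coordinate prime $\mathfrak p_g$ inverts everything except the $g$-valuation, so the intersection recovers $\{S \in \mathsf q(\mathcal B(G)) : \mathsf v_g(S) \ge 0 \text{ for all } g\} = \mathsf q(\mathcal B(G)) \cap \mathcal F(G)$, which is precisely the set of sequences in $\mathcal F(G)$ whose product-coset is trivial, i.e.\ $\mathcal F^*$.

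\textbf{Main obstacle.} The delicate point is the inclusion $\mathcal F^* \subseteq \widehat{\mathcal B(G)}$, i.e.\ constructing, for each $S$ with $\pi(S) \subset G'$, a single element $c \in \mathcal B(G)$ that works simultaneously for all powers $S^{[n]}$. The issue is non-commutativity: $\pi(S^{[n]})$ is not simply $\pi(S)$ raised to a power, so one must argue more carefully that all the sets $\pi(S^{[n]})$ lie in $G'$ (true, since each is a union over orderings and the product of group elements whose cosets are all trivial has trivial coset) and then that appending a fixed $c$ forces $1_G$ into $\pi(c \bdot S^{[n]})$; the cleanest route is probably to take $c$ so that $c \bdot S^{[n]}$ has, for every $g$, multiplicities making it lie in the Krull monoid $\mathcal F^*$ and then invoke that $\mathcal F^*$, being a saturated submonoid, has the property that a high-enough power of any of its elements is "product-one-completable" — alternatively, show directly $\widehat{\mathcal B(G)} = \widehat{\mathcal F^*} = \mathcal F^*$ using $\mathcal B(G) \subseteq \mathcal F^*$ together with $\mathsf q(\mathcal B(G)) = \mathsf q(\mathcal F^*)$, which reduces everything to the Krull (hence completely integrally closed) property of $\mathcal F^*$ and sidesteps the explicit construction entirely. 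I would pursue this last reduction first, as it is the most economical.
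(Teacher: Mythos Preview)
Your plan has a genuine gap that undermines both parts: you repeatedly invoke that $\mathcal B(G)$ is a saturated submonoid of $\mathcal F(G)$ (equivalently, that $\mathcal B(G)=\mathcal F(G)\cap\mathsf q(\mathcal B(G))$), but this is \emph{false} precisely when $G$ is non-abelian, which is the case of interest. Concretely, take $g,h\in G$ with $gh\ne hg$ and set $U=g\bdot h\bdot g^{-1}\bdot(gh^{-1}g^{-1})\in\mathcal B(G)$ and $V=g\bdot g^{-1}\in\mathcal B(G)$. Then $V\mid U$ in $\mathcal F(G)$, yet $U\bdot V^{[-1]}=h\bdot(gh^{-1}g^{-1})\notin\mathcal B(G)$ (either ordering of the two terms having product one would force $gh=hg$). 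Thus $h\bdot(gh^{-1}g^{-1})$ lies in $\mathcal F(G)\cap\mathsf q(\mathcal B(G))$ but not in $\mathcal B(G)$. Consequently your appeal to ``height-one primes of a saturated submonoid correspond to restrictions of the coordinate primes'' is unavailable; that statement is essentially the Krull property, and by Theorem~\ref{3.3} $\mathcal B(G)$ is Krull only when $G$ is abelian. The paper instead proves minimality directly: given any nonempty prime $s$-ideal $\mathfrak p$ and any $A=g_1\bdot\ldots\bdot g_k\in\mathfrak p$, the factorization $A^{[n]}=\prod^\bullet_i (g_i^{[\ord(g_i)]})^{[n/\ord(g_i)]}$ (with $n=\lcm$ of the orders) forces some $g^{[\ord(g)]}\in\mathfrak p$, and then one checks $\mathfrak p_g\subset\mathfrak p$.

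For part~2 your ``economical'' route is circular. From $\mathcal B(G)\subset\mathcal F^*$ and $\mathsf q(\mathcal B(G))=\mathsf q(\mathcal F^*)$ one gets only $\widehat{\mathcal B(G)}\subset\widehat{\mathcal F^*}=\mathcal F^*$ (which is fine, and is essentially how the paper proves this inclusion). The reverse containment $\widehat{\mathcal F^*}\subset\widehat{\mathcal B(G)}$ does \emph{not} follow from equality of quotient groups alone; it needs $\mathcal F^*\subset\widehat{\mathcal B(G)}$, which is exactly what you are trying to prove. Likewise, your description of the localization $\mathcal B(G)_{\mathfrak p_g}$ as $\{S\in\mathsf q(\mathcal B(G)):\mathsf v_g(S)\ge 0\}$ again presupposes a Krull-type structure; only the inclusion $\mathcal B(G)_{\mathfrak p_g}\subset\{\mathsf v_g\ge 0\}$ is automatic. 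The paper closes the circle differently: it shows $\mathcal F^*\subset\bigcap_g\mathcal B(G)_{\mathfrak p_g}$ by writing each commutator $f=ghg^{-1}h^{-1}$ explicitly as a quotient $A\bdot B^{[-1]}$ with $A,B\in\mathcal B(G)$ and $\mathsf v_x(B)=0$, doing this for enough choices of $A,B$ to cover every $x\in G$; and it shows $\bigcap_g\mathcal B(G)_{\mathfrak p_g}\subset\widehat{\mathcal B(G)}$ by observing that any $a$ in the intersection lies in $\mathcal F(G)$, hence $a^{[n]}\in\mathcal B(G)$, and then exhibiting an explicit conductor $c$. Your instinct that the obstacle is $\mathcal F^*\subset\widehat{\mathcal B(G)}$ is correct, but the resolution requires a hands-on construction rather than an abstract reduction.
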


\begin{proof}
We set $H = \mathcal B (G)$, $F=\mathcal F(G)$, and $n=\lcm\{\ord(g)\colon g\in G\}$.

\smallskip
1. Let $g \in G$. Clearly, ${\mathfrak p}_g$ is a prime $s$-ideal of $H$. Since $g^{[\ord (g)]} \in {\mathfrak p}_g \setminus {\mathfrak p}_h$ for all $h \in G \setminus \{g\}$, it follows that ${\mathfrak p}_g \ne {\mathfrak p}_h$ and ${\mathfrak p}_g \subsetneq {\mathfrak p}_h$ for all $h \in G \setminus \{g\}$. Thus it remains to show the following claim.

\begin{enumerate}
\item[{\bf A.}\,] Let ${\mathfrak p} \in s$-$\spec (H)$. Then there is a $g \in G$ such that ${\mathfrak p}_g \subset {\mathfrak p}$.
\end{enumerate}

{\it Proof of} \,{\bf A}.\,  Let $A = g_1 \bdot \ldots \bdot g_k \in {\mathfrak p}$.
 Then
\[
A^{[n]} = \Big( g_1^{[\ord (g_1)]} \Big)^{[n/\ord (g_1)]} \bdot \ldots \bdot \Big( g_k^{[\ord (g_k)]} \Big)^{[n/\ord (g_k)]} \in {\mathfrak p} \,,
\]
whence there is some $g \in \{g_1, \ldots, g_k\}$ such that $g^{[\ord (g)]} \in {\mathfrak p}$. We assert that ${\mathfrak p}_g \subset {\mathfrak p}$. Assume to the contrary that there is some $B \in  {\mathfrak p}_g \setminus {\mathfrak p}$, say $B = g \bdot h_2 \bdot \ldots \bdot h_{\ell}$. Since $g^{[\ord (g)]} \in {\mathfrak p}$, it follows that
\[
B^{[n]} =  \Big( g^{[\ord (g)]} \Big)^{[n/\ord (g)]} \bdot  \Big( h_2^{[\ord (h_2)]} \Big)^{[n/\ord (h_2)]} \bdot \ldots \bdot \Big( h_{\ell}^{[\ord (h_{\ell})]} \Big)^{[n/\ord (h_{\ell})]} \in {\mathfrak p} \,,
\]
whence $B \in {\mathfrak p}$, a contradiction. \qed[{\bf Proof of {\bf A.}}]

\smallskip
2. We proceed in three steps.

(i) Let $a=\frac{s_1}{s_2}\in \widehat{H}$, where $s_1,s_2\in H$. Then there exists $c\in H$ such that $ca^n\in H\subset F$ for all $n\in \N$. Since $F$ is completely integrally closed, we have $a\in F$ and $\pi(a)\subset\{xy^{-1}\colon x\in \pi(s_1)\text{ and }y\in \pi(s_2)\}\subset G'$. Thus we have $\widehat{H}\subset \{a\in F\colon \pi(a)\subset G'\}$.

(ii) In order to prove that  $\{a\in F\colon \pi(a)\subset G'\} \subset \bigcap_{{\mathfrak p} \in \mathfrak X (H)} H_{\mathfrak p}$ it suffices to verify that  $f\in \bigcap_{{\mathfrak p} \in \mathfrak X (H)} H_{\mathfrak p}$ for every $f\in G'$.  Since $G'=\langle ghg^{-1}h^{-1}\colon g,h\in G\rangle$, it is sufficient to show that $ghg^{-1}h^{-1} \in \bigcap_{{\mathfrak p} \in \mathfrak X (H)} H_{\mathfrak p}$ for all $g,h \in G$. Let $g, h \in G$ and $f = ghg^{-1}h^{-1}$. If $f=1$, then $f\in \bigcap_{{\mathfrak p} \in \mathfrak X (H)} H_{\mathfrak p}$. Suppose $f\neq 1$.
Since
\begin{align*}
f=\frac{f\bdot h^{[n]}\bdot g\bdot  g^{-1}}{h^{[n]}\bdot g\bdot  g^{-1}}=\frac{f\bdot h\bdot h^{-1}\bdot g^{[n]}}{h\bdot h^{-1}\bdot g^{[n]}}=\frac{f\bdot h\bdot h^{-1}\bdot (hg)^{[n]}}{h\bdot h^{-1}\bdot (hg)^{[n]}}=\frac{f\bdot g\bdot g^{-1}\bdot (hg)^{[n]}}{g\bdot g^{-1}\bdot (hg)^{[n]}}&\in \mathsf q(H)\,,\\
\text{ and }\quad f\bdot h^{[n]}\bdot g\bdot  g^{-1},\quad f\bdot h\bdot h^{-1}\bdot g^{[n]},\quad f\bdot h\bdot h^{-1}\bdot (hg)^{[n]},\quad f\bdot g\bdot g^{-1}\bdot (hg)^{[n]}&\in  H\,,
\end{align*} we have $f\in H_{{\mathfrak p}_x}$ for all $x\in G\setminus\big(\{g, g^{-1}, h\}\cap\{h,h^{-1}, g\}\cap \{h, h^{-1}, hg\}\cap \{g,g^{-1}, hg\}\big)=G$. Thus the assertion follows from 1.

(iii) Let $a=\frac{s_1}{s_2}\in \bigcap_{{\mathfrak p} \in \mathfrak X (H)} H_{\mathfrak p}$, where $s_1,s_2\in H$. Then for every $g\in G$, $a\in H_{{\mathfrak p}_g}$ implies that $\mathsf v_g(s_1)\ge \mathsf v_g(s_2)$. Therefore $s_2\mid_F s_1$ and hence $a\in F$. By the definition of $n$, we know that $a^n\in H$.
Let $c=s_2^n\in H$. Then, for every $k\in \N$ and every $r\in [0,n-1]$, we have
$ca^{kn+r}=a^{kn}s_1^{r}s_2^{n-r}\in H$ whence $a\in \widehat{H}$.
\end{proof}

%condition (b) and (d) were the same below, so I removed the redundancy.

\smallskip
\begin{lemma} \label{3.2}
Let $G$ be a finite group and $G_0 \subset G$ a subset. Consider the following conditions{\rm \,:}
\begin{enumerate}
\item[(a)] $1 + \mathsf d (G_0) < \mathsf D (G_0)$.
%B changed to A
%
\item[(b)] There exist distinct $U, V \in \mathcal A (G_0)$, $1 \ne W \in \mathcal B (G_0)$ and $m \in \N$ such that $\mathsf Z (V^{[m]}) \mathsf Z (W) \subset \mathsf Z (U^{[m]})$.

\item[(c)] $G$ is not abelian.

%\item[(d)]    There exist distinct atoms $U, V \in \mathcal B (G)$, $1 \ne W \in \mathcal B (G)$ and $m \in \N$ such that $\mathsf Z (V^{[m]}) \mathsf Z (W) \subset \mathsf Z (U^{[m]})$.
\end{enumerate}
Then (a) $\Rightarrow$ (b) $\Leftrightarrow$ (c).
\end{lemma}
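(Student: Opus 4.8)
\emph{Proof plan.} The plan is to reformulate condition~(b) first and then deal with the three implications in turn.

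Since $\mathcal B(G_0)$ is reduced and finitely generated (hence atomic) and the factorization homomorphism satisfies $\pi(zz')=\pi(z)\bdot\pi(z')$, the inclusion $\mathsf Z(V^{[m]})\mathsf Z(W)\subset\mathsf Z(U^{[m]})$ is equivalent to the single identity $V^{[m]}\bdot W=U^{[m]}$ in $\mathcal F(G_0)$; comparing multiplicities, this in turn forces $V\mid U$ in $\mathcal F(G_0)$ together with $W=R^{[m]}$, where $R:=U\bdot V^{[-1]}\neq 1$ (as $U\neq V$). I would also record the elementary fact that every nonempty sequence $R$ over $G_0$ has a product-one power: the subsemigroup of the finite group $\langle G_0\rangle$ generated by $\pi(R)$ is a subgroup, so $1_G=p_1\cdot\ldots\cdot p_m$ for suitable $p_1,\ldots,p_m\in\pi(R)$, and stringing together orderings of the $m$ copies of $R$ realizing these products gives $1_G\in\pi(R^{[m]})$, i.e.\ $R^{[m]}\in\mathcal B(G_0)$. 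Putting these together, (b) is equivalent to the existence of two distinct atoms $U,V\in\mathcal A(G_0)$ with $V\mid U$ in $\mathcal F(G_0)$, and hence to the existence of an atom of $\mathcal B(G_0)$ possessing a proper, nontrivial product-one subsequence.

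Granting this reformulation, (a)$\Rightarrow$(b) is quick: take $U\in\mathcal A(G_0)$ with $|U|=\mathsf D(G_0)$ and remove a term $g$; then $U\bdot g^{[-1]}$ has length $\mathsf D(G_0)-1>\mathsf d(G_0)$, so it is not product-one free, and any nontrivial product-one subsequence of it is a proper nontrivial product-one subsequence of $U$. And (b)$\Rightarrow$(c): if $G$ were abelian, then $V\mid U$ in $\mathcal F(G_0)$ would already give $V\mid U$ in $\mathcal B(G_0)$, since the complement of a zero-sum subsequence inside a zero-sum sequence is again zero-sum; then $V=U$ because $U$ is an atom and $V\neq 1$ --- contradicting $U\neq V$.

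The substantive step is (c)$\Rightarrow$(b): assuming $G$ non-abelian, one has to produce an atom with a proper nontrivial product-one subsequence. I would choose $g,h\in G$ with $gh\neq hg$ and set
\[
U := g\bdot g^{-1}\bdot h\bdot (gh^{-1}g^{-1})\,.
\]
Then $U$ is product-one, because $ghg^{-1}\cdot gh^{-1}g^{-1}=1_G$, and $g\bdot g^{-1}$ is a proper nontrivial product-one subsequence of it. The real work --- and the place I expect the only genuine difficulty --- is to check that $U$ is an \emph{atom}. Here one observes that $1_G$ is not a term of $U$ (else $g=1_G$ or $h=1_G$, contradicting $gh\neq hg$), so every nontrivial product-one subsequence of $U$ has length at least two; since $|U|=4$, the only way $U$ could fail to be an atom is if its four term-slots could be partitioned into two pairs each yielding a product-one sequence of length two. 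A length-two sequence $x\bdot y$ is product-one exactly when $y=x^{-1}$; so it is enough to note that the three pairs of term-slots of $U$ carrying $\{g,h\}$, $\{g,gh^{-1}g^{-1}\}$ and $\{h,gh^{-1}g^{-1}\}$ are each \emph{not} product-one --- each being product-one would make $g$ and $h$ commute --- and that every partition of the four term-slots of $U$ into two pairs contains one of these three pairs. Hence $U$ is an atom, which gives (b). (Note that the terms $g^{-1}$ and $gh^{-1}g^{-1}$ are where this argument draws on the ambient group $G$.)
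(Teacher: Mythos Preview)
Your proof is correct and follows essentially the same route as the paper: the same atom $U=g\bdot g^{-1}\bdot h\bdot gh^{-1}g^{-1}$ for (c)$\Rightarrow$(b), and the same ``remove a term from a maximal-length atom'' idea for (a)$\Rightarrow$(b). Two small differences are worth noting: for (b)$\Rightarrow$(c) the paper invokes that $\mathcal B(G)$ is root-closed when $G$ is abelian, whereas your direct observation that the complement of a zero-sum subsequence in a zero-sum sequence is again zero-sum is more elementary; and you supply the verification that $U$ is an atom (via the pair partition check), which the paper simply asserts.
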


\begin{proof}
(a) $\Rightarrow$ (b) Let $U = g_0 \bdot \ldots \bdot g_{\ell} \in \mathcal A (G_0)$ with $|U|= \mathsf D (G_0)$. Since $1 + \mathsf d (G_0) < \mathsf D (G_0) = 1 + \ell$, it follows that $g_1 \bdot \ldots \bdot g_{\ell}$ is not product-one free. Thus there is $V \in \mathcal A (G_0)$ such that $U = V \bdot S$ for some $1 \ne S \in \mathcal F (G_0)$. There is
%some added
some $m \in \N_{\ge 2}$ such that $W = S^{[m]} \in \mathcal B (G_0)$, whence $U^{[m]} = V^{[m]} \bdot W$ and $\mathsf Z (V^{[m]}) \mathsf Z (W) \subset \mathsf Z (U^{[m]})$.

%some boldsymbol dots and [] brackets were missing below

(b) $\Rightarrow$ (c) Let $U, V \in \mathcal A (G_0)$ and $1 \ne W \in \mathcal B (G_0)$ with $U^{[m]} = V^{[m]} \bdot W$ and assume to the contrary that $G$ is abelian.  Then $(V^{[-1]}\bdot U)^{[m]} \in \mathcal B (G)$ and since $\mathcal B (G)$ is root-closed, it follows that $V^{[-1]}\bdot U \in \mathcal B (G)$ whence $V=U$, a contradiction to $W \ne 1$.

(c) $\Rightarrow$ (b)  There exist $g,h \in G$ such that $gh \neq hg$. We consider the sequence
\[
  U = g\bdot h\bdot g^{-1}\bdot (gh^{-1}g^{-1}) \in \mathcal A(G)
\]
and set  $m = \ord (hgh^{-1}g^{-1}) \in \N$. Then $V = (g\bdot g^{-1}) \in \mathcal A (G)$, $W = (h\bdot (gh^{-1}g^{-1}))^{[m]} \in \mathcal B (G) \setminus \{1\}$, and
\[
U^{[m]} = V^{[m]}\bdot W \,. \qedhere
\]
\end{proof}

\medskip
\begin{theorem} \label{3.3}
Let $G$ be a finite group.
\begin{enumerate}
\item The following statements are equivalent{\rm \,:}
      \begin{enumerate}
      \item[(a)] $G$ is abelian.

      \item[(b)] $\mathcal B (G)$ is Krull.

      \item[(c)] $\mathcal B (G)$ is transfer Krull.

      \item[(d)] $\mathcal B (G)$ is weakly Krull.

      \item[(e)] If $U, V \in \mathcal A (G)$, $W \in \mathcal B (G)$, and $m \in \N$ such that $\mathsf Z (V^{[m]})\mathsf Z (W) \subset \mathsf Z (U^{[m]})$, then $U=V$ and $W=1$.
      \end{enumerate}

\item $\mathcal B (G)$ is seminormal if and only if $|G'| \le 2$. In particular, a dihedral group of order $2n$, where $n \ge 3$ is odd, is not seminormal.
\end{enumerate}
\end{theorem}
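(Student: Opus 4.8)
The plan for part 1 is to close the cycle $\mathrm{(a)}\Rightarrow\mathrm{(b)}\Rightarrow\mathrm{(c)}\Rightarrow\mathrm{(e)}\Rightarrow\mathrm{(a)}$ together with $\mathrm{(b)}\Rightarrow\mathrm{(d)}\Rightarrow\mathrm{(a)}$. The equivalence $\mathrm{(a)}\Leftrightarrow\mathrm{(e)}$ follows from Lemma \ref{3.2}: since $\mathcal F(G)$ is free abelian, an identity $U^{[m]}=V^{[m]}\bdot W$ forces $U=V$ if and only if $W=1$, so the negation of $\mathrm{(e)}$ is precisely condition $\mathrm{(b)}$ of Lemma \ref{3.2} applied with $G_0=G$, which is equivalent to $G$ not being abelian. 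For $\mathrm{(a)}\Rightarrow\mathrm{(b)}$ I would use that for abelian $G$ the monoid $\mathcal B(G)$ is the kernel of the homomorphism $\sigma\colon\mathcal F(G)\to G$, hence a saturated, and therefore Krull, submonoid of the factorial monoid $\mathcal F(G)$ (see \cite{Ge-HK06a}); $\mathrm{(b)}\Rightarrow\mathrm{(c)}$ is trivial, and $\mathrm{(b)}\Rightarrow\mathrm{(d)}$ holds because Krull monoids are weakly Krull. For $\mathrm{(c)}\Rightarrow\mathrm{(e)}$ fix a transfer homomorphism $\theta\colon\mathcal B(G)\to B_0$ onto a reduced Krull monoid $B_0$; from $\mathsf Z(V^{[m]})\mathsf Z(W)\subset\mathsf Z(U^{[m]})$ we get $U^{[m]}=V^{[m]}\bdot W$ in $\mathcal B(G)$ and hence $\theta(U)^m=\theta(V)^m\theta(W)$ with $\theta(U),\theta(V)\in\mathcal A(B_0)$. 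Then $\bigl(\theta(U)\theta(V)^{-1}\bigr)^m=\theta(W)\in B_0$, so root-closedness of $B_0$ gives $\theta(U)\theta(V)^{-1}\in B_0$; since $\theta(U)$ is an atom this forces $\theta(U)=\theta(V)$ and $\theta(W)\in B_0^{\times}$, whence $W\in\mathcal B(G)^{\times}=\{1\}$ by axiom $(\mathrm{T}1)$, and finally $U=V$ by cancellation in $\mathcal F(G)$. For $\mathrm{(d)}\Rightarrow\mathrm{(a)}$: if $\mathcal B(G)$ is weakly Krull then $\mathcal B(G)=\bigcap_{\mathfrak p\in\mathfrak X(\mathcal B(G))}\mathcal B(G)_{\mathfrak p}=\widehat{\mathcal B(G)}$ by Proposition \ref{3.1}.2; but for non-abelian $G$ any $c\in G'\setminus\{1_G\}$ gives the length-one sequence $c$ with $\pi(c)=\{c\}\subset G'$ and $1_G\notin\pi(c)$, so $c\in\widehat{\mathcal B(G)}\setminus\mathcal B(G)$ --- a contradiction.

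For part 2 I would use the standard criterion that a monoid $H$ is seminormal if and only if $x\in H$ holds whenever $x\in\mathsf q(H)$ satisfies $x^2,x^3\in H$. Assume first $|G'|\le 2$ and let $S\in\mathsf q(\mathcal B(G))$ with $S^{[2]},S^{[3]}\in\mathcal B(G)$. Then $S^{[n]}\in\mathcal B(G)$ for all $n\ge 2$, so $S\in\mathcal B(G)'\subset\widehat{\mathcal B(G)}$ and hence $\pi(S)\subset G'$ by Proposition \ref{3.1}.2. If $1_G\in\pi(S)$ we are done; otherwise $\pi(S)\subset G'\setminus\{1_G\}$, and since $|G'|\le 2$ this forces $|G'|=2$, say $G'=\{1_G,c\}$ with $\ord(c)=2$, and $\pi(S)=\{c\}$. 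Because $\pi(S)$ is a singleton, any two terms $g,h$ of $S$ can be brought into adjacent positions in an ordering, which forces $gh=hg$; thus all terms of $S$ commute pairwise, and therefore $\pi(S^{[3]})=\{c^3\}=\{c\}\ne\{1_G\}$, contradicting $S^{[3]}\in\mathcal B(G)$. So $\mathcal B(G)$ is seminormal.

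For the converse I must show that $|G'|\ge 3$ forces non-seminormality by producing a sequence $S$ with $\pi(S)\subset G'\setminus\{1_G\}$ (so that $S\notin\mathcal B(G)$) and $S^{[2]},S^{[3]}\in\mathcal B(G)$; then $S=S^{[3]}\bdot(S^{[2]})^{[-1]}\in\mathsf q(\mathcal B(G))$ satisfies $S^{[n]}\in\mathcal B(G)$ for all $n\ge 2$, so $S\in\mathcal B(G)'\setminus\mathcal B(G)$. Since $\pi(S^{[k]})\supset\pi(S)^k$, it is enough to realize $\pi(S)$ as a subset $K\subset G'\setminus\{1_G\}$ with $1_G\in K^2\cap K^3$; such $K$ exist for every $G'$ with $|G'|\ge 3$ --- for instance $K=\langle c\rangle\setminus\{1_G\}$ when $G'$ contains an element $c$ of order $\ge 3$ (since then $K^2=\langle c\rangle$), and $K=E\setminus\{1_G\}$ for a subgroup $E\cong C_2\oplus C_2$ of $G'$ when $G'$ is an elementary abelian $2$-group of order $\ge 4$. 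Realizing such a $K$ as $\pi(S)$ is the main obstacle: it requires a careful, case-dependent choice of $S$ (a short sequence of reflections, involutions, or conjugates, according to the structure of $G'$) that keeps $1_G$ out of $\pi(S)$ while filling out the prescribed subset of $G'$ --- the pitfall to avoid being that sequences in which every term is matched with its inverse automatically have $1_G\in\pi(S)$. The final assertion is then immediate: for $D_{2n}$ with $n\ge 3$ odd, $G'=\langle\alpha\rangle\cong C_n$ has order $n\ge 3$, so $\mathcal B(D_{2n})$ is not seminormal; a concrete witness is a short product $S$ of reflections $\alpha^{i}\tau$ chosen so that $\pi(S)=\langle\alpha\rangle\setminus\{1_G\}$, whence $\pi(S^{[2]}),\pi(S^{[3]})\supset\langle\alpha\rangle\ni 1_G$.
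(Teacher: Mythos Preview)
Your treatment of part~1 is correct and close to the paper's, with two pleasant variations: instead of citing \cite{Oh20a} for $\mathrm{(c)}\Rightarrow\mathrm{(a)}$, you give a direct $\mathrm{(c)}\Rightarrow\mathrm{(e)}$ argument via root-closedness of the target Krull monoid, and instead of $\mathrm{(d)}\Rightarrow\mathrm{(b)}$ you show $\mathrm{(d)}\Rightarrow\mathrm{(a)}$ by exhibiting a length-one sequence in $\widehat{\mathcal B(G)}\setminus\mathcal B(G)$ when $G'\neq\{1_G\}$. Both are valid and arguably more self-contained than the paper's citations.

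For part~2, the paper simply invokes \cite[Corollary~3.12]{Oh19b}. Your direct argument for the implication $|G'|\le 2\Rightarrow\mathcal B(G)$ seminormal is correct and elegant: the observation that $|\pi(S)|=1$ forces all terms of $S$ to commute pairwise, and hence $\pi(S^{[3]})=\{c^3\}=\{c\}$, is a clean way to derive the contradiction.

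The converse, however, has a genuine gap. You correctly reduce the problem to constructing $S\in\mathcal F(G)$ with $\pi(S)\subset G'\setminus\{1_G\}$ and $1_G\in\pi(S^{[2]})\cap\pi(S^{[3]})$, and you note that it would suffice to realise $\pi(S)$ as a suitable subset $K$ of $G'\setminus\{1_G\}$. But you then explicitly flag ``realizing such a $K$ as $\pi(S)$'' as ``the main obstacle'' and do not carry it out. This is not a minor omission: the set $\pi(S)$ is heavily constrained (it is a full $G'$-coset only in special situations), and an arbitrary prescribed $K\subset G'\setminus\{1_G\}$ need not arise as $\pi(S)$ for any $S$. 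Even your ``concrete witness'' for $D_{2n}$ is only a sketch; for instance, the natural candidate $S=\tau\bdot\alpha\tau$ satisfies $S^{[2]}\in\mathcal B(G)$ but, for $n\ge 5$, one checks via Corollary~\ref{cor-zero-sum-defining} that $1_G\notin\pi(S^{[3]})$, so that particular $S$ fails. A correct construction requires more care (indeed, this is precisely the content of the cited result in \cite{Oh19b}), and without it the ``only if'' direction of part~2 remains unproved.
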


\begin{proof}
1. (a) $\Rightarrow$ (b) $\Rightarrow$ (c) This is obvious.

(c) $\Rightarrow$ (a) See \cite[Proposition 3.4]{Oh20a}.

(d) $\Leftrightarrow$ (b) Every Krull monoid is weakly Krull. If $\mathcal B (G)$ is weakly Krull, then Proposition \ref{3.1} implies that
\[
\mathcal B (G) =\bigcap_{\mathfrak p \in \mathfrak X \bigl( \mathcal B (G) \bigr)} {\mathcal B (G)}_{\mathfrak p} = \widehat{\mathcal B (G)} \,,
\]
whence $\mathcal B (G)$ is completely integrally closed and so $\mathcal B (G)$ is Krull.

(e) $\Leftrightarrow$ (a) This follows from Lemma \ref{3.2} (with $G_0=G$).

2. See \cite[Corollary 3.12]{Oh19b}.
\end{proof}

\smallskip
\section{On the omega invariant} \label{4}

Let $H$ be an atomic monoid. For an element $a \in H$, let $\omega (H,a)$ be the smallest $N \in \N_0 \cup \{\infty\}$ with the following property:
\begin{itemize}
\item[] If $n \in \N$ and $a_1, \ldots, a_n \in H$ with $a \mid a_1 \cdot \ldots \cdot a_n$, then there exists a subset $\Omega \subset [1,n]$ such that $|\Omega| \le N$ and $a \mid \prod_{\nu \in \Omega} a_{\nu}$.
\end{itemize}
Furthermore, we set
\[
\omega (H) = \sup \{ \omega (H,a) \colon a \in \mathcal A(H) \} \,.
\]
Thus $\omega (H,a)=1$ if and only if $a \in H$ is a prime element, and for an atomic monoid $H$ that is not a group we have $\omega (H)=1$ if and only if $H$ is factorial. If $H$ satisfies the ascending chain condition on divisorial ideals  or if $H$ is strongly primary, then $\omega (H, a) < \infty$ for all $a \in H$. Furthermore, $\omega (H) < \infty$ if and only if $H$ is globally tame (\cite[Proposition 3.6]{Ge-Ka10a}) whence $\omega (H) < \infty$ for all finitely generated monoids. If $G$ is a finite group, then we set $\omega (G) := \omega (\mathcal B (G))$ and since $\mathcal B (G)$ is finitely generated, we have $\omega (G) < \infty$. If $G$ is abelian with $|G| \ge 3$, then it is easy to see that $\omega (G) = \mathsf D (G)$. But so far the precise value of $\omega (G)$ has not been determined yet for any non-abelian group. We formulate the main result of this section.

\smallskip
\begin{theorem} \label{4.1}
Let $G$ be a dihedral group of order $2n$, where $n \ge 3$ is odd. Then $\omega (G) =\mathsf D(G)= 2n$.
\end{theorem}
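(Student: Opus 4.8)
The plan is to establish $\omega(G)\le 2n$ and $\omega(G)\ge 2n$ separately; since $\mathsf D(G)=2n$ by Proposition \ref{2.3}.1, this gives $\omega(G)=\mathsf D(G)=2n$. Throughout write $G=\langle\alpha,\tau\colon \alpha^n=\tau^2=1_G,\ \tau\alpha=\alpha^{-1}\tau\rangle$, and recall that $\omega(G)=\sup\{\omega(G,U)\colon U\in\mathcal A(G)\}$, so for the lower bound it is enough to exhibit one atom $U$ with $\omega(G,U)\ge 2n$.

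For the lower bound I would take $U=\tau^{[n]}\bdot(\alpha\tau)^{[n]}$, which is a minimal product-one sequence by Proposition \ref{2.4}(b) (with $i=0$, $j=1$), together with the two sequences
\[
S_1=\tau\bdot\alpha^{n-2}\bdot(\alpha^2\tau)\und S_2=(\alpha\tau)\bdot\alpha^{n-1}\bdot(\alpha^2\tau)\,.
\]
Using $n\ge 3$ one checks directly that $S_1,S_2\in\mathcal A(G)$, that $\mathsf v_\tau(S_1)=\mathsf v_{\alpha\tau}(S_2)=1$, and that $\mathsf v_{\alpha\tau}(S_1)=\mathsf v_\tau(S_2)=0$. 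The product $A=S_1^{[n]}\bdot S_2^{[n]}$ then equals $\tau^{[n]}\bdot(\alpha\tau)^{[n]}\bdot(\alpha^{n-2})^{[n]}\bdot(\alpha^{n-1})^{[n]}\bdot(\alpha^2\tau)^{[2n]}$, and $A\bdot U^{[-1]}=(\alpha^{n-2})^{[n]}\bdot(\alpha^{n-1})^{[n]}\bdot(\alpha^2\tau)^{[2n]}$ is a product-one sequence (multiply the $2n$ rotation terms first, their product being $\alpha^{n(2n-3)}=1_G$, and then the $2n$ copies of the involution $\alpha^2\tau$), so $U\mid A$ in $\mathcal B(G)$. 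Since $\mathsf v_\tau(U)=\mathsf v_{\alpha\tau}(U)=n$ while deleting one of the $n$ copies of $S_1$ (respectively of $S_2$) leaves only $n-1$ terms equal to $\tau$ (respectively to $\alpha\tau$), no product over a proper subfamily of these $2n$ atoms is divisible by $U$ in $\mathcal B(G)$. Hence $\omega(G,U)\ge 2n$.

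For the upper bound, fix an arbitrary $U\in\mathcal A(G)$. Replacing each factor by its atomic factorization and then deleting superfluous atoms one at a time, it suffices to bound the number $k$ of atoms in a family $V_1,\ldots,V_k\in\mathcal A(G)$ such that $U\mid V_1\bdot\ldots\bdot V_k$ in $\mathcal B(G)$ but $U$ divides no product over a proper subfamily. For each $j$, deleting $V_j$ breaks divisibility for one of two reasons: either the multiplicity $\mathsf v_g$ at some $g\in\supp(U)$ drops below $\mathsf v_g(U)$, or all multiplicities remain large enough but the complementary sequence stops being a product-one sequence. The ``multiplicity'' indices are handled by a direct counting argument: grouping them according to a witnessing $g$ and using $\sum_{i=1}^{k}\mathsf v_g(V_i)\ge\mathsf v_g(U)$ shows that at most $\mathsf v_g(U)$ of them can be attached to $g$, so altogether there are at most $|U|\le\mathsf D(G)=2n$ of them. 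The hard part — and the point where non-abelianness is essential — is to show that the second type of index contributes nothing beyond this: one has to analyse, for a sequence $S$ over $G$ having an even number of reflection terms, the set $\pi(S)\subseteq\langle\alpha\rangle$ via sumsets in $\Z/n\Z$ (the reflections toggle the signs of the rotation exponents), invoke Kneser's Theorem, and combine this with the classification of length-$\mathsf D(G)$ atoms in Proposition \ref{2.4}, in order to rule out long minimal families in which some atom is essential only for the product-one property. Granting this, $k\le 2n$, whence $\omega(G,U)\le 2n$ for every $U\in\mathcal A(G)$ and therefore $\omega(G)\le 2n$.
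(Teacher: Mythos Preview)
Your lower bound is correct, though the paper's example is a bit simpler: it uses $U\bdot U=(\tau^{[2]})^{[n]}\bdot((\alpha\tau)^{[2]})^{[n]}$ and shows directly that $U$ divides no proper sub-product of these $2n$ length-two atoms. Your construction with the length-three atoms $S_1,S_2$ works for the same multiplicity reason and is a valid alternative.

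The upper bound, however, is not proved. Your counting of ``multiplicity'' indices is fine and shows that a subset $I_\varnothing$ of size at most $|U|\le 2n$ with $U\mid\prod_{i\in I_\varnothing}V_i$ in $\mathcal F(G)$ always exists; this is exactly the paper's starting point. But your treatment of the remaining indices --- those essential only for the product-one condition on the complement --- consists of the sentence ``one has to analyse \ldots\ invoke Kneser's Theorem, and combine this with \ldots\ Proposition~\ref{2.4}. Granting this, $k\le 2n$.'' That is the entire content of the theorem, and it is not granted. In a minimal family for divisibility in $\mathcal B(G)$, type-(b) indices can genuinely occur: after picking a size-$|U|$ subfamily covering the multiplicities, the quotient $V_\varnothing$ need not be product-one, and one must then carefully enlarge the index set while controlling $|\pi(V)|$.

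The paper's argument for this step is far more delicate than Kneser plus Proposition~\ref{2.4}. It relies on recent strengthenings of the Partition Theorem (Propositions~\ref{thm-partition-thm-equi} and~\ref{thm-special-dihedral-ample}), on the subgroup bound of Proposition~\ref{prop-subgroupbound} (itself several pages), and on a lengthy case analysis in which one incrementally grows the index set from $I_\emptyset$, tracking $|\pi(V)|$ via the notions of ``ample'' and ``$H_x$-constrained'' subsets, until either $\pi(V)=G'$ or the remaining atoms are all supported in a subgroup $H_x$ so that Proposition~\ref{prop-regulate} forces $1_G\in\pi(V)$. None of this machinery appears in your sketch, and there is no indication of how Kneser's Theorem alone would bound the number of type-(b) indices.
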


The goal of this section is to prove Theorem \ref{4.1}. To do so, we
 make use of the following recent strengthenings of the Partition Theorem, formulated as Propositions \ref{thm-partition-thm-equi} and \ref{thm-special-dihedral-ample}, as well as a basic lemma from \cite{Gr21a}. Their proofs are given in  \cite[Theorem 3.2, Theorem 3.3, Lemma 2.4]{Gr21a} (for simplicity, we state here only the cyclic case in Proposition \ref{thm-partition-thm-equi}). A setpartition $\mathcal A=A_1\bdot\ldots\bdot A_n$ is a sequence of finite and nonempty subsets $A_i\subseteq G$. Then $\mathsf S(\mathcal A)=\prod_{i\in [1,n]}^\bullet \prod_{g\in A}^\bullet g\in \Fc(G)$ is the corresponding sequence of terms from $G$ partitioned by the sets $A_i$ in $\mathcal A$. Clearly, $\Sum{i=1}{n}A_i\subset \Sigma_n(S)$, where $S=\mathsf S(\mathcal A)$.

\begin{proposition} \label{thm-partition-thm-equi}
Let $G$ be a cyclic group, let $n\geq 1$, let $X\subset G$ be a finite, nonempty subset, let  $L\leq  \mathsf H(X)$, let $S\in \Fc(G)$ be a sequence, and let $S'\mid S$ be a subsequence with $\mathsf h(\phi_L(S'))\leq n\leq |S'|$. Suppose $|S'|\leq 2n$. Then there exists a  setpartition $\mathscr A=A_1\bdot\ldots\bdot A_n$  with $\mathsf S(\mathscr A)\mid S$, \ $|\mathsf S(\mathscr A)|=|S'|$ and $|\phi_L(A_i)|=|A_i|\leq 2$ for all $i$ such that either \begin{enumerate}

    \item[1.] $|X+\Sigma_n(S)|\geq |X+\Sum{i=1}{n}A_i|\geq (|S'|-n)|L|+|X|$, or
\item[2.] there is  a subgroup $K\leq H=\mathsf H(X+\Sigma_n(S))$ with $L<K$ proper and $\alpha\in G$ such that \begin{itemize}
\item[(a)] $X+\Sigma_n(S)=X+\Sum{i=1}{n}A_i$,
\item[(b)]  $\supp(\mathsf S(\mathscr A)^{[-1]}\bdot S)\subset \alpha+K=\bigcap_{i=1}^n(A_i+K)$ and  $|A_i\setminus (\alpha+K)|\leq 1$ for all $i$,
\item[(c)] $|X+\Sigma_n(S)|\geq |X+H|+|S_{G\setminus (\alpha+H)}|\cdot|H|$ and
 $|X+\Sigma_n(S)|\geq |X+K|+|S_{G\setminus (\alpha+K)}|\cdot|K|$,
\item[(d)] $L+\Summ{i\in I_K}A_i=\alpha |I_K|+K$, where $I_K\subset [1,n]$ is the nonempty subset of all $i\in [1,n]$ with $A_i\subset \alpha+K$.
%\item[4.] $|X+\Sigma_n(S)|=|X+\Summ{i\in I}A_i|$ for some subset $I\subset [1,n]$ with $|I|\leq |S_{G\setminus(\alpha+H)}|+|H|-1$.
\end{itemize}
\end{enumerate}
\end{proposition}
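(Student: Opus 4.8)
The plan is to prove this by induction on $|G|$: choose a setpartition that is extremal for a suitably ordered list of optimality conditions, and then read off the stated dichotomy from Kneser's Theorem applied to the $n+1$ sets $X, A_1, \ldots, A_n$.

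First I would verify that the family $\mathcal P$ of all setpartitions $\mathscr A = A_1 \bdot \ldots \bdot A_n$ with $\mathsf S(\mathscr A) \mid S$, $|\mathsf S(\mathscr A)| = |S'|$ and $|\phi_L(A_i)| = |A_i| \leq 2$ for all $i$ is nonempty. Passing to $G/L$, the sequence $\phi_L(S')$ satisfies $\mathsf h(\phi_L(S')) \leq n$ and $|\phi_L(S')| = |S'| \leq 2n$, so a standard cyclic-interval placement (lay the copies of each value of $\phi_L(S')$ consecutively around a cycle of $n$ slots) partitions $\phi_L(S')$ into $n$ squarefree blocks, exactly $|S'| - n$ of which have size two; lifting these blocks to terms of $S'$ (legitimate since distinct $L$-cosets yield distinct elements of $G$) produces a member of $\mathcal P$. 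Among all $\mathscr A \in \mathcal P$ I would then fix one for which $|X + \Sum{i=1}{n} A_i|$ is as large as possible and, subject to that, $|\mathsf H(X + \Sum{i=1}{n} A_i)|$ is as large as possible (the full argument also imposes further tie-breakers, such as maximality of $|\supp(\mathsf S(\mathscr A))|$, but this pair carries the essential weight). Write $\Sigma = \Sum{i=1}{n} A_i$ and $H = \mathsf H(X + \Sigma)$; since $X + L = X$ we get $(X + \Sigma) + L = X + \Sigma$, hence $L \leq H$, and $X + \Sigma \subseteq X + \Sigma_n(\mathsf S(\mathscr A)) \subseteq X + \Sigma_n(S)$, which already gives the first inequality in alternative 1.

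The case $H = L$ is immediate. Kneser's Theorem gives
\[
|X + \Sigma| \;\geq\; |X + L| + \Sum{i=1}{n}\bigl(|A_i + L| - |L|\bigr) \;=\; |X| + \Sum{i=1}{n}\bigl(|A_i + L| - |L|\bigr),
\]
and for each of the $|S'| - n$ blocks $A_i$ of size two the condition $|\phi_L(A_i)| = 2$ forces its two terms into distinct $L$-cosets, so $|A_i + L| - |L| = |L|$; therefore $|X + \Sigma| \geq |X| + (|S'| - n)|L|$, which is alternative 1.

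So assume $L < H$. If Kneser's bound $|X + \Sigma| \geq |X + H| + \sum_{i}\bigl(|A_i + H| - |H|\bigr)$ is already at least $(|S'| - n)|L| + |X|$, we are again in alternative 1; the remaining case is where it is not, and this is the heart of the matter and the step I expect to be the main obstacle. Here one exploits the maximality of $\mathscr A$ by means of local modifications of the setpartition --- transferring a single term from one block to another, or merging two blocks and re-splitting one of them with the help of a term currently outside $\mathsf S(\mathscr A)$ --- showing that none of these moves strictly increases $|X + \Sigma|$, and that at fixed $|X + \Sigma|$ none increases $|H|$. A Kemperman-type analysis of the situation in which every such move is blocked then produces a subgroup $K$ with $L < K \leq H$ and a coset $\alpha + K$ that contains $\supp(\mathsf S(\mathscr A)^{[-1]} \bdot S)$, meets each $A_i$ in all but at most one point, satisfies $\alpha + K = \bigcap_{i=1}^{n}(A_i + K)$, and has $L + \Summ{i \in I_K} A_i = \alpha|I_K| + K$. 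Re-applying Kneser at the levels of $K$ and of $H$, together with the induction hypothesis applied in $G/K$ to obtain $X + \Sigma = X + \Sigma_n(S)$, then yields the sumset estimates in 2(c) and the remaining assertions of alternative 2. Converting the single negative statement ``alternative 1 fails'' into the entire structured package (a)--(d) of alternative 2 is exactly the content of this refinement of the Partition Theorem, and is carried out in \cite[Theorem 3.2, Theorem 3.3, Lemma 2.4]{Gr21a}.
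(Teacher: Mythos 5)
The paper does not prove this proposition at all: it is imported verbatim from \cite{Gr21a}, with the proof attributed to Theorems 3.2 and 3.3 and Lemma 2.4 of that reference. Measured against that, your attempt is in the same position — but it presents itself as a proof sketch, so it should be judged as one, and there it has a genuine gap. The parts you actually argue are fine and routine: nonemptiness of the family of admissible setpartitions by distributing $\phi_L(S')$ over $n$ blocks (using $\mathsf h(\phi_L(S'))\leq n\leq |S'|\leq 2n$), the inclusion $L\leq H$ from $X+L=X$, the first inequality in alternative 1, and the case where the stabilizer of $X+\Sum{i=1}{n}A_i$ equals $L$, which Kneser's Theorem settles. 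But the entire content of the proposition is what happens when the growth bound fails: one must produce a proper subgroup $K$ with $L<K\leq H$, the equality $X+\Sigma_n(S)=X+\Sum{i=1}{n}A_i$, the coset $\alpha+K$ containing \emph{every} term of $S$ outside $\mathsf S(\mathscr A)$, the two bounds in 2(c) at both the $H$- and $K$-levels (which involve $S_{G\setminus(\alpha+H)}$ and $S_{G\setminus(\alpha+K)}$, i.e.\ global information about $S$, not just about the chosen blocks), and the filled-coset condition 2(d). Your text names plausible tools for this — extremal choice of $\mathscr A$, local swaps and re-splittings, a ``Kemperman-type analysis'', an induction in $G/K$ — but carries out none of them, and then explicitly concedes that this step ``is carried out in \cite{Gr21a}''. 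In particular, nothing in your sketch explains how purely local modifications of the chosen setpartition force the terms of $\mathsf S(\mathscr A)^{[-1]}\bdot S$ (which are not in the setpartition at all) into a single $K$-coset, nor how the $G/K$ induction would be set up so that its conclusion lifts to the exact equalities in 2(a) and 2(d). So: if citation of \cite{Gr21a} is the accepted standard (as it is in the paper), your attempt is acceptable but the sketched machinery adds nothing beyond the citation; as a self-contained proof, the decisive structural step — the whole point of this refinement of the Partition Theorem — is assumed rather than proven.
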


\begin{proposition} \label{thm-special-dihedral-ample}
Let $G$ be an abelian group, let $n\geq 1$, and let $S\in \Fc(G)$ be a sequence with $|S|>n$. Suppose $|\Sigma_n(S)|\leq m+1$, where $m=\min\{n,|S|-n,|S|-\mathsf h(S)\}$. Then one of the following holds, with Items 1--4 only possible if $|\Sigma_n(S)|=m+1$ or $|\supp(S)|=1$.
\begin{itemize}
\item[1.] $n=2$, $|S|=|\supp(S)|$, and $\supp(S)=x+K$ for some $K\leq G$ and $x\in G$ with $K\cong (\Z/2\Z)^2$.
\item[2.] $m=2$ and $\supp(S)=x+K$ for some $K\leq G$ and $x\in G$ with $K\cong \Z/3\Z$.
\item[3.] $|\supp(S)|\leq 2$.
\item[4.] $\supp(S)\subset \{x-d,x,x+d\}$ for some $x,\,d\in G$ with $\vp_x(S)=\mathsf h(S)\geq \mathsf |S|-m$.
\item[5.] There exists $x\in G$ and  a setpartition $\mathscr A=A_1\bdot\ldots\bdot A_n$ with
 $\mathsf S(\mathscr A)\mid S$, $|\mathsf S(\mathscr A)|=n+m$,
$\Sum{i=1}{n}A_i=\Sigma_n(S)$,  $\supp(\mathsf S(\mathscr A)^{[-1]}\bdot S)\subset x+H$, $|A_i|\leq 2$ and $(x+H)\cap A_i\neq \emptyset$ for all $i\in [1,n]$, and $|\Sum{i=1}{n}A_i|=|\underset{i\neq j}{\Sum{i=1}{n}}A_i|$ for some $j\in [1,n]$, where  $H=\mathsf H(\Sigma_n(S))$ is nontrivial.
\end{itemize}
\end{proposition}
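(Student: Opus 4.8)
The plan is to derive this as a sharpened inverse theorem for the Partition Theorem of \cite[Chapters 14 and 15]{Gr13a}, combining Kneser's Theorem with the structure theory of sumsets of near-minimal cardinality. The case $n=1$ is immediate: then $\Sigma_1(S)=\supp(S)$, and since $|S|>1$ we have $m=\min\{1,\,|S|-\h(S)\}\in\{0,1\}$, so $|\supp(S)|\leq m+1\leq 2$, which is Item~3, and $|\supp(S)|=1$ forces $\h(S)=|S|$, hence $m=0$ and $|\Sigma_1(S)|=1=m+1$. So assume $n\geq 2$. The inequalities $m\leq n$, $m\leq|S|-n$ and $m\leq |S|-\h(S)$ are exactly what is needed to choose a subsequence $S''\mid S$ of length $n+m$ and to distribute its terms among $n$ nonempty blocks, each a genuine \emph{set} with at most two elements (so exactly $m$ blocks of size $2$); call the resulting setpartition $\mathscr A=A_1\bdot\ldots\bdot A_n$ \emph{admissible}, and recall $\Sum{i=1}{n}A_i\subseteq\Sigma_n(S)$.

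Fix an admissible $\mathscr A$ and put $K=\mathsf H\big(\Sum{i=1}{n}A_i\big)$. Kneser's Theorem gives $|\Sum{i=1}{n}A_i|\geq\Sum{i=1}{n}|A_i+K|-(n-1)|K|$, which for trivial $K$ reads $|\Sigma_n(S)|\geq|\Sum{i=1}{n}A_i|\geq (n+m)-(n-1)=m+1$; under the hypothesis $|\Sigma_n(S)|\leq m+1$ this forces equality, both here and in Kneser's bound. Hence the dichotomy: either some admissible $\mathscr A$ has $K$ nontrivial, or every admissible $\mathscr A$ attains equality in Kneser's bound with trivial stabilizer.

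In the first branch, $\Sum{i=1}{n}A_i$ — and hence $\Sigma_n(S)$ — already contains a nonzero periodic set; passing to $G/K$ and feeding the quantitative form of the Partition Theorem back shows that $H=\mathsf H(\Sigma_n(S))$ is nontrivial, that $\mathscr A$ can be chosen so that each $A_i$ meets one common coset $x+H$ with all remaining terms of $S$ lying in $x+H$, and that dropping a suitable block leaves the sumset unchanged, i.e.\ $|\Sum{i=1}{n}A_i|=|\underset{i\neq j}{\Sum{i=1}{n}}A_i|$ for some $j\in[1,n]$; this is Item~5. In the second branch, equality in Kneser's bound with trivial stabilizer is rigid (Kemperman's critical pair theory and its extension to $n$ summands, cf.\ \cite{Gr13a}): each $A_i$ is an arithmetic progression with one fixed common difference $d$, apart from two exceptional configurations — $n=2$ with $S$ squarefree and $\supp(S)$ a coset of a subgroup $\cong(\Z/2\Z)^2$, giving Item~1, and $m=2$ with $\supp(S)$ a coset of a subgroup $\cong\Z/3\Z$, giving Item~2. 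In the progression subcase one applies this conclusion to \emph{every} admissible redistribution of the chosen $n+m$ terms: this forces first $\supp(S)\subseteq x+\la d\ra$, and then — by isolating three prescribed distinct support elements into the size-$\leq 2$ blocks — that $\supp(S)$ has no point outside a single $3$-term progression $\{x-d,x,x+d\}$ of difference $d$. Hence $|\supp(S)|\leq 2$ (Item~3) or $\supp(S)=\{x-d,x,x+d\}$, and in the latter case counting which $n$-subsums are realizable forces $\vp_x(S)=\h(S)\geq|S|-m$ (Item~4).

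The chief obstacle is the rigidity bookkeeping. One must run the Kemperman-type equality analysis \emph{simultaneously} over the entire family of admissible setpartitions — a single one never suffices — to isolate the precise exceptional groups $(\Z/2\Z)^2$ and $\Z/3\Z$ and the exact multiplicity bound $\vp_x(S)\geq|S|-m$ in Item~4; and one must extract the sharp clause $|\Sum{i=1}{n}A_i|=|\underset{i\neq j}{\Sum{i=1}{n}}A_i|$ of Item~5 from the quantitative partition theorem, which is exactly where the refinement of \cite{Gr21a} goes beyond the classical statement of \cite{Gr13a}. A minor technical point, guaranteed by $m\leq|S|-\h(S)$, is that the size-$2$ blocks really can be taken to be genuine sets.
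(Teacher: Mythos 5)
You should first note that the paper does not prove this proposition at all: it is quoted verbatim from \cite{Gr21a} (Theorem 3.3 there), alongside Proposition \ref{thm-partition-thm-equi} and Lemma \ref{lem-subsums=sumset}, so there is no internal argument to compare against; the question is therefore whether your sketch amounts to an independent proof. It does not, because the two steps where all the difficulty sits are asserted rather than argued. In the periodic branch, you pass from ``some admissible setpartition has sumset with nontrivial stabilizer $K$'' to the full package of Item 5 by ``feeding the quantitative form of the Partition Theorem back''. But a nontrivial stabilizer of the sub-sumset $\Sum{i=1}{n}A_i$ does not transfer to $H=\mathsf H(\Sigma_n(S))$, since $\Sum{i=1}{n}A_i$ is only contained in $\Sigma_n(S)$ and the stabilizer of a subset says nothing about the superset; and the clauses that the setpartition can be re-chosen so that $\Sum{i=1}{n}A_i=\Sigma_n(S)$ \emph{exactly}, that \emph{all} remaining terms of $S$ lie in a single coset $x+H$ met by every $A_i$, and that some block is removable without shrinking the sumset, are precisely the refinements that \cite{Gr21a} adds to the classical Partition Theorem of \cite{Gr13a}. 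Your closing paragraph concedes this, so the argument is in effect circular: the conclusion you need is invoked as the tool.

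The aperiodic branch has the same character. Equality in Kneser's bound with trivial stabilizer for $n\geq 2$ summands is not simply ``each $A_i$ is an arithmetic progression with one common difference $d$ up to the two exceptional groups'': Kemperman's critical-pair theory also produces quasi-periodic configurations, and the $n$-fold induction must deal with intermediate partial sumsets that are periodic even though the final sumset is aperiodic, which is exactly where sets of size two can conspire to give the $(\Z/2\Z)^2$ and $\Z/3\Z$ cases; isolating those two groups, excluding everything else, and then converting per-setpartition rigidity into the global statements $\supp(S)\subset\{x-d,x,x+d\}$ with $\vp_x(S)=\h(S)\geq |S|-m$, together with the caveat that Items 1--4 force $|\Sigma_n(S)|=m+1$ or $|\supp(S)|=1$, requires a genuine multi-page analysis over all redistributions that your sketch only gestures at. So the proposal is a reasonable road map, but as a proof it has essential gaps at exactly the points the paper outsources to \cite{Gr21a}; the honest version of your argument is the paper's own route, namely citing that theorem.
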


\smallskip
\begin{lemma}\label{lem-subsums=sumset}
Let $G$ be an abelian group, let $n\geq 0$, let $X\subset G$ be a finite, nonempty subset,   let $S\in \Fc(G)$ be a sequence, let $H\leq G$, and let $x\in G$. Suppose  $\mathscr A=A_1\bdot\ldots\bdot A_n$ is a setpartition with $\mathsf S(\mathscr A)\mid S$, \  $\supp(\mathsf S(\mathscr A)^{[-1]}\bdot S)\subset x+H\subset \bigcap_{i=1}^n(A_i+H)$,  \ $|A_i\setminus (x+H)|\leq 1$ for all $i$,   and $H\leq \mathsf H(X+\Sum{i=1}{n}A_i)$. Then $X+\Sigma_{\ell}(S)=X+\Sum{i=1}{n}A_i+(\ell-n)x$ for any $\ell\in [n,n+|\mathsf S(\mathscr A)^{[-1]}\bdot S|]$.
\end{lemma}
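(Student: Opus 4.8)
The plan is to write $T=\mathsf S(\mathscr A)^{[-1]}\bdot S$, so that $m:=|T|$ satisfies $\ell\in[n,n+m]$ and $\supp(T)\subset x+H$, and to prove the two inclusions of the claimed equality separately. The one tool used throughout is that, by the hypothesis $H\le\mathsf H(X+\Sum{i=1}{n}A_i)$, the set $X+\Sum{i=1}{n}A_i$, and hence also $X+\Sum{i=1}{n}A_i+(\ell-n)x$, is $H$-periodic; this is what lets error terms lying in $H$ be discarded.

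For the inclusion $X+\Sum{i=1}{n}A_i+(\ell-n)x\subseteq X+\Sigma_\ell(S)$ I would argue directly. Since $0\le\ell-n\le m$ and $\supp(T)\subset x+H$, fix a subsequence $T'\mid T$ with $|T'|=\ell-n$; then $\sigma(T')=(\ell-n)x+h_0$ for some $h_0\in H$. For every choice of $a_i\in A_i$ $(i\in[1,n])$, taking $a_i$ from the block $A_i$ of $\mathscr A$ and appending $T'$ produces a length-$\ell$ subsequence of $S$ with sum $\Sum{i=1}{n}a_i+(\ell-n)x+h_0$; hence $\Sum{i=1}{n}A_i+(\ell-n)x+h_0\subseteq\Sigma_\ell(S)$. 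Adding $X$ and absorbing $h_0$ by $H$-periodicity of $X+\Sum{i=1}{n}A_i+(\ell-n)x$ gives the inclusion.

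For the reverse inclusion, which is the main point, I would pass to $G/H$ via $\phi=\phi_H$. Let $I=\{i\in[1,n]\colon A_i\not\subseteq x+H\}$ and, for $i\in I$, let $a_i$ be the unique element of $A_i\setminus(x+H)$ (unique since $|A_i\setminus(x+H)|\le1$). As $x+H\subseteq A_i+H$, each $A_i$ meets $x+H$, so $\phi(A_i)=\{\phi(x)\}$ for $i\notin I$ and $\phi(A_i)=\{\phi(x),\phi(a_i)\}$ for $i\in I$; thus $\phi\big(\Sum{i=1}{n}A_i+(\ell-n)x\big)=\big\{\Summ{i\in J}\phi(a_i)+(\ell-|J|)\phi(x)\colon J\subseteq I\big\}$. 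The key observation is that, as a sequence over $G/H$, $\phi(S)$ consists of $|S|-|I|$ copies of $\phi(x)$ together with the terms $\phi(a_i)$ $(i\in I)$; consequently every length-$\ell$ subsum of $\phi(S)$ equals $\Summ{i\in J}\phi(a_i)+(\ell-|J|)\phi(x)$ for some $J\subseteq I$, so $\Sigma_\ell(\phi(S))\subseteq\phi\big(\Sum{i=1}{n}A_i+(\ell-n)x\big)$. Since $\phi(\Sigma_\ell(S))\subseteq\Sigma_\ell(\phi(S))$, this yields $\phi\big(X+\Sigma_\ell(S)\big)\subseteq\phi\big(X+\Sum{i=1}{n}A_i+(\ell-n)x\big)$, and because the right-hand set is $H$-periodic, taking $\phi$-preimages gives $X+\Sigma_\ell(S)\subseteq X+\Sum{i=1}{n}A_i+(\ell-n)x$.

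The only real obstacle I anticipate is in this second inclusion: one should resist trying to show $X+\Sigma_\ell(S)$ itself is $H$-periodic (it need not be), and instead exploit that it is the target set $X+\Sum{i=1}{n}A_i+(\ell-n)x$ whose $H$-periodicity is available, so the inclusion is verified after reducing modulo $H$ and then pulled back. The rest — bookkeeping the multiplicities in $\phi(S)$, and noting that the bounds $n\le\ell\le n+m$ are precisely what guarantee the needed subsequence $T'$ exists and that the subsums of $\phi(S)$ have the stated form — is routine; the degenerate case $n=0$ (where $\Sum{i=1}{n}A_i=\{0\}$ and $\supp(S)\subset x+H$) is covered by the same argument.
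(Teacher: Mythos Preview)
Your proof is correct. Note that the paper does not supply its own proof of this lemma but rather cites it from \cite{Gr21a} (Lemma~2.4), so there is no in-paper argument to compare against. Your approach---handling one inclusion by an explicit construction of an $\ell$-term subsequence and the other by reducing modulo $H$ and exploiting the $H$-periodicity of the target set $X+\Sum{i=1}{n}A_i+(\ell-n)x$---is clean and complete; in particular, your remark that one should not expect $X+\Sigma_\ell(S)$ itself to be $H$-periodic, and must instead pull back the inclusion from $G/H$ using periodicity of the right-hand side, is exactly the right point.
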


Let $G$ be a dihedral group of order $2n$ with $n \ge 3$, say $G=\la \alpha,\,\tau:\; \alpha^n=\tau^2=1_G,\; \tau \alpha=\alpha^{-1}\tau\ra$. Then $\la \alpha\ra$ is a cyclic subgroup of index $2$. The commutator subgroup $G'=\la \alpha^2\ra$ is a cyclic group of order $n$ (when $n$ is odd) or order $\frac{n}{2}$ (when $n$ is even).
%The center of $G$ equals $\{1\}$ when $n$ is odd, and it equals $\{1,\alpha^{n/2}\}$ when $n\geq 4$ is even.
Let $S\in \Fc(G)$ be a sequence of terms from $G$. We have a natural partition $S=S_{\la \alpha\ra}\bdot S_{\tau\la \alpha\ra}$, where $S_{\la \alpha\ra}$ consists of all terms $\alpha^x\in \la \alpha\ra$ and $S_{\tau\la \alpha\ra}$ consists of all terms $\tau\alpha^y\in \tau\la \alpha\ra$,
where $x,\,y\in \Z$.
For $x\in \Z/n\Z$, let $\alpha^x$ be $\alpha^{x_0}$, where $x_0$ is any integer representative for $x$ modulo $n$.
The additive cyclic group $\Z/n\Z$ and the multiplicative cyclic group $\la \alpha\ra$ can be identified via the isomorphism  $\cdot ^*:\Z/n\Z\rightarrow \la \alpha\ra$ defined by
$x^*=\alpha^x$. The inverse isomorphism $\cdot^+:\la \alpha\ra\rightarrow \Z/n\Z$ is defined by $(\alpha^x)^+=x\mod n$. The notation is chosen so that $x^*$ lives in the multiplicative cyclic group $\la \alpha\ra$, while $g^+$ lies in the additive cyclic group $\Z/n\Z$. We extend the definition of $\cdot^+$ to all of $G$ by setting $(\tau \alpha^y)^+=y\mod n$. The definitions of $\cdot^*$ and $\cdot^+$ depend on the  fixed generating set $\{\alpha,\tau\}$ for $G$, with the map $\cdot ^*$ only depending on $\alpha$. If we exchange $\{\alpha,\tau\}$ for an alternative generating set, then the definitions of $\cdot^*$ and $\cdot^+$ are implicitly altered as well.  The maps $\cdot^*$ and $\cdot^+$ are extended to sequences/sets in the usual fashion of applying the corresponding map to each term/element. The effect of replacing the generator $\tau$ by $\tau \alpha^y$ is to translate all terms of $(\tau\la \alpha\ra)^+$ by $-y$.
To avoid confusion, when dealing with the dihedral group $G$, all subgroups of $\Z/n\Z$ will be notated in the form $K^+$ for the appropriate isomorphic subgroup  $K\leq \la \alpha\ra$. This will allow immediate visual recognition of whether a subgroup lies in the additive cyclic group $\Z/n\Z$ or in the multiplicative cyclic group $\la\alpha\ra$, and provides a strong visual  connection between the  linked  subgroups $K$ and $K^+$. Additionally, the map $\cdot ^+$ provides a one-to-one correspondence between the subgroups $H\leq G$ with $H\not\leq \la \alpha\ra$ and all subgroup-coset pairs $(K^+,y+K^+)$, where $K^+\leq \Z/n\Z$ and $y\in \Z/n\Z$, as follows.
 For a subgroup $K\leq \la \alpha\ra$ and $y\in \Z/n\Z$, we let $K_y=\la K,\tau\alpha^y\ra$.
 Note every $H\leq G$ with $H\not\leq \la \alpha\ra$ has some $\tau \alpha^y\in H$, where $y\in \Z/n\Z$, and then $H=\la H\cap \la\alpha\ra,\tau \alpha^y\ra$, ensuring that $H=K_y$ with $K=H\cap \la\alpha\ra$. The subgroup $K=H\cap \la \alpha\ra=K_y\cap \la \alpha\ra$ is uniquely defined. Since  all $\tau \alpha^z\in H$ with $z\in \Z/n\Z$ have $z\in y+(H\cap \la\alpha\ra)^+=y+K^+$, the  element $y\in \Z/n\Z$ is uniquely defined modulo $K^+$. Thus the map $H\mapsto ((H\cap \la\alpha\ra)^+,y+(H\cap \la\alpha\ra)^+)$ is well-defined, and clearly bijective as its inverse is the map $(K^+,y+K^+)\mapsto K_y$. In light of this, we will often denote subgroups of $G$ not contained in $\la \alpha\ra$ in the form $K_y$ for some $K\leq \la\alpha\ra$ and $y\in \Z/n\Z$. Moreover, when this is the case, we note that
 $$K_y^+=K^+\cup (K_y\setminus K)^+=K^+\cup (y+K^+)$$ with
 $K'_y=K_y\cap \la\alpha\ra=K$ (if $n$ is odd), and $K_y'=K^2=(2K^+)^*$ (if $n$ is even). When $y=0$ (equivalently, if we choose our generating set to be $\{\alpha,\tau\alpha^y\}$), then $$K_0\setminus K=\tau K\quad\und\quad K_0^+=K^+.$$ To help lighten the notation, we also use $\phi_K:\Z/n\Z\rightarrow (\Z/n\Z)/K^+$ to  denote the natural homomorphism modulo $K^+$.

The following proposition shows how the computation of $\pi(S)$ reduces to an additive question in $\Z/n\Z$ combining $\pm$ weighted subsums alongside ordinary $\lfloor \ell/2\rfloor$-term subsums.

\begin{proposition}\label{prop-add/mult-DefiningCorrespondance}
Let $G$ be a dihedral group of order $2n$ where $n \ge 3$, say $G=\la \alpha,\,\tau:\; \alpha^n=\tau^2=1_G,\; \tau \alpha=\alpha^{-1}\tau\ra$, and let $S\in \Fc(G)$ with $S_{\la\alpha\ra}=\alpha^{x_1}\bdot\ldots\bdot \alpha^{x_s}$, where $x_1,\ldots,x_s\in \Z/n\Z$.

\begin{itemize}
\item[1.] If  $S\in \Fc(\la \alpha\ra)$, then
$\pi(S)=\{\sigma(S^+)^*\}$.
\item[2.] If  $S\notin \Fc(\la \alpha\ra)$, then
$$\pi(S)=\tau^\ell\Big(\{x_1,-x_1\}+\ldots+\{x_s,-x_s\}+(-1)^\ell\Sigma_{\lfloor \ell/2\rfloor}(2S^+_{\tau\la \alpha\ra})
-(-1)^\ell\sigma(S^+_{\tau\la \alpha\ra})\Big)^*,$$
where $\ell=|S_{\tau\la \alpha\ra}|\geq 1$.
\end{itemize}
\end{proposition}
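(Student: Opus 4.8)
The plan is to evaluate a reordering of $S$ directly from the relations $\alpha^n=\tau^2=1_G$ and $\alpha^a\tau=\tau\alpha^{-a}$, which turns the computation of $\pi(S)$ into additive bookkeeping in $\Z/n\Z$. Statement~1 is immediate: when $S\in\Fc(\la\alpha\ra)$ the subgroup $\la\alpha\ra$ is abelian, so every reordering of $S$ has the same product $\alpha^{x_1}\cdot\ldots\cdot\alpha^{x_s}=\alpha^{x_1+\ldots+x_s}=\sigma(S^+)^*$.

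For Statement~2, write $S_{\tau\la\alpha\ra}=\tau\alpha^{y_1}\bdot\ldots\bdot\tau\alpha^{y_\ell}$ with $y_j\in\Z/n\Z$, so that $S^+_{\tau\la\alpha\ra}=y_1\bdot\ldots\bdot y_\ell$. Fix a reordering of the $s+\ell$ terms of $S$ with product $g$. Since $\la\alpha\ra$ is normal of index $2$ in $G$ and exactly $\ell$ of the terms lie outside $\la\alpha\ra$, we have $g\in\tau^\ell\la\alpha\ra$, so only the $\la\alpha\ra$-exponent of $g$ must be found. Rewrite each $\tau\alpha^{y_j}$ as $\tau\cdot\alpha^{y_j}$ and push every $\tau$-symbol to the right-hand end of the product; each pass of a $\tau$ over an $\alpha$-power negates that power. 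Hence an $\la\alpha\ra$-term $\alpha^{x_i}$ contributes the factor $\alpha^{(-1)^{c_i}x_i}$, where $c_i$ is the number of $\tau\la\alpha\ra$-terms preceding $\alpha^{x_i}$ in the reordering, and a term $\tau\alpha^{y_j}$ contributes its own $\tau$ together with $\alpha^{(-1)^{d_j}y_j}$, where $d_j$ is the number of $\tau\la\alpha\ra$-terms occurring at or before it, itself counted. Collecting the $\ell$ copies of $\tau$ at the right and using $\alpha^e\tau^\ell=\tau^\ell\alpha^{(-1)^\ell e}$ gives
\[
g=\tau^\ell\alpha^{w}\quad\text{with}\quad w=(-1)^\ell\Big(\Summ{i}(-1)^{c_i}x_i+\Summ{j}(-1)^{d_j}y_j\Big).
\]

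Now let the reordering vary; every reordering is obtained by choosing an arrangement $\tau\alpha^{y_{\pi(1)}},\ldots,\tau\alpha^{y_{\pi(\ell)}}$ of the $\tau\la\alpha\ra$-terms and then, independently, distributing the $\la\alpha\ra$-terms among the resulting gaps. Since $\ell\ge 1$, each $c_i$ ranges over $[0,\ell]$ independently, so $\Summ{i}(-1)^{c_i}x_i$ ranges over $A:=\{x_1,-x_1\}+\ldots+\{x_s,-x_s\}$; and the $\tau\la\alpha\ra$-term in position $k$ has $d=k$, so $\Summ{j}(-1)^{d_j}y_j=\Sum{k=1}{\ell}(-1)^ky_{\pi(k)}$, which as $\pi$ varies runs over all sums assigning the coefficient $-1$ to $\lceil\ell/2\rceil$ of the $y_j$ and $+1$ to the rest, that is, over $\sigma(S^+_{\tau\la\alpha\ra})-\Sigma_{\lceil\ell/2\rceil}(2S^+_{\tau\la\alpha\ra})$. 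As these two contributions are chosen independently and $A=-A$, plugging into the formula for $g$ yields
\[
\pi(S)=\tau^\ell\Big(A+(-1)^\ell\sigma(S^+_{\tau\la\alpha\ra})-(-1)^\ell\Sigma_{\lceil\ell/2\rceil}(2S^+_{\tau\la\alpha\ra})\Big)^*.
\]
Finally, the complementation identity $\Sigma_{\lceil\ell/2\rceil}(U)=\sigma(U)-\Sigma_{\lfloor\ell/2\rfloor}(U)$ for a sequence $U$ of length $\ell$, applied to $U=2S^+_{\tau\la\alpha\ra}$ with $\sigma(U)=2\sigma(S^+_{\tau\la\alpha\ra})$, rewrites the bracket as $A+(-1)^\ell\Sigma_{\lfloor\ell/2\rfloor}(2S^+_{\tau\la\alpha\ra})-(-1)^\ell\sigma(S^+_{\tau\la\alpha\ra})$, which is the asserted formula.

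I expect the entire difficulty to lie in the sign- and index-bookkeeping of the last two steps: checking that sweeping the $\tau$'s to one end produces exactly the claimed per-term contributions, that the arrangements of the $\tau\la\alpha\ra$-terms and the placements of the $\la\alpha\ra$-terms are genuinely independent with every combination realized, and that the index $\lceil\ell/2\rceil$ arising naturally together with the sign coming from the collected $\tau^\ell$ reassemble into the $\lfloor\ell/2\rfloor$ and $(-1)^\ell$ occurring in the statement. The complementation identity for $\Sigma_k$ and the negation-invariance of the sets $\{x_i,-x_i\}$ are exactly what make these steps reconcile with the stated form.
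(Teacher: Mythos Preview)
Your proof is correct and follows essentially the same approach as the paper: both reduce the dihedral product to a signed sum in $\Z/n\Z$ by tracking how many $\tau$'s each term must pass, identify the $\la\alpha\ra$-contribution with $\{x_1,-x_1\}+\ldots+\{x_s,-x_s\}$ and the $\tau\la\alpha\ra$-contribution with an alternating sum over placements, and then rewrite in terms of $\Sigma_{\lfloor\ell/2\rfloor}$. The only cosmetic difference is that the paper organizes the bookkeeping via ``even/odd slots'' and a partition $T_{\mathrm{odd}}\bdot T_{\mathrm{even}}$ with $|T_{\mathrm{even}}|=\lfloor\ell/2\rfloor$, landing on $\lfloor\ell/2\rfloor$ directly, whereas you push the $\tau$'s explicitly to the right, land naturally on $\lceil\ell/2\rceil$, and then invoke the complementation identity to convert; the content is the same.
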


\begin{proof}
Item 1 is clear as $\la \alpha\ra$ is abelian. For Item 2, consider an arbitrary ordered product of all $s+\ell$ terms of $S$, say $g_1\cdot\ldots\cdot g_{s+\ell}\in \pi(S)$. By the defining relations for the dihedral group, we have $g_1\cdot\ldots\cdot g_{s+\ell}=\tau^\ell(\pm g_1^+\pm\ldots\pm g_{s+\ell}^+)^*$, where the sign of each $g_i$ depends upon the number of terms from $S_{\tau\la \alpha\ra}$ contained in $g_1\bdot\ldots\bdot g_i$: if the number of terms from $\tau\la \alpha\ra$ contained in $g_1\bdot\ldots\bdot g_i$ is congruent to $\ell$ modulo $2$, then it is positive, while if it congruent to $\ell+1$, then it is negative. Since $\ell\geq 1$ (in view of the hypothesis $S\notin \Fc(\la \alpha\ra)$), each term $\alpha^{x_j}$ from $S_{\la \alpha\ra}$ can be placed either in an even or odd slot relative to the fixed ordering of the sequence $S_{\tau\la \alpha\ra}$ in the product, the even slots being those places $i\in [1,s+\ell]$ where there are an even number of terms from $\tau\la \alpha\ra$ contain in $g_1\bdot\ldots\bdot g_i$, and the odd slots $i\in [1,s+\ell]$ being those for which the number of such terms is odd. The effect of moving $\alpha^{x_j}$ between an even and odd slot is to simply change its sign in the sum. There must be exactly $\lceil \ell/2\rceil$ terms from $S_{\tau\la \alpha\ra}$ placed in odd slots, and exactly $\lfloor  \ell/2\rfloor$ placed in even slots.
Thus the elements of $\pi(S)$ are those  from the sets $$\tau^\ell\Big((-1)^{\ell}\big(\sigma(T^+_{even})-\sigma(T^+_{odd})\big)+
\{x_1,-x_1\}+\ldots+\{x_s,-x_s\}\Big)^*$$ as we range over all partitions $S_{\tau\la \alpha\ra}=T_{odd}\bdot T_{even}$ with $|T_{even}|=\lfloor \ell/2\rfloor$. Observing that $\sigma(T_{odd}^+)=
\sigma(S_{\tau\la \alpha\ra}^+)-\sigma(T_{even}^+)$, we find that the elements of $\pi(S)$ are those from the sets $$\tau^\ell\Big((-1)^{\ell}\big(\sigma(2T^+_{even})-\sigma(S_{\tau\la \alpha\ra}^+)\big)+
\{x_1,-x_1\}+\ldots+\{x_s,-x_s\}\Big)^*$$ as we range over all subsequences $T^+_{even}\mid S^+_{\tau\la\alpha\ra}$ with $|T^+_{even}|=\lfloor \ell/2\rfloor$, which yields the desired result.
\end{proof}

\begin{corollary}\label{cor-zero-sum-defining}
Let $G$ be a dihedral group of order $2n$ where $n\geq 3$, say $G=\la \alpha,\,\tau:\; \alpha^n=\tau^2=1_G,\; \tau \alpha=\alpha^{-1}\tau\ra$, and let $S\in \Fc(G)$ with $S_{\la\alpha\ra}=\alpha^{x_1}\bdot\ldots\bdot \alpha^{x_s}$, where $x_1,\ldots,x_s\in \Z/n\Z$. Suppose $S\notin \Fc(\la \alpha\ra)$.  Then $S\in \mathcal B(G)$ if and only if $|S_{\tau\la\alpha\ra}|=2\ell$ is even and
$$0\in  \{x_1,-x_1\}+\ldots+\{x_s,-x_s\}+\Sigma_{\ell}(2S^+_{\tau\la \alpha\ra})-\sigma(S^+_{\tau\la \alpha\ra}).$$
\end{corollary}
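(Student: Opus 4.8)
The plan is to derive the statement directly from Proposition~\ref{prop-add/mult-DefiningCorrespondance}, using that $S\in\mathcal B(G)$ means by definition exactly that $1_G\in\pi(S)$, and that the standing hypothesis $S\notin\Fc(\la\alpha\ra)$ puts us in the setting of Item~2 of that proposition. Write $k=|S_{\tau\la\alpha\ra}|\geq 1$ for the quantity denoted $\ell$ there. Proposition~\ref{prop-add/mult-DefiningCorrespondance}.2 exhibits $\pi(S)$ as a subset of $\tau^k\la\alpha\ra$, and since $1_G\in\la\alpha\ra$ while $\tau^k\la\alpha\ra=\la\alpha\ra$ precisely when $k$ is even, a first conclusion is that $1_G\in\pi(S)$ forces $k$ to be even, whereas if $k$ is odd then $\pi(S)$ is disjoint from $\la\alpha\ra\ni 1_G$ and $S\notin\mathcal B(G)$. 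This produces the clause ``$|S_{\tau\la\alpha\ra}|=2\ell$ is even'' in the assertion.

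Assuming now that $k=2\ell$ is even, I would substitute into the formula of Proposition~\ref{prop-add/mult-DefiningCorrespondance}.2: one has $(-1)^k=1$, $\lfloor k/2\rfloor=\ell$, and $\tau^k=1_G$, whence
$$\pi(S)=\Big(\{x_1,-x_1\}+\ldots+\{x_s,-x_s\}+\Sigma_{\ell}(2S^+_{\tau\la\alpha\ra})-\sigma(S^+_{\tau\la\alpha\ra})\Big)^*\,.$$
Since $\cdot^*\colon\Z/n\Z\to\la\alpha\ra$ is an isomorphism with $0^*=1_G$, the membership $1_G\in\pi(S)$ is equivalent to $0$ belonging to the subset of $\Z/n\Z$ on the right-hand side, which is exactly the displayed condition in the corollary. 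Combining this with the parity dichotomy of the previous paragraph gives the stated equivalence.

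I do not expect a genuine obstacle: the entire content lies in Proposition~\ref{prop-add/mult-DefiningCorrespondance}, and the corollary merely repackages ``$1_G\in\pi(S)$'' in purely additive terms over $\Z/n\Z$. The only points that need a moment's care are the bookkeeping of the two uses of $\ell$ (the full length of $S_{\tau\la\alpha\ra}$ in the proposition versus the half-length in the corollary) and the harmless degenerate case $s=0$, in which the weighted sumset $\{x_1,-x_1\}+\ldots+\{x_s,-x_s\}$ is to be read as $\{0\}$.
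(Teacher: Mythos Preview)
Your proposal is correct and follows exactly the route the paper takes: the paper's proof reads in full ``This follows immediately from Proposition~\ref{prop-add/mult-DefiningCorrespondance},'' and you have simply spelled out that immediate deduction, including the parity observation and the substitution $\tau^{2\ell}=1_G$, $(-1)^{2\ell}=1$, $\lfloor 2\ell/2\rfloor=\ell$.
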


\begin{proof}
This follows immediately from Proposition \ref{prop-add/mult-DefiningCorrespondance}.
\end{proof}

\begin{lemma}\label{lem-triv-multbound}
Let $G$ be a dihedral group of order $2n$ where $n\geq 3$, say $G=\la \alpha,\,\tau:\; \alpha^n=\tau^2=1_G,\; \tau \alpha=\alpha^{-1}\tau\ra$. If $U\in \mathcal A(G)$  with $|U_{\tau\la \alpha\ra}|>2$, then $\mathsf h(U_{\tau\la \alpha\ra})\leq \frac12|U_{\tau\la \alpha\ra}|$.
\end{lemma}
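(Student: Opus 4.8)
The plan is to argue by contradiction, reducing the statement via Corollary~\ref{cor-zero-sum-defining} to an elementary identity between two $\Sigma$-sets in $\Z/n\Z$. Since $U\in\mathcal B(G)$ and $U_{\tau\la\alpha\ra}\neq 1$, we have $U\notin\Fc(\la\alpha\ra)$, so Corollary~\ref{cor-zero-sum-defining} applies to $U$; in particular $|U_{\tau\la\alpha\ra}|=2m$ is even, and the hypothesis $|U_{\tau\la\alpha\ra}|>2$ forces $m\geq 2$. Assume, for contradiction, that some term $\tau\alpha^y$ occurs in $U$ with multiplicity $h=\mathsf h(U_{\tau\la\alpha\ra})\geq m+1$. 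Since $|U_{\tau\la\alpha\ra}|$ and $\mathsf h(U_{\tau\la\alpha\ra})$ depend only on the subgroup $\la\alpha\ra$ and not on the chosen reflection, I would replace the generating pair $\{\alpha,\tau\}$ by $\{\alpha,\tau\alpha^y\}$ (which satisfies the same dihedral relations $\alpha^n=(\tau\alpha^y)^2=1_G$, $(\tau\alpha^y)\alpha(\tau\alpha^y)^{-1}=\alpha^{-1}$), reducing to the case $y=0$, so that the term $\tau$ itself occurs exactly $h\geq m+1$ times in $U$. The goal is then to show that $U'=U\bdot(\tau^{[2]})^{[-1]}$ is again a product-one sequence: granting this, $\tau^{[2]}\in\mathcal B(G)$ (as $\tau^2=1_G$) is a nontrivial product-one sequence and $|U'|=|U|-2\geq 2m-2>0$, so the factorization $U=\tau^{[2]}\bdot U'$ in $\mathcal B(G)$ contradicts $U\in\mathcal A(G)$.

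To prove $U'\in\mathcal B(G)$, I would apply Corollary~\ref{cor-zero-sum-defining} to both $U$ and $U'$; this is legitimate because $|U_{\tau\la\alpha\ra}|=2m\geq 4$ and $|U'_{\tau\la\alpha\ra}|=2m-2\geq 2$, so neither sequence lies in $\Fc(\la\alpha\ra)$. Write $U_{\la\alpha\ra}=\alpha^{x_1}\bdot\ldots\bdot\alpha^{x_s}$ and $U^+_{\tau\la\alpha\ra}=0^{[h]}\bdot T$ for a sequence $T$ over $\Z/n\Z$ with $|T|=2m-h$; then $U'_{\la\alpha\ra}=U_{\la\alpha\ra}$, \ $(U')^+_{\tau\la\alpha\ra}=0^{[h-2]}\bdot T$, and $\sigma\bigl((U')^+_{\tau\la\alpha\ra}\bigr)=\sigma(T)=\sigma\bigl(U^+_{\tau\la\alpha\ra}\bigr)$. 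Comparing the membership criteria of Corollary~\ref{cor-zero-sum-defining} for $U$ (its parameter being $m$) and for $U'$ (its parameter being $m-1$), and noting that the sumsets $\{x_1,-x_1\}+\ldots+\{x_s,-x_s\}$ and the $\sigma$-terms agree, the membership $U'\in\mathcal B(G)$ follows as soon as
\[
\Sigma_m\bigl(0^{[h]}\bdot 2T\bigr)=\Sigma_{m-1}\bigl(0^{[h-2]}\bdot 2T\bigr).
\]

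This identity is the heart of the argument, and it is precisely where the hypothesis $h=\mathsf h(U_{\tau\la\alpha\ra})>\frac12|U_{\tau\la\alpha\ra}|$ is used. I expect its proof to be short and purely combinatorial: because $h\geq m+1$, an $m$-element subsum of $0^{[h]}\bdot 2T$ has the form $\sigma(T'')$ for any subsequence $T''\mid 2T$ (completed by the appropriate number of copies of $0$, which is always possible since $h\geq m+1$), so $\Sigma_m(0^{[h]}\bdot 2T)$ equals the set of all subsums of $2T$; the same count applied to $\Sigma_{m-1}(0^{[h-2]}\bdot 2T)$ yields the same set, and the identity follows. If instead $h=m$ this breaks down — the empty subsum of $2T$ can no longer be realized on the right-hand side — which is consistent with the bound being sharp, as shown by the minimal product-one sequences in Proposition~\ref{2.4}(b). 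The one real obstacle is recognizing that Corollary~\ref{cor-zero-sum-defining} turns the claim into this sumset identity; once that reduction is made, the rest is routine.
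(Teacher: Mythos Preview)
Your proof is correct and follows essentially the same route as the paper's: both argue by contradiction, normalize so that $\tau$ is the high-multiplicity term, peel off $\tau^{[2]}$, and then use Corollary~\ref{cor-zero-sum-defining} together with the elementary identity $\Sigma_{m}(2U^+_{\tau\la\alpha\ra})=\Sigma_{m-1}(2(U\bdot\tau^{[-2]})^+_{\tau\la\alpha\ra})$ (the paper phrases this as ``every $\ell$-term subsequence of $2U^+_{\tau\la\alpha\ra}$ must contain a zero'', which is the same observation you make by computing both sides as $\{0\}\cup\Sigma(2T)$).
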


\begin{proof}
Since $U\in\mathcal A(G)$ is product-one and $|U_{\tau\la \alpha\ra}|>2$, we have $|U_{\tau\la \alpha\ra}|=2\ell$ even with $\ell \geq 2$. Assume by contradiction that $\mathsf h( U_{\tau\la \alpha\ra})\geq \frac12|U_{\tau\la \alpha\ra}|+1=\ell+1$. By replacing the generator $\tau$ by an appropriate alternative generator $\tau\alpha^x$, we can w.l.o.g. assume that $\vp_{\tau}(U)=\mathsf h( U_{\tau\la \alpha\ra})\geq \ell+1$.
 Then $\tau^{[2]}\mid U$ is a product-one subsequence. Since $|U|\geq |U_{\tau\la \alpha\ra}|=2\ell\geq 4$, it follows that $V:=U\bdot \tau^{[-2]}$ is a nontrivial sequence.
Since $\vp_{\tau}(U)\geq \ell+1=\frac12|U_{\tau\la \alpha\ra}|+1$, every $\ell$-term subsequence of $2U^+_{\tau\la \alpha\ra}$ must contain at least one term equal to $0$, and $\vp_{\tau}(V)\geq \ell-1$. It follows that $\Sigma_\ell(2U^+_{\tau\la \alpha\ra})=\Sigma_{\ell-1}(2V^+_{\tau\la \alpha\ra})$ and $\sigma(U^+_{\tau\la \alpha\ra})=\sigma(V^+_{\tau\la \alpha\ra})$. Thus $U\in \mathcal A(G)$ combined with Corollary \ref{cor-zero-sum-defining} applied to $U$ and $V$ (possible as  $U,\,V\notin \Fc(\la \alpha\ra)$ follows from $\ell\geq 2$) shows that $V\in \mathcal B(G)$, and now $U=(\tau^{[2]})\bdot V$ is a factorization of  $U$ into two nontrivial
product-one subsequences, contradicting that $U\in \mathcal A(G)$ is an atom.
\end{proof}

\begin{lemma}\label{lem-short}
Let $G$ be an abelian group, let $X,\,Y\subset G$ be finite subsets with $H=\mathsf H(X)$, $K=\mathsf H(Y)$ and $X+H=Y+H$. Then $K\leq H$.
\end{lemma}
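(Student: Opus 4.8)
The plan is to observe that the hypothesis collapses to a single equation and then read off the conclusion directly from the definitions of the stabilizers. First I would use that $H=\mathsf H(X)$ means precisely that $h+X=X$ for every $h\in H$, so that $X+H=\bigcup_{h\in H}(h+X)=X$; consequently the assumption $X+H=Y+H$ is equivalent to the single relation $X=Y+H$.

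With this in hand, I would prove $K\le H$ by checking that every element of $K$ stabilizes $X$. Fix an arbitrary $k\in K=\mathsf H(Y)$, so that $k+Y=Y$. Then
$k+X=k+(Y+H)=(k+Y)+H=Y+H=X$,
where the first and last equalities use $X=Y+H$ and the middle one uses $k+Y=Y$. Hence $k+X=X$, i.e. $k\in\mathsf H(X)=H$. Since $k\in K$ was arbitrary, this gives $K\subseteq H$, which is exactly the assertion.

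The whole argument is a short computation with cosets and stabilizers, so I do not anticipate any genuine obstacle; the only point that needs to be written out explicitly is the reduction $X+H=X$, which is immediate from the definition of $\mathsf H(X)$. In the broader context of the paper this lemma is a bookkeeping fact: it is what lets one conclude that a stabilizer produced abstractly (for instance the $H$ appearing in the output of the Partition Theorem, Proposition~\ref{thm-partition-thm-equi}, or in Lemma~\ref{lem-subsums=sumset}) is actually contained in the stabilizer of the set one cares about, once the two sets are known to agree modulo $H$.
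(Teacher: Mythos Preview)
Your proof is correct and is essentially the paper's argument: the paper computes the chain $X+H+K=Y+H+K=Y+K+H=Y+H=X+H=X$ to conclude $H+K\leq \mathsf H(X)=H$ and hence $K\leq H$, which is exactly your computation carried out with the whole subgroup $K$ at once rather than element by element.
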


\begin{proof}
 We have $X+H+K=Y+H+K=Y+K+H=Y+H=X+H=X$. Thus $H+K\leq \mathsf H(X)=H$, implying $K\leq H$.
\end{proof}

\begin{lemma} \label{lem-HinAlpha}
Let $G$ be a dihedral group of order $2n$ where $n\geq 3$, say $G=\la \alpha,\,\tau:\; \alpha^n=\tau^2=1_G,\; \tau \alpha=\alpha^{-1}\tau\ra$. If
 $H\leq \la \alpha\ra$ is a subgroup and $U\in \mathcal A(G)$ with $|U|>1$, then $|U_H|\leq 2|H|-2$.
\end{lemma}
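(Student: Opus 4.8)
The plan is to split the argument according to whether $U$ has a term outside $\la\alpha\ra$; throughout put $m=|H|$, so that $H^+\leq\Z/n\Z$ is the subgroup of order $m$. If $U\in\Fc(\la\alpha\ra)$, the bound is immediate: $U_H$ is a subsequence of the atom $U$, all of whose terms lie in the cyclic group $\la\alpha\ra$. If $U_H=U$, then $U\in\Fc(H)$, and since $\mathcal B(H)\subset\mathcal B(G)$, $U$ is an atom of $\mathcal B(H)$, i.e. a minimal zero-sum sequence over the cyclic group $H$, so $|U_H|=|U|\leq\mathsf D(H)=m$; here $m\geq 2$, as $m=1$ would force $U=1_G$, against $|U|>1$. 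If instead $U_H\subsetneq U$, then $U_H$ is a proper subsequence of an atom, hence product-one free, so $|U_H|\leq\mathsf d(H)=m-1$ (and if $m=1$, then $\mathsf v_{1_G}(U)=0$ since $1_G$ cannot divide an atom of length $>1$, so $U_H$ is trivial). In every case $|U_H|\leq 2m-2$.

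From now on I would assume $U\notin\Fc(\la\alpha\ra)$ and argue by contradiction, supposing $|U_H|\geq 2m-1$. Write $U_{\la\alpha\ra}=\alpha^{x_1}\bdot\ldots\bdot\alpha^{x_s}$ with the indices arranged so that $x_1,\ldots,x_t\in H^+$ while $x_{t+1},\ldots,x_s\notin H^+$, where $t=|U_H|\geq 2m-1$. Applying Corollary \ref{cor-zero-sum-defining} to $U$ gives $|U_{\tau\la\alpha\ra}|=2\ell$ even with $\ell\geq 1$, together with signs $\epsilon_1,\ldots,\epsilon_s\in\{1,-1\}$ and an element $w$ of $W:=\Sigma_\ell(2U^+_{\tau\la\alpha\ra})-\sigma(U^+_{\tau\la\alpha\ra})$ with $\sum_{j=1}^s\epsilon_jx_j+w=0$ in $\Z/n\Z$. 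The heart of the proof is a double pigeonhole. Among $\epsilon_1,\ldots,\epsilon_t$ at least $\lceil t/2\rceil\geq m$ share one value $\epsilon_0\in\{1,-1\}$, say $\epsilon_{j_1}=\cdots=\epsilon_{j_r}=\epsilon_0$ with $j_1<\cdots<j_r$ in $[1,t]$ and $r\geq m$. The $r+1>m=|H^+|$ partial sums $\sum_{i=1}^k x_{j_i}$ $(k\in[0,r])$ lie in the group $H^+$, so two of them coincide, producing $a<b$ in $[0,r]$ with $\sum_{i=a+1}^b x_{j_i}=0$. Set $I:=\{j_{a+1},\ldots,j_b\}\subset[1,t]$: it is nonempty, $\sum_{j\in I}x_j=0$, and $\epsilon_j=\epsilon_0$ for all $j\in I$.

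Finally I would check that $T:={\prod}^\bullet_{j\in I}\alpha^{x_j}$ and $V:=U\bdot T^{[-1]}$ form a nontrivial factorization of $U$ into product-one sequences. Since $T\in\Fc(\la\alpha\ra)$ with $\sigma(T^+)=\sum_{j\in I}x_j=0$, Proposition \ref{prop-add/mult-DefiningCorrespondance}(1) gives $T\in\mathcal B(G)$, and $T$ is nontrivial as $I\neq\emptyset$. Meanwhile $V$ still contains the whole subsequence $U_{\tau\la\alpha\ra}$ (because $T$ has no term outside $\la\alpha\ra$), hence $V$ is nontrivial, $V\notin\Fc(\la\alpha\ra)$, and $V_{\tau\la\alpha\ra}=U_{\tau\la\alpha\ra}$, so the set $W$ is unchanged for $V$. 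Because $\sum_{j\in I}\epsilon_jx_j=\epsilon_0\sum_{j\in I}x_j=0$, the relation $\sum_{j=1}^s\epsilon_jx_j+w=0$ restricts to $\sum_{j\notin I}\epsilon_jx_j+w=0$, and Corollary \ref{cor-zero-sum-defining} applied to $V$ then yields $V\in\mathcal B(G)$. Thus $U=T\bdot V$ with both factors nontrivial product-one sequences, contradicting $U\in\mathcal A(G)$; hence $|U_H|\leq 2m-2$.

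The step I expect to be the main obstacle — and the reason the bound reads $2|H|-2$ rather than $|H|-1$ — is the rigidity incurred once the split-off factor $T$ consists solely of terms from $\la\alpha\ra$: in Corollary \ref{cor-zero-sum-defining} the signs attached to the $\la\alpha\ra$-terms are free only as long as $U$ still carries a term from $\tau\la\alpha\ra$, but a purely $\la\alpha\ra$-valued $T$ has the single, rigid product $\sigma(T^+)^*$, so making $T$ product-one genuinely requires an \emph{unsigned} zero-sum sub-block of $U_H$. Passing first to a sign-homogeneous subfamily of the $H$-terms is precisely what converts the available signed relation into an unsigned one, and this is exactly what costs the factor two.
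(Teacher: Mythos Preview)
Your proof is correct and follows essentially the same strategy as the paper's. Both arguments split the $\la\alpha\ra$-terms of $U$ into two sign classes---you via Corollary~\ref{cor-zero-sum-defining}, the paper via an explicit rearrangement of the product-one ordering into two consecutive blocks $g_1\bdot\ldots\bdot g_r$ and $h_1\bdot\ldots\bdot h_s$---and then pigeonhole first on the sign (equivalently, the block) and second on partial sums inside $H$ to extract a zero-sum subsequence whose removal leaves the remainder product-one.
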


\begin{proof}
As $|U|>1$, we see that $U$ is not the atom consisting of a single term equal to $1_G$, which ensures $1_G \notin \supp(U)$. Thus the lemma holds for $H$ trivial, and we may assume $|H|>1$. If $U\in \mathcal F(\la \alpha\ra)$, then $\la\supp(U)\ra$ is abelian, whence $|U_H|\leq \mathsf D(H)=|H|\leq 2|H|-2$, as desired. Therefore, we may assume $|U_{\tau \la\alpha\ra}|>0$, allowing us to use Proposition \ref{prop-add/mult-DefiningCorrespondance}.2. Then $|U_{\tau\la \alpha\ra}|\geq 2$ is even and there exists an ordering of the terms of $U$ whose product is one, say w.l.o.g. (as $\tau \alpha^{z_i}\tau\alpha^{z_{i+1}}\in \la \alpha\ra$ commutes with all terms $g,h\in \la \alpha\ra$) $$U=g_1\bdot\ldots\bdot g_r\bdot \tau \alpha^{z_1}\bdot h_1\bdot\ldots\bdot h_s\bdot  \tau\alpha^{z_2}\bdot\ldots\bdot \tau \alpha^{z_\ell},$$ where $\ell=|U_{\tau\la\alpha\ra}|$ and $U_{\la \alpha\ra}=g_1\bdot\ldots\bdot g_r\bdot h_1\bdot\ldots\bdot h_s$ with $r,s\geq 0$.
If $|U_{H}|\geq 2|H|-1$, then the pigeonhole principle ensures either $g_1\bdot\ldots\bdot g_r$ or $ h_1\bdot\ldots\bdot h_s$ contains at least $|H|=\mathsf D(H)$ terms from $H$. Thus, re-ordering the terms of $g_1\bdot\ldots\bdot g_r$ or $h_1\bdot\ldots\bdot h_s$ appropriately, we find a consecutive nontrivial product-one sequence in $g_1\bdot\ldots\bdot g_r$ or $h_1\bdot\ldots\bdot h_s$, forcing the complement of this sequence in $U$ to also have product-one, which contradicts  that $U$ is an atom in view of $\ell=|U_{\tau\la \alpha\ra}|>0$. Therefore $|U_H|\leq 2|H|-2$, as desired.\end{proof}

%the extra conditions $|U|$ even, $|V|$ even or $|U\bdot V|<|S'|$ are needed to ensure $$|S'|-|U|-|V|\geq \lfloor \frac{|S'|}{2}\rfloor-\lfloor \frac{|U|}{2}\rfloor -\lfloor \frac{|V|}{2}\rfloor$, which (i think) we need in order to know there are enough extra terms in $S'$ not contained in $U\bdot V$ to extend any sum from $U\bdot V$ by the same exact subsequence of length $\lfloor \frac{|S'|}{2}\rfloor-\lfloor \frac{|U|}{2}\rfloor -\lfloor \frac{|V|}{2}\rfloor$ each time.

\begin{lemma}
\label{lem-yelp}
Let $G$ be an abelian group, let $S\in \Fc(G)$ be a sequence, let $X\subset G$ be a finite, nonempty set, let $H\leq G$, and let $U\bdot V\mid S$. Suppose $\supp(S\bdot(U\bdot V)^{[-1]})\subset H$, $|U_H|\geq |U|/2$, $|V_H|\geq |V|/2$,   and $X+\Sigma_{\lfloor |U|/2\rfloor}(U)+\Sigma_{\lfloor |V|/2\rfloor}(V)$ is $H$-periodic. Then $$X+\Sigma_{\lfloor |S'|/2\rfloor}(S')= X+\Sigma_{\lfloor |U|/2\rfloor}(U)+\Sigma_{\lfloor |V|/2\rfloor}(V)=X+(\Sigma(S)\cup \{0\})$$ for any subsequence $S'\mid S$ with $U\bdot V\mid S'$ and either $|U|$ even, $|V|$ even or $|U\bdot V|<|S'|$.
\end{lemma}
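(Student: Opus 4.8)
The plan is to isolate one combinatorial fact and then deduce both equalities from it together with the assumed $H$-periodicity of $X+\Sigma_{\lfloor|U|/2\rfloor}(U)+\Sigma_{\lfloor|V|/2\rfloor}(V)$. The fact I would prove first is: \emph{if $T\in\Fc(G)$ satisfies $|T_H|\geq|T|/2$, then $\Sigma_a(T)\subseteq\Sigma_{\lfloor|T|/2\rfloor}(T)+H$ for every $a\in[0,|T|]$.} Given $T_1\mid T$ with $|T_1|=a$: if $a\leq\lfloor|T|/2\rfloor$, one enlarges $T_1$ to a subsequence of $T$ of length $\lfloor|T|/2\rfloor$ adjoining only terms lying in $H$, possible since $T$ has $|T_H|\geq\lceil|T|/2\rceil$ terms in $H$ while $T_1$ uses at most $a$ of them; if $a>\lfloor|T|/2\rfloor$, one deletes terms lying in $H$ from $T_1$ down to length $\lfloor|T|/2\rfloor$, possible since $T_1$ contains at least $a-(|T|-|T_H|)\geq a-\lfloor|T|/2\rfloor$ terms from $H$ (again using $|T_H|\geq\lceil|T|/2\rceil$). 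In both cases the new subsequence has sum differing from $\sigma(T_1)$ by an element of $H$.

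Next, write $S=U\bdot V\bdot W$, so $\supp(W)\subseteq H$, abbreviate $u'=\lfloor|U|/2\rfloor$, $v'=\lfloor|V|/2\rfloor$, and set $Y=X+\Sigma_{u'}(U)+\Sigma_{v'}(V)$, which is $H$-periodic by hypothesis. I would establish the second equality $Y=X+(\Sigma(S)\cup\{0\})$ as follows. The inclusion ``$\subseteq$'' is clear, since a $u'$-term subsum of $U$ plus a $v'$-term subsum of $V$ is the sum of a subsequence of $U\bdot V\mid S$. For ``$\supseteq$'', given $T\mid S$, decompose $T=T_U\bdot T_V\bdot T_W$ along $S=U\bdot V\bdot W$; then $\sigma(T_W)\in H$, while the displayed fact applied to $U$ and to $V$ (legitimate because $|U_H|\geq|U|/2$, $|V_H|\geq|V|/2$) yields $\sigma(T_U)\in\Sigma_{u'}(U)+H$ and $\sigma(T_V)\in\Sigma_{v'}(V)+H$. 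Hence $X+\sigma(T)\subseteq Y+H=Y$, and the union over all $T\mid S$ gives $X+(\Sigma(S)\cup\{0\})\subseteq Y$.

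For the first equality, fix $S'\mid S$ with $U\bdot V\mid S'$ and the parity hypothesis, and write $S'=U\bdot V\bdot W'$, so $W'\mid W$ and $\supp(W')\subseteq H$; put $w'=|W'|$ and $k=\lfloor|S'|/2\rfloor$. Since $S'\mid S$, we have $\Sigma_k(S')\subseteq\Sigma(S)\cup\{0\}$, hence $X+\Sigma_k(S')\subseteq Y$ by the previous step. For the reverse inclusion, set $\delta=k-u'-v'$. An elementary computation with floors shows $\lfloor w'/2\rfloor\leq\delta$ (in particular $\delta\geq0$) in all cases, and $\delta\leq w'$ in every case except when $|U|$ and $|V|$ are both odd and $w'=0$, where $\delta=1$; that exceptional case is exactly the one excluded by the hypothesis on $S'$, so here $0\leq\delta\leq w'$. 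Consequently $\Sigma_k(S')\supseteq\Sigma_{u'}(U)+\Sigma_{v'}(V)+\Sigma_\delta(W')$, and $\Sigma_\delta(W')$ is a nonempty subset of $H$, so, since $Y$ is $H$-periodic, $X+\Sigma_k(S')\supseteq Y+\Sigma_\delta(W')=Y$. This gives $X+\Sigma_k(S')=Y$ and completes the proof.

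The conceptual core is the displayed fact together with the observation that the parity hypothesis on $S'$ is precisely what forces $k\leq u'+v'+w'$: it guarantees that the $\delta$ surplus terms needed to pad $\Sigma_{u'}(U)+\Sigma_{v'}(V)$ up to a half-length subsum of $S'$ can be drawn from the block $W'$, all of whose subsums lie in $H$ and are therefore absorbed by the $H$-periodic set $Y$. The step demanding the most care is the floor–ceiling bookkeeping: verifying the two counting estimates in the displayed fact, and pinning down exactly when $\delta\leq w'$ can fail.
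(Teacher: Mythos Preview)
Your proof is correct and follows essentially the same approach as the paper's: both arguments reduce modulo $H$ by adding or removing terms lying in $H$ to adjust subsequence lengths to $\lfloor|U|/2\rfloor$ and $\lfloor|V|/2\rfloor$, then use $H$-periodicity of $Y$; and both handle the first equality via the same floor arithmetic showing $0\leq\delta\leq w'$ (the paper phrases this as $|S'|-|U|-|V|\geq\lfloor|S'|/2\rfloor-\lfloor|U|/2\rfloor-\lfloor|V|/2\rfloor\geq0$). Your only organizational difference is isolating the padding/trimming step as a standalone fact about a single sequence $T$, whereas the paper argues it directly for $U$ and $V$ by first restricting to the subsequence of terms outside $H$.
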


\begin{proof}
Let us begin by showing \be\label{teetop} X+\Sigma_{\lfloor |U|/2\rfloor}(U)+\Sigma_{\lfloor |V|/2\rfloor}(V)=X+(\Sigma(S)\cup \{0\}).\ee
The inclusion $ X+\Sigma_{\lfloor |U|/2\rfloor}(U)+\Sigma_{\lfloor|V|/2\rfloor}(V)\subset X+(\Sigma(S)\cup \{0\})$ is trivial in view of $U\bdot V\mid S$. Since $X+\Sigma_{\lfloor |U|/2\rfloor}(U)+\Sigma_{\lfloor|V|/2\rfloor}(V)$ is $H$-periodic, it suffices to prove  $\Sigma(S)\cup \{0\}\subset \Sigma_{\lfloor |U|/2\rfloor}(U)+\Sigma_{\lfloor|V|/2\rfloor}(V)$  holds modulo $H$.
Let $T\mid S$ be an arbitrary (possibly trivial) subsequence such that all terms of $T$ are nonzero modulo $H$. In view of the hypotheses $\supp(S\bdot(U\bdot V)^{[-1]})\subset H$, $|U_H|\geq |U|/2$, and $|V_H|\geq |V|/2$, we have  $T=T_U\bdot T_V$ for some $T_U\mid U$ and $T_V\mid V$ with $|T_U|\leq |U|/2$ and $|T_V|\leq |V|/2$. Moreover, $|U_H|\geq |U|/2$ and $|V_H|\geq |V|/2$ ensure  there are at least  $|U|/2$ terms in $U$ which are zero modulo $H$, and at least $|V|/2$ terms in $V$ which are zero modulo $H$.  It follows that we can extend the sequence $T_U\mid U$ to a subsequence $T'_U\mid U$ of length $|T'_U|=\lfloor |U|/2\rfloor$ by concatenating an additional $\lfloor |U|/2\rfloor-|T_U|$ terms from $U\bdot T_U^{[-1]}$, each zero modulo $H$. Likewise, we can extend the sequence $T_V\mid V$ to a subsequence $T'_V\mid V$ of length $|T'_V|=\lfloor|V|/2\rfloor$ by concatenating an additional $\lfloor|V|/2\rfloor-|T_V|$ terms from $V$, each zero modulo $H$. Let $T'=T'_U\bdot T'_V$. As we have only extended the sequences by terms zero modulo $H$, it follows that $\sigma(T)\equiv\sigma(T')\mod H$.
By construction, $\sigma(T')\in \Sigma_{\lfloor |U|/2\rfloor}(U)+\Sigma_{\lfloor|V|/2\rfloor}(V)$. Consequently, since $T\mid S$ was an arbitrary subsequence of terms nonzero modulo $H$, we conclude that the inclusion $\Sigma(S)\cup \{0\}\subset \Sigma_{\lfloor |U|/2\rfloor}(U)+\Sigma_{\lfloor |V|/2\rfloor}(V)$ holds modulo $H$, which establishes \eqref{teetop} as noted earlier.

For any subsequence $S'\mid S$, the inclusion $\Sigma_{\lfloor |S'|/2\rfloor}(S')\subset \Sigma(S)\cup \{0\}$ holds trivially. For any subsequence $S'\mid S$ with $U\bdot V\mid S'$ and either $|U|$ even, $|V|$ even or $|U\bdot V|<|S'|$, we have  $|S'|-|U|-|V|\geq \lfloor \frac{|S'|}{2}\rfloor-\lfloor \frac{|U|}{2}\rfloor -\lfloor \frac{|V|}{2}\rfloor\geq 0$, ensuring that the set $\Sigma_{\lfloor |S'|/2\rfloor}(S')$  contains a translate of  $\Sigma_{\lfloor |U|/2\rfloor}(U)+\Sigma_{\lfloor|V|/2\rfloor}(V)$. Hence \eqref{teetop} implies
$$X+\Sigma_{\lfloor |S'|/2\rfloor}(S')=X+\Sigma_{\lfloor |U|/2\rfloor}(U)+\Sigma_{\lfloor|V|/2\rfloor}(V)=X+(\Sigma(S)\cup \{0\})$$
for any sequence $S'\mid S$ with $U\bdot V\mid S'$ and either $|U|$ even, $|V|$ even or $|U\bdot V|<|S'|$, completing the proof.
\end{proof}

\begin{proposition}\label{prop-regulate}
Let $G$  be a dihedral group of order $2n$ where $n \ge 3$, say $G=\la \alpha,\,\tau:\; \alpha^n=\tau^2=1_G,\; \tau \alpha=\alpha^{-1}\tau\ra$, and let $H_x=\la H,\tau\alpha^x\ra\leq G$ be a subgroup with $H\leq \la\alpha\ra$ and $x\in \Z/n\Z$.
\begin{itemize}
\item[1.] Suppose $V\in \Fc(G)$ has a decomposition $V_{\tau\la\alpha\ra}^+=T_1\bdot\ldots\bdot T_\ell$ such that $\ell\geq 1$,
$|T_i|=2$ for all $i$, $X+\Sum{i=1}{\ell}A_i$ is $H^+$-periodic, and $ A_i\cap (2x+H^+)\neq \emptyset$ for all $i$,
 where  $A_i=\supp(2T_i)$ for all $i$,
 $2\ell=|V_{\tau\la\alpha\ra}|$,   $V^+_{\la \alpha\ra}=x_1\bdot\ldots\bdot x_s$ and $X=\{x_1,-x_1\}+\ldots+\{x_s,-x_s\}$. Then $$\pi(V)^+=X+\Sigma_{\ell}(2V_{\tau\la\alpha\ra}^+)-
 \sigma(V_{\tau\la\alpha\ra}^+)=X+\Sum{i=1}{\ell}A_i-\sigma(V_{\tau\la\alpha\ra}^+)$$ is $H^+$-periodic and $|V_{H_x\setminus H}|\geq \frac12 |V_{\tau \la \alpha\ra}|=\ell$.

\item[2.] Suppose $U,\,V\in \Fc(G)$ with $\pi(V)^+$ $H^+$-periodic,  $|V_{H_x\setminus H}|\geq \frac12 |V_{\tau \la \alpha\ra}|>0$ and $\supp(U)\subset H_x$. Then $\pi(U\bdot V)$ is a translate of $\pi(V)$. In particular, if $U$ is product-one, then $\pi(U\bdot V)=\pi(V)$.
\end{itemize}
\end{proposition}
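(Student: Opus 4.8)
The plan is to translate both parts into additive statements in $\Z/n\Z$ by means of Proposition \ref{prop-add/mult-DefiningCorrespondance}.2, and then to invoke Lemma \ref{lem-subsums=sumset} for Part 1 and Lemma \ref{lem-yelp} for Part 2. \emph{For Part 1}, since $\ell\ge 1$ we have $V\notin\Fc(\la\alpha\ra)$ with $|V_{\tau\la\alpha\ra}|=2\ell$ even, so Proposition \ref{prop-add/mult-DefiningCorrespondance}.2 (using $(-1)^{2\ell}=1$ and $\tau^{2\ell}=1_G$) gives $\pi(V)^+=X+\Sigma_{\ell}(2V_{\tau\la\alpha\ra}^+)-\sigma(V_{\tau\la\alpha\ra}^+)$. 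I would then regard $\mathscr A=A_1\bdot\ldots\bdot A_\ell$ as a setpartition with $\mathsf S(\mathscr A)\mid 2V_{\tau\la\alpha\ra}^+$ and verify the hypotheses of Lemma \ref{lem-subsums=sumset} with subgroup $H^+$, coset representative $2x$ and setpartition length $\ell$: each term of $2V_{\tau\la\alpha\ra}^+$ in excess of $\mathsf S(\mathscr A)$ comes from a singleton $A_i=\{a\}$, which then lies in $2x+H^+$ by hypothesis; the condition $A_i\cap(2x+H^+)\ne\emptyset$ together with $|A_i|\le 2$ gives both $2x+H^+\subset\bigcap_i(A_i+H^+)$ and $|A_i\setminus(2x+H^+)|\le 1$; and $X+\Sum{i=1}{\ell}A_i$ being $H^+$-periodic is the assumed periodicity. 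Lemma \ref{lem-subsums=sumset}, applied with subset-size $\ell$ so that the translation term vanishes, then yields $X+\Sigma_{\ell}(2V_{\tau\la\alpha\ra}^+)=X+\Sum{i=1}{\ell}A_i$; substituting into the displayed formula proves the asserted identity, and $\pi(V)^+$ is $H^+$-periodic because it is a translate of the $H^+$-periodic set $X+\Sum{i=1}{\ell}A_i$. Finally, $A_i\cap(2x+H^+)\ne\emptyset$ forces $T_i$ to contain a term $t_i$ with $2t_i\in 2x+H^+$, hence $t_i\in x+H^+$, i.e.\ $\tau\alpha^{t_i}\in H_x\setminus H$; these $\ell$ positions inside $V$ give $|V_{H_x\setminus H}|\ge\ell=\tfrac12|V_{\tau\la\alpha\ra}|$.

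\emph{For Part 2}, write $U=U_{\la\alpha\ra}\bdot U_{\tau\la\alpha\ra}$ with $U_{\la\alpha\ra}\in\Fc(H)$ and all terms of $U_{\tau\la\alpha\ra}$ in $H_x\setminus H$. As $|V_{\tau\la\alpha\ra}|>0$ we have $V,\,U\bdot V\notin\Fc(\la\alpha\ra)$, so Proposition \ref{prop-add/mult-DefiningCorrespondance}.2 applies to $U\bdot V$ and writes $\pi(U\bdot V)^+$ as a $\{w,-w\}$-sumset over the exponents of $U_{\la\alpha\ra}$ and $V_{\la\alpha\ra}$, plus $(-1)^m\Sigma_{\lfloor m/2\rfloor}\big(2U_{\tau\la\alpha\ra}^+\bdot 2V_{\tau\la\alpha\ra}^+\big)$, plus a translation, where $m=|U_{\tau\la\alpha\ra}|+|V_{\tau\la\alpha\ra}|$; the $U_{\la\alpha\ra}$-part of the sumset lies in $H^+$ and so is absorbed by the eventual $H^+$-periodicity. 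For the $\Sigma_{\lfloor m/2\rfloor}$-term I would split $2V_{\tau\la\alpha\ra}^+=R\bdot Q'$ with $R$ the subsequence of terms in $2x+H^+$, so that $|R|=|V_{H_x\setminus H}|\ge\tfrac12|V_{\tau\la\alpha\ra}|\ge|Q'|$ and every term of $2U_{\tau\la\alpha\ra}^+\bdot R$ lies in $2x+H^+$; after translating every term by $-2x$, so that these terms come to lie in the subgroup $H^+$, I would apply Lemma \ref{lem-yelp} with the roles of its two sequences played by the translated $2U_{\tau\la\alpha\ra}^+$ and $2V_{\tau\la\alpha\ra}^+$ — transferring one term of $R$ between them when $|U_{\tau\la\alpha\ra}|$ and $|V_{\tau\la\alpha\ra}|$ are both odd, so as to meet the parity requirement — and with the auxiliary set chosen so that adding it to $\Sigma_{\lfloor|V_{\tau\la\alpha\ra}|/2\rfloor}$ of the translated $2V_{\tau\la\alpha\ra}^+$ yields a translate of $\pi(V)^+$; this is legitimate because that combination is $H^+$-periodic by the hypothesis on $\pi(V)^+$ (via Proposition \ref{prop-add/mult-DefiningCorrespondance}.2 again), while $\Sigma_{\lfloor|U_{\tau\la\alpha\ra}|/2\rfloor}$ of the translated $2U_{\tau\la\alpha\ra}^+$ lies in a single $H^+$-coset and hence only translates it further. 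This identifies $\Sigma_{\lfloor m/2\rfloor}\big(2U_{\tau\la\alpha\ra}^+\bdot 2V_{\tau\la\alpha\ra}^+\big)$, and hence $\pi(U\bdot V)^+$, as a translate of $\pi(V)^+$; the sign $(-1)^m$ relative to $(-1)^{|V_{\tau\la\alpha\ra}|}$ introduces at most a reflection $z\mapsto -z$, which is harmless since $\pi(V)^+$ is invariant under $z\mapsto -z$ up to translation when $|V_{\tau\la\alpha\ra}|$ is even (by complementation), while if $|V_{\tau\la\alpha\ra}|$ is odd then the sign changes only when $|U_{\tau\la\alpha\ra}|$ is odd too, whence $m$ is even and complementation applies to $2U_{\tau\la\alpha\ra}^+\bdot 2V_{\tau\la\alpha\ra}^+$ instead. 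Thus $\pi(U\bdot V)$ is a translate of $\pi(V)$. If moreover $U$ is product-one, then prepending to any ordering of $V$ an ordering of $U$ with product $1_G$ shows $\pi(V)\subset\pi(U\bdot V)$, and since $\pi(U\bdot V)$ is a translate of $\pi(V)$ the two sets have the same cardinality, so $\pi(U\bdot V)=\pi(V)$.

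The hardest part will be the bookkeeping at the core of Part 2: verifying all hypotheses of Lemma \ref{lem-yelp} after the coset-to-subgroup translation and the parity adjustment, choosing the auxiliary set so that the $V$-part becomes \emph{exactly} a translate of $\pi(V)^+$, and carrying the resulting translation (and the possible reflection coming from $(-1)^m$) back through Proposition \ref{prop-add/mult-DefiningCorrespondance}.2, so that the conclusion is genuinely a translate of $\pi(V)$.
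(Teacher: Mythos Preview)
Your Part 1 is essentially the paper's argument: the same two ingredients (Proposition \ref{prop-add/mult-DefiningCorrespondance}.2 and Lemma \ref{lem-subsums=sumset}) applied in the same way, with your write-up simply verifying the hypotheses of Lemma \ref{lem-subsums=sumset} in more detail.

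For Part 2 you have the right tools but take a needlessly hard route, and one step is not justified. The paper's first move is to replace the generating set $\{\alpha,\tau\}$ by $\{\alpha,\tau\alpha^x\}$, thereby reducing to $x=0$. With $x=0$ one has $H_0^+=H^+$, so $\supp(U)\subset H_0$ forces $\supp(U^+)\subset H^+$ directly, and in particular $\supp(2U_{\tau\la\alpha\ra}^+)\subset H^+$. Lemma \ref{lem-yelp} is then applied with its ``$U$'' taken to be the \emph{trivial} sequence and its ``$V$'' taken to be $2V_{\tau\la\alpha\ra}^+$, with $S=S'=2(U\bdot V)_{\tau\la\alpha\ra}^+$; the parity clause is satisfied automatically since $|U|=0$ is even. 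This yields $X+\Sigma_{\lfloor\ell/2\rfloor}(2V_{\tau\la\alpha\ra}^+)=X+\Sigma_{\lfloor\ell'/2\rfloor}(2(U\bdot V)_{\tau\la\alpha\ra}^+)$ in one clean stroke, and the $Y$-sumset from $U_{\la\alpha\ra}$ is absorbed because $Y\subset H^+$. Your approach instead keeps general $x$, translates by $-2x$ manually, and tries to use both $2U_{\tau\la\alpha\ra}^+$ and $2V_{\tau\la\alpha\ra}^+$ as nontrivial arguments in Lemma \ref{lem-yelp}; when both lengths are odd you propose transferring a term of $R$ between them. The gap is here: after that transfer, the modified ``$V$'' is $2V_{\tau\la\alpha\ra}^+$ with one $H^+$-term removed, and you have not shown that $X+\Sigma_{\lfloor |V'|/2\rfloor}(V')$ is still $H^+$-periodic, which is a hypothesis of Lemma \ref{lem-yelp}. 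This does not follow from the periodicity of $X+\Sigma_{\lfloor\ell_V/2\rfloor}(2V_{\tau\la\alpha\ra}^+)$ alone. The fix is simply to do what the paper does: take one argument trivial, which makes the parity clause automatic and avoids the transfer entirely.

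Your reflection discussion via complementation is also more work than needed. Once one has that $\pi(U\bdot V)^+$ equals $(-1)^{|U_{\tau\la\alpha\ra}|}\,\pi(V)^+$ up to a shift, the possible sign flip on the $+$-level corresponds in $G$ to a one-sided translate by an element of $\tau\la\alpha\ra$ (since $\alpha^z\cdot\tau\alpha^c=\tau\alpha^{c-z}$), so the conclusion ``$\pi(U\bdot V)$ is a translate of $\pi(V)$'' follows without any symmetry argument on the subsums.
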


\begin{proof}
1. The hypotheses  $|T_i|=2$ and $ A_i\cap (2x+H^+)\neq \emptyset$ for all $i$ ensure $|V_{H_x\setminus H}|\geq \frac12 |V_{\tau \la \alpha\ra}|$.
Applying Lemma \ref{lem-subsums=sumset} to the sub-sumset of $\Sum{i=1}{\ell}A_i$ consisting of all cardinality two summands yields $X+\Sum{i=1}{\ell}A_i= X+\Sigma_\ell(2V_{\tau \la\alpha\ra}^+)$. Since $\ell\geq 1$, Item 1 now follows by Proposition \ref{prop-add/mult-DefiningCorrespondance}.

2. By replacing the generating set $\{\alpha,\tau\}$ by $\{\alpha,\tau\alpha^x\}$, we can  w.l.o.g. assume $x=0$. We can also assume $U$ is nontrivial, else the item holds trivially.  Let $\ell=|V_{\tau\la\alpha\ra}|$. It follows in view of Proposition \ref{prop-add/mult-DefiningCorrespondance} and the hypothesis $\ell>0$ that  $\pi(V)^+=(-1)^\ell\Big(X+\Sigma_{\lfloor \ell/2\rfloor}(2V^+_{\tau\la\alpha\ra})-\sigma(V_{\tau\la\alpha\ra}^+)\Big)$, where
$V_{\la\alpha\ra}^+=x_1\bdot\ldots \bdot x_s$ and $X=\{x_1,-x_1\}+\ldots+\{x_s,-x_s\}$. Thus $X+\Sigma_{\lfloor \ell/2\rfloor}(2V^+_{\tau\la\alpha\ra})$ is $H^+$-periodic by hypothesis.
Since $x=0$, we have $|(2V_{\tau \la \alpha\ra}^+)_{H^+}|=|V_{H_0\setminus H}|\geq \frac12 |V_{\tau \la \alpha\ra}|=\frac12 \ell>0$ by hypothesis. The hypothesis $\supp(U)\subset H_x=H_0$ ensures $\supp(U^+)\subset H^+$.
 Thus Lemma \ref{lem-yelp} (applied with $V$ taken to be $2V_{\tau \la\alpha\ra}^+$, $U$ taken to be the trivial sequence, and $S'=S=2(U\bdot V)_{\tau\la\alpha\ra}^+$) yields \be\label{periodic}X+\Sigma_{\lfloor \ell/2\rfloor}(2V^+_{\tau\la\alpha\ra})=X+\Sigma_{\lfloor\ell'/2\rfloor}(2(U\bdot V)_{\tau\la\alpha\ra}^+),\ee where $\ell'=|(U\bdot V)_{\tau \la\alpha\ra}|$. Note the set in \eqref{periodic} is $H^+$-periodic by hypothesis.  Let $U^+_{\la\alpha\ra}=y_1\bdot\ldots\bdot y_r$ and $Y=\{y_1,-y_1\}+\ldots+\{y_r,-y_r\}$. Since $\supp(U^+)\subset H^+$, we have $Y\subset H^+$,
 whence it follows that  $X+\Sigma_{\lfloor\ell'/2\rfloor}(2(U\bdot V)_{\tau\la\alpha\ra}^+)=Y+X+\Sigma_{\lfloor\ell'/2\rfloor}(2(U\bdot V)_{\tau\la\alpha\ra}^+)$ as this set is $H^+$-periodic. As a result, in view of \eqref{periodic}, $\ell>0$ and Proposition \ref{prop-add/mult-DefiningCorrespondance}, it follows that $\pi(V)$ and $\pi(U\bdot V)$ are translates of each other, as desired.
\end{proof}

\begin{lemma}
\label{lem-easycond}
Let $G$ be a dihedral group of order $2n$ where $n\geq 3$ is odd, say $G=\la \alpha,\,\tau:\; \alpha^n=\tau^2=1_G,\; \tau \alpha=\alpha^{-1}\tau\ra$, and let $U\in \mathcal F(G)$.
Let $K_y=\la K,\tau \alpha^y\ra\leq G$ be a subgroup  with  $K\leq \la \alpha\ra$, let  $V\mid U$ be a subsequence, let $\ell_V=|V_{\tau\la \alpha\ra}|$, and let $Z=X_V+\Sigma_{\lfloor \ell_V/2\rfloor}(2V^+_{\tau\la\alpha\ra})$, where $V_{\la \alpha\ra}^+=x_1\bdot\ldots\bdot x_s$ and $X_V=\{x_1,-x_1\}+\ldots+\{x_s,-x_s\}$. Suppose  $|V_{K_y\setminus K}|\geq \frac12 |V_{\tau\la \alpha\ra}|$, \ $Z$ is $K^+$-periodic, \ $$|(U\bdot V^{[-1]})_{K_y}|\geq |K|+1+|G'/K|-|\phi_{K}(Z)|\quad\und\quad|(U\bdot V^{[-1]})_{K_y\setminus  K}|\geq |G'/K|-|\phi_{K}(Z)|$$ with both above inequalities strict when $K=G'$.    Then $U$ is not an atom.
\end{lemma}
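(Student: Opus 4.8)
We may assume $U$ is a product-one sequence, since otherwise $U$ is not an atom; likewise we may assume $|U|\le\mathsf D(G)=2n$, as an atom of $\mathcal B(G)$ has length at most $\mathsf D(G)$. After replacing the generating set $\{\alpha,\tau\}$ by $\{\alpha,\tau\alpha^y\}$ (which translates the terms of $(\tau\la\alpha\ra)^+$ by $-y$ and leaves $\cdot^*$ unchanged), we may assume $y=0$, so that $K_0=\la K,\tau\ra$, $K_0\setminus K=\tau K$, and a term $\tau\alpha^z$ lies in $K_0\setminus K$ precisely when $z\in K^+$. Since $n$ is odd, $G'=\la\alpha\ra$, hence $|G'/K|=n/|K|$, and multiplication by $2$ is a bijection of $\Z/n\Z$ stabilizing $K^+$ and permuting its cosets; in particular a term of $2V^+_{\tau\la\alpha\ra}$ lies in $K^+$ exactly when the corresponding term of $V_{\tau\la\alpha\ra}$ lies in $K_0\setminus K$. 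Put $W=U\bdot V^{[-1]}$. The plan is to find a \emph{nontrivial} product-one subsequence $W_0\mid W$ with $\supp(W_0)\subset K_0$ such that $U':=U\bdot W_0^{[-1]}$ is again product-one and nontrivial; then $U=W_0\bdot U'$ shows that $U$ is not an atom.

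Granting such a $W_0$, we verify that $U'$ is product-one through Proposition \ref{prop-regulate}. The strategy is to arrange that $U'$ fulfils the hypotheses of Proposition \ref{prop-regulate}.1 with $H=K$ and $x=0$: writing $2\ell'=|U'_{\tau\la\alpha\ra}|$, we group the terms of $U'_{\tau\la\alpha\ra}$ into pairs $T_1\bdot\ldots\bdot T_{\ell'}$ so that each $A_i=\supp(2T_i)$ meets $K^+$, and so that $X_{U'}+\Sum{i=1}{\ell'}A_i$ is $K^+$-periodic, where $X_{U'}$ is the sumset of the sets $\{z,-z\}$ over the terms $\alpha^z$ of $U'_{\la\alpha\ra}$. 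Proposition \ref{prop-regulate}.1 then delivers, in one stroke, that $\pi(U')^+$ is $K^+$-periodic and that $|U'_{K_0\setminus K}|\ge\tfrac12|U'_{\tau\la\alpha\ra}|>0$. Here the hypotheses on $V$ are the seed: $Z=X_V+\Sigma_{\lfloor\ell_V/2\rfloor}(2V^+_{\tau\la\alpha\ra})$ is $K^+$-periodic, and $|V_{K_0\setminus K}|\ge\tfrac12\ell_V$ puts at least $\ell_V/2$ terms of $2V^+_{\tau\la\alpha\ra}$ into $K^+$; by a stabilization in the spirit of Lemma \ref{lem-yelp}, carried out in $\Z/n\Z$ with $H=K^+$ (and using Lemma \ref{lem-subsums=sumset} to pass between $\Sum{i=1}{\ell'}A_i$ and the corresponding $\Sigma_{\ell'}(2U'^+_{\tau\la\alpha\ra})$), this already forces the $V$-part of $\Sum{i=1}{\ell'}A_i$ to contribute a $K^+$-periodic sumset, while the terms of $W$ that survive in $K_0\setminus K$ retain enough slack for the remaining $A_i\subset K^+$, so that adjoining them — and the summands $\{z,-z\}$ from $\la\alpha\ra$-terms — preserves $K^+$-periodicity (a sumset with a $K^+$-periodic set being $K^+$-periodic). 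With $\pi(U')^+$ being $K^+$-periodic and $|U'_{K_0\setminus K}|\ge\tfrac12|U'_{\tau\la\alpha\ra}|>0$ in hand, Proposition \ref{prop-regulate}.2 — applied with $K_0$ in the role of $H_x$ ($x=0$), $U'$ in the role of $V$, and the product-one sequence $W_0$ (supported in $K_0$) in the role of $U$ — gives $\pi(U)=\pi(W_0\bdot U')=\pi(U')$, so $1_G\in\pi(U')$ and $U'$ is product-one.

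It remains to choose $W_0$. First reserve any $|G'/K|-|\phi_K(Z)|$ terms of $W_{K_0\setminus K}$, which is possible as $|W_{K_0\setminus K}|\ge|G'/K|-|\phi_K(Z)|$; the rest of $W_{K_0}$ then still contains at least $|K|+1$ terms, and since $\mathsf d(K_0)=|K|$ — the group $K_0$ being dihedral of order $2|K|$ when $|K|\ge3$ and $K_0\cong C_2$ when $K$ is trivial — it has a nontrivial product-one subsequence $W_0$. Then $W\bdot W_0^{[-1]}$ still contains all the reserved terms, so $U'$ has at least $|G'/K|-|\phi_K(Z)|$ terms in $K_0\setminus K$; together with the $V$-part and the bound $|U|\le2n$ this yields the slack and the inequality $|U'_{K_0\setminus K}|\ge\tfrac12|U'_{\tau\la\alpha\ra}|$ used above. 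Moreover $|U'_{\tau\la\alpha\ra}|=|U_{\tau\la\alpha\ra}|-|(W_0)_{K_0\setminus K}|$ is even, since $|U_{\tau\la\alpha\ra}|$ is even ($U$ being product-one) and $W_0\in\mathcal B(K_0)$ forces $|(W_0)_{K_0\setminus K}|$ even by Corollary \ref{cor-zero-sum-defining} applied inside $K_0$; hence $1_G\in\pi(U')$ really gives $U'\in\mathcal B(G)$. Finally $U'$ is nontrivial because $V\mid U'$; the case of trivial $V$ forces $K$ trivial (as then $Z=X_V=\{0\}$ must be $K^+$-periodic, so $K_0=\la\tau\ra$) and is settled directly by splitting off $\tau^{[2]}$ and again using $|U|\le2n$. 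The degenerate case $K=G'$, where $K_0=G$, $Z=\Z/n\Z$ (as $0\in X_V\subset Z$), $|\phi_K(Z)|=1=|G'/K|$, and the strict inequalities read $|W|>|K|+1$ and $|W_{\tau\la\alpha\ra}|>0$, is handled by the same mechanism.

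The main difficulty is the combinatorial bookkeeping: $W_0$ must be at once nontrivial, product-one, and supported in $K_0$, while $U'=U\bdot W_0^{[-1]}$ stays nontrivial and retains enough terms in $K_0\setminus K$ to force both the $K^+$-periodicity of $\pi(U')^+$ (via Proposition \ref{prop-regulate}.1 and the Lemma \ref{lem-yelp}-type stabilization) and the multiplicity inequality needed for Proposition \ref{prop-regulate}.2. The thresholds $|K|+1+(|G'/K|-|\phi_K(Z)|)$ and $|G'/K|-|\phi_K(Z)|$ are calibrated precisely so that these demands are simultaneously met; reconciling them, handling the $\tau$-terms of $W$ lying outside $K_0$ (which cannot be absorbed into $W_0$), and treating the edge case $K=G'$ are where the real work lies.
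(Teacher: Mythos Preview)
Your overall architecture --- reduce to $y=0$, find a nontrivial product-one $W_0\mid W=U\bdot V^{[-1]}$ supported in $K_0$, then argue via $K^+$-periodicity (through Lemma~\ref{lem-yelp} and Proposition~\ref{prop-regulate}) that $U'=U\bdot W_0^{[-1]}$ is still product-one --- matches the paper's approach in its first case. But there is a genuine gap, which you yourself flag in the last paragraph without resolving: the terms of $W_{\tau\la\alpha\ra}$ lying \emph{outside} $\tau K$. These all survive into $U'$ (since $W_0\subset K_0$), and if there are many of them --- specifically more than the $|G'/K|-|\phi_K(Z)|$ reserved terms you keep in $\tau K$ --- then you cannot pair up $(U')^+_{\tau\la\alpha\ra}$ so that every $A_i$ meets $K^+$, your claimed inequality $|U'_{K_0\setminus K}|\ge\tfrac12|U'_{\tau\la\alpha\ra}|$ fails, and neither Proposition~\ref{prop-regulate}.1 nor the Lemma~\ref{lem-yelp} stabilization applies with $H=K^+$. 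The hypotheses of the lemma give no control on $|W_{\tau\la\alpha\ra\setminus\tau K}|$, so this case genuinely occurs.

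The paper handles exactly this by splitting on whether $|W_{\tau\la\alpha\ra\setminus\tau K}|$ is at most or at least $\ell:=|G'/K|-|\phi_K(Z)|+\epsilon$. When it is small (Case~1), your sketch is essentially what happens: one can absorb all the bad terms into a carefully constructed $W$ with $|W_{\tau K}|\ge\tfrac12|W|$, then use Lemma~\ref{lem-yelp} with $H=K^+$. When it is large (Case~2), $K^+$-periodicity is no longer enough, and the paper invokes the Partition Theorem (Proposition~\ref{thm-partition-thm-equi}) applied to $X+\Sigma_\ell(2W^+_{\tau\la\alpha\ra})$ to produce a strictly larger subgroup $H$ with $K<H\le G'$ and a coset containing almost all terms; only then does Lemma~\ref{lem-yelp} apply, now with this larger $H^+$. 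This step --- passing to a bigger stabilizer via the Partition Theorem --- is substantive and is entirely absent from your proposal. Your closing remark that ``handling the $\tau$-terms of $W$ lying outside $K_0$\ldots\ is where the real work lies'' is accurate, but that work is the heart of the argument and cannot be deferred.
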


\begin{proof}Let $\epsilon=1$ if $K=G'$ and $\epsilon=0$ otherwise. Assume by contradiction $U\in \mathcal A(G)$. By exchanging the generating $\tau$ for $\tau\alpha^y$, we can w.l.o.g. assume $y=0$, so that $K_0=\tau K\cup K$ and $K_0^+=K^+$.
Note $|\phi_{K}(Z)|\leq |G'/K|-1+\epsilon,$ so $$\ell:=|G'/K|-|\phi_{K}(Z)|+\epsilon>0.$$
By hypothesis (note $\mathsf d(K_0)=|K|$ as $K_0$ is dihedral \cite{Ge-Gr13a}),
\be\label{weelit} |(U\bdot V^{[-1]})_{K_0}|\geq \mathsf d(K_0)+1+\ell\quad\und\quad|(U\bdot V^{[-1]})_{\tau K}|\geq \ell>0.\ee
In particular,  $U\notin \Fc(\la\alpha\ra)$, ensuring that  $U$ is not the atom consisting of a single term equal to $1_G$, whence $1_G \notin\supp(U)$.
For a subsequence $T\mid U$, let $$\ell_T=|T_{\tau\la \alpha\ra}|\quad \und\quad X_T=\{x_1,-x_1\}+\ldots+\{x_t,-x_t\},\quad\mbox{ where $T_{\la \alpha\ra}^+=x_1\bdot\ldots\bdot x_t$},$$ and set $X_T=\{0\}$ if $T_{\la \alpha\ra}$ is the trivial sequence.

\smallskip
\noindent
CASE 1. \,  $|(U\bdot V^{[-1]})_{\tau\la \alpha\ra \setminus \tau K}|\leq \ell$.

If $\ell_V=0$ and $|(U\bdot V^{[-1]})_{\tau\la \alpha\ra \setminus \tau K}|=0$, then $U\notin \Fc(\la \alpha\ra)$ ensures $|(U\bdot V^{[-1]})_{\tau K}|=|U_{\tau \la\alpha\ra}|\geq 2$ is even ($|U_{\tau\la \alpha\ra}|$ must be even as $U$ is product-one).
In this case,  it follows in view of \eqref{weelit} that there exists a nontrivial product-one subsequence $W_0\mid U\bdot V^{[-1]}$ with $|W_0|\leq \mathsf d(K_0)+1$ and $W_0\in\Fc(K_0)$ such that $W_0$ does not contain all terms from $\tau K$.
This ensures that $U\bdot (V\bdot W_0)^{[-1]}$ contains an even positive number of terms from $\tau K$ (as $\ell_V=0$ and $U$ and $W_0$ are product-one), and we define $W\in \Fc(\tau K)$ to be any length two subsequence  of $(U\bdot (V\bdot W_0)^{[-1]})_{\tau K}$. In all other cases, we let  $W\in \Fc(\tau \la \alpha\ra)$ be a sequence of length $2|(U\bdot V^{[-1]})_{\tau\la \alpha\ra \setminus \tau K}|$ consisting of the terms from $(U\bdot V^{[-1]})_{\tau\la \alpha\ra \setminus \tau K}$ together with $|(U\bdot V^{[-1]})_{\tau\la \alpha\ra \setminus \tau K}|\leq \ell\leq |(U\bdot V^{[-1]})_{\tau K}|$ additional terms from $(U\bdot V^{[-1]})_{\tau K}$, which is possible in view of \eqref{weelit} and the case hypothesis. Since all terms from $(U\bdot V^{[-1]})_{\tau\la \alpha\ra \setminus \tau K}$ lie outside $K_0$, the first bound in \eqref{weelit} ensures there is a nontrivial product-one subsequence $W_0\mid U\bdot (V\bdot W)^{[-1]}$ with $W_0\in \Fc(K_0)$ and $|W_0|\leq \mathsf d(K_0)+1$.
In both cases, $W\in \Fc(\tau \la \alpha\ra)$ is a sequence of even length $\ell_W\leq 2\ell$ with $|W_{\tau K}|\geq \frac12\ell_W$ and  $V\bdot W\bdot W_0\mid U$, where $W_0$ is a nontrivial product-one sequence. Moreover, $|(V\bdot W)_{\tau \la \alpha\ra}|\geq 1$ and $\supp\big((U\bdot (V\bdot W)^{[-1]})_{\tau \la\alpha\ra}\big)\subset \tau K$.

 By hypothesis, $|V_{\tau K}|=|V_{K_0\setminus K}|\geq \frac12|V_{\tau\la \alpha\ra}|=\frac12\ell_V$. We also have $|W_{\tau K}|\geq \frac12\ell_W=\frac12 |W|$  and  $\supp\big((U\bdot (V\bdot W)^{[-1]})_{\tau \la\alpha\ra}\big)\subset \tau K$  by definition of $W$, while $Z=X_V+\Sigma_{\lfloor \ell_V/2\rfloor}(2V_{\tau \la\alpha\ra}^+)$ is $K^+$-periodic by hypothesis.  Thus Lemma \ref{lem-yelp} (applied with $U$ taken to be $2V_{\tau \la\alpha\ra}^+$, $V$ taken to be $2W^+$, $H$ taken to be $K^+$, $S$ taken to be $2U_{\tau \la\alpha\ra}^+$, and $X$ taken to be $X_V$) implies
\be\label{dd2}X_V+\Sigma_{\lfloor \ell_V/2\rfloor}(2V_{\tau\la \alpha\ra}^+)+\Sigma_{\ell_W/2}(2W^+)=X_V+\Sigma_{\lfloor \ell_S/2\rfloor}(2S^+)\ee
for any sequence $S\mid U_{\tau \la \alpha\ra}$ with $V_{\tau \la\alpha\ra}\bdot W\mid S$, with this set being $K^+$-periodic by hypothesis. Since $X_V+X_{U\bdot(V\bdot W_0)^{[-1]}}=X_{U\bdot W_0^{[-1]}}$, we derive from \eqref{dd2} that $$X_{U\bdot W_0^{[-1]}}+\Sigma_{\lfloor \ell_V/2\rfloor}(2V_{\tau\la \alpha\ra}^+)+\Sigma_{\ell_W/2}(2W^+)=X_{U\bdot W_0^{[-1]}}+\Sigma_{\lfloor \ell_S/2\rfloor}(2S^+)$$
for any sequence $S\mid U_{\tau \la \alpha\ra}$ with $V_{\tau\la\alpha\ra}\bdot W\mid S$, with this set being $K^+$-periodic by hypothesis. Considering the cases $S=(U\bdot W_0^{[-1]})_{\tau \la\alpha\ra}$ and $S=U_{\tau\la\alpha\ra}$, we find \be\label{dd3}X_{U\bdot W_0^{[-1]}}+\Sigma_{\lfloor (\ell_{U\bdot W_0^{[-1]}})/2\rfloor}(2(U\bdot W_0^{[-1]})_{\tau\la\alpha\ra}^+)=X_{U\bdot W_0^{[-1]}}
+\Sigma_{\lfloor \ell_U/2\rfloor}(2U^+_{\tau\la\alpha\ra}),\ee
 with this being a $K^+$-periodic set. By construction, the set $X_{W_0}$ consists of a sumset of sets $\{x_i,-x_i\}$ with $x_i\in\supp(W_0^+)\subset K^+$. Thus, since the quantity in \eqref{dd3} is $K^+$-periodic, we obtain
 \be\label{dd4}X_{U\bdot W_0^{[-1]}}+\Sigma_{\lfloor \frac12\ell_{U\bdot W_0^{[-1]}}\rfloor}(2(U\bdot W_0^{[-1]})_{\tau\la\alpha\ra}^+)=X_{U}
+\Sigma_{\lfloor \ell_U/2\rfloor}(2U^+_{\tau\la\alpha\ra}).\ee

Since $V\bdot W\mid U\bdot  W_0^{[-1]}$ and $|(V\bdot W)_{\tau \la \alpha\ra}|\geq 1$, \eqref{dd4} and Proposition \ref{prop-add/mult-DefiningCorrespondance} imply that $\pi(U)$ is a translate of the set $\pi(U\bdot W_0^{[-1]})$. However, since $W_0$ is  product-one, this forces them to be equal, in which case  $1_G \in \pi(U)=\pi(U\bdot W_0^{[-1]})$. Thus the factorization $U=W_0\bdot (U\bdot W_0^{[-1]})$ contradicts that $U$ is an atom (as $W_0$ is nontrivial) unless $U=W_0$. However, $|W_0|\leq \mathsf d(K_0)+1$ by construction, while \eqref{weelit} ensures that $|U|\geq |U\bdot V^{[-1]}|\geq \mathsf d(K_0)+1+\ell>\mathsf d(K_0)+1$, ensuring that $W_0\neq U$, which completes CASE 1.

\smallskip
\noindent
CASE 2. \,  $|(U\bdot V^{[-1]})_{\tau\la \alpha\ra \setminus \tau K}|\geq \ell$.

Since $\ell >0$, the case hypothesis ensures that $K<\la \alpha\ra=G'$ is a proper subgroup, forcing $\epsilon=0$.
Let $W\in \Fc(\tau \la \alpha\ra)$ be a sequence of length $2\ell$ consisting of $\ell>0$ terms from $(U\bdot V^{[-1]})_{\tau\la \alpha\ra \setminus \tau K}$ together with  $\ell>0$ terms from $(U\bdot V^{[-1]})_{\tau K}$, which exists in view of the case hypothesis and \eqref{weelit}.
Let $$V'=V\bdot (U\bdot V^{[-1]})_{\la \alpha\ra\setminus K}.$$
Since $Z=X_V+\Sigma_{\lfloor \ell_V/2\rfloor}(2V_{\tau\la\alpha\ra}^+)$ is $K^+$-periodic by hypothesis, it follows that
$$X:=
X_{V'}+\Sigma_{\lfloor \ell_V/2\rfloor}(2V_{\tau\la\alpha\ra}^+)$$ is also $K^+$-periodic with $|X|\geq |Z|$.

 Apply Proposition \ref{thm-partition-thm-equi}  to
   $X+\Sigma_\ell\Big(2(U\bdot V^{[-1]})^+_{\tau\la \alpha\ra}\Big)$ taking $L$ to be $K^+$ and using
  $2W^+\mid 2(U\bdot V^{[-1]})_{\tau \la \alpha\ra}^+$ (which has precisely $\ell$ terms equal to $0$ modulo $K^+$, and $\ell$ terms which are non-zero modulo $K^+$).
  Let $$H^+=\mathsf H\Big(X+\Sigma_{\ell}(2(U\bdot V^{[-1]})_{\tau \la \alpha\ra}^+)\Big).$$
  Note $K^+\leq H^+$ follows as $X$ is $K^+$-periodic.
  Since
$|X|+\ell|K|\geq |Z|+(|G'/K|-|\phi_{K}(Z)|)|K|=|G'|$,
Proposition \ref{thm-partition-thm-equi} ensures that $H/K$ is nontrivial (it it were trivial, then Proposition \ref{thm-partition-thm-equi}.1 must hold, in which case  the previous calculation combined with the bound in Proposition \ref{thm-partition-thm-equi}.1 forces $H=G'$, in which case $H/K=G'/K$ is nontrivial as $K<G'$ is a proper subgroup).
Regardless of whether Item 1 or 2 holds in Proposition \ref{thm-partition-thm-equi}, it follows that there is an $H^+$-coset that contains all but at most $\ell-1$ of the terms of $2(U\bdot V^{[-1]})_{\tau \la \alpha\ra}^+$: in case Item 1 holds, then $H^+=\Z/n\Z$, while if Item 2 holds, then this conclusion follows from Proposition \ref{thm-partition-thm-equi}.2(b)(d) with the desired coset equal to the coset $\alpha+H$ given by 2(b),
which fully contains all elements from some $A_i$ by 2(d).  Since there are at least $\ell$ terms from $K^+\leq H^+$ lying in $2(U\bdot V^{[-1]})_{\tau \la \alpha\ra}^+$ by \eqref{weelit}, this $H^+$-coset must equal the subgroup $H^+$.   Thus
Proposition \ref{thm-partition-thm-equi}  ensures that we can find a subsequence $W_1\mid (U\bdot V^{[-1]})_{\tau \la \alpha\ra}$  with $|W_1|=|W|=2\ell$, $\mathsf h\big(\phi_{K}(2W_1^+)\big)\leq \ell$,
 $|(2W_1^+)_{H^+}|\geq \ell=\frac12|W_1|$ and $\supp\big(2\Big(U_{\tau\la \alpha\ra}\bdot (V_{\tau \la\alpha\ra}\bdot W_1)^{[-1]}\Big)^+\big)\subset H^+$
 such that  $$X+\Sigma_{\ell}\big(2(U\bdot V^{[-1]})_{\tau \la \alpha\ra}^+\big)=X+\Sigma_{\ell}(2W_1^+)=X_{V'}+\Sigma_{\lfloor \ell_V/2\rfloor}(2V_{\tau\la\alpha\ra}^+)+\Sigma_\ell(2W_1^+)$$
 is $H^+$-periodic.
  Since $\mathsf h\big(\phi_{K}(2W_1^+)\big)\leq \ell$, it follows that at most $\ell$ terms of $W_1$ lie in $K_0$, while no terms in $V'\bdot V^{[-1]}$ lie in $K_0$ by its definition. Thus \eqref{weelit} ensures that
$U\bdot (V'\bdot W_1)^{[-1]}$ contains at least $\mathsf d(K_0)+1$ terms from $K_0$, meaning there exists a nontrivial product-one subsequence $W_0\mid U\bdot (V'\bdot W_1)^{[-1]}$ with $|W_0|\leq \mathsf d(K_0)+1$ and $W_0\in \Fc(K_0)$.
 By hypothesis,
  $|V_{\tau K}|=|V_{K_0\setminus K}|\geq \frac12|V_{\tau \la\alpha\ra}|$, meaning at least half the terms of $2V_{\tau \la\alpha\ra}^+$ lie in $(\tau K)^+=K^+\leq H^+$.
  As a result, we can apply Lemma \ref{lem-yelp} (taking $U$ to be $2V_{\tau \la\alpha\ra}^+$, taking $V$ to be $2W_1^+$, taking $H$ to be $H^+$, taking $S$ to be $2U_{\tau\la\alpha\ra}^+$, taking $X$ to be $X_{V'}$, and taking $S'$ to be
  $2U^+_{\tau \la\alpha\ra}$ as well as $2(U\bdot W_0^{[-1]})_{\tau \la \alpha\ra}^+$)
  to conclude
   \be\label{dekat}X_{V'}+\Sigma_{\lfloor \frac12\ell_{U\bdot W_0^{[-1]}}\rfloor}\big(2(U\bdot W_0^{[-1]})^+_{\tau \la\alpha\ra}\big)= X_{V'}+\Sigma_{\lfloor\ell_U/2\rfloor}(2U^+_{\tau \la \alpha\ra})=X+\Sigma_\ell(2W_1^+)\ee is $H^+$-periodic.
   By definition of $V'$, all $g\in \supp(U\bdot (V')^{[-1]})\cap \la \alpha\ra$ lie in $K$, ensuring $g^+\in K^+\leq H^+$.
   As a result, since the quantity in \eqref{dekat} is $H^+$-periodic, it follows that
   $$X_{U\bdot W_0^{[-1]}}+\Sigma_{\lfloor \frac12\ell_{U\bdot W_0^{[-1]}}\rfloor}\big(2(U\bdot W_0^{[-1]})_{\tau \la\alpha\ra}\big)= X_{U}+\Sigma_{\lfloor\ell_U/2\rfloor}(2U^+_{\tau \la \alpha\ra}),$$
    whence  $|\pi(U\bdot W_0^{[-1]})|=|\pi(U)|$ by Proposition \ref{prop-add/mult-DefiningCorrespondance}, forcing $1_G \in \pi(U)=\pi(U\bdot W_0^{[-1]})$ since both $U$ and $W_0$ are product-one.
    It follows that $U=W_0\bdot (U\bdot W_0^{[-1]})$ is a factorization of $U$ into product-one sequences, with $W_0$ nontrivial by definition.
     Since $U$ is an atom, this forces $W_0=U$. However, $|W_0|\leq \mathsf d(K_0)+1$ by definition, while $|U|\geq |U\bdot V^{[-1]}|\geq \mathsf d(K_0)+1+\ell>\mathsf d(K_0)+1\geq |W_0|$ by \eqref{weelit}, whence $W_0=U$ is impossible, completing the proof.
\end{proof}

\begin{lemma}\label{lem-greedy}
Let $G$ be an abelian group, let $A_1,\ldots,A_\ell\subset G$ be cardinality two subsets, let $X=\Sum{i=1}{\ell}A_i$, and let  $H=\mathsf H(X)$. Let $I_H\subset [1,\ell]$ be all those $i\in [1,\ell]$ with $|\phi_H(A_i)|=1$. Then there exists a subset $J\subset [1,\ell]$ with $|\Summ{i\in J}A_i|=|X|$, $J\setminus I_H=[1,\ell]\setminus I_H$, $|J\cap I_H|\leq |H|-1$, $|J\setminus I_H|\leq |\phi_H(X)|-1$ and $|J|\leq |H|+|\phi_H(X)|-2$.
\end{lemma}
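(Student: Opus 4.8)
The plan is to split off the summands outside $I_H$ exactly as the statement prescribes, and then reduce the remaining task to a purely internal ``filling'' problem inside the subgroup $H=\mathsf H(X)$, which I settle by a greedy argument whose governing quantity is the size of the partial sumset of the chosen cardinality-two sets. For the setup: for each $i\in I_H$ the set $A_i$ lies in a single $H$-coset, so fixing $a_i\in A_i$ and putting $B_i=A_i-a_i$ gives $0\in B_i\subseteq H$ with $|B_i|=2$. Let $Y_0=\sum_{i\in[1,\ell]\setminus I_H}A_i$. For any $J^*\subseteq I_H$ and $J=([1,\ell]\setminus I_H)\cup J^*$ one has $\sum_{i\in J}A_i=\bigl(\sum_{i\in J^*}a_i\bigr)+\bigl(Y_0+\sum_{i\in J^*}B_i\bigr)$, so $|\sum_{i\in J}A_i|=|Y_0+\sum_{i\in J^*}B_i|$; taking $J^*=I_H$ gives $|X|=|Y_0+\sum_{i\in I_H}B_i|$. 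Since $\sum_{i\in I_H}A_i$ lies in one $H$-coset, $\phi_H(Y_0)$ is a translate of $\phi_H(X)$, hence $|\phi_H(Y_0)|=|\phi_H(X)|$ and $\mathsf H(\phi_H(Y_0))=\mathsf H(\phi_H(X))=\{0\}$ (the last equality because $X$ is $H$-periodic with $\mathsf H(X)=H$). As $X$ and $Y_0+H$ are $H$-periodic, $|Y_0+H|=|H|\,|\phi_H(X)|=|X|$, which combined with $Y_0+\sum_{i\in I_H}B_i\subseteq Y_0+H$ forces $Y_0+\sum_{i\in I_H}B_i=Y_0+H$.

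Applying Kneser's Theorem in $G/H$ to the aperiodic sum $\phi_H(Y_0)=\sum_{i\in[1,\ell]\setminus I_H}\phi_H(A_i)$, each summand of cardinality two, gives $|\phi_H(X)|=|\phi_H(Y_0)|\ge 2m-(m-1)=m+1$, where $m=|[1,\ell]\setminus I_H|$; hence $|J\setminus I_H|\le|\phi_H(X)|-1$. It therefore suffices to produce $J^*\subseteq I_H$ with $|J^*|\le|H|-1$ and $Y_0+\sum_{i\in J^*}B_i=Y_0+H$: then $J=([1,\ell]\setminus I_H)\cup J^*$ satisfies $|\sum_{i\in J}A_i|=|Y_0+H|=|X|$, $\;J\setminus I_H=[1,\ell]\setminus I_H$, $\;|J\cap I_H|=|J^*|\le|H|-1$, and $|J|\le(|H|-1)+(|\phi_H(X)|-1)=|H|+|\phi_H(X)|-2$.

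To find $J^*$, I proceed greedily. Set $J^*_0=\emptyset$, $P_0=\{0\}$; given $J^*_t$ with $P_t=\sum_{i\in J^*_t}B_i$, stop if $Y_0+P_t=Y_0+H$. Otherwise $Y_0+P_t\subsetneq Y_0+H$, and some $i\in I_H\setminus J^*_t$ must satisfy $Y_0+P_t+B_i\neq Y_0+P_t$: if not, then $B_i\subseteq\mathsf H(Y_0+P_t)$ for every remaining $i$, whence $\sum_{i\in I_H\setminus J^*_t}B_i\subseteq\mathsf H(Y_0+P_t)$ and $Y_0+H=Y_0+\sum_{i\in I_H}B_i\subseteq(Y_0+P_t)+\mathsf H(Y_0+P_t)=Y_0+P_t$, a contradiction. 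Choosing such $i$ and putting $P_{t+1}=P_t+B_i$, the inclusion $0\in B_i$ gives $P_t\subseteq P_{t+1}$, while $Y_0+P_{t+1}\neq Y_0+P_t$ forces $P_{t+1}\neq P_t$; hence $|P_{t+1}|\ge|P_t|+1$. Since $|P_0|=1$ and every $P_t\subseteq H$, the process terminates within $|H|-1$ steps, and it cannot get stuck before terminating because $J^*_t=I_H$ would already give $Y_0+P_t=Y_0+\sum_{i\in I_H}B_i=Y_0+H$. The resulting index set is the desired $J^*$.

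The reduction in the first two paragraphs is essentially bookkeeping; the one point that must be chosen correctly is the potential monitored in the greedy step. Tracking $|Y_0+\sum_{i\in J^*}B_i|$ only bounds the number of steps by $|X|-|Y_0|$, which is far too large. The key observation is that the pure sumset $\sum_{i\in J^*}B_i$ lives inside $H$ and, because each $B_i$ contains $0$ while the full family $(B_i)_{i\in I_H}$ does reach $Y_0+H$, its cardinality strictly increases at each greedy step — which is exactly what produces the sharp bound $|H|-1$.
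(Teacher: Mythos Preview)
Your proof is correct and follows essentially the same approach as the paper: Kneser's Theorem applied modulo $H$ to the summands outside $I_H$ gives the bound $|J\setminus I_H|\le |\phi_H(X)|-1$, and a greedy argument on the summands in $I_H$ (after translating each to contain $0$) gives $|J\cap I_H|\le |H|-1$. The paper compresses the greedy step into a citation of \cite[Proposition 2.2]{Gr05c} applied to $\sum_{i\in I_H}A_i$ directly, whereas you unpack it explicitly and phrase the stopping condition as $Y_0+P_t=Y_0+H$ while still tracking $|P_t|$; this is a cosmetic variation of the same argument.
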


\begin{proof}
Note $\phi_H(A_i)$ has cardinality one or two depending on whether $i\in I_H$ or $i\in [1,\ell]\setminus I_H$. Thus Kneser's Theorem implies $|[1,\ell]\setminus I_H|\leq |\phi_H(X)|-1$, while \cite[Proposition 2.2]{Gr05c} applied to $\Summ{i\in I_H}A_i$ ensures there is a subset $I'_H\subset I_H$ with $|\Summ{i\in I'_H}A_i|=|\Summ{i\in I_H}A_i|$ and $|I'_H|\leq |\Summ{i\in I_H}A_i|-1\leq |H|-1$. Setting $J=I'_H\cup ([1,\ell]\setminus I_H)$ now yields the desired index set.
\end{proof}

\begin{proposition}\label{prop-subgroupbound}
Let $G$ be a dihedral group of order $2n$ where $n\geq 3$ is odd, say $G=\la \alpha,\,\tau:\; \alpha^n=\tau^2=1_G,\; \tau \alpha=\alpha^{-1}\tau\ra$. If $H_z=\la H,\tau\alpha^z\ra<G$ is a proper subgroup with  $H\leq \la \alpha\ra$, and  $U\in \mathcal A(G)$, then  $|U_{H_z}|\leq n+|H|-1$, with equality only possible if $H$ is trivial.
\end{proposition}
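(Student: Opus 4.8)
The plan is to show that if $|U_{H_z}|$ exceeds the asserted bound, then one can exhibit a subgroup $K\le\la\alpha\ra$ and a short subsequence $V\mid U$ satisfying the hypotheses of Lemma~\ref{lem-easycond}, forcing $U$ to be decomposable and contradicting $U\in\mathcal A(G)$. Replacing the generating pair $\{\alpha,\tau\}$ by $\{\alpha,\tau\alpha^z\}$ we may assume $z=0$, so $H_0=H\cup\tau H$ and $H_0^+=H^+$. If $|U|\le 1$ the claim is trivial, and if $U\in\Fc(\la\alpha\ra)$ then $\la\supp(U)\ra$ is abelian and $|U_{H_0}|=|U_H|\le\mathsf D(H)=|H|<n+|H|-1$; so assume $|U|\ge 2$ and $|U_{\tau\la\alpha\ra}|\ge 2$ is even. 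When $H$ is trivial, $|U|>1$ gives $1_G\notin\supp(U)$, so every term of $U_{H_0}$ equals $\tau$; applying Lemma~\ref{lem-easycond} with $K=H=\{1_G\}$, $y=0$ and $V$ the trivial sequence (so $Z=\{0\}$, $|\phi_K(Z)|=1$, $|G'/K|=n$ since $G'=\la\alpha\ra$), any value $|U_{H_0}|\ge n+1$ would satisfy both numerical hypotheses, contradicting $U\in\mathcal A(G)$; hence $|U_{H_0}|\le n=n+|H|-1$ (and this is sharp, e.g.\ for the atom $\tau^{[n]}\bdot(\alpha\tau)^{[n]}$ of Proposition~\ref{2.4}(b) with $\la\tau\ra=H_z$).

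Now let $H$ be nontrivial; I argue by induction on $|H|$ and assume for contradiction $|U_{H_0}|\ge n+|H|-1$. Since $|H|\mid n$ with $n$ odd and $H_0<G$ proper, we get $n/|H|\ge 3$, i.e.\ $n\ge 3|H|$; and by Lemma~\ref{lem-HinAlpha}, $|U_H|\le 2|H|-2$, whence $|U_{\tau H}|\ge n-|H|+1$. The key reduction is that it suffices to find a subgroup $K\le H$ with $K_0\le H_0$ and a subsequence $V\mid U$ with $V\subseteq K_0$ such that $Z:=X_V+\Sigma_{\lfloor\ell_V/2\rfloor}(2V^+_{\tau\la\alpha\ra})$ is $K^+$-periodic, $|\phi_K(Z)|$ is large, and $|V|$ is small. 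Indeed, a routine computation using $|U_H|\le 2|H|-2$, $|U_{H_0}|\ge n+|H|-1$ and $n\ge 3|H|$ shows that in this situation the two inequalities of Lemma~\ref{lem-easycond}, namely $|(U\bdot V^{[-1]})_{K_0}|\ge|K|+1+|G'/K|-|\phi_K(Z)|$ and $|(U\bdot V^{[-1]})_{K_0\setminus K}|\ge|G'/K|-|\phi_K(Z)|$, do hold (for $K=H$ and $Z=H^+$, so $|\phi_K(Z)|=1$ and $|G'/K|=n/|H|$, they reduce to $|V|\le 2|H|-2$, and one checks both budgets $|U_{H_0}|-|H|-n/|H|$ and $|U_{\tau H}|-n/|H|+1$ are at least $2|H|-2$); since $K=H<\la\alpha\ra=G'$ is proper, no strict version is needed. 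Lemma~\ref{lem-easycond} then gives that $U$ is not an atom, a contradiction.

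Producing such a pair $(K,V)$ is the core of the argument. Let $H_1^+\le H^+$ be the subgroup of $\Z/n\Z$ generated by all differences $a-b$ with $a,b\in\supp(2U^+_{\tau H})$, and let $H_1\le\la\alpha\ra$ be the corresponding subgroup. If $H_1^+=H^+$, then since $|U_{\tau H}|\ge n-|H|+1\ge 2|H|-2$ and the terms of $2U^+_{\tau H}$ have differences spanning $H^+$, an arithmetic-progression argument — bounding $\Sigma_t(W)$ from below by the sumset of the cardinality-$\le 2$ sets obtained by pairing the terms of $W=2U^+_{\tau H}$, then invoking the greedy selection of Lemma~\ref{lem-greedy} via Kneser's Theorem — extracts $V=V_{\tau H}\mid U_{\tau H}$ of length $\le 2|H|-2$ with $\Sigma_{\lfloor\ell_V/2\rfloor}(2V^+_{\tau H})=H^+$; take $K=H$ and $Z=H^+$. (In the borderline case $|U_H|=2|H|-2$ Kneser's Theorem forces $X_{U_H}=H^+$ directly, and one may instead take $V=V_H$ with $|V_H|\le|H|-1$; the parity nuisance when $|U_{\tau H}|$ is odd is absorbed by discarding one term.) If $H_1^+<H^+$ is proper, then every term of $U_{\tau H}$ lies in a single coset $\tau\alpha^{c}H_1$ with $c\in H^+$. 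If moreover $\supp(U_H)^+\subseteq H_1^+$, then $U_{H_0}\in\Fc(\la H_1,\tau\alpha^c\ra)$, a proper dihedral subgroup of $G$ with $|H_1|<|H|$, so $|U_{\la H_1,\tau\alpha^c\ra}|=|U_{H_0}|\ge n+|H|-1>n+|H_1|-1$, contradicting the inductive hypothesis. In the remaining case some term of $U_H$ lies outside $H_1$, and one takes $K=H_1$: one arranges $\Sigma_k(2V^+_{\tau H})$ to be a full $H_1^+$-coset with $|V_{\tau H}|=O(|H_1|)$ terms (the same progression argument, now inside $H_1^+$) and maximizes $|\phi_{H_1}(X_{V_H})|$ by Lemma~\ref{lem-greedy} applied in the quotient $H^+/H_1^+$, so that $Z=X_{V_H}+\Sigma_k(2V^+_{\tau H})$ is $H_1^+$-periodic with $|\phi_{H_1}(Z)|$ large enough for Lemma~\ref{lem-easycond}.

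The hard part is exactly this last case analysis: establishing cleanly the dichotomy ``either $\Sigma_{\lfloor|U_{\tau H}|/2\rfloor}(2U^+_{\tau H})$ already fills $H^+$, or one can descend to a proper dihedral subgroup'', and then, in each branch, choosing $V$ short enough that $Z$ acquires the required $K^+$-periodicity and large $|\phi_K(Z)|$ while $U\bdot V^{[-1]}$ retains enough terms inside $K_0$ and inside $K_0\setminus K$ for Lemma~\ref{lem-easycond} to apply. This balancing of the numerology is where the refined Partition Theorem (Proposition~\ref{thm-partition-thm-equi}) and Kneser's Theorem are brought fully to bear.
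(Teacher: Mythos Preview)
Your overall strategy---reduce to Lemma~\ref{lem-easycond} by locating a short subsequence $V$ for which $Z$ is periodic with respect to a suitable $K\le H$---is exactly the paper's strategy, and your treatment of the trivial-$H$ case via Lemma~\ref{lem-easycond} with $V$ empty is correct (and in fact slightly cleaner than the paper's argument, which invokes the multiplicity bound of Lemma~\ref{lem-triv-multbound} instead).

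The genuine gap is in the nontrivial-$H$ case. Your dichotomy is driven by $H_1^+=\langle a-b : a,b\in\supp(2U^+_{\tau H})\rangle$, but this invariant sees only the \emph{support} of $U_{\tau H}$ and ignores multiplicities. In the branch $H_1=H$ you assert that a short $V\mid U_{\tau H}$ with $|V|\le 2|H|-2$ can be found so that $\Sigma_{\lfloor |V|/2\rfloor}(2V^+)=H^+$; this is false in general. Concretely, take $|H|=9$, $n=27$, and suppose $\supp(U^+_{\tau H})=\{0,3\}$ with $\vp_{\tau\alpha^3}(U)=1$ and $\vp_\tau(U)$ large (which is not excluded by Lemma~\ref{lem-triv-multbound}, since the many balancing terms may lie in $\tau\langle\alpha\rangle\setminus\tau H$). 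Then the difference $6$ generates $H^+$, so $H_1=H$, yet for any $V\mid U_{\tau H}$ the sequence $2V^+$ contains at most one copy of $6$, so $\Sigma_k(2V^+)\subseteq\{0,6\}$ has size~$2$, never $|H|=9$. Your parenthetical fallback (``if $|U_H|=2|H|-2$ then $X_{U_H}=H^+$'') does not cover the general situation and is itself not justified. The third branch (``some term of $U_H$ lies outside $H_1$, take $K=H_1$'') appeals to ``the same progression argument, now inside $H_1^+$,'' which inherits the identical multiplicity obstruction.

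What the paper does differently is to work not with the support-difference group $H_1$, but with the stabilizer $L^+=\mathsf H(X_{U_H})$ together with the maximum multiplicity $m_X=\mathsf h(\phi_L(U^+_{\tau H}))$. The three-way split is: (i) $m_X$ large (then $K=L$ and $V$ is taken from $U_H$ alone via Lemma~\ref{lem-greedy}); (ii) the sumset $X+\Sigma_{\lceil\ell/2\rceil}(2U^+_{\tau H})$ is already big enough to force $|\pi(U_{H_0})|=|H|$ (then $K=H$); (iii) neither holds, and Proposition~\ref{thm-partition-thm-equi} produces an intermediate subgroup $L<K<H$ together with a setpartition whose structure gives both the $K^+$-periodicity of $Z$ and the bound on $|V|$. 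In your skewed-multiplicity example the paper lands in case~(i) with $K=L$, not $K=H$; the Partition Theorem, not just Kneser plus greedy selection, is what makes the numerology close.
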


\begin{proof}By replacing the generator $\tau$ by $\tau\alpha^z$, we can w.l.o.g. assume $z=0$.
Assume by contradiction $U\in \mathcal A(G)$ with \be\label{Ubig}|U_{H_0}|=|U_H|+|U_{\tau H}|\geq n+|H|-1+\epsilon,\ee where $\epsilon=1$ if $H$ is trivial and otherwise $\epsilon=0$.
Since $H_0<G$ is proper, it follows that $H<G'$ is proper. By \eqref{Ubig}, we have $|U|\geq n+1\geq 4$, ensuring that the atom $U\in \mathcal A(G)$ does not consist of a single term equal to $1_G$, forcing $1_G \notin\supp(U)$.
If $U\in \Fc(\la \alpha\ra)$, then $\la \supp(U)\ra$ is abelian and $U_{H_0}=U_{H}$. Thus
 $|U_{H_0}|=|U_{H}|\leq \mathsf D(H)=|H|$, again contradicting  \eqref{Ubig}.
 Therefore  $|U_{\tau\la \alpha\ra}|>0$.

Let $K\leq\la \alpha\ra$ be arbitrary. Lemma \ref{lem-HinAlpha} implies
\be\label{bigoff}|U_{K}|\leq 2|K|-2\quad\mbox{ for every $K\leq \la\alpha\ra$}, \und \ell:=|U_{\tau H}|\geq n-|H|+1+\epsilon\geq |G'/H|+1\geq 4,\ee with the latter inequality following from the former (taking $K=H$) combined with \eqref{Ubig} (and recalling that $H<G'$ is proper).
Since  $n\geq 3$ is odd and $\frac12|U_{\tau\la\alpha\ra}|\geq \frac12 \ell\geq 2$, Lemma \ref{lem-triv-multbound} gives
\be\label{trivupper}\mathsf h(2U^+_{\tau\la \alpha\ra})=\mathsf h(U_{\tau\la \alpha\ra})\leq \frac12 |U_{\tau\la \alpha\ra}|\leq n,\ee
 with the latter inequality in view of $|U_{\tau\la \alpha\ra}|\leq |U|\leq \mathsf D(G)=2n$.
If $H$ is trivial, then $H=\{1_G \}$ and $H_0=\la \tau\ra=\{1_G ,\tau\}$, so $U_{H}$ is the trivial sequence (as $1_G \notin \supp(U)$) and $\supp(U_{H_0})=\{\tau\}$. In such case, \eqref{Ubig} implies  $\vp_\tau(U)=|U_{H_0}|\geq n+1$, contradicting \eqref{trivupper}. Therefore we may now assume $H$ is nontrivial, and thus $\epsilon=0$.

 For a subsequence $T\mid U$, let $\ell_T=|T_{\tau\la \alpha\ra}|$ and $X_T=\{y_1,-y_1\}+\ldots+\{y_t,-y_t\}$, where $T_{\la \alpha\ra}^+=y_1\bdot\ldots\bdot y_t$.
Let $X=X_{U_H}$,
$\ell=\ell_{U_{H_0}}$ and $$L^+=\mathsf H(X)\leq H^+.$$
Since $\ell>0$, Proposition \ref{prop-add/mult-DefiningCorrespondance}.2 implies that $\pi(U_{H_0})^+$ is a translate of the set
$X+\Sigma_{\lfloor\ell/2\rfloor}(2U_{\tau H}^+)$.
Let $$m_X=\mathsf h\big(\phi_{L}(2U_{\tau H}^+)\big)=\mathsf h\big(\phi_{L}(U_{\tau H}^+)\big)$$
and let
$U'_{\tau H}\mid U_{\tau H}$ be a maximal length subsequence with $\mathsf h\big(\phi_{L}(2(U'_{\tau H})^+)\big)\leq \lceil \ell/2\rceil$. Note $\phi_{L}(X)$ is aperiodic as $L^+$ is the stabilizer of $X$.  As a result,   applying  Kneser's Theorem to the aperiodic sumset $\phi_{L}(X)$, which is a sumset of $|U_{H\setminus L}|$ cardinality two sets and $|U_L|$ cardinality one sets,  yields
 \be\label{kt-Xbound} |\phi_{L}(X)|\geq |U_{H\setminus L}|+1.\ee
 Combining the above bound with \eqref{bigoff}, we obtain
 \be\label{kt-Ybound} |U_{H}|=|U_{L}|+|U_{H\setminus L}|\leq 2|L|+|\phi_{L}(X)|-3.\ee
In view of \eqref{Ubig}, \eqref{kt-Ybound},  $|\phi_{L}(X)|\leq |H/L|$ (as $X\subset H^+$) and $H<G'$ proper, we have  \be\label{ellway}\ell=|U_{\tau H}|=|U_{H_0}|-|U_{H}|\geq n+|H|+2-2|L|-|\phi_{L}(X)|\geq
n-|H|+1\geq 2|H|+1.\ee
Lemma \ref{lem-easycond} and  the following claim will complete the proof by contradicting that $U$ is an atom.

\subsection*{Claim A} There is a subgroup $K_y=\la K,\tau\alpha^y\ra\leq H_0$ with  $K\leq \la \alpha\ra$  and  a subsequence $V\mid U_{H_0}$ such that   $|V_{K_y\setminus  K}|\geq \frac12 |V_{\tau H}|$, $Z$ is $K^+$-periodic,  $|(U\bdot V^{[-1]})_{K_y}|\geq |K|+1+|G'/K|-|\phi_{K}(Z)|$ and $|(U\bdot V^{[-1]})_{K_y\setminus K}|\geq |G'/K|-|\phi_{K}(Z)|$, where $Z=X_V+\Sigma_{\lfloor \ell_V/2\rfloor}(2V^+_{\tau H})$.

\medskip
To prove the claim, we distinguish three cases.

\medskip
\noindent
CASE 1. \,    $m_X\geq |U_{\tau H}|-|H/L|+2$.

Let $y\in \supp(U^+_{\tau H})$ be an element which modulo $L^+$ has  multiplicity $m_X$ in $\phi_{L}(U_{\tau H}^+)$, and set $L_y=\la L,\tau\alpha^y\ra$.
Lemma \ref{lem-greedy}  applied to the sumset $X$ finds a subsequence $V\mid U_{H}$ with
$V_{H\setminus L}=U_{H\setminus L}$, $|V_{L}|\leq |L|-1$,
$|V|\leq |L|+|\phi_{L}(X)|-2$,  and $X_V=X_{U_H}=X$.
%Thus $Z:=X_V=X$ has $\mathsf H(Z)=L$ with $|\phi_{L}(Z)|=|\phi_{L}(X)|$.
%Now \eqref{kt-Ybound} gives \be\label{sido}|U_{H'}|\leq 2|L|+|\phi_{L}(Z)|-3.\ee
Note \begin{align*}|(U\bdot V^{[-1]})_{L_y}|&\geq m_X+|U_{L}|-|V_{L}|\geq |U_{H_0}|-|U_{H\setminus L}|-|H/L|+2-|V_{L}|\\
&\geq |U_{H_0}|-|\phi_{L}(X)|-|L|-|H/L|+4\geq |U_{H_0}|-|\phi_{L}(X)|-|H|+3\\&\geq n-|\phi_{L}(X)|+2\geq |L|+|G'/L|-|\phi_{L}(X)|+1,\end{align*}
with the first inequality as $V\mid U_H$, with the second inequality by case hypothesis,  the third in view of $|V_{L}|\leq |L|-1$ and \eqref{kt-Xbound},  and the fifth in view of \eqref{Ubig}.
The case hypothesis ensures there are at most $|H/L|-2$ terms of $U_{\tau H}$ lying outside $L_y=\la L,\tau \alpha^y\ra$. Consequently, if  $|(U\bdot V^{[-1]})_{L_y\setminus L}|=|U_{L_y\setminus L}|\leq |G'/L|-1-|\phi_{L}(X)|$,
then $|U_{\tau H}|\leq (|G'/L|-1-|\phi_{L}(X)|)+(|H/L|-2)$, which combined with \eqref{kt-Ybound}  yields $|U_{H_0}|=|U_{H}|+|U_{\tau H}|\leq 2|L|+\frac{n+|H|}{|L|} -6\leq n+|H|-4,$ contrary to \eqref{Ubig}.  Therefore     $|(U\bdot V^{[-1]})_{L_y\setminus L}|=|U_{L_y\setminus L}|\geq |G'/L|-|\phi_{L}(X)|$, meaning Claim A holds with $K_y$ taken to be $L_y$ and  $Z$ taken to be $X$, contradicting that $U$ is an atom.

\medskip
\noindent
CASE 2. \, $|X+\Sigma_{\lceil \ell/2\rceil}(2U_{\tau H}^+)|\geq \min\{|H|,\,(|U'_{\tau H}|-\lceil \ell/2\rceil)|L|+|X|\}$ and $m_X\leq |U_{\tau H}|-|H/L|+1$.

Apply Proposition \ref{thm-partition-thm-equi} to $X+\Sigma_{\lceil\ell/2\rceil}(2U^+_{\tau H})$ taking $L$ to be $L^+$ and using $2(U'_{\tau H})^+\mid 2U^+_{\tau H}$.
First suppose that   $|X+\Sigma_{\lceil \ell/2\rceil}(2U_{\tau H}^+)|\geq (|U'_{\tau H}|-\lceil\ell/2\rceil)|L|+|X|$, so that Proposition \ref{thm-partition-thm-equi}.1 holds.
 Consequently, if $m_X=\mathsf h(2\phi_{L}(U_{\tau H}^+))\leq \lceil\ell/2\rceil$, then $U'_{\tau H}=U_{\tau H}$, in which case
 Proposition \ref{prop-add/mult-DefiningCorrespondance} implies that
 $|\pi(U_{H_0})|\geq
 (|U_{\tau H}|-\lceil \frac{\ell}{2}\rceil)|L|+|X|= \lfloor\ell/2\rfloor \,|L|+|X|\geq \lfloor\ell/2\rfloor\geq  |H|$, with the final inequality by \eqref{ellway}.
 On the other hand, if $m_X=\mathsf h(2\phi_{L}(U_{\tau H}^+))>\lceil\ell/2\rceil$, then there is a unique term with multiplicity  greater than $\lceil\ell/2\rceil$ in $\phi_{L}(2U_{\tau H}^+)$, and
 Proposition \ref{prop-add/mult-DefiningCorrespondance} instead implies   $|\pi(U_{H_0})|\geq
 (|U'_{\tau H}|-\lceil \frac{\ell}{2}\rceil)|L|+|X|\geq (|U'_{\tau H}|-\lceil \frac{\ell}{2}\rceil+1)|L|=(|U_{\tau H}|-m_X+1)|L|\geq|H|$, with the final inequality holding by the upper bound $m_X\leq |U_{\tau H}|-|H/L|+1$  in the hypothesis of CASE 2.
  In either case, we conclude that  $|\pi(U_{H_0})|=|H|$ (note $|\pi(U_{H_0})|\leq |H|$ holds trivially as $\supp(U_{H_0}^+)\subseteq H_0^+= H$, ensuring the inequality in Proposition \ref{thm-partition-thm-equi}.1 must hold with equality), and so Proposition  \ref{thm-partition-thm-equi} ensures  there is a setpartition of cardinality two sets realizing $X+\Sigma_{\lceil \ell/2\rceil}(2U_{\tau H}^+)$ as a sumset.
  If instead $|X+\Sigma_{\lceil \ell/2\rceil}(2U_{\tau H}^+)|< (|U'_{\tau H}|-\lceil \ell/2\rceil)|L|+|X|$ and $|\pi(U_{H_0})|=|H|$, then Proposition \ref{thm-partition-thm-equi}.2 yields these same conclusions.
  Thus in both cases,  Lemma \ref{lem-greedy} applied to the sumset $X=X_{U_H}$ and \cite[Proposition 2.2]{Gr05c} applied to $\phi_{L}(2U^+_{\tau H})$ (via the setpartition realizing $X+\Sigma_{\lceil \ell/2\rceil}(2U_{\tau H}^+)$ considered modulo $L^+$) give us a subsequence $V\mid U_{H_0}$ with $|V|=|V_{H}|+|V_{\tau H}|\leq (|L|+|\phi_{L}(X)|-2)+\max\{2,\,2(|H/L|-|\phi_{L}(X)|)\}\leq  2|H|-2$   such that $|\pi(V)|=|\pi(U_{H_0})|=|H|$.
  Indeed, \cite[Proposition 2.2]{Gr05c} (which is simply the greedy algorithm) ensures we need keep at most  one cardinality two set for each element of $\phi_{L}\big(X+\Sigma_{\lceil \ell/2\rceil}(2U_{\tau H}^+)\big)$ in excess of the original $|\phi_{L}(X)|$ elements  from $\phi_{L}(X)$, and thus $|V_{\tau H}|\leq 2\Big(|\phi_{L}\big(X+\Sigma_{\lceil \ell/2\rceil}(2U_{\tau H}^+)\big)|-|\phi_{L}(X)|\Big)\leq 2(|H/L|-|\phi_{L}(X)|)$. However,  in order to ensure $V_{\tau \la \alpha\ra}$ is nonempty when $|X|=|H|$ (so that we can apply Proposition \ref{prop-add/mult-DefiningCorrespondance} to conclude the resulting sumset has the same cardinality as $|\pi(V)|$), we always include at least $2\leq \ell$ terms from some  cardinality two set, resulting in  $|V_{\tau H}|\leq \max\{2,\, 2(|H/L|-|\phi_{L}(X)|)\}$.
  But now \eqref{Ubig} yields $|(U\bdot V^{[-1]})_{H_0}|\geq n-|H|+1\geq |H|+|G'/H|$, with the latter inequality following as $H$ is a proper, nontrivial subgroup of the odd order group  $G'$ (forcing $|G'|\geq 9$), while \eqref{ellway} then implies $|(U\bdot V^{[-1]})_{\tau H}|\geq \ell-2(|H/L|-|\phi_{L}(X)|+1)
  \geq n+|H|+1-2|L|-2|H/L|
  \geq n-|H|-1\geq |G'/H|-1$.  So Claim A  holds with $K_y=H_0$, contradicting that $U$ is an atom.

\medskip
\noindent
CASE 3. \,  $|X+\Sigma_{\lceil \ell/2\rceil}(2U_{\tau H}^+)|<\min\{|H|,\,(|U'_{\tau H}|-\lceil \ell/2\rceil)|L|+|X|\}$.

In view of the case hypothesis, we can
apply Proposition \ref{thm-partition-thm-equi}.2 to $X+\Sigma_{\lceil\ell/2\rceil}(2U^+_{\tau H})$ taking $L$ to be $L^+$ and using $2(U'_{\tau H})^+\mid 2U^+_{\tau H}$.
  Let $y+K^+$ be the resulting coset with $K/L<H/L$ proper and nontrivial (so $y+K^+$ is the coset $\alpha+H$ from Proposition \ref{thm-partition-thm-equi}.2; note $K/L$ is proper in view of the case hypothesis $|X+\Sigma_{\lceil \ell/2\rceil}(2U_{\tau H}^+)|<|H|$), and set $K_y=\la K,\tau\alpha^y\ra\leq H_0$. In particular, $K$ is nontrivial and $|G'|\geq 3^3=27$.
  In this case, Lemma \ref{lem-greedy} along with \cite[Proposition 2.2]{Gr05c} applied to the sumset given by Proposition \ref{thm-partition-thm-equi}.2(d) yields a subsequence $V\mid U_{H_0}$ with \be\label{dino}|V|=|V_{H}|+|V_{\tau H}|\leq (|L|+|\phi_{L}(X)|-2)+2(|K/L|-1)= |L|+2|K/L|-4+|\phi_{L}(X)|\ee and  $\pi(V)^+$ being $K^+$-periodic.
  Moreover, as the subsequence $V_{\tau H}$ is that partitioned by a sub-setpartition of the one given by Proposition \ref{thm-partition-thm-equi} (cf. \cite[Proposition 2.2]{Gr05c}),  we have  $|V_{K_y\setminus K}|\geq \frac12 |V_{\tau H}|$, for each cardinality two subset must contain at least one term from $y+K^+=(K_y\setminus K)^+$ by Proposition \ref{thm-partition-thm-equi}.2(b).  Proposition \ref{thm-partition-thm-equi}.2(c) ensures that at most $|H/K|-2$ of the terms of $U_{\tau H}$ are not in $K_y$ (as $(K_y\setminus K)^+=y+K^+$). Thus \begin{align*}|(U\bdot V^{[-1]})_{K_y\setminus K}|\geq |U_{\tau H}|-(|H/K|-2)-2(|K/L|-1)\geq n-|H|+5-2|K/L|-|H/K|\\\geq n-|H|+5-2|K|-|H/K|\geq \frac23 |G'|+5-2|K|-\frac13|G'/K|\geq |G'/K|-1,\end{align*} with the second inequality above in view of \eqref{ellway}.
 By construction (cf. Lemma \ref{lem-greedy}), $V_{H}$ contains all terms of $U_{H}$ lying outside $L$, so all terms in
 $U_{H}\bdot V_{H}^{[-1]}$ lie in $L\leq K\leq K_y$. Combining this with \eqref{dino}, \eqref{Ubig} and the already observed fact that there are  at most $|H/K|-2$  terms from $U_{\tau H}$ lying outside $K_y$  implies \begin{align*}|(U\bdot V^{[-1]})_{K_y}|&\geq |U_{H_0}|-|H/K|+2-|V|\geq |U_{H_0}|-|H/K|-|L|-2|K/L|+6-|H/L| \\  &\geq n+|H|-|H/K|-|L|-2|K/L|-|H/L|+5 \\&\geq n+|H|-|H/K|-2|K|-|H|+4 \geq |G'|-\frac13|G'/K|-2|K|+4\\&\geq |K|+|G'/K|,\end{align*} where the final inequality follows as $K$ is nontrivial with $K<H<G'$.
Therefore  Claim A holds with $K_y$ as defined above, contradicting that $U$ is an atom. This completes CASE 3 and the proof.
\end{proof}

\smallskip
\begin{proof}[Proof of Theorem \ref{4.1}]
Let $G$ be a dihedral group of order $2n$ where $n \ge 3$ is odd, say $G=\la \alpha,\,\tau:\; \alpha^n=\tau^2=1_G,\; \tau \alpha=\alpha^{-1}\tau\ra$.

To see $\omega(G)\geq 2n$, let $U=((\tau \alpha)\bdot \tau)^{[n]}\in \mathcal A(G)$. Then $U\bdot U=(\tau\bdot\tau)^{[n]}\bdot (\tau \alpha\bdot \tau \alpha)^{[n]}$ is a factorization into $2n$ length two atoms. Suppose $U$ divides (in $\mathcal B(G)$) a sub-product $S\in\mathcal B(G)$ of these length two atoms. Then $S$ must have an even number of copies of both $\tau \alpha$ and $\tau$, ensuring that $S\bdot U^{[-1]}\in \mathcal B(G)$ contains an odd number of both $\tau \alpha$ and $\tau$ (as $n$ is odd). However, it is readily seen (cf. Lemma \ref{lem-triv-multbound}) that $\tau^{[2]}$, $(\tau\alpha)^{[2]}$ and $U$ are the only atoms with support contained in $\{\tau \alpha,\tau\}$. In particular, $U$ is the only atom with support contained in $\{\tau \alpha,\tau\}$ having an odd number of copies of
 $\tau$ and $\tau \alpha$. Thus $S\bdot U^{[-1]}=U$, ensuring $S$ must be the sub-product of all $2n$ length two atoms, which shows $\omega(G)\geq 2n$.

It remains to show $\omega(G)\leq 2n$. To this end, suppose $U,U_1,\ldots,U_w\in \mathcal A(G)$ are atoms with $U\mid_{\mathcal B(G)} U_1\bdot\ldots\bdot U_w$, i.e., $U\mid U_1\bdot\ldots\bdot U_w$ and $(U_1\bdot\ldots\bdot U_w)\bdot U^{[-1]}\in \mathcal B(G)$. We need to show there exists a subset $J\subset [1,w]$ with $U\mid_{\mathcal B(G)} \prod^{\bullet}_{i\in J}U_i$ and $|J|\leq 2n$.

Since $U\mid U_1\bdot\ldots\bdot U_w$, let $I_\varnothing\subset [1,w]$ be a minimal cardinality subset with $U\mid \prod_{\in I_\varnothing}^{\bullet}U_i$. In view of the minimality of $|I_\varnothing|$, we have $|I_\varnothing|\leq |U|\leq \mathsf D(G)=2n$.
Thus, if $V_\varnothing:=U^{[-1]}\bdot \prod^\bullet_{i\in I_\varnothing}U_i$ is product-one, then the proof is complete taking $J$ to be $I_\varnothing$. Therefore we may assume $$1_G \notin \pi(V_\varnothing).$$
Since both  $U$ and  $\prod_{i\in I_\varnothing}^\bullet U_i$  are product-one sequences, it follows that $2\ell_\varnothing:=|(V_\varnothing)_{\tau \la\alpha\ra}|$ must be even with  $\pi(V_\varnothing)\subset G'$.
Since $U\mid_{\mathcal B(G)} \prod^\bullet_{i\in [1,w]}U_i$, we have $1_G \in \pi(U^{[-1]}\bdot \prod^\bullet_{i\in [1,w]}U_i)=\pi(V_\varnothing\bdot \prod^\bullet_{i\in [1,w]\setminus I_\varnothing}U_i) $. In consequence, if $\supp(V_\varnothing\bdot \prod^\bullet_{i\in [1,w]\setminus I_\varnothing}U_i)\subseteq \la \alpha\ra$, then $1_G \in \pi(V_\varnothing)$ (as $\la \alpha\ra$ is abelian), contrary to assumption. As a result, if $\ell_\varnothing>0$, then set $I_\emptyset=I_\varnothing$, and otherwise set $I_\emptyset=I_\varnothing\cup \{i_\emptyset\}$, where $i_\emptyset\in [1,w]\setminus I_\varnothing$ is an index with $\supp(U_{i_\emptyset})\cap \tau \la\alpha\ra$ nonempty.

Let $I\subset [1,w]$ be an arbitrary subset with $I_\emptyset\subset I$. Set
 $V_\emptyset=U^{[-1]}\bdot \prod_{\in I_\emptyset}^{\bullet}U_i\in \mathcal F(G)$ and
 $V=U^{[-1]}\bdot \prod_{\in I}^{\bullet}U_i\in \mathcal F(G)$.
Since   $U$, \ $\prod_{i\in I}^\bullet U_i$ and  $\prod_{i\in I_0}^\bullet U_i$ are product-one sequences, $2\ell:=|V_{\tau \la\alpha\ra}|$ and $2\ell_\emptyset=|(V_\emptyset)_{\tau\la\alpha\ra}|$ must both be even with  $\pi(V)\subset G'$.
 Let $$X=\{x_1,-x_1\}+\ldots+\{x_{s},-x_{s}\},$$ where $V^+_{\la\alpha\ra}=x_1\bdot\ldots\bdot x_{s}.$ By construction, $\ell\geq \ell_\emptyset>0$. Thus Proposition \ref{prop-add/mult-DefiningCorrespondance} ensures that $$\pi(V)^+=X+\Sigma_{\ell}(2V_{\tau \la\alpha\ra}^+)-\sigma(V_{\tau \la\alpha\ra}^+).$$
Let $H^+=\mathsf H(\pi(V)^+)$ and $L^+=\mathsf H(X)$ (set $L^+=X=\{0\}$ if $s=0$). Note $L\leq H$.
If $\mathsf h(V_{\tau \la\alpha\ra})\leq \ell$, let $\ell'=\ell$. Otherwise, let $\ell'=2\ell-\mathsf h(V_{\tau \la\alpha\ra})$. Note $\ell'\geq 0$ is the maximal integer for which there is a decomposition $V_{\tau \la\alpha\ra}^+=T_1\bdot\ldots\bdot T_{\ell}$ with
$|T_i|=2$  for all $i\in [1,\ell]$ and $|\supp(T_i)|=2$ for all $i\leq \ell'$. Likewise, if $\mathsf h(\phi_{L}(V_{\tau \la\alpha\ra}^+))\leq \ell$, let $\ell_{L}=\ell$. Otherwise, let $\ell_{L}=2\ell-\mathsf h(\phi_{L}(V_{\tau\la\alpha\ra}^+))$.
Since $U\mid \prod_{\in I_\varnothing}^{\bullet}U_i$, we have decompositions $U_i=W_i\bdot W^U_i$, for $i\in I_\varnothing$, with $\prod_{i\in I_\varnothing}^\bullet W_i=V_\varnothing$ and $\prod_{i\in I_\varnothing}^\bullet W^U_i=U$. Note $|W^U_i|>0$ for all $i\in I_\varnothing$ in view of the minimality of $|I_\varnothing|$. Let $W_i^U$ be the trivial sequence with  $W_i=U_i$ for $i\in I\setminus I_\varnothing$.
Partition $I=I_\varnothing^{2}\cup I^\alpha\cup I^{\alpha\tau}\cup I^\tau$,  where $I^{2}_\varnothing\subset I_\varnothing$ consist of all $i\in I_\varnothing$ with $|W_i^U|\geq 2$, where $I^\alpha\subset I$ consists of all $i\in I\setminus I_\varnothing^2$ with $\supp(W_i)\subset \la \alpha\ra$, where $I^\tau\subset I$ consists of all $i\in I\setminus I_\varnothing^2$ with $\supp(W_i)\subset \tau\la \alpha\ra$, and where $I^{\alpha\tau}\subset I$ consists of all $i\in I\setminus I_\varnothing^2$ with $\supp(W_i)\cap  \la \alpha\ra$ and $\supp(W_i)\cap  \tau \la \alpha\ra$ both nonempty. Let $I_\varnothing^1=I_\varnothing\setminus I_\varnothing^2$. Let $I_\varnothing=I_\varnothing^{2}\cup I_\varnothing^\alpha\cup I_\varnothing^{\alpha\tau}\cup I_\varnothing^\tau$ and $I_\emptyset=I_\varnothing^{2}\cup I_\emptyset^\alpha\cup I_\emptyset^{\alpha\tau}\cup I_\emptyset^\tau$ be the analogous partitions for $I_\varnothing$ and $I_\emptyset$.
To simplify notation, we have suppressed the dependency on $I$ of $V$, $\ell$, $X$, $s$, $H$,  $L$, and $\ell'$  from the notation. In the case when $I=I_\emptyset$, we denote these parameters by $V_\emptyset$, $\ell_\emptyset$, $X_\emptyset$, $s_\emptyset$, $H_\emptyset$, $L_\emptyset$ and $\ell'_\emptyset$.

Since $U\mid \prod_{i\in I_\varnothing}^\bullet U_i$ and $1_G \notin \pi(V_\varnothing)$, it follows that $\supp(\prod_{i\in I_\varnothing}^\bullet U_i)\cap \tau \la\alpha\ra$ is nonempty. In particular, if $I_\varnothing\neq I_\emptyset$, then there must be some $U_{i}$ with $\supp(U_{i})\cap \tau\la\alpha\ra$ nonempty and $i\in I_\varnothing$. Moreover, as $U_i$ is product-one, it must then have an even number of terms from $\tau \la \alpha\ra$, all of which are not contained in $V_\varnothing$ as $I_\varnothing\neq I_\emptyset$, whence $i\in I_\varnothing^2$, ensuring $I_\varnothing^2$ is nonempty.
In summary, $I_\varnothing^2\neq \varnothing$  when $I_\varnothing\neq I_\emptyset$.
In particular, if $I_\varnothing \neq I_\emptyset$, then $|I_\emptyset|=|I_\varnothing|+1\leq (|U|-|I_\varnothing^2|)+1\leq |U|\leq 2n$. Thus we can also assume \be\label{V0not1}1_G \notin \pi(V_\emptyset),\ee for otherwise the proof is complete taking $J=I_\emptyset$.

\smallskip

\textbf{Remark:} The proof contains three interludes referred to as Claims A, B and C. The proofs of these claims contain some of the key points in our argument. However, to better see the overall structure of the proof and the role these claims play in it, and also to reduce the  interruption in flow of the main line of arguments, we merely introduce their respective statements during the course of the proof and defer their proofs until the very end. We begin doing so with Claim A.

\subsection*{Claim A} $\ell'_\varnothing\geq \min\{\ell_\varnothing,\,|I_\varnothing^\tau|\}\geq \frac12 |I_\varnothing^\tau|$.

\bigskip

Continuing in the main line of argument, we say the set $I\subset [1,w]$ (containing $I_\emptyset$) is \emph{ample} if
the following hold:
\begin{itemize}
\item[A1.] $|\pi(V)|\geq \left\lfloor\frac12|I_\varnothing^{1}|\right\rfloor+1+|I\setminus I_\varnothing|$.
\item[A2.] $|X|\geq |X_\varnothing|+|I^\alpha\setminus I_\varnothing^\alpha|$.
\end{itemize}

We will say the  set $I$ (containing $I_\emptyset$) is \emph{constrained} or (more specifically) \emph{$H_x$-constrained} if there exists a subgroup $H_x=\la H,\tau \alpha^x\ra\leq G$, as well as a decomposition $V_{\tau \la\alpha\ra}^+=T_1\bdot\ldots\bdot T_{\ell}$ with $|T_i|=2$  for all $i\in [1,\ell]$ such that, letting $A_i=\supp(2T_i)$ for $i\in [1,\ell]$, the following hold:
\begin{itemize}
\item[C1.]  $X+\Sum{i=1}{\ell}A_i$ is $H^+$-periodic.
\item[C2.] $A_i\cap (2x+H^+)\neq \emptyset$ for all $i\in [1,\ell]$.
\item[C3.] There is a $j\in [1,\ell]$ with $|X+\underset{i\neq j}{\Sum{i=1}{\ell}}A_i|=|X+\Sum{i=1}{\ell}A_i|$.
\item[C4.] $|X|\geq |X_\varnothing|+|I^\alpha\setminus I_\varnothing^\alpha|$.
\end{itemize}

Conditions C1 and C2 allow us to apply Proposition \ref{prop-regulate}.1 to conclude $X+\Sum{i=1}{\ell}A_i=X+\Sigma_{\ell}(2V^+_{\tau \la\alpha\ra})=\pi(V)^++\sigma(V^+_{\tau\la\alpha\ra})$, in which case
$H^+=\mathsf H(X+\Sum{i=1}{\ell}A_i)=\mathsf H(\pi(V)^+)$ by definition of $H$.
Note that $\phi_{H}(X+\Sum{i=1}{\ell}A_i)=\phi_{H}(\{x_1,-x_1\})+\ldots+\phi_{H}(\{x_s,-x_s\})+\Sum{i=1}{\ell}\phi_{H}(A_i)$ is a sumset of cardinality at most two sets. Since $n$ is odd, the set $\{x_i,-x_i\}$ considered modulo $H^+$ has cardinality two precisely when $x_i\notin H^+$, while the set $\phi_{H}(A_i)$ has cardinality two (in view of C2) precisely when $T_i$ consists of one term from $x+H^+$ with its other term lying outside $x+H^+$. As a result, $|V_{G\setminus H_x}|$ equals the number of cardinality two summands in the sumset $\phi_{H}(\{x_1,-x_1\})+\ldots+\phi_{H}(\{x_s,-x_s\})+\Sum{i=1}{\ell}\phi_{H}(A_i)$, in which case Kneser's Theorem implies $|X+\Sum{i=1}{\ell}A_i|\geq (|V_{G\setminus H_x}|+1)|H|$.
 In summary, Conditions C1 and C2 imply
\be\label{extra-C} X+\Sum{i=1}{\ell}A_i=X+\Sigma_{\ell}(2V^+_{\tau \la\alpha\ra})=\pi(V)^++\sigma(V^+_{\tau\la\alpha\ra})\quad\und\quad|X+\Sum{i=1}{\ell}A_i|\geq (|V_{G\setminus H_x}|+1)|H|.\ee
Kneser's Theorem (applied modulo $H^+$) ensures that $A_j\subset 2x+H^+$ for any $j\in [1,\ell]$ satisfying C3.  As we trivially have $|X+\Sum{i=1}{\ell}A_i|\leq n$, with equality only possible when $H^+=\Z/n\Z$, \eqref{extra-C} implies \be\label{extra-C-index}|V_{G\setminus  H_x}|\leq |G'/H|-2+\epsilon,\ee where $\epsilon=0$ if $H^+<\Z/n\Z$ is proper, and $\epsilon=1$ if $H^+=\Z/n\Z$.

\medskip

\textbf{Remark:} In order to help the reader digest the overall scope of the proof, we provide some informal remarks here on its structure. These remarks are not essential to the proof, but may help in explaining the general strategy. As we saw in \eqref{V0not1}, having $1_G \in \pi(V_\emptyset)$ would complete the proof. Indeed, having $1_G \in \pi(V)$ when $|I|\leq 2n$ would complete the proof.  The basic idea is to increase the size of $I$ incrementally, starting with $I_\emptyset$ and the corresponding sequence $V_\emptyset$, each time also increasing the size of $|\pi(V)|$. If we can do this, then eventually we obtain $1_G \in G'=\pi(V)$, completing the proof. The difficulty is in showing $|\pi(V)|$ grows quickly enough that we hit the maximal possible value $|\pi(V)|=|G'|=n$ before including too many terms into $I$. This leads us to the definition of ample. When $I$ is ample, we are growing $|\pi(V)|$ fast enough relative to the size of $|I|$, as specified in A1, to be on track to get $1_G \in \pi(V)$ eventually with $|I|\leq 2n$. The bound A1 involves the portion $\left\lfloor\frac12|I_\varnothing^{1}|\right\rfloor+1$, which is how much the terms from $I_\varnothing$ \emph{ought} to contribute for us to be on track, as well as an additional increase in the bound corresponding to an increase of one for each term $i\in I\setminus I_\varnothing$. Of course, it is possible for  A1 to fail initially for $I_\varnothing$ but have later terms $i\in I\setminus I_\varnothing$ increase the size more than they were expected to, offsetting the original deficiency in size, and resulting in A1 holding for the overall sequence $V$.

Note the small discrepancy between $I_\varnothing$ and $I_\emptyset$. We need at least one term from $\tau\la \alpha\ra$ in $V$ for our arguments to work, so if there is none to begin with when considering the initial index set $I_\varnothing$, then we must include an extra term forming $I_\emptyset$  to artificially induce this hypothesis. While this costs us an available $U_i$ with $i\notin I_\emptyset$ that could later be used to increase the size of $|\pi(V)|$, it is balanced by increasing the size of $I_\varnothing^2$. The terms from $I_\varnothing^2$ in $I$ do not need to contribute to the size of $\pi(V)$ (as described in A1). Instead, for each increase in the size of $I_\varnothing^2$ we get an increase in the number of available terms in $[1,w]\setminus I_\varnothing$ (corresponding to the size of $I_\varnothing$ decreasing), as we saw when deriving \eqref{V0not1}. This is sufficient contribution that we don't require more from them in A1.

Our arguments for increasing the size of $|\pi(V)|$ work on two levels, using Kneser's Theorem to increase the size of $|X|$ (which is the contribution we get using element from $\la \alpha\ra$) and using the Partition Theorem derived results to increase the size of $|\Sigma_\ell(2V^+_{\tau\la\alpha\ra})|$ (which is the contribution we get using elements from $\tau\la \alpha\ra$. For the latter to be effective, we must be able to control the maximum multiplicity of a term from $2V^+_{\tau\la \alpha\ra}$, enough to get a sufficiently large $\ell'$ value, as our increase for the size of $|\Sigma_\ell(2V^+_{\tau\la\alpha\ra})|$ is tied directly to the size of $\ell'$. This explains the importance of Claim A, as it ensures our $\ell'$ starts off at a reasonable level.
We use Lemma \ref{lem-triv-multbound} to control the maximum multiplicity, but this will only give us effective bounds when restricted to the subsequence of $V_\varnothing$ indexed by $I_\varnothing^1$.
As a brief example illustrating the potential problem, simply imagine we have $i\in I_\varnothing$ with $x^{[2]}\bdot y^{[2]}$ being the subsequence corresponding to terms from $\tau\la \alpha\ra$ contained in $U_i$. If $i\in I_\varnothing ^1$, then w.l.o.g. $x^{[2]}\bdot y^{[1]}$ is the contribution of this index $i$ to the larger sequence $2V^+_{\tau\la \alpha\ra}$, which contains at least one pair of distinct elements, thus increasing the size of $\ell'$.  On the other hand, if $i\in I^2_\varnothing$, then the contribution might instead be $x^{[2]}$, contributing no  pair of distinct elements, with no resulting increase to the size of $\ell'$ (potentially).

The condition A2/C4 in the definition of ample is to ensure we don't wastefully include terms $i$ in $I$ that add no contribution to the size of $\pi(V)$, at least not terms $i$ with $U_i$ having all terms from $\la \alpha\ra$. What we don't want is an unexpected increase in the size of $|\pi(V)|$ resulting from terms $\tau\la\alpha\ra$ to offset terms from $I^\alpha$ that provide no increase at all.  While the unexpected increase might temporarily offset the wasteful inclusion of terms into $I^\alpha$, it is possible at a latter stage, once $I$ has been increased in size further, that this margin for error will evaporate and these added terms will now drag down the increase in size for $|\pi(V)|$.

Of course, we may fail to achieve the necessary growth described in A1. When this happens, we utilize structural characterizations for sequences and sumsets failing to yield the required growth to give us \emph{local} structural control over the terms in $V$. The conditions described in the notion of "constrained" contain much of the structure we will need, as well as that which is described in Claim B (which helps us in the case A1 doesn't hold with strict inequality). In CASE 1, we assume we begin achieving sufficient growth for $|\pi(V)|$, but that at some point this growth stalls. This forces equality in A1, in which case either we get the structure of $V$ from $I$ being constrained (CASE 1.2) or from Claim B (CASE 1.1). In either case, we then use the structure of $V$, along with the inability to add any further index to $I$ and increase the size of $|\pi(V)|$, to gain control over the \emph{global} structure of \emph{all} terms not contained in $U$, not just those in $V$, which allows us to derive the desired conclusion. The other possibility is contained in CASE 2, where it is assumed every interval $I$ fails to have the required growth. This forces us into the structural description given by being constrained for our initial interval $I_\emptyset$, as well as for all extensions of it. The main complication is to make sure the structural conditions we get for $I$ being constrained align with those we get once to increase the size of $I$ by adding a new index.
If they do not, then the  increment from $I$ to the next interval will completely change the means we  using to measure how quickly $|\pi(V)|$ grows, throwing into question whether we are consistently increasing the size of $|\pi(V)|$ fast enough. To ensure the structural conditions align,  we will introduce the conditions given in D1--D4 later in the proof.
We also need the alternative way to measure the size of $|\pi(V)|$ to ensure our incremental increase in the size of $I$ stops before $I$ grows too large. For this, we utilize that the structural conditions obtained from $I$ being constrained force all but a small number of exceptional terms of $\tau\la \alpha\ra$ to be from the same coset of the stabilizer. If there are no more such exceptional terms among all the $U_i$ with $i\notin I$, then Proposition \ref{prop-regulate} will allow us to shift the global condition $1_G \in \pi(U^{[-1]}\bdot U_1\bdot\ldots\bdot U_w)$  into the needed local one $1_G \in \pi(V)$.
If not, we will be able to increase the size of $\pi(V)$ by including a new index into $I$, and this increase will be controlled (one of the key parts of the argument) via the conditions D1--D4 to ensure the process stops soon enough.
\bigskip

With this briefest  outlines of the overall strategy of the proof, we continue by next  introducing the second claim whose proof is deferred till later, and continue by dividing the main proof into three subcases: CASE 1.1, CASE 1.2 and CASE  2.

\subsection*{Claim B} If $|\pi(V)|\leq \left\lfloor\frac12|I_\varnothing^{1}|\right\rfloor+1+|I\setminus I_\varnothing|$ and C4 holds but  $I$ is not constrained, then $\ell=\ell'$, $|I_\varnothing^1|$ is even, $|\pi(V)|= \frac12|I_\varnothing^{1}|+1+|I\setminus I_\varnothing|=\ell+1$, $L$ is trivial, $|I^{\alpha\tau}|=|I^\alpha|=0$, $X=\{0\}$, and $|U_i|=2$ for every $i\in I^\tau$. Moreover,  $\supp(V_{\tau \la\alpha\ra}^+)\subset \{x-d,x,x+d\}$ with $\vp_x(V_{\tau \la\alpha\ra}^+)=\mathsf h(V^+_{\tau \la\alpha\ra})=\ell\leq \ord(d)-1$, for some $x,d\in \Z/n\Z$.

\bigskip

\noindent
CASE 1. \,  There exists an ample subset $I\subset [1,w]$.

We may w.l.o.g. assume $|I|$ is maximal among all ample subsets. By definition of $I_\varnothing^2$, we have $2n\geq |U|\geq 2|I_\varnothing^2|+|I_\varnothing^1|$. Hence, by A1, we have
$n\geq |\pi(V)|\geq \frac12|I_\varnothing^{1}|+\frac12+|I\setminus I_\varnothing|=|I|+\frac12-\frac12|I_\varnothing^1|-|I_\varnothing^2|\geq |I|+\frac12-n$, implying $|I|\leq 2n-1$. If $|\pi(V)|=n$, then $\pi(V)=G'$ follows. In particular, $1\in \pi(V)$, ensuring that $U\mid_{\mathcal B(G)} \prod_{i\in I}^\bullet U_i$ with $|I|\leq 2n-1$, and the proof is complete. Therefore we may assume $|\pi(V)|\leq n-1$, in which case the above estimates improve to $|I|\leq 2n-2$. We must have $I\subset [1,w]$ proper; otherwise $U\mid_{\mathcal B(G)}\prod_{i\in [1,w]}^\bullet U_i=\prod_{i\in I}^\bullet U_i$ with $|I|\leq 2n-2$, and the proof is again complete.

Let $j\in [1,w]\setminus I$ be arbitrary. The  maximality of $|I|$ ensures that $I_j=I\cup \{j\}$ is not ample, meaning either A1 or A2 fails. Let $V_j$, $\ell_j$, $X_j$  and $I_j^\alpha$ be the respective quantities  $V$, $\ell$, $X$ and $I^\alpha$ for the set $I_j$.
Suppose there is some $j\in [1,w]\setminus I$ and $g\in \supp(U_j)\cap \la\alpha\ra$ with $g\notin H$.
Since $H^+=\mathsf H(X+\Sigma_\ell(2V_{\tau \la\alpha\ra}^+))$, it follows that $\phi_{H}(X)$ and $\phi_{H}(X+\Sigma_\ell(2V_{\tau \la\alpha\ra}^+))$ are both aperiodic.
Thus Kneser's Theorem implies that $|X_j|\geq |\{g^+,-g^+\}+X|>|X|$ and $|\pi(V_j)|\geq |\{g^+,-g^+\}+X+\Sigma_\ell(2V_{\tau \la\alpha\ra}^+)|>|X+\Sigma_\ell(2V_{\tau \la\alpha\ra}^+)|=|\pi(V)|$, so that A1 and A2 holding for $I$ ensures they hold for $I_j$, contradicting the maximality of $|I|$.
So we instead conclude that  \be\label{H-stab}\supp\big({\prod}_{i\in [1,w]\setminus I}^\bullet U_i\big)\cap \la \alpha\ra\subset H.\ee
As a result, if $\supp(U_i)\subset \la \alpha\ra$ for all $i\in [1,w]\setminus I$, then  $\pi(U^{[-1]}\bdot \prod^\bullet_{i\in [1,w]}U_i)^+=\pi(V\bdot \prod^\bullet_{i\in [1,w]\setminus I}U_i)^+=\pi(V)^+$ follows from Proposition \ref{prop-add/mult-DefiningCorrespondance}.2.
Thus, since $1_G \in  \pi(U^{[-1]}\bdot \prod^\bullet_{i\in [1,w]}U_i)$ in view of the hypothesis that $U\mid_{\mathcal B(G)} \prod^\bullet_{i\in [1,w]}U_i$, we conclude that $1_G \in \pi(V)$, so that $U\mid_{\mathcal B(G)} \prod^\bullet_{i\in I}U_i$ with $|I|\leq 2n-2$, and then the proof is complete taking $J=I$.
Therefore let $J_\tau\subset [1,w]\setminus I$ be the nonempty set of all $i\in [1,w]\setminus I$ with $\supp(U_i)\cap \tau \la\alpha\ra\neq \emptyset$.
Note that $I_j^\alpha=I^\alpha$ for all $j\in J_\tau$, so that A2 holds for any $I_j$ with $j\in J_\tau$ since it holds for $I$. This means A1 fails for every $I_j$ with $j\in J_\tau$, which in view of A1 holding for $I$ implies
\be\label{erlaub}|\pi(V)|=\left\lfloor\frac12|I_\varnothing^{1}|\right\rfloor+1+|I\setminus I_\varnothing|.\ee

\medskip
\noindent
CASE 1.1. \, $I$ is not constrained.

In this case,  \eqref{erlaub} and A2 holding for $I$ allow us to apply Claim B yielding $\ell=\ell'$, $|I_\varnothing^1|$ is even, $L$ is trivial, $|I^{\alpha\tau}|=|I^\alpha|=0$, $X=\{0\}$, $|U_i|=2$ for every $i\in I^\tau$, $|\pi(V)|=\ell+1$ and $\supp(V_{\tau \la\alpha\ra}^+)\subset \{x-d,x,x+d\}$ for some $x,d\in \Z/n\Z$ with $\ell\leq \ord(d) -1$ and $\vp_{2x}(2V_{\tau \la\alpha\ra}^+)=\mathsf h(2V_{\tau\la\alpha\ra}^+)=\ell$. Thus we have a decomposition  $V^+_{\tau \la\alpha\ra}=T_1\bdot\ldots\bdot T_\ell$ with $|A_i|=|T_i|=2$ for all $i$, where each  $A_i=\supp(2T_i)$ is an arithmetic progression with difference $2d$ containing $2x$, for all $i\in [1,\ell]$, in which case Lemma \ref{lem-subsums=sumset} (with $H$ taken to be trivial) ensures $\Sigma_{\ell}(2V^+_{\tau \la\alpha\ra})=\Sum{i=1}{\ell}A_i\subset 2\ell x+\la d\ra$ is an arithmetic progression of length $\ell+1$. In particular, either $\ell<\ord(d)-1$ and $H$ is trivial, or $\ell=\ord(d)-1$ and $H^+=\la d\ra$.

Consider an arbitrary index $j\in J_{\tau}$, in which case $\ell_j>\ell$. Since $V_j=V\bdot U_j$, we have  $\Sigma_{\ell_j-\ell}(2(U_j)_{\tau\la\alpha\ra}^+)+\Sigma_\ell(2V_{\tau \la\alpha\ra}^+)\subset \Sigma_{\ell_j}(2(V_j)^+_{\tau \la\alpha\ra})$. Consequently, since A1 fails for $I_j$ (as $j\in J_\tau$), it then  follows from  \eqref{erlaub} that
$|\pi(V)|=|\pi(V_j)|=\lfloor \frac12|I_\varnothing^1|\rfloor +1+|I\setminus I_\varnothing|$, and hence
$|\Sigma_\ell(2V_{\tau \la\alpha\ra}^+)|=|\pi(V)|=|\pi(V_j)|=|\Sigma_{\ell_j}(2(V_j)^+_{\tau \la\alpha\ra})|$ and
\be\label{wilt1}\Sigma_{\ell_j-\ell}(2(U_j)_{\tau\la\alpha\ra}^+)+\Sigma_\ell(2V_{\tau \la\alpha\ra}^+)= \Sigma_{\ell_j}(2(V_j)^+_{\tau \la\alpha\ra})=\beta+\Sigma_\ell(2V_{\tau\la\alpha\ra}^+),\ee for any $\beta\in \Sigma_{\ell_j-\ell}(2(U_j)_{\tau\la\alpha\ra}^+)$. In particular, since $|(U_j)_{\tau\la\alpha\ra}|=2(\ell_j-\ell)\geq 2$, we conclude that all terms of $2(U_j)_{\tau\la\alpha\ra}^+$ are congruent to each other modulo the stabilizer $H^+=\mathsf H(\Sigma_{\ell}(2V_{\tau\la\alpha\ra}^+))$.

Suppose $H^+$ is nontrivial. Then $H^+=\la d\ra$ with $\Sigma_\ell(2V_{\tau\la\alpha\ra}^+)=\Sum{i=1}{\ell}A_i=2\ell x+H^+$, in which case $\Sigma_{\ell_j}(2(V_j)^+_{\tau \la\alpha\ra})=\beta+\Sigma_\ell(2V_{\tau\la\alpha\ra}^+)=\beta+2\ell x+H^+$ is also an $H^+$-coset. However, as $\ell_j<|(V_j)_{\tau \la\alpha\ra}|$, this is only possible if all terms of $(V_j)_{\tau\la\alpha\ra}^+$ lie in the same $H^+$-coset, ensuring that $\supp((V_j)^+_{\tau \la\alpha\ra})\subset x+ H^+=x+\la d\ra$. This must be true for any $j\in J_\tau$, so $\supp\Big((\prod_{i\in [1,w]\setminus I}^\bullet U_i)^+_{\tau \la \alpha\ra}\Big)\subset x+H^+$. Combined with \eqref{H-stab}, we conclude
$\supp\big({\prod}_{i\in [1,w]\setminus I}^\bullet U_i\big)\subset H_x$, and now Proposition \ref{prop-regulate}.2 implies    that $\pi(V)$ is a translate of $\pi\Big(U^{[-1]}\bdot \prod_{i\in [1,w]}^\bullet U_i\Big)=\pi(V\bdot \prod^\bullet_{i\in [1,w]\setminus I}U_i)$. However, since $\prod_{i\in [1,w]\setminus I}^\bullet U_i$ is product-one,  we have $1_G \in \pi\Big(U^{[-1]}\bdot \prod_{i\in [1,w]}^\bullet U_i\Big)=\pi(V)$.  Thus  $U\mid_{\mathcal B(G)}\prod_{i\in I}^\bullet U_i$ with $|I|\leq 2n-2$, and the proof is complete taking $J=I$. So we now instead assume $H$ is trivial and $\ell<\ord(d)-1$.

Combining $H$ trivial  with \eqref{H-stab} implies $\supp(U_i)\subset \tau\la \alpha\ra$ for all $i\in [1,w]\setminus I$ (as we can assume no $U_i$ is the atom consisting of a single term equal to $1_G$). We showed above that all terms of $U_j=(U_j)_{\tau\la\alpha\ra}$ are equal (as $H$ is trivial), for any $j\in J_\tau=[1,w]\setminus I$. Thus Lemma \ref{lem-triv-multbound} ensures each $U_j=g_j^{[2]}$ for some $g_j\in \tau\la \alpha\ra$.  Suppose, for some $j\in [1,w]\setminus I$, that $\supp(U_j^+)=\{y\}$ with $2y\notin A_{j'}$ for some $j'\in [1,\ell]$, say w.l.o.g. $2y\notin A_\ell$.
Observe that $(\ell_j-\ell)2y+\Sum{i=1}{\ell-1}A_i+(A_\ell\cup \{2y\})\subset \Sigma_{\ell_j}(2(V_j)_{\tau\la\alpha\ra}^+)$. Since $\Sum{i=1}{\ell}A_i$ is  aperiodic (as $H^+$ is trivial and $X=\{0\}$) and $2y\notin A_\ell$, Kneser's Theorem and \cite[Lemma 2.6]{Gr21a} imply  $|\Sum{i=1}{\ell-1}A_i+(A_\ell\cup \{2y\})|>\Sum{i=1}{\ell}|A_i|-\ell+1=\ell+1=|\Sum{i=1}{\ell}A_i|$. Thus $|\Sigma_{\ell_j}(2(V_j)^+_{\tau \la\alpha\ra})|>|\Sum{i=1}{\ell}A_i|=|\Sigma_\ell(2V_{\tau\la\alpha\ra}^+)|$, contrary to \eqref{wilt1}. So we are left to conclude that, for any $j\in [1,w]\setminus I$,  all terms of $U_j$ are equal to some multiplicity $\ell$ term in $V_{\tau\la\alpha\ra}$.
If it is always the same multiplicity $\ell$ term for each $U_j$ with $j\in [1,w]\setminus I$, then Proposition \ref{prop-regulate} (taking $H$ to be trivial) implies $\pi(V)=\pi(V\bdot\prod^\bullet_{i\in[1,w]\setminus I}U_i)=\pi\Big(U^{[-1]}\bdot \prod_{i\in [1,w]}^\bullet U_i\Big)$, and the proof is complete as before.

It remains to consider the case when there are two multiplicity $\ell$ terms in $V_{\tau\la\alpha\ra}^+$, so w.l.o.g. $A_i=\{x,x+d\}$ for all $i\in [1,\ell]$, and $(V_j)^+_{\tau \la\alpha\ra}=x^{[2]}$ and $(V_{j'})^+_{\tau \la\alpha\ra}=(x+d)^{[2]}$ for some $j,j'\in [1,w]\setminus I$. Set $A_{\ell+1}=A_{\ell+2}=\{2x,2x+d\}$. Define $J=I\cup \{j,j'\}$ and let $V_J$, $\ell_J$, $I_J^\alpha$ and $\ell'_{J}$ be the corresponding quantities $V$, $\ell$, $I^\alpha$ and $\ell'$ for the set $J$. Then $\ell'_J=\ell'+2=\ell+2=\ell_J$ and $I^\alpha=I_J^\alpha$, ensuring that A2 holds for $J$ since it held for $I$.
Observe (in view of \eqref{H-stab} and Proposition \ref{prop-add/mult-DefiningCorrespondance}) that $$\pi(V_J)^++\sigma(V_{\tau\la\alpha\ra}^+)=\Sum{i=1}{\ell+2}A_i=
{\underbrace{\{2x,2x+2d\}+\ldots+\{2x,2x+2d\}}}_{\ell+2}.$$ If $|X+\Sum{i=1}{\ell+2}A_i|\geq |X+\Sum{i=1}{\ell}A_i|+2$, then A1 holding for $I$ will imply it holds for $J$, in which case $|J|$ contradicts the maximality of $|I|$.
Therefore $|X+\Sum{i=1}{\ell+2}A_i|\leq |X+\Sum{i=1}{\ell}A_i|+1$, which is only possible if $2d\in H^+_J:=\mathsf H(\Sum{i=1}{\ell+1}A_i)=\mathsf H(\Sum{i=1}{\ell+2}A_i)=\mathsf H(\Sigma_{\ell+2}(V_J))=\la d\ra$. As a result, since $\supp((U_j)^+_{\tau \la\alpha\ra})\subset \{x,x+d\}\subset x+H^+_J$ for all $j\in J_\tau$,  we conclude via Proposition \ref{prop-regulate} that  $1_G \in \pi(U^{[-1]}\bdot\prod_{i\in [1,w]}^\bullet U_i)=\pi(V_J\bdot\prod^\bullet_{i\in [1,w]\setminus J}U_i)=\pi(V_J)$. Thus $U\mid_{\mathcal B(G)} \prod_{i\in J}^\bullet U_i$ with $|J|=|I|+2\leq 2n$, completing  CASE 1.1.

\medskip
\noindent
CASE 1.2. \, $I$ is $H_x$-constrained.

Let $2x+H^+$ and $\mathscr A=A_1\bdot\ldots\bdot A_\ell$ be the coset and setpartition showing $I$ is constrained, and let w.l.o.g. $j=\ell$ be the index from C3. Consider an arbitrary index $k\in J_{\tau}$. Then A1 fails for $I_k$, which in view of \eqref{erlaub} implies that $|\pi(V)|=|\pi(V_k)|$. Indeed, since $|\pi(V)|=|X+\Sigma_\ell(2V^+_{\tau\la\alpha\ra})|\leq
|X+\Sigma_{\ell_k}(2(V_k)^+_{\tau\la\alpha\ra})|\leq |X_k+\Sigma_{\ell_k}(2(V_k)^+_{\tau\la\alpha\ra})|=|\pi(V_k)|$, we obtain that
 $|X+\Sigma_\ell(2V^+_{\tau \la\alpha\ra})|=|X+\Sigma_{\ell_k}(2(V_k)^+_{\tau \la\alpha\ra})|$.
Moreover, arguing as we did when establishing \eqref{wilt1}, we conclude that
\be\label{wilt2}X+\Sigma_{\ell_k-\ell}(2(U_k)_{\tau\la\alpha\ra}^+)+\Sigma_\ell(2V_{\tau \la\alpha\ra}^+)= X+\Sigma_{\ell_k}(2(V_k)^+_{\tau \la\alpha\ra})=\beta+X+\Sigma_\ell(2V_{\tau\la\alpha\ra}^+),\ee for any $\beta\in \Sigma_{\ell_k-\ell}(2(U_k)_{\tau\la\alpha\ra}^+)$, and that  all terms of $(U_k)_{\tau\la\alpha\ra}^+$ are congruent to each other modulo the stabilizer $H^+=\mathsf H(X+\Sigma_{\ell}(2V_{\tau\la\alpha\ra}^+))$. We claim that they are, in fact, all congruent to $x$ modulo $H^+$.
If this fails, then there is some $z\in \supp((U_k)_{\tau \la\alpha\ra}^+)$ with $z\notin x+H^+$,
whence \eqref{wilt2}  yields \be\label{chalk}X+\Sigma_{\ell_k}(2(V_k)^+_{\tau \la\alpha\ra})=(\ell_k-\ell)2z+X+\Sigma_\ell(2V_{\tau\la\alpha\ra}^+).\ee
Recall that $j=\ell$ is the index given by C3 and define a new setpartition $\mathscr B=B_1\bdot\ldots\bdot B_\ell$ by setting $B_\ell=\{2y,2z\}$, where $2y\in A_\ell=A_j\subset 2x+H^+$ is any element, and setting $B_i=A_i$ for $i<\ell$. In view of C3 and \eqref{extra-C}, we have $(\ell_k-\ell)2z+X+\Sigma_\ell(2V_{\tau \la\alpha\ra}^+)=(\ell_k-\ell)2z+X+\Sum{i=1}{\ell}A_i\subset (\ell_k-\ell)2z+X+ \Sum{i=1}{\ell}B_i\subset X+ \Sigma_{\ell_k}(2(V_k)_{\tau \la\alpha\ra}^+)$, whence $X+\Sum{i=1}{\ell}A_i=X+ \Sum{i=1}{\ell}B_i$ follows in view of \eqref{chalk}. In particular,
since $X+\Sum{i=1}{\ell-1}A_i$ is a translate of $X+\Sum{i=1}{\ell}A_i$ by C3, and thus has stabilizer $H^+$, Kneser's Theorem implies that all terms of $B_\ell=\{2y,2z\}$ are congruent modulo $H^+$, contradicting the assumption $z\notin x+H^+=y+H^+$.
So we conclude that $\supp((U_k)_{\tau \la\alpha\ra}^+)\subset x+H^+$, as claimed. However, as $j\in J_\tau$ was arbitrary, combining this with \eqref{H-stab} and Proposition \ref{prop-regulate} (as $V$ is $H_x$-constrained) once more yields  $1_G\in \pi(U^{[-1]}\bdot \prod_{i\in [1,w]}^\bullet U_i)=\pi(V\bdot\prod^\bullet_{i\in [1,w]\setminus I}U_i)=\pi(V)$, showing that $U\mid_{\mathcal B(G)} \prod_{i\in I}^\bullet U_i$ with $|I|\leq 2n-2$. Thus the proof is complete taking $J=I$, which completes CASE 1.1, and thus also  CASE 1.

\medskip
\noindent
CASE 2. \,  There is no  ample subset $I\subset [1,w]$.

As a particular instance of the case hypothesis, $I_\emptyset$ is not ample. Since A2/C4
 holds trivially for $I_\emptyset$ (as $i_\emptyset\in I^{\tau}\cup I^{\alpha\tau}$
 when $I_\varnothing \neq I_\emptyset$), this means A1 must fail:
\be\label{A1fails} |X_\emptyset+\Sigma_{\ell_\emptyset}(2(V_\emptyset)_{\tau \la\alpha\ra}^+)|=|\pi(V_\emptyset)|\leq \frac12 |I_\varnothing^1|+|I_\emptyset\setminus I_\varnothing|.\ee
Thus Claim B ensures that $I_\emptyset$ is $(H_\emptyset)_{x_0}$-constrained for some $x_0\in \Z/n\Z$, and we use the abbreviation $\tilde H_\emptyset=(H_\emptyset)_{x_0}$.

Suppose $H_\emptyset$ is trivial. Then $X_\emptyset+\Sigma_{\ell_\emptyset}(2(V_\emptyset)_{\tau \la\alpha\ra}^+)$ is aperiodic. Moreover, from the definitions involved,  $X_\emptyset$ is a sumset of  $|(V_\emptyset)_{\la\alpha\ra}|\geq |I_\emptyset^\alpha|+|I_\emptyset^{\alpha\tau}|$ cardinality two sets, while in view of \eqref{extra-C}, C2 and the definition of $\ell'_\emptyset$,
it follows that $\Sigma_{\ell_\emptyset}(2(V_\emptyset)_{\tau \la\alpha\ra}^+)$ is a sumset of $\ell'_\emptyset$ cardinality two sets (as well as several cardinality one sets).
 Hence it follows from Kneser's Theorem that \be\label{kt-wow}|X_\emptyset+\Sigma_{\ell_\emptyset}(2(V_\emptyset)_{\tau \la\alpha\ra}^+)|\geq |(V_\emptyset)_{\la\alpha\ra}|+1+\ell'_\emptyset\geq |I_\emptyset^\alpha|+|I_\emptyset^{\alpha\tau}|+1+\ell'_\emptyset.\ee
 If $I_\varnothing\neq I_\emptyset$, then $|I_\emptyset\setminus I_\varnothing|=1$, $\ell'_\emptyset\geq 0$,  $(V_\varnothing)_{\la\alpha\ra}=V_\varnothing$, and $|(V_\emptyset)_{\la\alpha\ra}|\geq |(V_\varnothing)_{\la\alpha\ra}|=|V_\varnothing|\geq \max\{1,|I_\varnothing^1|\}$, in which case \eqref{kt-wow} contradicts \eqref{A1fails}. On the other hand, if $I_\varnothing =I_\emptyset$, then $|I_\emptyset\setminus I_\varnothing|=0$, and \eqref{kt-wow} implies
 $|X_\emptyset+\Sigma_{\ell_\emptyset}(2(V_\emptyset)_{\tau \la\alpha\ra}^+)|\geq |I_\varnothing^\alpha|+|I_\varnothing^{\alpha\tau}|+1+\ell'_\varnothing\geq |I_\varnothing^\alpha|+|I_\varnothing^{\alpha\tau}|+1+\frac12|I_\varnothing^\tau|\geq \frac12|I_\varnothing^1|+1$, with the second inequality in view of Claim A. However, this also contradicts \eqref{A1fails}. So we instead conclude that $H_\emptyset$ is nontrivial. We must also have $H_\emptyset$ proper, else $1_G \in \pi(V_\emptyset)$, contradicting \eqref{V0not1}. Since $n=|G'|$ is odd, this forces $3\leq |H_\emptyset|\leq \frac{n}{3}$.

Let $I^e_\emptyset\subset I_\emptyset$ consist of all indices $i\in I_\emptyset$ such that $W_i$ contains some term from $|G\setminus \tilde H_\emptyset|$. Then $\supp(W_i)\subset  \tilde H_\emptyset$ for all $i\in I_\emptyset\setminus I_\emptyset^e$, while $U_i\bdot W_i^{[-1]}=W_i^U$ is a single term if we additionally have $i\in I_\varnothing^1$. It follows that the remaining term from $W_i^U$ in the product-one sequence $U_i$ must also be from $\tilde H_\emptyset$ for $i\in I_\varnothing^1\setminus I_\emptyset^e$. As a result, the atom $U$ contains at least $|I^1_\varnothing\setminus I_\emptyset^e|$ terms from the subgroup $\tilde H_\emptyset$, in which case Proposition \ref{prop-subgroupbound} ensures that \be\label{star-0}|I^1_\varnothing\setminus I^e_\emptyset|\leq n+|H_\emptyset|-2.\ee
In view of \eqref{extra-C-index}, we have $|(V_\emptyset)_{G\setminus \tilde H_\emptyset}|\leq |G'/H_\emptyset|-2$, in which case \be\label{sun-0}|I_\emptyset^e|\leq |(V_\emptyset)_{G\setminus \tilde H_\emptyset}|\leq |G'/H_\emptyset|-2.\ee
Thus $|I_\varnothing^1|\leq n+|H_\emptyset|-2+|(V_\emptyset)_{G\setminus
\tilde H_\emptyset}|$. Averaged with the inequality
$2|I_\varnothing^2|+|I^1_\varnothing|\leq |U|\leq 2n$, we obtain
\be\label{I_0bound}|I_\emptyset|\leq |I_\varnothing|+1\leq \frac12\Big(3n-2+|H_\emptyset|+|(V_\emptyset)_{G\setminus \tilde H_\emptyset}|\Big)+1<2n-\Big(|G'/H_\emptyset|-1-|(V_\emptyset)_{G\setminus  \tilde H_\emptyset}|\Big),\ee
with the final inequality making use of $3\leq |H_\emptyset|\leq \frac{n}{3}$.

Let $I\subset [1,w]$ be a subset containing $I_\emptyset$ with $|I|$ maximal  subject to A2 holding,
 \be\label{max-cond} H_\emptyset\leq H\quad\und\quad |\pi(V)|\geq |\pi(V_\emptyset)|+|I\setminus I_\emptyset||H_\emptyset|.\ee
 Thus our  case hypothesis ensures that A1 fails, allowing us to apply Claim B to conclude $I$ is $H_x$-constrained.
 Let $2x+H^+$ and
$\mathscr A=A_1\bdot\ldots\bdot A_{\ell}$ be the coset and setpartition exhibiting that $I$ is constrained, so $X+\Sum{i=1}{\ell}A_i=X+\Sigma_{\ell}(2V_{\tau \la\alpha\ra}^+)$ by \eqref{extra-C}.
In view of the second condition in \eqref{max-cond} and \eqref{extra-C} (applied to $I_\emptyset$), we have
 $n\geq |\pi(V)|\geq (|(V_\emptyset)_{G\setminus \tilde H_\emptyset}|+1)|H_\emptyset|+|I\setminus I_\emptyset||H_\emptyset|$, and thus
 $|I\setminus I_\emptyset|\leq |G'/H_\emptyset|-1-|(V_\emptyset)_{G\setminus\tilde  H_\emptyset}|$,
 with equality only possible if $H^+=\Z/n\Z$. Thus \eqref{I_0bound} implies $|I|=|I_\emptyset|+|I\setminus I_\emptyset|\leq 2n-1$. Consequently, if $1_G \in \pi(V)$, then taking $J=I$ completes the proof as $|I|\leq 2n-1$. Therefore we may assume $1_G \notin \pi(V)$. In particular, $H$ is proper, in which case the previous estimate improves by one: $|I|\leq 2n-2$.

If $\supp(U_k)\subset  H_x$ for all $i\in [1,w]\setminus I$, then Proposition \ref{prop-regulate}  again ensures $1_G \in \pi(U^{[-1]}\bdot \prod_{i\in [1,w]}^\bullet U_i)=\pi(V\bdot \prod^\bullet_{i\in [1,w]\setminus I}U_i)=\pi(V)$ (as $V$ is $H_x$-constrained), contrary to assumption. Therefore there must be some $k\in [1,w]\setminus I$ with $\supp(U_k)\nsubseteq H_x$. Let $I_k=I\cup \{k\}$, and let $V_k$, $\ell_k$, $X_k$,  $H_k$,  and $I_k^\alpha$ be the respective quantities $V$, $\ell$, $X$,  $H$, and $I^\alpha$ for $I_k$.
If $\supp((U_k)_{\tau \la\alpha\ra}^+)\subset x+H^+$, then \eqref{extra-C} and  Lemma \ref{lem-subsums=sumset} imply that $X+\Sigma_{\ell}(2V_{\tau \la\alpha\ra}^+)$ is a translate of $X+\Sigma_{\ell_k}(2(V_k)_{\tau \la\alpha\ra}^+)$. In particular, $H_\emptyset\leq H\leq H_k$ (the first inclusion follows from \eqref{max-cond}).  Moreover, there must be be some $g\in \supp((U_k)_{\la\alpha\ra})$ with $g\notin H$, and now  Kneser's Theorem ensures that $X_k+\Sigma_{\ell_k}(2(V_k)_{\tau \la\alpha\ra}^+)$ is  strictly larger in size than $X+\Sigma_{\ell}(2V_{\tau \la\alpha\ra}^+)$. Since both these sets are $H_\emptyset$-periodic in view of $H_\emptyset\leq H\leq H_k$, it follows from Proposition \ref{prop-add/mult-DefiningCorrespondance} that
$|\pi(V_k)|=|X_k+\Sigma_{\ell_k}(2(V_k)_{\tau \la\alpha\ra}^+)|\geq |X+\Sigma_{\ell}(2V_{\tau \la\alpha\ra}^+)|+|H_\emptyset|=|\pi(V)|+|H_\emptyset|\geq |\pi(V_\emptyset)|+|I_k\setminus I_\emptyset|\, |H_\emptyset|$, with the final inequality from \eqref{max-cond}.
Since $g\notin H$, we also have $g\notin L\leq H$, so that Kneser's Theorem implies $|X_k|>|X|$, ensuring A2 holds for $I_k$ (as it holds for $I$). It follows that $I_k$ satisfies \eqref{max-cond}, contradicting the maximality of $|I|$. Therefore we instead conclude that \be\label{oncetwice}\supp((U_k)_{\tau \la\alpha\ra}^+)\nsubseteq x+H^+.\ee In particular, $(U_k)^+_{\tau \la\alpha\ra}$ is not trivial, and hence $\supp(U_k)\not\subset \la \alpha\ra$. Thus $I^\alpha=I_k^\alpha$, ensuring that A2 holds for $I_k$ (as it holds for $I$).

Let $S\mid (V_k)_{\tau \la\alpha\ra}$ be a subsequence with $V_{\tau \la\alpha\ra}\mid S$, so $S=V_{\tau\la\alpha\ra}\bdot T'$ for some $T'\mid (U_k)_{\tau\la\alpha\ra}$, for which $|S|=2r$ is maximal subject to there existing a decomposition $S^+=S_1\bdot\ldots\bdot S_r$ with $|S_i|=2$  for all $i\in [1,r]$ and the following holding, where  $B_i=\supp(2S_i)$ for $i\in [1,r]$:
\begin{itemize}
\item[D1.]  $H\leq H_S$, where $H^+_S=\mathsf H(X+\Sum{i=1}{r}B_i)$.
\item[D2.] $B_i\cap (2x+H^+_S)\neq \emptyset$ for $i\in [1,r]$.
\item[D3.] There is a $j\in [1,r]$ with $|X+\underset{i\neq j}{\Sum{i=1}{r}}B_i|=|X+\Sum{i=1}{r}B_i|$.
\item[D4.] $\phi_{H_S}(A_i)=\phi_{H_S}(B_i)$ for all $i\in I_S$, where $I_S\subset [1,\ell]$ is the subset of all $i\in [1,\ell]$ with $|\phi_{H_S}(A_i)|=2$.
\end{itemize}
Note $S=V_{\tau \la\alpha\ra}$ satisfies the above conditions with $A_i=B_i$ for all $i$ in view of $I$ being $H_x$-constrained, so $S$ exists.

Our final claim, needed only for Case 2, is the following.

\subsection*{Claim C} $S=(V_{k})_{\tau \la\alpha\ra}$

\bigskip

In view of Claim C, we have $S=(V_k)_{\tau \la\alpha\ra}$. In particular, $r=\ell_k$. In view of D1 and D2, we can apply Proposition \ref{prop-regulate}.1 (with $H$ taken to be $H_S^+$) to conclude \be\label{dayo}X+\Sum{i=1}{\ell_k}B_i=X+\Sigma_{\ell_k}(2(V_k)_{\tau\la\alpha\ra}^+).\ee
In particular, $H_S\leq H_k$.
In view of \eqref{dayo}, D1 and D4, we see that $X+\Sum{i=1}{\ell_k}B_i=X+\Sigma_{\ell_k}(2(V_k)_{\tau\la\alpha\ra}^+)$, and thus also $X_k+\Sigma_{\ell_k}(2(V_k)_{\tau\la\alpha\ra}^+)$,  is $H^+$-periodic and  contains a translate of the $H^+$-periodic set  $X+\Sum{i=1}{\ell}A_i=X+\Sigma_{\ell}(2V_{\tau\la\alpha\ra}^+)$.
If this translate is a proper subset, then $$|\pi(V_k)|=|X_k+\Sigma_{\ell_k}(2(V_k)_{\tau\la\alpha\ra}^+)|\geq |X+\Sigma_{\ell}(2V_{\tau\la\alpha\ra}^+)|+|H|=|\pi(V)|+|H|.$$ Thus \eqref{max-cond} holds for $I_k=I\cup \{k\}$ as it held for $I$, with $H_\emptyset\leq H\leq H_S\leq H_k$ following from D1. We already noted above D1--D4 that A2 holds for $I_k$, so $|I_k|$ contradicts the maximality of $|I|$ in such case.
Therefore we instead conclude that  $X_k+\Sum{i=1}{\ell_k}B_i$  is equal to a translate of $X+\Sum{i=1}{\ell}A_i$. Consequently, since a translate of $X+\Sigma_\ell(2V_{\tau\la\alpha\ra}^+)=X+\Sum{i=1}{\ell}A_i$ is trivially contained in $X+\Sigma_{\ell_k}(2(V_k)_{\tau\la\alpha\ra}^+)=X+\Sum{i=1}{\ell_k}B_i\subset X_k+\Sum{i=1}{\ell_k}B_i$ (the equalities follows from \eqref{extra-C} and \eqref{dayo}), it follows that $X+\Sum{i=1}{\ell_k}B_i$  is also equal to a translate of $X+\Sum{i=1}{\ell}A_i$,
 whence $H_S=H$.

Since $S=(V_k)_{\tau \la\alpha\ra}$, we have $S=V_{\tau \la\alpha\ra}\bdot (U_k)_{\tau\la\alpha\ra}$. Since $H=H_S$ is the stabilizer of both $X+\Sum{i=1}{\ell_k}B_i$ and $X+\Sum{i=1}{\ell}A_i$, which are simply translates of each other, Kneser's Theorem combined with D4 ensures this is only possible if the cardinality two sets among $\phi_{H}(B_1),\ldots,\phi_{H}(B_{\ell_k})$ are the same as the cardinality two sets among $\phi_{H}(A_1),\ldots,\phi_{H}(A_{\ell})$.
Combined with D2, we conclude that $\phi_{H}(B_1),\ldots,\phi_{H}(B_{\ell_k})$ consists of the cardinality two sets from $\phi_{H}(A_1),\ldots,\phi_{H}(A_{\ell})$ with all other sets equal to $\{\phi_{H}(2x)\}$. Recall that $A_1\bdot\ldots\bdot A_\ell$ partitions the terms from $2V_{\tau \la\alpha\ra}^+$ by its definition, meaning the terms in sets $B_i$ with $|\phi_H(B_i)|\geq 2$  form a subsequence of $2V_{\tau \la\alpha\ra}^+$. Thus   $\supp((U_k)^+_{\tau \la\alpha\ra})=\supp((S\bdot V_{\tau\la\alpha\ra}^{[-1]})^+)\subset x+H^+$, contradicting \eqref{oncetwice}, which completes CASE 2.

\medskip

It remains only to provide the proofs of the three claims introduced above, which we now do.

\begin{proof}[Proof of Claim A] If $I_\emptyset\neq I_\varnothing$, then $\ell_\varnothing=\ell'_\varnothing=|I_\varnothing^\tau|=0$, and the claim is true. Therefore we now assume $I_\emptyset=I_\varnothing$.
Let $j\in I_\varnothing^\tau$ be arbitrary. Then $|W_j^U|=1$ and $\supp(W_j)\subset \tau \la \alpha\ra$.
Let us consider the various possibilities that can occur for $W_j$. If $|W_j|=1$, then $U_j$ is a length two atom, forcing $U_j=w_j^{[2]}$ for some $w_j\in \tau\la\alpha\ra$. However, in such case, we must have $\vp_{w_j}(V_\varnothing)=1$, for if $w_j\in \supp(W_{j'})$ for some $j'\in I_\varnothing\setminus \{j\}$, then, since $W_j^U=\{w_j\}$, this means $U\mid \prod_{i\in I_\varnothing\setminus \{j\}}^\bullet U_i$, contradicting the minimality of $|I_\varnothing|$.  If $|W_j|=2$, then $|U_j|=3$. Consequently, since the number of terms from $\tau\la \alpha\ra$ in a product-one sequence must be even, we conclude that $U_j=g_j\bdot h_j\bdot \alpha^z$ for some $g_j=\tau\alpha^x,h_j=\tau\alpha^y\in \tau\la\alpha\ra$ and $z\in \Z/n\Z$. Since $U_j$ is an atom, we cannot have $\alpha^z=1_G$ while either $x+z=y$ or $y+z=x$. Thus $g_j\neq h_j$. If $|W_j|\geq 3$, then Lemma \ref{lem-triv-multbound} ensures that there are distinct $g_j,\,h_j\in \supp(W_j)$.
Partition $I_\varnothing^\tau=J_1\cup J_2$, with $J_1\subset I_\varnothing^\tau$ consisting of all $j\in I_\varnothing^\tau$ with $|W_j|=1$, and $J_2\subset I_\varnothing^\tau$ consisting of all $j\in I_\varnothing^\tau$ with $|W_j|\geq 2$.
By the above work, $\prod_{j\in J_1}^\bullet w_j\bdot \prod_{j\in J_2}^\bullet (g_j\bdot h_j)\mid (V_\varnothing)_{\tau \la\alpha\ra}$ with $g_j\neq h_j$ for all $j\in J_2$, and the $w_j$ for $j\in J_1$ all distinct.
Suppose $\ell'_\varnothing<\ell_\varnothing$.
Then there is a unique $g\in \tau\la \alpha\ra$ with $\vp_g(V_\varnothing)\geq \ell_\varnothing+1=\ell_\emptyset+1\geq 2$ (the equality follows from the assumption $I_\emptyset=I_\varnothing$). Since $\vp_{w_j}(V_\varnothing)=1$ for all $j\in J_1$, we have $g\neq w_j$ for all $j\in J_1$. Since $g_j\neq h_j$, each $j\in J_2$ has $g\neq g_j$ or $g\neq h_j$.
Thus, swapping the roles of each $g_j$ and $h_j$ as need by, we may  w.l.o.g. assume $\prod_{j\in J_1}^\bullet w_j\bdot \prod_{j\in J_2}^\bullet g_j\mid (V_\varnothing)_{\tau \la\alpha\ra}$ is a sequence of $|J_1|+|J_2|=|I_\varnothing^\tau|$ terms all distinct from $g$.
Since $\vp_g(V_\varnothing)\geq \ell_\varnothing+1>\frac12 |(V_\varnothing)_{\tau \la\alpha\ra}|$, each of these terms can be paired up with a distinct term equal to $g$, showing $\ell'_\varnothing\geq |I_\varnothing^\tau|$.
So $\ell'_\varnothing\geq \min\{\ell_\varnothing,|I_\varnothing^\tau|\}$. Since each $W_i$ with $i\in I_\varnothing^\tau$ contains at least one term from $\tau\la \alpha\ra$, we have $2\ell_\varnothing=|(V_\varnothing)_{\tau \la\alpha\ra}|\geq |I_\varnothing^\tau|$, and Claim A follows.
\end{proof}

\begin{proof}[Proof of Claim B]
By definition of $\ell_{L}$ and $\ell'$, there is a decomposition  $V_{\tau \la\alpha\ra}^+=T_1\bdot\ldots\bdot T_{\ell}$ with $|T_i|=2$  for all $i\in [1,\ell]$,   $|A_i|=2$ for all $i\leq \ell'$, $|\phi_{L}(A_i)|=2$ for all $i\leq \ell_{L}$, and $|\phi_{L}(A_i)|=1$ for all $i> \ell_{L}$, where  $A_i=\supp(2T_i)$ for $i\in [1,\ell]$.

Suppose $\ell_{L}<\ell$, i.e., $\mathsf h(\phi_{L}(V_{\tau \la\alpha\ra}^+))\geq \ell+1$, and let $x\in\Z/n\Z$ be an element with $\phi_{L}(x)$ a maximum multiplicity term in $\phi_{L}(V_{\tau \la\alpha\ra}^+)$. Since $\ell_{L}<\ell$, we have $A_i\cap (2x+L^+)\neq \emptyset$ and $|A_i\setminus (2x+L^+)|\leq 1$ for all $i$. Hence, since $L^+=\mathsf H(X)$,  Proposition \ref{prop-regulate}.1 implies that $X+\Sum{i=1}{\ell}A_i=X+\Sigma_{\ell}(2V_{\tau \la\alpha\ra}^+)=\pi(V)^++\sigma(V^+_{\tau\la\alpha\ra})$, which is $H^+$-periodic by definition. Thus C1 holds. Additionally, since
$L\leq H$, it follows that  C2 holds (in view of  $A_i\cap (2x+L^+)\neq \emptyset$ for all $i$), while C3 holds for any $j>\ell_{L}$ as these sets are subsets of the same $L^+$-coset with $\mathsf H(X)=L^+$. As C4 holds by hypothesis, we conclude that $I$ is constrained, which is contrary to hypothesis. So we instead assume $\ell_{L}=\ell$, i.e., $\mathsf h(\phi_{L}(V_{\tau \la\alpha\ra}^+))\leq \ell$, which also forces $\ell'=\ell$.

Suppose $|X+\Sigma_{\ell}(2V_{\tau \la\alpha\ra}^+)|< \ell|L|+|X|=(|V_{\tau\la \alpha\ra}|-\ell)|L|+ |X|$. Then, in view of $\ell_{L}=\ell$ (which is equivalent to $\mathsf h(\phi_{L}(V_{\tau \la\alpha\ra}^+))\leq \ell$), we can apply Proposition \ref{thm-partition-thm-equi}.2 to $X+\Sigma_{\ell}(2V_{\tau \la\alpha\ra}^+)$ taking $L$ to be $L^+$ and using $2V^+_{\tau\la \alpha\ra}\mid 2V^+_{\tau\la \alpha\ra}$. But now C1--C2 all hold for the resulting setpartition $B_1\bdot\ldots\bdot B_\ell$  given by Proposition \ref{thm-partition-thm-equi}.2, and  C4 holds by hypothesis. Moreover, if C3 fails, then $|X+\Sum{i=1}{j}B_i|\geq |X+\Sum{i=1}{j-1}B_i|+|L|$ for all $j\in [1,\ell]$,
which implies $|X+\Sigma_{\ell}(2V_{\tau \la\alpha\ra}^+)|\geq \ell|L|+|X|$, contrary to assumption. Thus C3 also holds, meaning  $I$ is constrained, which is contrary to hypothesis. So we now instead assume \be\label{XsumBig}|X+\Sigma_{\ell}(2V_{\tau \la\alpha\ra}^+)|\geq \ell|L|+|X|\geq |X|+\ell+|L|-1.\ee

Let $e_\alpha\geq 0$ be the number of indices $i\in I_\varnothing^\alpha$ for which $W_i$ contains some term lying outside $L_\varnothing$. Kneser's Theorem implies $|X_\varnothing|\geq (e_\alpha+1)|L_\varnothing|$. Combined with C4, we find that \be\label{beet1}|X|\geq  (e_\alpha+1)|L_\varnothing|+|I^\alpha|-|I_\varnothing^\alpha|.\ee
For each of the $|I_\varnothing^\alpha|-e_\alpha$ indices $i\in I_\varnothing^\alpha$ not counted by $e_\alpha$, we have $\supp(W_i)\subset L_\varnothing$.
Thus, since $U_i$ is an atom with $W_i^U=U_i\bdot W_i^{[-1]}$ a single term, it follows that the unique term from $W_i^U$ must also lie in $L_\varnothing$.
It follows that $|U_{L_\varnothing}|\geq |I_\varnothing^\alpha|-e_\alpha$. Hence Lemma \ref{lem-HinAlpha} applied to $U$ implies $|I_\varnothing^\alpha|\leq 2|L_\varnothing|-2+e_\alpha.$ Combined with \eqref{beet1}, we obtain
\be\label{beet2} |X|\geq |I^\alpha|+(e_\alpha-1)(|L_\varnothing|-1)+1\geq |I^\alpha|-|L_\varnothing|+2,
 \ee
 with equality only possible if $|I_\varnothing^\alpha|=2|L_\varnothing|-2+e_\alpha$.
 Since each $W_i$ with $i\in I_\varnothing^1\setminus I_\varnothing^\alpha$ contains at least one term from $\tau \la \alpha\ra$, and since each $W_i=U_i$ with $i\in I\setminus (I^\alpha\cup I_\varnothing)=(I\setminus I_\varnothing)\setminus (I^\alpha\setminus I_\varnothing^\alpha)$ contains at least two terms from $\tau \la \alpha\ra$, we have \be\label{ell-bow}\ell\geq (|I\setminus I_\varnothing|-|I^\alpha|+|I_\varnothing^\alpha|)+
 \left\lceil\frac12(|I_\varnothing^1|-|I_\varnothing^\alpha|)\right\rceil=|I\setminus I_\varnothing|-|I^\alpha|+\left\lceil\frac12|I_\varnothing^1|+\frac12|I_\varnothing^\alpha|\right\rceil,\ee with equality only possible if $|(U_i)_{\tau \la\alpha\ra}|=2$ for all $i\in I^{\alpha\tau}$ and $|U_i|=2$ for all $i\in I^\tau$ (since each $|(U_i)_{\tau \la\alpha\ra}|$ must be even).
Combining \eqref{XsumBig}, \eqref{beet2} and \eqref{ell-bow}, we obtain
\be\label{estimate}|X+\Sigma_{\ell}(2V_{\tau \la\alpha\ra}^+)|\geq \left\lceil\frac12 |I_\varnothing^1|+\frac12|I_\varnothing^\alpha|\right\rceil+|I\setminus I_\varnothing|+1+|L|-|L_\varnothing|\geq
\left\lceil\frac12 |I_\varnothing^1|\right\rceil+|I\setminus I_\varnothing|+1.\ee  Since $|X+\Sigma_{\ell}(2V_{\tau \la\alpha\ra}^+)|\leq \left\lfloor\frac12 |I_\varnothing^1|\right\rfloor+|I\setminus I_\varnothing|+1$ holds by hypothesis, we are left to conclude equality holds in \eqref{estimate}  as well as in the estimates \eqref{XsumBig}, \eqref{beet2} and \eqref{ell-bow} used to derive \eqref{estimate}, and that $|I_\varnothing^1|$ is even, $|I_\varnothing^\alpha|=0$ and $L=L_\varnothing$ (lest the second inequality in \eqref{estimate} be strict). Equality in \eqref{beet2} implies $|I_\varnothing^\alpha|=2|L_\varnothing|-2+e_\alpha$, which combined with $|I_\varnothing^\alpha|=0$ forces $|L_\varnothing|=1$ and $e_\alpha=0$.
Since $L=L_\varnothing$ is trivial, Kneser's Theorem implies $|X|\geq |V_{\la\alpha\ra}|+1\geq
2|I^\alpha\setminus I^\alpha_\varnothing|+|I^\alpha_\varnothing|+|I^{\alpha\tau}|+1
 =2|I^\alpha|-|I_\varnothing^\alpha|+|I^{\alpha\tau}|+1=2|I^\alpha|+|I^{\alpha\tau}|+1$.
  As a result, since equality holds in \eqref{beet2}, we are left to conclude  $|V_{\la\alpha\ra}|=|I^\alpha|=|I^{\alpha\tau}|=0$.
  Thus $\supp(V)\subset \tau\la \alpha\ra$ and $X=\{0\}$. As equality holds in \eqref{ell-bow}, we have $|U_i|=2$ for all $i\in I^\tau$. It remains to show $\supp(V^+_{\tau \la\alpha\ra})\subset\{x-d,x,x+d\}$ with $\vp_x(V^+_{\tau \la\alpha\ra})=\mathsf h(V^+_{\tau \la\alpha\ra})=\ell\leq \ord(d)-1$, for some $x,d\in \Z/n\Z$.

Since equality holds in \eqref{estimate} and \eqref{ell-bow} with $X=\{0\}$, $|I_\varnothing^\alpha|\leq |I^\alpha|=0$ and $|I_\varnothing^1|$ even, we have
$|\pi(V)|=|\Sigma_{\ell}(2V_{\tau \la\alpha\ra}^+)|=\frac12 |I_\varnothing^1|+|I\setminus I_\varnothing|+1=\ell+1=\ell'+1$, allowing us to apply Proposition \ref{thm-special-dihedral-ample} (with $m=n=\ell$) to $\Sigma_{\ell}(2V_{\tau }^+\la\alpha\ra)$. If Proposition \ref{thm-special-dihedral-ample}.5 holds, then $I$ is constrained in view of C4 holding by hypothesis, which is contrary to hypothesis.
If Proposition \ref{thm-special-dihedral-ample}.4 holds, then $\supp(V^+_{\tau \la\alpha\ra})\subset\{x-d,x,x+d\}$ with $\vp_x(V^+_{\tau \la\alpha\ra})=\mathsf h(V^+_{\tau \la\alpha\ra})=\ell$, for some $x,d\in \Z/n\Z$. Thus the claim is complete unless $\ell\geq \ord(d)$. However, in this case, each $A_i$ is an arithmetic progression with difference $2d$, so that $|\Sum{i=1}{j}A_i|=\min\{\ord(d),j+1\}$ for all $j\in [1,\ell]$. Thus $\ell\geq \ord(d)$ implies that $|\Sum{i=1}{\ell-1}A_i|=|\Sum{i=1}{\ell}A_i|=\ord(d)$, whence C3 holds as well as C1 with $H^+=\la d\ra$. Since there is some term $2x\in 2V^+_{\tau \la\alpha\ra}$ with multiplicity $\ell$, we obtain  $2x\in A_i$ for all $i$, whence C2 holds. Hence $I$ is constrained as C4 holds by hypothesis, a contradiction. If Proposition \ref{thm-special-dihedral-ample}.3 holds, then each $A_i=\{2x,2x+2d\}$ for some $x,x+d\in \Z/n\Z$. If $\ell\leq \ord(d)-1$, the claim is complete. Otherwise, arguing as in the previous case, we conclude that $I$ is constrained, contrary to hypothesis.
If Proposition \ref{thm-special-dihedral-ample}.2 holds, then $\ell=2$ and $\supp(V^+_{\tau \la\alpha\ra})= x+\la d\ra$ for some $x,d\in\Z/n\Z$ with
$\ord(d)=3$. In this case, the pigeonhole principle ensures there is some $y\in x+\la d\ra$ with $\vp_y(V^+_{\tau \la\alpha\ra})=2$, so the claim follows as $\la d\ra$ is trivially an arithmetic progression with difference $d$ and length $\ord(d)=3$. Finally, we note that Proposition \ref{thm-special-dihedral-ample}.1 cannot hold since this requires $\Z/n\Z$ to contain a subgroup isomorphic to $(\Z/2\Z)^2$. As this exhausts all possibilities, the proof of Claim B is complete.
\end{proof}

\begin{proof}[Proof of Claim C] Assume by contradiction that $T:=S^{[-1]}\bdot (V_k)_{\tau \la\alpha\ra}=(T')^{[-1]}\bdot (U_k)_{\tau\la\alpha\ra}$ is nontrivial. Let $H^+_S=\mathsf H(X+\Sum{i=1}{r}B_i)$. Since $|(V_k)_{\tau \la\alpha\ra}|=2\ell_k$ and $|S|=2r$ are both even, it follows that $|T|$ is even, so $|T|\geq 2$. If there is some $y\in \supp(T^+)\cap (x+H^+_S)$, then setting $S_{r+1}=y\bdot z$ and $B_{r+1}=\supp(2S_{r+1})$, where $z$ is any other term from $T$,  we find that D1--D4 hold for $S\bdot y\bdot z$, contradicting the maximality of $|S|$. Therefore we instead conclude that $\supp(T^+)$ is disjoint from $x+H^+_S$. As a result, there is a two-term subsequence $z_1\bdot z_2\mid T^+$ with $z_1,\,z_2\notin x+H^+_S$. Let $j\in [1,r]$ be an index given by D3.
Let $y\in \supp(S_j)$ be any element,  and define a decomposition $S^+\bdot z_1\bdot z_2=S'_1\bdot\ldots\bdot S'_{r+1}$ and sets $B'_i=\supp(2S'_i)$ as follows: $S'_j=S_j\bdot y^{[-1]}\bdot z_1$, \ $S'_{r+1}=z_2\bdot y$, and $S'_i=S_i$ for $i\neq j,r+1$.
In view of D1 and D3 holding for the original decomposition, it follows that $H\leq H_S\leq H_{S'}$, where $H^+_{S'}=\mathsf H(X+\Sum{i=1}{r+1}B'_i)$, so   D1 holds for the new decomposition.
Since $|B_i|\leq 2$ for all $i$, Kneser's Theorem and D3 imply that $|\phi_{H_S}(B_j)|=1$, which combined with D2 ensures $\supp(S_j)\subset x+H^+_S$. Thus D4 holds for the new decomposition as it held for the original decomposition (in view of $H_S\leq H_{S'}$), and  both terms from $S_j$ lie in $x+H^+_S\subset x+H^+_{S'}$, ensuring that D2 also holds for the new decomposition.
 In order not to contradict the maximality of $|S|$, we are left to conclude that D3 fails for the new decomposition.
 As a result, $|X+\Sum{i=1}{m}B'_i|>|X+\Sum{i=1}{m-1}B'_i|$ for all $m$, ensuring that \be\label{labory}|X+\Sum{i=1}{r+1}B'_i|\geq |X|+r+1\geq |X|+\ell+1\geq |X|+\frac12|I_\varnothing^\tau|+|I^\tau\setminus I^\tau_\varnothing|+\frac12|I_\varnothing^{\alpha\tau}|+|I^{\alpha\tau}\setminus I_\varnothing^{\alpha\tau}|+1.\ee
Let $e_\alpha\geq 0$ be the number of indices $i\in I_\varnothing^\alpha$ for which $W_i$ contains some term lying outside $L_\varnothing$. Since A2/C4 holds for $I$,
arguing as in Claim B when establishing \eqref{beet2}, we conclude that
$|X|\geq |I^\alpha|+(e_\alpha-1)(|L_\varnothing|-1)+1\geq |I^\alpha|-|L_\varnothing|+2$.
In view of A2/C4 and $X_\varnothing$ being $L_\varnothing$-periodic, we trivially have $|X|\geq |X_\varnothing|+|I^\alpha\setminus I_\varnothing^\alpha|\geq |L_\varnothing|+|I^\alpha\setminus I_\varnothing^\alpha|$, which averaged with the previous bound implies $|X|\geq \frac12|I_\varnothing^\alpha|+|I^\alpha\setminus I_\varnothing^\alpha|+1$. Combined with \eqref{labory}, we find $$|X+\Sum{i=1}{r+1}B'_i|\geq \frac12 |I_\varnothing^1|+|I\setminus I_\varnothing|+2=\frac12 |I_\varnothing^1|+|I_k\setminus I_\varnothing|+1.$$ Since a translate of $X+\Sum{i=1}{r+1}B'_i$ lies contained in $X+\Sigma_{\ell_k}(2(V_k)_{\tau \la\alpha\ra}^+)$, it follows from Proposition \ref{prop-add/mult-DefiningCorrespondance} that $|\pi(V_k)|\geq |X+\Sigma_{\ell_k}(2(V_k)_{\tau \la\alpha\ra}^+)|\geq \frac12 |I_\varnothing^1|+|I_k\setminus I_\varnothing|+1$, ensuring that A1 holds for $I_k$. However, since A2/C4 holds for $I$ with $I^\alpha=I_k^\alpha$, it follows that A2 holds for $I_k$, implying that $I_k$ is ample, contrary to case hypothesis. This completes the proof of  Claim C.
\end{proof}
Thus the proof of Theorem \ref{4.1} is complete.
\end{proof}

\section{On the set of distances and the set of catenary degrees} \label{5}

In this section, we study the set of distances and the set of catenary degrees. Our main result is Theorem \ref{5.1},
  which substantially uses Theorem \ref{4.1}. We recall the definition of catenary degrees and summarize some basic properties of distances and catenary degrees.

Let $H$ be an atomic monoid. For an element $a \in H$, let $\mathsf c (a)$ be the smallest $N \in \N_0 \cup \{\infty\}$ with the following property:
\begin{itemize}
\item[] If $z, z' \in \mathsf Z (a)$ are two factorizations of $a$, then there exist factorizations $z=z_0, z_1, \ldots, z_k=z' \in \mathsf Z (a)$ such that $\mathsf d (z_{i-1}, z_i) \le N$ for each $i \in [1,k]$.
\end{itemize}
Then $\mathsf c (a)=0$ if and only if $|\mathsf Z (a)|=1$ (i.e., $a$ has unique factorization) and if $|\mathsf Z (a)|>1$, then $2 \le \mathsf c (a) \le \sup \mathsf L (a)$.
Then
\[
\Ca (H) = \{\mathsf c (a) \colon a \in H \ \text{with} \ \mathsf c (a) > 0  \} \subset \N_0
\]
denotes the {\it set of (positive) catenary degrees},
 and its supremum $\mathsf c (H) = \sup \Ca (H)$ is called the {\it catenary degree} of $H$ (we use the convention that $\sup \emptyset = 0$). It is easy to see that (\cite[Proposition 3.6]{Ge-Ka10a})
\begin{equation} \label{basic-inequality}
2+ \max \Delta (H) \le \mathsf c (H) \le \omega (H) \,.
\end{equation}
Each of the inequalities can be strict,
 and the structure of the sets $\Delta (H)$ and $\Ca (H)$ can be quite arbitrary. We mention a couple of results. For every finite set $\Delta \subset \N$ with $\min \Delta = \gcd \Delta$ (recall  property \eqref{set-of-distances}) there is a finitely generated Krull monoid $H$ with $\Delta (H) = \Delta$ (\cite{Ge-Sc17a}). For every finite set $C \subset \mathbb N_{\ge 2}$, there is a finitely generated Krull monoid $H_1$ and, if $\max C \ge 3$,  a numerical monoid $H_2$ such that $\Ca (H_1) = \Ca (H_2) = C$ (\cite{ON-Pe18a, Fa-Ge19a}). On the other hand, sets of distances and sets of catenary degrees are intervals for transfer Krull monoids over finite groups and for classes of seminormal weakly Krull monoids (\cite{Ge-Zh19a, Ge-Zh16c}).

The main result (Theorem \ref{5.1}) of the present section states that the set of distances and the set of catenary degrees of $\mathcal B (D_{2n})$ are intervals. By Theorem \ref{3.3}, $\mathcal B (D_{2n})$ is neither transfer Krull nor weakly Krull nor seminormal nor does it have the property studied in \cite[Theorem 5.5]{Oh20a} enforcing that  sets of distances are intervals.

\smallskip
\begin{theorem} \label{5.1}~
Let $G$ be a dihedral group of order $2n$, where $n \ge 3$ is odd. Then  $\Delta (D_{2n}) = [1, 2n-2]$ and $\mathsf {Ca}(G) = [2, 2n]$.
\end{theorem}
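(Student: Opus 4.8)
The plan is to prove the two statements together, since by the general inequality \eqref{basic-inequality} we have $2+\max\Delta(G)\le \mathsf{c}(G)\le \omega(G)=2n$ by Theorem \ref{4.1}. So if we can show $\Delta(G)\supseteq[1,2n-2]$ and $\mathsf{Ca}(G)\supseteq[2,2n]$, then combined with $\Delta(G)\subseteq[1,\mathsf c(G)-2]\subseteq[1,2n-2]$ and $\mathsf{Ca}(G)\subseteq[2,\mathsf c(G)]\subseteq[2,2n]$ we are done. Thus the entire task reduces to \emph{constructing}, for each $d\in[1,2n-2]$, an element $a\in\mathcal B(G)$ whose set of lengths $\mathsf L(a)$ contains a gap of size exactly $d$, and for each $c\in[2,2n]$ an element $b\in\mathcal B(G)$ with $\mathsf c(b)=c$.

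First I would set up the extremal atoms from Proposition \ref{2.4}: $A=\alpha^{[2n-2]}\bdot\tau^{[2]}$ and, for suitable $i,j$, $B=(\alpha^i\tau)^{[n]}\bdot(\alpha^j\tau)^{[n]}$, both of length $\mathsf D(G)=2n$. Alongside these, the short atoms $\tau^{[2]}$, $(\alpha^k\tau)^{[2]}$, $\alpha^{[n]}$, and $(\alpha^k\bdot\alpha^{-k})$ (for $k\ne 0$) are available, as are atoms like $\alpha\bdot\alpha^{-1}$ and $\alpha^{k}\bdot(\alpha^k\tau)\bdot\tau$; these can be identified and verified directly via Corollary \ref{cor-zero-sum-defining}. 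The key device for producing a distance $d$ is to find an element of $\mathcal B(G)$ that factors both as a product of a controlled number $t$ of atoms and as a product of $t+d$ atoms with \emph{no} factorization of intermediate length. A clean source: take $a=A\bdot(\text{something})$ and exploit that $\alpha^{[2n-2]}\bdot\tau^{[2]}$ equals $\tau^{[2]}\bdot\alpha^{[2n-2]}$, which has the factorization $\tau^{[2]}\bdot(\alpha^{[n]})^{[2]}\bdot\ldots$ Wait — $2n-2$ is not a multiple of $n$ in general; more useful is that $A$ itself is an atom (length $1$ as a factorization) while $A\bdot\alpha^{[2]}=\alpha^{[2n]}\bdot\tau^{[2]}$ has the factorization $(\alpha^{[n]})^{[2]}\bdot\tau^{[2]}$ of length $3$, and also $=\,(\alpha\tau\bdot\tau)^{[\,?\,]}\cdots$. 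I would instead systematically use the element $\alpha^{[2n-2]}\bdot\tau^{[2]}\bdot T$ for small $T$ and the two competing "regimes" — decomposing the $\tau$'s with the $\alpha\tau$'s versus keeping $\tau^{[2]}$ intact — to realize every length in a prescribed range, then delete intermediate options. Concretely I expect that products of the form $\big((\alpha^i\tau)\bdot\tau\big)^{[2n]}$-type sequences, which can be refactored either into $2n$ short atoms of the shape $g\bdot g$ or into $n$ copies of a length-$2$ "mixed" atom plus leftovers, give a whole staircase of lengths; trimming via an auxiliary atom in $\langle\alpha\rangle$ lets one tune the minimum length and hence the gap.

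For the catenary degrees, the main workhorse is the inequality $\mathsf c(a)\ge 2+\max\Delta(\mathsf L(a))$ together with the fact that $\mathsf c(a)\le\sup\mathsf L(a)$: if I can produce $a$ with $\mathsf L(a)=\{t,t+d\}$ (exactly two lengths, realized by the constructions above) then $\mathsf c(a)\in\{d+?\}$ is pinned down, and iterating the range $d=1,\dots,2n-2$ together with the top value realized by $A\bdot A$ (or $B\bdot B$) — where $\mathsf L=\{2,\,2n\}$ and every rearrangement of the factorization of $A\bdot A=\tau^{[2]}\bdot\alpha^{[\,4n-4\,]}$... again $4n-4$ is a multiple of neither — so more carefully, $A^{[n]}$ or a suitable power whose length-two factorization has exactly $2n$ blocks, forcing $\mathsf c=2n$. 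I would pick the cleanest witness for $\mathsf c(b)=2n$: the element $U\bdot U$ with $U=((\alpha\tau)\bdot\tau)^{[n]}$ from the proof of Theorem \ref{4.1}, which factors as $2n$ length-two atoms and also as $U\bdot U$ (two atoms), and show any factorization chain between these must use a step of distance $2n$; then fill in $c\in[3,2n-1]$ and $c=2$ by truncating/modifying this example using short atoms supported in $\langle\alpha\rangle$.

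The main obstacle, and where the real work lies, is proving the \emph{sharpness} of the constructed sets of lengths — i.e. that the element realizing distance $d$ has \emph{no} factorization of intermediate length, and that the catenary-degree witnesses admit no shorter chain of $\mathsf d$-steps. This requires a careful analysis of all atoms dividing the relevant product, using Proposition \ref{prop-add/mult-DefiningCorrespondance} / Corollary \ref{cor-zero-sum-defining} to enumerate the possible atomic constituents (those supported in $\langle\alpha\rangle$, those with $|U_{\tau\langle\alpha\rangle}|=2$, and the long atoms of Proposition \ref{2.4}), and parity/counting arguments on the number of $\tau$-type terms — exactly the kind of bookkeeping already used in the $\omega(G)\ge 2n$ argument. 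I expect the bulk of the proof to be this enumeration, organized as a short lemma classifying atoms with support in a two-element set $\{\alpha^i\tau,\alpha^j\tau\}$ or in $\langle\alpha\rangle\cup\{\alpha^i\tau\}$, followed by the explicit distance/catenary computations.
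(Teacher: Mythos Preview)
Your overall framework is correct and matches the paper's: upper bounds come from $\mathsf c(G)\le\omega(G)=2n$ via Theorem \ref{4.1} and \eqref{basic-inequality}, and the content is in constructing witnesses. But your proposal never settles on a construction, and the scattershot attempts (with $A=\alpha^{[2n-2]}\bdot\tau^{[2]}$, with $A\bdot\alpha^{[2]}$, etc.) are not the efficient route.

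The paper streamlines this in two ways you should adopt. First, half of each interval comes for free from the cyclic subgroup: $[1,n-2]=\Delta(C_n)\subset\Delta(G)$ and $[2,n]\subset\mathsf{Ca}(G)$ are known results, so you only need to produce distances in $[n-2,2n-2]$ and catenary degrees in $[n,2n]$. Second, a \emph{single} one-parameter family handles the rest. Work over $G_0=\{\alpha,\tau,\alpha\tau\}$: with $U=\tau^{[n]}\bdot(\alpha\tau)^{[n]}$ and $U_k=\alpha^{[k]}\bdot\tau^{[n-k]}\bdot(\alpha\tau)^{[n-k]}$ for $k\in[0,n]$, one shows $\mathsf L(U\bdot U_k)=\{2,\,2n-k\}$. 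This immediately gives both the distance $2n-k-2$ and (since $\mathsf c(a)\le\max\mathsf L(a)$ and $\mathsf c(a)\ge 2+\max\Delta(\mathsf L(a))$) the catenary degree $2n-k$, covering $[n-2,2n-2]$ and $[n,2n]$ as $k$ runs over $[0,n]$.

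The classification lemma you anticipate is exactly right and is the only real work: one lists all atoms over $\{\alpha,\tau,\alpha\tau\}$ (this is Lemma \ref{5.2} in the paper), and then the computation of $\mathsf L(U\bdot U_k)$ reduces to observing that any factorization into three or more atoms must use only atoms with exactly two terms from $\tau\langle\alpha\rangle$, forcing the length to be $2n-k$. Your instinct about parity/counting on $\tau$-terms is precisely how that step goes.
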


We start with a simple lemma.

\smallskip
\begin{lemma} \label{5.2}~
Let $n \in \N_{\ge 3}$ be an odd, and $G = \langle \alpha, \tau :\; \alpha^{n} = \tau^{2} = 1_G \text{ and } \tau\alpha = \alpha^{-1}\tau \rangle$.
 Then
\[
 \begin{aligned}
     \mathcal A \big( \{ \alpha, \tau, \alpha\tau \} \big)\   = & \Big\{ \alpha^{[j]} \bdot \tau^{[n-j]} \bdot (\alpha\tau)^{[n-j]} \colon j \in [0,n] \Big\} \,
                                                                     \bigcup \, \\ & \Big\{ \alpha^{[2j-1]} \bdot \tau \bdot \alpha\tau\colon j\in [1,n-1]\Big\} \bigcup \Big\{ \alpha^{[2j]} \bdot \tau^{[2]}, \quad \alpha^{[2j]} \bdot (\alpha\tau)^{[2]}\colon j\in [0 ,n-1] \Big\} \,.
   \end{aligned}
\]
\end{lemma}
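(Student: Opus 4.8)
The plan is to translate product-one-ness over $G_0:=\{\alpha,\tau,\alpha\tau\}$ into an arithmetic condition modulo $n$ and then run a finite case analysis. Every $S\in\mathcal F(G_0)$ has the form $S=\alpha^{[a]}\bdot\tau^{[b]}\bdot(\alpha\tau)^{[c]}$ with $a,b,c\in\N_0$; since $n$ is odd, $\langle\alpha\rangle=G'$ is cyclic of order $n$, and $\tau^+=0$, $(\alpha\tau)^+=-1$, $\alpha^+=1$. Applying Proposition \ref{prop-add/mult-DefiningCorrespondance} and Corollary \ref{cor-zero-sum-defining} — with $S^+_{\tau\langle\alpha\rangle}=0^{[b]}\bdot(-1)^{[c]}$, so $2S^+_{\tau\langle\alpha\rangle}=0^{[b]}\bdot(-2)^{[c]}$, $\sigma(S^+_{\tau\langle\alpha\rangle})=-c$, and $\{x_1,-x_1\}+\dots+\{x_a,-x_a\}=\{a-2k:0\le k\le a\}$ — yields the criterion: $S$ is product-one if and only if either $b=c=0$ and $n\mid a$, or $b+c$ is even and $a+c\equiv 2t\pmod n$ for some $t$ in the interval $I(a,b,c):=\bigl[\max\{0,\tfrac{c-b}{2}\},\ a+\min\{\tfrac{b+c}{2},c\}\bigr]$, which consists of $a+\min\{b,c\}+1$ consecutive integers. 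Since $2$ is a unit modulo $n$, these integers give pairwise distinct residues once $I(a,b,c)$ is shorter than $n$. I will also use the symmetry exchanging $\tau$ and $\alpha\tau$ (replace the generator $\tau$ by $\tau\alpha^{-1}$): it swaps $b$ and $c$ and is compatible with the criterion via $t\mapsto a+\tfrac{b+c}{2}-t$, so it suffices to treat $b\ge c$, the $(\alpha\tau)^{[2]}$-atoms of the list being the images of the $\tau^{[2]}$-atoms.

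Next I would verify that the three displayed families consist of atoms. Product-one-ness is immediate from the criterion (e.g. $t=j$, resp. $t=j+1$, lies in $I(a,b,c)$). For minimality, suppose $S=S_1\bdot S_2$ with both $S_i=\alpha^{[a_i]}\bdot\tau^{[b_i]}\bdot(\alpha\tau)^{[c_i]}$ nontrivial and product-one. In families (II) and (III) some $S_i$ must lie in $\mathcal F(\langle\alpha\rangle)$ (as $b+c=2$ cannot split into two even parts $\ge 2$), so $n\mid a_i$ with $a_i\ge n$; since $a<2n$ this forces $a_i=n$, and the criterion shows the complement $\alpha^{[a-n]}\bdot\tau^{[b]}\bdot(\alpha\tau)^{[c]}$ is \emph{not} product-one — a contradiction. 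In family (I), where $a+c=n$, no factor can lie in $\mathcal F(\langle\alpha\rangle)$ (as $a\le n$ would force a trivial factor), so $b_i+c_i=2\ell_i\ge 2$; choosing witnesses $t_i\in I(a_i,b_i,c_i)$ with $2t_i\equiv a_i+c_i\pmod n$, we get $2(t_1+t_2)\equiv a+(c_1+c_2)=n\equiv 0$, hence $t_1+t_2\in\{0,n\}$ from $0\le t_1+t_2\le (a_1+\ell_1)+(a_2+\ell_2)=n$. In either subcase the extremal values of the $t_i$ are forced, which forces $b_i=c_i$ and $n\mid a_i+b_i$ for both $i$; since $\sum_i(a_i+b_i)=n$ with each summand a nonnegative multiple of $n$, one factor is trivial — a contradiction.

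For the converse, let $S=\alpha^{[a]}\bdot\tau^{[b]}\bdot(\alpha\tau)^{[c]}$ be an atom with (w.l.o.g.) $b\ge c$, and argue by cases on $(b,c)$. If $b=c=0$ then $n\mid a$ and minimality forces $a=n$ (family (I), $j=n$). If $b+c\ge 4$ and $b\ge c+2$, then deleting $\tau^{[2]}$ gives a nontrivial $S'=\alpha^{[a]}\bdot\tau^{[b-2]}\bdot(\alpha\tau)^{[c]}\notin\mathcal F(\langle\alpha\rangle)$ with $b-2\ge c$, so $I(a,b-2,c)=I(a,b,c)=[0,a+c]$ and $S'$ satisfies the same product-one congruence $2t\equiv a+c$ as $S$; hence $S'$ is product-one, contradicting minimality, so there are no atoms here. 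When $b=2$, $c=0$ the criterion (together with peeling off $\alpha^{[n]}$ or $\tau^{[2]}$ to produce a product-one complement when $a$ is odd or $a\ge 2n$) shows $\alpha^{[a]}\bdot\tau^{[2]}$ is an atom exactly when $a$ is even and $a\le 2n-2$, i.e. family (III); likewise $b=c=1$ yields family (II) for odd $a\le 2n-3$, plus the single further atom $\alpha^{[n-1]}\bdot\tau\bdot(\alpha\tau)$, which belongs to family (I). The remaining case $b=c\ge 2$ is the crux: deleting $\tau^{[2]}$ produces $S'$ with interval $I(a,b-2,b)=[1,a+b-1]$ and the same congruence $2t\equiv a+b$ as $S$'s interval $[0,a+b]$, so minimality ($S'$ not product-one) means every solution of $2t\equiv a+b$ in $[0,a+b]$ lies at an endpoint, which forces $n\mid a+b$ and $a+b\le n$, hence $a+b=n$ and $S$ is family (I) with $j=n-b$.

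I expect the main obstacle to be precisely this last case $b=c\ge 2$, together with the parity bookkeeping in the cases $(b,c)\in\{(2,0),(1,1)\}$: one must track exactly how deleting a pair of reflections shifts the two endpoints of $I(a,b,c)$, and combine this with the length bound $|S|\le\mathsf D(G)=2n$ to exclude the stray product-one sequences $\alpha^{[a]}\bdot\tau^{[b]}\bdot(\alpha\tau)^{[c]}$ with $a$ large (for instance $\alpha^{[a]}\bdot\tau\bdot(\alpha\tau)$ with $a$ even, $a\ge n+1$, which is product-one but factors as $\alpha^{[n]}\bdot\bigl(\alpha^{[a-n]}\bdot\tau\bdot(\alpha\tau)\bigr)$). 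Everything else should be routine once the criterion of the first step is established.
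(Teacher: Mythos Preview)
Your approach is correct but takes a genuinely different route from the paper. The paper's proof disposes of the case $k_2+k_3>2$ in one stroke by invoking Lemma~\ref{lem-triv-multbound}, which forces $k_2=k_3$ immediately; it then fixes an explicit product-one ordering of $S$ and reads off the constraint $k_1\equiv j\pmod n$ from a short direct computation. By contrast, you translate the problem entirely into the arithmetic criterion $a+c\equiv 2t\pmod n$ with $t\in I(a,b,c)$ via Corollary~\ref{cor-zero-sum-defining}, and then run a case analysis on $(b,c)$, handling the asymmetric case $b\ge c+2$ yourself by the ``delete $\tau^{[2]}$'' trick (the interval $I(a,b,c)$ is unchanged, so $S'$ is again product-one) and the key case $b=c\ge2$ by the observation that $I(a,b-2,b)=[1,a+b-1]$ is $I(a,b,b)$ minus its two endpoints, forcing $n\mid a+b$ and hence $a+b=n$. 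This is a clean, self-contained alternative that avoids Lemma~\ref{lem-triv-multbound}; the price is more bookkeeping, especially the parity checks for $(b,c)\in\{(2,0),(1,1)\}$ and the endpoint analysis for family~(I). The paper's argument is shorter because the structural lemma has already absorbed that work; your argument buys an explicit reusable criterion and shows the classification can be done by purely additive reasoning in $\Z/n\Z$.
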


\begin{proof} First, it is easy to check that the product-one sequences on the right hand side are indeed atoms.
 Let $S = \alpha^{[k_1]} \bdot \tau^{[k_2]} \bdot (\alpha\tau)^{[k_3]} \in \mathcal A \big( \{ \alpha, \tau, \alpha\tau \} \big)$, where $k_1,k_2,k_3\in \N_0$.
It is easily checked that $S$ must have one of the listed forms if $k_2+k_3\leq 2$. For $k_2+k_3> 2$, Lemma \ref{lem-triv-multbound} implies  $k_2=k_3$, say $k_2=k_3=n-j$ with $j\in [0,n-2]$.
Then w.l.o.g. $S=\alpha^{[k_1-x]}\bdot\alpha\tau\bdot \alpha^{[x]}\bdot \tau\bdot (\alpha\tau\bdot \tau)^{[n-j-1]}$ with $1_G=\alpha^{k_1-x}\cdot\alpha\tau\cdot \alpha^{x}\cdot \tau\cdot (\alpha\tau\cdot \tau)^{n-j-1}$, $0\leq x\leq \min\{n-1,k_1\}$ and $k_1-x\leq n-1$. Since $1_G=\alpha^{k_1-x}\cdot\alpha\tau\cdot \alpha^{x}\cdot \tau\cdot (\alpha\tau\cdot \tau)^{n-j-1}=\alpha^{k_1-2x-j}$, we must have $k_1-2x-j\equiv 0\mod n$.
If $x\geq 1$, then $\alpha^{[k_1-x]}\bdot\alpha\tau\bdot \alpha^{[x-1]}\bdot \tau\bdot (\alpha\tau\bdot \tau)^{[n-j-2]}$ and $\alpha\bdot \tau \bdot \alpha\tau $ are both product-one, contradicting that $S$ is an atom. If $x=0$, then $k_1-2x-j\equiv 0\mod n$ forces $k_1\equiv j\mod n$. However, as $0\leq k_1=k_1-x\leq n-1$ and $j\in [0,n-2]$, this implies $k_1=j$, and now $S$ has the desired form.
\end{proof}

\smallskip
\begin{proof}[Proof of Theorem \ref{5.1}]
Let $n \in \N_{\ge 3}$ be odd and let $G = \langle \alpha, \tau \colon\alpha^{n} = \tau^{2} = 1_G \text{ and } \tau\alpha = \alpha^{-1}\tau \rangle$ be a dihedral group of order $2n$.
Clearly, we have that $[1, n-2] = \Delta (C_n) \subset \Delta (G)$ (\cite[Theorem 6.7.1]{Ge-HK06a})  and $[2, n]\subset \mathsf {Ca}(G)$. We assert  that
\begin{equation} \label{inclusion}
[n-2, 2n-2] \subset \Delta (G) \quad \text{and} \quad [n, 2n]\subset \mathsf {Ca}(G) \,.
\end{equation}
Then Equation \eqref{basic-inequality} and Theorem \ref{4.1} imply that
\[
2n \le 2 + \max \Delta (G) \le \mathsf c (G) \le \omega (G) = 2n \,.
\]
Thus it remains to verify the inclusions \eqref{inclusion}.

Let $U = \tau^{[n]} \bdot (\alpha\tau)^{[n]} \in \mathcal A (G)$. For every $k \in [0,n]$, we let $U_k=\alpha^{[k]} \bdot \tau^{[n-k]} \bdot (\alpha\tau)^{[n-k]}$.
We claim that $\mathsf L(U\bdot U_k)=\{2, 2n-k\}$ for all $k\in [0,n]$, and the assertions then follow by definition.

Let $k\in [0,n]$. Since $U\bdot U_k=(\alpha\bdot \tau\bdot \alpha\tau)^{[k]}\bdot (\tau^{[2]})^{[n-k]}\bdot ((\alpha\tau)^{[2]})^{[n-k]}$, we obtain that $\{2,2n-k\}\subset \mathsf L(U\bdot U_k)$.
Suppose
\[
U \bdot U_k \, = \, \big(\tau^{[n]} \bdot (\alpha\tau)^{[n]}\big) \bdot \big(\alpha^{[k]} \bdot \tau^{[n-k]} \bdot (\alpha\tau)^{[n-k]}\big) \, = \, V_1 \bdot \ldots \bdot  V_{\ell} \,,
\]
where $\ell\ge 3$ and  $V_1, \ldots, V_{\ell} \in \mathcal A (G)$.
If there exists $i \in [1,\ell]$ such that $V_i = \alpha^{[j]} \bdot \tau^{[n-j]} \bdot (\alpha\tau)^{[n-j]}$ for some $j \in [1,k]$, then the remaining sequence is $\alpha^{[k-j]} \bdot \tau^{[n-k+j]} \bdot (\alpha\tau)^{[n-k+j]}$, which is an atom, and hence  $\ell = 2$, a contradiction.
Thus we may assume by Lemma \ref{5.2} that, for every $i \in [1,\ell]$, $\mathsf v_{\tau}(V_i)+\mathsf v_{\alpha\tau}(V_i)=2$. Therefore $\ell=\frac{n+n+n-k+n-k}{2}=2n-k$, and hence $\mathsf L(U\bdot U_k)=\{2, 2n-k\}$.
\end{proof}

\section{On the structure of sets of lengths} \label{6}

For an atomic monoid $H$, unions of sets of lengths $\mathcal U_k (H)$, where $k \in \N$, and sets of elasticities $\mathcal R (H) = \{\rho (L) \colon L \in \mathcal R (H)\}$ are well-studied invariants. Under very mild conditions, unions of sets of lengths are almost arithmetical progressions (e.g., \cite{Tr19a}). For monoids of product-one sequences, both invariants, unions and sets of elasticities, are as simple as possible,  and this is not difficult to obtain. Recall that $\rho_k (H) = \sup \mathcal U_k (H)$ and set $\lambda_k (H) = \min \mathcal U_k (H)$. If $G$ is a finite group with $|G| \le 2$, then $\mathcal B (G)$ is half-factorial, whence
\[
\mathcal L (G) = \bigl\{ \{k\} \colon k \in \N_0 \bigr\} \,.
\]
Thus, whenever convenient, we will assume that $|G| \ge 3$.

\begin{proposition} \label{6.0}
Let $G$ be a finite group with $|G| \ge 3$.
\begin{enumerate}
\item For every $k \in \N$, $\mathcal U_k (G)$ is an interval, $\rho_{2k} (G) = k \mathsf D (G)$, $k \mathsf D (G)+1 \le \rho_{2k+1} (G) \le (2k+1) \mathsf D (G)/2$, and if $|G|> 1$, then $\rho (G) = \mathsf D (G)/2$. If $G$ is dihedral of order $2n$ for some odd $n \ge 3$, then, for every $k \ge 2$ and every $\ell \in \N_0$, $\rho_k (G) = kn$ and
    \[
    \lambda_{2 \ell n + j } (G) = \begin{cases}
                                  2 \ell + j & \ \text{for} \ j \in [0,1] \,, \\
                                  2 \ell + 2 & \ \text{for} \ j \ge 2 \ \text{and} \ \ell = 0 \,, \\
                                  2 \ell + 1 & \ \text{for} \ j \in [2,n] \ \text{and} \ l \ge 1 \,, \\
                                  2 \ell + 2 & \ \text{for} \ j \in [n+1, 2n-1] \ \text{and} \ \ell \ge 1 \,,
                                  \end{cases}
    \]
    provided that $2 \ell n + j \ge 1$.

\item  $\{ \rho (L) \colon L \in \mathcal L (G)\} = \{ q \in \Q \colon 1 \le q \le \mathsf D (G)/2 \}$.
\end{enumerate}
\end{proposition}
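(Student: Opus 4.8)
\textbf{Proof plan for Proposition \ref{6.0}.}

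The plan is to treat the two parts separately, handling first the general-group statements and then the more delicate dihedral case. For Part 1, the fact that each $\mathcal U_k(G)$ is an interval will follow from a standard argument: if $a$ has a factorization of length $k$ and $b$ one of length $k$, with $\mathsf L(a)\ni k$ and $\mathsf L(a)\ni m$ with $m>k+1$, one exhibits a product $a\bdot W$ (for a suitable product-one sequence $W$) realizing all intermediate lengths by repeatedly swapping a pair of atoms in the ``short'' factorization for a longer decomposition. The formula $\rho_{2k}(G)=k\mathsf D(G)$ comes from the factorization $U^{[2k]}=$ (the $2k$-th power of a maximal atom $U$, with $|U|=\mathsf D(G)$) versus the factorization obtained by splitting $U\bdot U$ into many short atoms; the general bound $\mathsf D(G)/2$ for $\rho(G)$ and the inequalities for $\rho_{2k+1}$ are then routine consequences of \eqref{elasticity} and the fact that $U^{[2]}$ always factors as a product of $\mathsf k(U)$-controlled short atoms, combined with $\mathsf K(G)\le\mathsf D(G)/2$ from \eqref{natural-bounds}. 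For the dihedral case, $\rho_k(G)=kn$ for $k\ge 2$ is where Theorem \ref{4.1} enters: since $\omega(G)=2n=\mathsf D(G)$, and $\rho_k(G)\le\tfrac12 k\,\omega(G)$ is a general bound (via \eqref{basic-inequality}-type estimates), one gets $\rho_k(G)\le kn$; the reverse inequality is exhibited by the element $U^{[k]}$ with $U=\tau^{[n]}\bdot(\alpha\tau)^{[n]}$, whose set of lengths contains $kn$ (split into $kn$ length-two atoms $\tau^{[2]}$ and $(\alpha\tau)^{[2]}$) when $k$ is even, and by a small modification (e.g.\ $U^{[k-1]}\bdot U_0$ in the notation of the proof of Theorem \ref{5.1}) when $k$ is odd.

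The computation of $\lambda_{2\ell n+j}(G)$ is the heart of Part 1 and, I expect, the main obstacle. The strategy is to use the elements $U\bdot U_k$ from the proof of Theorem \ref{5.1}, for which $\mathsf L(U\bdot U_k)=\{2,2n-k\}$, together with their powers and products, to pin down the minimal length in each union. For the upper bound on $\lambda_m(G)$, one builds an explicit element whose set of lengths contains both $m$ and the claimed small value; concretely, products of the form $U^{[a]}\bdot U_{k_1}\bdot\ldots\bdot U_{k_t}$ have sets of lengths that are easy to control by Lemma \ref{5.2}, and one optimizes the choice of the $k_i$'s subject to $\sum(2n-k_i)+2a$ (or its analogue) hitting $m$. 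For the lower bound — showing no shorter factorization-companion exists — one argues that any product-one sequence $T$ with $m\in\mathsf L(T)$ has $|T|\ge$ the claimed value by a counting/parity argument: a factorization of length $m$ uses at most $\lfloor|T|/\text{(min atom length)}\rfloor$ atoms, so $|T|$ is bounded below, and conversely the cross-number bound forces a lower bound on the minimal length of any factorization of a sequence of given size. The case distinctions ($j\in[0,1]$, $j\ge 2$ with $\ell=0$, etc.) reflect exactly when one can or cannot ``absorb'' the leftover $j$ terms into length-two atoms versus being forced to spend a longer atom; the $\ell=0$ exception is because with too few copies of $U$ available there is not enough room to split efficiently. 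I would organize this as a sequence of lemmas: one giving the general interval/elasticity facts, one computing $\rho_k$ in the dihedral case, and one (the technical core) establishing the $\lambda_k$ formula by matching upper and lower bounds.

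For Part 2, the set $\{\rho(L)\colon L\in\mathcal L(G)\}$ is shown to be all of $\mathbb Q\cap[1,\mathsf D(G)/2]$ by: (i) every such $\rho(L)$ lies in $[1,\rho(G)]=[1,\mathsf D(G)/2]$ by \eqref{elasticity}; (ii) conversely, given $q=a/b\in\mathbb Q$ with $1\le q\le\mathsf D(G)/2$, one constructs an element $T$ with $\min\mathsf L(T)=b'$ and $\max\mathsf L(T)=a'$ for suitable $a',b'$ proportional to $a,b$ — again using powers of a maximal atom $U$ together with the ``half-factorial padding'' trick (concatenating atoms $W$ with $|\mathsf L(W)|=1$ does not change elasticity but lets one adjust $\min\mathsf L$ and $\max\mathsf L$ additively). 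Since $\mathcal B(G)$ is finitely generated and contains elements of maximal elasticity $\mathsf D(G)/2$ realized honestly (not just in the limit) — namely $U^{[2]}$ with $U$ a maximal atom, whose set of lengths is $\{2,\mathsf D(G)\}$ — every rational in between is attainable by taking products of such an element with appropriately chosen half-factorial elements and maximal-atom powers. The only subtlety is realizing denominators: one must check that for the given $G$ (cyclic or dihedral) there are enough atoms of controlled length to make $\min\mathsf L(T)$ take any prescribed value, which follows from $\mathsf D(G)=|G|$ in these cases together with Lemma \ref{5.2} in the dihedral setting. This part I expect to be comparatively routine once Part 1 is in place.
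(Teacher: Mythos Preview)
Your plan diverges from the paper in two noteworthy ways. First, for Part~1 the paper simply cites prior work (\cite[Theorem~5.5 and Proposition~5.6]{Oh20a} for the general statements and \cite[Theorem~5.4]{Oh-Zh20a} for the dihedral $\rho_k$ and $\lambda_k$ values), so your reconstruction from scratch is extra labor rather than a different idea. Within that reconstruction, your invocation of Theorem~\ref{4.1} is a misstep: the upper bound $\rho_k(G)\le kn$ needs only that every nontrivial atom has length at least $2$ (so a factorization of length $k$ forces $|A|\le k\mathsf D(G)=2kn$, and any other factorization has length $\le |A|/2\le kn$), not any information about $\omega(G)$. The lower bound $\rho_{2m+1}(G)\ge (2m+1)n$ does require a construction beyond $U^{[2m+1]}$ (which only gives $2mn+1$), e.g.\ three distinct type-(b) atoms $U,V,W$ of length $2n$ with $U\bdot V\bdot W$ factoring into $3n$ length-two atoms; your ``small modification'' gesture does not quite reach this.

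Second, and more substantively, your approach to Part~2 is confined to groups with $\mathsf D(G)=|G|$, whereas the statement is for arbitrary finite $G$ with $|G|\ge 3$. The paper instead invokes a general realization criterion \cite[Theorem~1.2]{Zh19a}: to get full elasticity it suffices to show $\inf\{\overline{\rho}(A):1\ne A\in\mathcal B(G)\}=1$, where $\overline{\rho}(A)=\lim_{n\to\infty}\rho(\mathsf L(A^{[n]}))$, and this is immediate by taking $A=g^{[\ord(g)]}$ since then $\mathsf L(A^{[n]})=\{n\}$ for all $n$. Your direct ``pad with half-factorial elements'' construction would need to control $\min\mathsf L(B^{[a]}\bdot C^{[b]})$ exactly, which is delicate in general (the obvious candidate $2a+b$ need not be the minimum), so the paper's route is both shorter and covers the full generality claimed.
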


\begin{proof}
1. See \cite[Theorem 5.5 and Proposition 5.6]{Oh20a} and \cite[Theorem 5.4]{Oh-Zh20a}.

2. By 1., we have  $\rho (G)= \mathsf D (G)/2$.
By definition of the elasticity $\rho (G)$ (see Equation \eqref{elasticity}), we have
\[
\{ \rho (L) \colon L \in \mathcal L (G)\} \subset \{ q \in \Q \colon 1 \le q \le \rho (G) \} \,.
\]
In order to show that equality holds, we use \cite[Theorem 1.2]{Zh19a}. By that result, it is sufficient to verify that
\[
\inf \{ \overline{\rho} (A) \colon 1 \ne A \in \mathcal B (G) \} = 1 \,,
\quad \text{where} \quad
\overline{\rho } (A) = \lim_{n \to \infty} \rho (\mathsf L (A^{[n]})) \,.
\]
If $g \in G$ and $A = g^{[\ord (g)]}$, then $\mathsf L (A^{[n]})= \{n\}$ for every $n \in \N$ whence $\overline{\rho } (A)=1$ and $\inf \{ \overline{\rho} (B) \colon 1 \ne B \in \mathcal B (G) \} = 1$.
\end{proof}

In this section, we study the structure of sets of lengths over dihedral groups $G$. In order to do so, we consider two distinguished subsets of $\Delta (G)$, namely $\Delta^* (G)$ and $\Delta_{\rho}^* (G)$, which play a crucial role in all structural descriptions of sets of lengths. We start with the definitions of generalizations of arithmetic
progressions.

Let $d \in \N$,  $\ell,\, M \in \N_0$, and  $\{0,d\} \subset
\mathcal D \subset [0,d]$. A subset $L \subset \Z$ is called an

\begin{itemize}
\item {\it almost arithmetical multiprogression}  ({\rm AAMP})  with  {\it difference}  $d$, \ {\it period} \ $\mathcal D$,
       {\it length}  $\ell$, and  {\it bound}  $M$,  if
      \[
      L = y + (L' \cup L^* \cup L'') \, \subset \, y + \mathcal D + d \Z \quad \text{where}
      \]
      \begin{itemize}
      \item  $\min L^* = 0$, $L^* = (\mathcal D + d \mathbb Z) \cap [0, \max L^*]$,
       and  $\ell$ \ is maximal such that \ $\ell d \in L^*$,
      \item  $L' \subset [-M, -1]$, \ $L'' \subset \max L^* + [1,M]$,  and
      \item $y \in \Z$.
      \end{itemize}

\item {\it almost arithmetical progression}  ({\rm AAP})  with
       {\it difference} $d$,  {\it bound} $M$,  and  {\it length}
      $\ell$,  if it is an {\rm AAMP} with difference $d$, period  $\{0,d\}$, bound  $M$, and length  $\ell$.
\end{itemize}

Let $H$ be an atomic monoid. Following \cite[Definition 4.3.12]{Ge-HK06a}, we define
\begin{itemize}
\item $\Delta_1 (H)$ to be the set of all $d \in \N$ having the following property:
      \begin{itemize}
      \item[] For every $k \in \N$, there is some $L_k \in \mathcal L (H)$ that is an AAP with difference $d$ and length at least $k$.
      \end{itemize}

\item  $\Delta^* (H)$ to be the set of all $\min \Delta (S)$ for some divisor-closed submonoid $S \subset H$ with $\Delta (S) \ne \emptyset$.
\end{itemize}
If $H$ is finitely generated, then by \cite[Corollary 4.3.16]{Ge-HK06a}
\begin{equation} \label{Delta-1-Delta^*}
\Delta^* (H) \subset \Delta_1 (H) \subset \{ d_1 \in \Delta (H) \colon d_1 \ \text{divides some} \ d \in \Delta^* (H) \} \subset \Delta (H) \,.
\end{equation}
The  significance of the sets $\Delta^* (H)$ and $\Delta_1 (H)$ stem from the following result (\cite[Theorem 4.4.11]{Ge-HK06a}).

\begin{lemma} \label{6.1}
Let $H$ be a finitely generated monoid. Then there is a constant $M \in \N_0$ such that every $L \in \mathcal L (H)$ is an \text{\rm AAMP} with difference $d \in \Delta^* (H)$ and bound $M$.
\end{lemma}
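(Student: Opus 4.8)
The plan is to reduce to a finite combinatorial problem about the fibers of the factorization homomorphism; the entire content is the \emph{uniformity} of the bound $M$ (any single finite set is an AAMP with bound equal to its diameter), so a finiteness argument is unavoidable. First I would replace $H$ by $H_{\red}$, which affects neither $\mathcal L(H)$ nor $\Delta^*(H)$, and fix the finite atom set $\mathcal A(H_{\red})=\{u_1,\dots,u_s\}$, so that $\mathsf Z(H)=\mathcal F(\{u_1,\dots,u_s\})$ and $\mathsf L(a)=\{|z|:z\in\mathsf Z(a)\}$ with $|\cdot|$ the length. Since $H$ is finitely generated, there are only finitely many divisor-closed submonoids, each of the form $\LK E\RK$ for some $E\subseteq\mathcal A(H_{\red})$; in particular $\Delta^*(H)$ is finite. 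It therefore suffices to produce, for each divisor-closed $S$ with $\Delta(S)\neq\emptyset$, a bound $M_S$ and a finite family of periods valid for every $L\in\mathcal L(S)$, and then take $M=\max_S M_S$ together with the union of the period families (the elements $a$ with $|\mathsf L(a)|\le 1$ being trivially covered).

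Fix such an $S$ and set $d=\min\Delta(S)=\gcd\Delta(S)\in\Delta^*(H)$; this is the difference I would attach to every $\mathsf L(a)$ with $a\in S$. By Rédei's theorem the congruence $z\sim z'\Leftrightarrow\pi(z)=\pi(z')$ on the free monoid over the atoms of $S$ is finitely generated, say by relations $(v_1,w_1),\dots,(v_t,w_t)$; any two factorizations of a given $a$ are joined by a chain of elementary substitutions via these relations, each changing the length by one of the finitely many integers $|v_i|-|w_i|$, and one checks that the subgroup of $\Z$ generated by the realizable length-changes is exactly $d\Z$. The structural claim to isolate is then: there exist $M_S\in\N_0$ and finitely many sets $\{0,d\}\subseteq\mathcal D\subseteq[0,d]$ such that for every $a\in S$, after translating so that $\min\mathsf L(a)=0$ and deleting at most $M_S$ elements from each end, one is left with a set $(\mathcal D+d\Z)\cap[0,N]$ for the appropriate $N$ and some $\mathcal D$ in the family. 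The ``densely periodic middle'' is produced using the sumset inclusion $\mathsf L(ab)\supseteq\mathsf L(a)+\mathsf L(b)$: once $\mathsf L(a)$ is long enough, iterating length-$d$ relations forces its interior to be a genuine multiprogression of difference $d$ realizing one of finitely many residue patterns modulo $d$.

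The hard part is the uniformity of $M_S$ and the finiteness of the period family --- equivalently, ruling out arbitrarily long irregular tails $L'$, $L''$ as $a$ ranges over $S$. Here I would encode the ``local shape'' of $\mathsf L(a)$ near each of its two extremes by a bounded amount of data (a minimal-length factorization near that end together with the finitely many relations applicable to it) and show that only finitely many such shapes occur: otherwise one extracts an infinite antichain, or a strictly descending chain, in the finitely generated relation monoid $\mathcal C=\{(z,z'):\pi(z)=\pi(z')\}$, contradicting Dickson's lemma. A further delicate point is that the difference in the conclusion must be an honest element of $\Delta^*(H)$, namely $\min\Delta(\LK\supp(a)\RK)$, rather than merely a divisor of one; this is why one works inside $\LK\supp(a)\RK$ from the outset, so that $d$ is literally $\min\Delta$ of the relevant divisor-closed submonoid. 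Assembling the finitely many bounds $M_S$ and period families then yields the common constant $M$ and completes the proof.
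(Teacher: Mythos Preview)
The paper does not prove this statement at all: it is quoted verbatim as \cite[Theorem 4.4.11]{Ge-HK06a}, i.e.\ the Structure Theorem for Sets of Lengths for finitely generated monoids from the Geroldinger--Halter-Koch monograph. So there is no ``paper's own proof'' to compare against; what you have written is an attempted outline of a substantial theorem whose full proof occupies a large part of Chapter~4 of that book.

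As an outline, your proposal identifies the right ingredients (reduction to $H_{\red}$, finiteness of divisor-closed submonoids, R\'edei's theorem for the congruence on $\mathsf Z(H)$, working inside $\LK\supp(a)\RK$ to pin down the difference as a genuine element of $\Delta^*(H)$, and a Dickson-type compactness to bound the tails), and these are indeed the pillars of the actual proof. However, two of your key steps are assertions rather than arguments. First, the sentence ``once $\mathsf L(a)$ is long enough, iterating length-$d$ relations forces its interior to be a genuine multiprogression of difference $d$ realizing one of finitely many residue patterns modulo $d$'' is precisely the heart of the theorem, and the sumset inclusion $\mathsf L(ab)\supseteq \mathsf L(a)+\mathsf L(b)$ alone does not yield it; the actual argument passes through the finiteness of a suitable \emph{tameness} parameter and a delicate analysis of how factorizations near the two ends of $\mathsf L(a)$ can be connected. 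Second, your uniformity argument (``encode the local shape \dots\ only finitely many such shapes occur, else Dickson's lemma'') is a plausible heuristic but is not how the bound $M$ is actually produced; the monograph instead derives an explicit bound from local tame degrees and the finitely many generating relations, and the compactness is applied in the monoid of relations rather than to ``local shapes'' of sets of lengths. In short: correct architecture, but the load-bearing floors are not yet built.
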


Let $G$ be a finite group.
By \cite[Lemma 3.3]{Oh20a}, a submonoid $S \subset \mathcal B (G)$ is divisor-closed if and only if $S = \mathcal B (G_0)$ for some subset $G_0 \subset G$. As usual, we set $\Delta^* (G) := \Delta^* ( \mathcal B (G))$. Thus it follows that
\[
\Delta^* (G) = \{ \min \Delta (G_0) \colon G_0 \subset G \ \text{such that} \ \Delta (G_0) \ne \emptyset \} \,.
\]
If $G_1 \subset G_0$ with $\Delta (G_1) \ne \emptyset$, then $\Delta (G_1) \subset \Delta (G_0)$ whence
\[
\min \Delta (G_0) = \gcd \Delta (G_0) \mid \gcd \Delta (G_1) = \min \Delta (G_1) \,.
\]
Thus there exists a minimal non-half-factorial subset $G_0 \subset G$ with $\max \Delta^* (G) = \min \Delta (G_0)$. The set of minimal distances $\Delta^* (G)$ has found much attention in the literature. If $G$ is finite abelian with $|G| >2$, then (by \cite{Ge-Zh16a})
\begin{equation} \label{max-delta*}
\max \Delta^* (G) = \max \{\exp (G)-2, \mathsf r (G)-1 \} \,,
\end{equation}
and $[1, \mathsf r (G)-1] \subset \Delta^* (G)$ (here $\mathsf r (G)$ denotes the rank of $G$). In contrast to $\Delta (G)$, the set $\Delta^* (G)$ is not an interval in general, but there is a characterization when this is the case (\cite[Theorem 1.1]{Zh18a}).
Cross numbers are a crucial tool in the study of half-factorial and minimal non-half-factorial sets. We start with a simple lemma whose proof  runs along the same lines as the proof in the abelian case.

\medskip
\begin{lemma} \label{6.2}
Let $G$ be a finite group and $G_0 \subset G$ a subset. Then the following statements are equivalent{\rm \,:}
\begin{enumerate}
\item[(a)] $G_0$ is half-factorial.

\item[(b)] $\mathsf k (U)=1$ for every $U \in \mathcal A (G_0)$.

\item[(c)] $\mathsf L (A) = \{ \mathsf k (A)\}$ for every $A \in \mathcal B (G_0)$.
\end{enumerate}
\end{lemma}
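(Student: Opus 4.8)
The plan is to prove the chain of implications (c) $\Rightarrow$ (a) $\Rightarrow$ (b) $\Rightarrow$ (c), following the pattern of the abelian case. The implication (c) $\Rightarrow$ (a) is immediate: if $\mathsf L(A) = \{\mathsf k(A)\}$ for every $A \in \mathcal B(G_0)$, then in particular $|\mathsf L(A)| = 1$ for every $A$, which is the definition of $G_0$ being half-factorial. The implication (a) $\Rightarrow$ (b) is also quick: if $G_0$ is half-factorial and $U \in \mathcal A(G_0)$, write $U^{[m]}$ where $m = \lcm\{\ord(g) \colon g \in \supp(U)\}$; then $U^{[m]} = \prod^\bullet_{g \in \supp(U)} (g^{[\ord(g)]})^{[m \mathsf v_g(U)/\ord(g)]}$ gives a factorization of $U^{[m]}$ into $\sum_{g} m \mathsf v_g(U)/\ord(g) = m\,\mathsf k(U)$ atoms of the form $g^{[\ord(g)]}$, while $U^{[m]}$ also factors as $m$ copies of $U$; half-factoriality forces $m = m\,\mathsf k(U)$, hence $\mathsf k(U) = 1$.

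The substantive step is (b) $\Rightarrow$ (c). First I would establish the general identity that the length of any factorization can be compared with the cross number. Observe that the cross number $\mathsf k \colon \mathcal F(G_0) \to \Q_{\ge 0}$ is additive with respect to concatenation, so for any $A \in \mathcal B(G_0)$ and any factorization $z = U_1 \bdot \ldots \bdot U_\ell \in \mathsf Z(A)$ with $U_i \in \mathcal A(G_0)$ we have $\mathsf k(A) = \sum_{i=1}^\ell \mathsf k(U_i)$. Now if (b) holds, then $\mathsf k(U_i) = 1$ for each $i$, so $\mathsf k(A) = \ell = |z|$. Since this holds for every factorization $z$ of $A$, we conclude $\mathsf L(A) = \{\mathsf k(A)\}$, which is (c). (For $A \in \mathcal B(G_0)^\times = \{1\}$ one has $\mathsf L(A) = \{0\} = \{\mathsf k(A)\}$ trivially, and atomicity of the finitely generated monoid $\mathcal B(G_0)$ guarantees $\mathsf Z(A) \ne \emptyset$ for all $A$.)

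There is essentially no obstacle here — the argument is purely formal once one records that $\mathsf k$ is additive over $\bdot$ and that $\mathsf k(g^{[\ord(g)]}) = 1$. The only point requiring a line of care is the choice of the exponent $m$ in the proof of (a) $\Rightarrow$ (b): one needs $m$ divisible by $\ord(g)$ for every $g \in \supp(U)$ so that each $(g^{[\ord(g)]})^{[m\mathsf v_g(U)/\ord(g)]}$ is a genuine element of $\mathcal F(G_0)$ with integer exponents, and one needs to note each $g^{[\ord(g)]}$ is indeed a product-one sequence (it is, since $g^{\ord(g)} = 1_G$ and the terms all commute with each other) and an atom (it is minimal, as any proper nonempty subsequence $g^{[j]}$ with $0 < j < \ord(g)$ has $\pi(g^{[j]}) = \{g^j\} \not\ni 1_G$). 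With these observations in place the equivalence follows, and I would simply remark that the proof runs exactly as in the abelian case treated in \cite[Proposition 6.7.3]{Ge-HK06a}.
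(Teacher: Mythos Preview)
Your proof is correct and essentially identical to the paper's: both prove (a) $\Rightarrow$ (b) by comparing the two factorizations $U^{[m]} = U \bdot \ldots \bdot U$ and $U^{[m]} = {\prod}^\bullet_g (g^{[\ord(g)]})^{[m\mathsf v_g(U)/\ord(g)]}$ with $m = \lcm\{\ord(g) : g \in \supp(U)\}$, and both derive (b) $\Rightarrow$ (c) from additivity of $\mathsf k$. The only cosmetic difference is that the paper cycles the implications as (a) $\Rightarrow$ (b) $\Rightarrow$ (c) $\Rightarrow$ (a) rather than your (c) $\Rightarrow$ (a) $\Rightarrow$ (b) $\Rightarrow$ (c), and it writes out $U = g_1 \bdot \ldots \bdot g_\ell$ with repetition rather than indexing over $\supp(U)$.
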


\begin{proof}
(a) $\Longrightarrow$ (b) Let $U = g_1 \bdot \ldots \bdot g_{\ell} \in \mathcal A (G_0)$ and suppose that $\ord (g_i)=m_i$ for every $i \in [1,\ell]$. For every $i \in [1, \ell]$, we have $U_i = g_i^{[m_i]} \in \mathcal A (G_0)$. We set $m = \lcm (m_1, \ldots, m_{\ell}) $ and $m = m_im_i'$ for every $i \in [1, \ell]$. Then $U^{[m]} = U_1^{[m_1']} \bdot \ldots \bdot U_{\ell}^{[m_{\ell}']}$ and $\mathsf L (U^{[m]})=\{m\}$
imply that
\[
m = m_1' + \ldots + m_{\ell}' = m \mathsf k (U), \quad \text{whence} \quad \mathsf k (U)=1 \,.
\]

(b) $\Longrightarrow$ (c) If $A = U_1 \bdot \ldots \bdot U_m \in \mathcal B (G_0)$, where $m \in \N$ and $U_1, \ldots, U_m \in \mathcal A (G_0)$, then $m = \mathsf k (U_1) + \ldots + \mathsf k (U_m) = \mathsf k (A)$, whence $\mathsf L (A) = \{\mathsf k (A)\}$.

(c) $\Longrightarrow$ (a) Obvious.
\end{proof}

A subset $G_0 \subset G$ is called an LCN-set (large cross number set) if $\mathsf k (U) \ge 1$ for each $U \in \mathcal A (G_0)$. We define
\[
\mathsf m (G) = \max \{ \min \Delta (G_0) \colon G_0 \subset G \ \text{is a non-half-factorial LCN-set} \} \,.
\]

Let $G$ be a finite cyclic group and let $g \in G$ with
 $\ord(g) = |G| \ge 2$. For every product-one
sequence $S = g^{[n_1]} \bdot \ldots \bdot g^{[n_{\ell]}} \in \mathcal F(G)$, \ where $\ell \in \N$ and $n_1, \ldots, n_{\ell}
\in [1, |G|]$, \ we define its $g$-norm
\[
\| S \|_g = \frac{n_1+ \ldots + n_{\ell}}{|G|} \in \N \,.
\]

\begin{lemma} \label{6.3}
Let $G$ be a finite group, $G_0 \subset G$ a non-half-factorial subset, and $e \in \N$ such that $\ord (g) \mid e$ for all $g \in G_0$.
\begin{enumerate}
\item \[
      \min \Delta (G_0) \mid \gcd \bigl\{ e (\mathsf k (U) -1) \colon U \in \mathcal A (G_0) \bigr\} \,.
       \]
       In particular, if there is some $U \in \mathcal A (G_0)$ with $\mathsf k (U) < 1$, then $\min \Delta (G_0) \le e-2$.

\item $\min \Delta (G_0) \le \max \{e-2, \mathsf m (G) \}$.

\item If $\langle G_0 \rangle = \langle g \rangle$ for some $g \in G_0$, then $\min \Delta (G_0) = \gcd  \bigl\{ \|V\|_g -1 \,\colon\, V \in \mathcal A (G_0) \bigr\}$.

\item Let $g \in G$ with $\ord (g)=n > 3$ and let  $a \in [2,n-1]$.
      If  $[a_0,\ldots, a_m]$ is the continued fraction expansion of $n/a$ with odd length (i.e. $m$ is even), then  $\min \Delta(\{g, ag\})=\gcd(a_1, a_3,\ldots, a_{m-1})$.
\end{enumerate}
\end{lemma}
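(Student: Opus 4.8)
The plan is to reduce everything to the $g$-norm computation of part 3. Fix $g\in G$ with $\ord(g)=n>3$ and $a\in[2,n-1]$, and set $G_0=\{g,ag\}$; since $\gcd$ is unaffected we may w.l.o.g. assume $\gcd(a,n)=1$ (otherwise $\langle G_0\rangle$ is a proper subgroup of $\langle g\rangle$, but the atoms and their $g$-norms are computed inside that subgroup in exactly the same way, so the argument below goes through verbatim with $n$ replaced by $\ord(g)$ if needed — this is a minor point to check). First I would identify $\langle g\rangle$ with $\Z/n\Z$ and describe $\mathcal A(G_0)$ explicitly: a product-one sequence $g^{[x]}\bdot(ag)^{[y]}$ is an atom iff $x+ay\equiv 0\pmod n$ and no proper nonempty subsequence $g^{[x']}\bdot(ag)^{[y']}$ with $0\le x'\le x$, $0\le y'\le y$ satisfies $x'+ay'\equiv 0\pmod n$. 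By part 3, $\min\Delta(G_0)=\gcd\{\|V\|_g-1:V\in\mathcal A(G_0)\}$, where for $V=g^{[x]}\bdot(ag)^{[y]}$ we have $\|V\|_g=(x+ay)/n$ (this uses that $V$ is supported on $\la g\ra$, and $\ord(g)=n$). So the lemma amounts to the purely number-theoretic identity
\[
\gcd\Bigl\{\tfrac{x+ay}{n}-1 \;:\; g^{[x]}\bdot(ag)^{[y]}\in\mathcal A(G_0)\Bigr\}=\gcd(a_1,a_3,\ldots,a_{m-1}),
\]
where $n/a=[a_0,\ldots,a_m]$ with $m$ even.

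The core of the argument is the correspondence between atoms of $\{g,ag\}$ — equivalently, minimal solutions of $x+ay\equiv 0\pmod n$ in nonnegative integers with the minimality/irreducibility condition — and the lattice of nonnegative integer points on the line $x+ay\equiv 0$, i.e.\ the monoid of solutions of $x+ay=kn$. The Hilbert basis / atoms of the numerical-type monoid $\{(x,y)\in\N_0^2: n\mid x+ay\}$ is classically governed by the continued fraction expansion of $n/a$: the "staircase" of lattice points below the line from $(n,0)$ to $(0,\lceil n/a\rceil$-ish$)$ has its convex corners indexed by the convergents $p_i/q_i$ of $[a_0,\ldots,a_m]$, and the atoms are precisely the primitive vectors along the edges of this staircase. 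For an atom $V_i$ lying on the $i$-th edge one computes $\|V_i\|_g-1$ in terms of the $a_j$, and the key identity is that the norms coming from edges of a fixed "direction" (even-indexed vs odd-indexed continuant positions) contribute the partial quotients $a_1,a_3,\ldots,a_{m-1}$, while the others contribute values already divisible by those — so that the overall gcd collapses to $\gcd(a_1,a_3,\ldots,a_{m-1})$. I would carry this out by induction on $m$ (or on $a$, via the Euclidean step $n=a_0 a + r$, $a/r$ has expansion $[a_1,\ldots,a_m]$), tracking how the atom set and the multiset of norms transform under replacing $(n,a)$ by $(a,r)$ — essentially the same recursion the paper surely used for part 3 or references (\cite{Ge-HK06a}), now made quantitative.

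The main obstacle I anticipate is bookkeeping the exact shape of $\mathcal A(G_0)$: one must be careful that the irreducibility condition (no proper product-one subsequence) is equivalent to primitivity along a staircase edge, and one must correctly handle the two "long" atoms $g^{[n]}$ and $(ag)^{[n]}$ (norms $n/n-1=0$ and $(an)/n-1=a-1$) as well as the atom sitting at the top corner — these boundary cases determine whether the answer is $\gcd(a_1,a_3,\ldots,a_{m-1})$ rather than a gcd over all $a_i$, and this is exactly where the parity hypothesis "$m$ even" (odd length $m+1$) enters: swapping $a\leftrightarrow$ (something) or reversing the staircase exchanges even- and odd-indexed partial quotients, and only one of the two normalizations yields a continued fraction of odd length, pinning down which subsequence of partial quotients survives. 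Once the staircase description and the induction step are set up correctly, the gcd computation itself is a routine continuant manipulation using $p_m=n$, $q_m=a$ (or the relevant convergent identities $p_iq_{i-1}-p_{i-1}q_i=(-1)^{i-1}$), and I would present it compactly rather than in full.
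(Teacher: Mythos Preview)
The paper does not prove part 4 at all; it simply cites \cite[Theorem 2.1]{Ch-Ch-Sm07b} (and likewise cites \cite[Lemma 6.8.5]{Ge-HK06a} for part 3). So there is nothing to compare against beyond noting that your proposal is an attempt to reconstruct what is taken as a known result from the literature.

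That said, your sketch has a few issues worth flagging. First, your reduction to $\gcd(a,n)=1$ is justified incorrectly: since $g\in G_0$, we always have $\langle G_0\rangle=\langle g\rangle$, so $\langle G_0\rangle$ is never a proper subgroup of $\langle g\rangle$, and part 3 applies directly without any reduction. Second, and relatedly, when $\gcd(a,n)=d>1$ the sequence $(ag)^{[n]}$ is not an atom; the atom is $(ag)^{[n/d]}$, with $\|(ag)^{[n/d]}\|_g=a/d$, so your boundary-atom norm computation needs adjustment. Third, and most importantly, the heart of the argument---the precise description of $\mathcal A(\{g,ag\})$ via the staircase, the induction on the Euclidean step $(n,a)\mapsto(a,r)$, and the continuant identities showing the odd-indexed partial quotients survive while the even-indexed ones do not---is left entirely as intention (``I would carry this out'', ``I anticipate''). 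The conceptual outline is the right one and matches the approach in the cited reference, but as written it is not a proof: the combinatorial bookkeeping you correctly identify as the main obstacle is exactly what needs to be done, and none of it appears here.
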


\begin{proof}
1. We set $d = \min \Delta (G_0)=\gcd \Delta(G_0)$
 and choose some $U \in \mathcal A (G_0)$. It is sufficient to show that $d \mid  e (\mathsf k (U) -1)$. We set $U = g_1 \bdot \ldots \bdot g_{\ell}$. Then $U_i = g_i^{[\ord (g_i)]} \in \mathcal A (G_0)$ for all $i \in [1, \ell]$ and
\[
U^{[e]} = {\prod}^\bullet_{i\in [1,\ell]} U_i^{[e/\ord(g_i)]} \quad \text{implies that} \quad e \mathsf k (U) = \sum_{i=1}^{\ell} \frac{e}{\ord (g_i)} \in \mathsf L (U^{[e]}) \,.
\]
Since $e \in \mathsf L (U^{[e]})$, we infer that $d$ divides $e \mathsf k (U) - e$.

If $\mathsf k (U)< 1$, then $e \mathsf k (U) \in [2, e-1]$ whence $e - e \mathsf k (U) \in [1, e-2]$ and thus $d \le e-2$.

2. This follows immediately from 1.

3. follows from \cite[Lemma 6.8.5]{Ge-HK06a} and 4. from \cite[Theorem 2.1]{Ch-Ch-Sm07b}.
\end{proof}

\begin{lemma}\label{lem-atom-transfer}
Let $G$ be a dihedral group of order $2n$ where $n\ge 3$ is odd, say  $G=\langle\alpha,\tau\colon \alpha^n=\tau^2=1_G \text{ and }\alpha\tau=\tau\alpha^{-1}\rangle$. For $U\in \mathcal F(\la \alpha\ra)$, define $$\phi(U)={\prod}^\bullet_{g\in \la \alpha\ra} (g\tau\bdot \tau)^{[\vp_g(U)]}.$$
Then $U$ is an atom if and only if $\phi(U)$ is an atom.
\end{lemma}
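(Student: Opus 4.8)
The plan is to work through the correspondence established in Proposition~\ref{prop-add/mult-DefiningCorrespondance}, Corollary~\ref{cor-zero-sum-defining}, and the factorization bookkeeping implicit there. Write $U=\alpha^{x_1}\bdot\ldots\bdot\alpha^{x_s}\in\Fc(\la\alpha\ra)$, so that $U$ is a product-one sequence in the abelian group $\la\alpha\ra$ exactly when $x_1+\ldots+x_s\equiv 0\pmod n$, and $U$ is an atom of $\mathcal B(\la\alpha\ra)$ precisely when additionally no proper nonempty subsequence sums to $0$. On the dihedral side, $\phi(U)$ has $s$ terms equal to $\tau$ and $s$ terms of the form $\alpha^{x_i}\tau$, so $|\phi(U)_{\tau\la\alpha\ra}|=2s$ with $\ell=s$ in the notation of Corollary~\ref{cor-zero-sum-defining}, and $\phi(U)_{\la\alpha\ra}$ is trivial. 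The key computational observation is that $\phi(U)^+_{\tau\la\alpha\ra}$ consists of the terms $x_1,\ldots,x_s$ (from the $\alpha^{x_i}\tau$) together with $s$ copies of $0$ (from the $\tau$'s), so $\sigma(\phi(U)^+_{\tau\la\alpha\ra})=x_1+\ldots+x_s$ and $\Sigma_s(2\phi(U)^+_{\tau\la\alpha\ra})$ is precisely the set of sums $\sigma(2T^+)$ over $s$-term subsequences $T$; choosing how many of the $0$-terms to include, such a subsequence picks out any subset $I\subseteq[1,s]$ of the nonzero slots with $|I|\le s$, giving $\Sigma_s(2\phi(U)^+_{\tau\la\alpha\ra})=\{\,2\!\sum_{i\in I}x_i : I\subseteq[1,s]\,\}$. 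Since $n$ is odd, multiplication by $2$ is a bijection on $\Z/n\Z$, so by Corollary~\ref{cor-zero-sum-defining}, $\phi(U)\in\mathcal B(G)$ iff $0\in\{\sum_{i\in I}x_i:I\subseteq[1,s]\}-(x_1+\ldots+x_s)$, i.e.\ iff some subset of $\{x_1,\ldots,x_s\}$ sums to $x_1+\ldots+x_s$ modulo $n$; taking complements, this holds iff some subset sums to $0$ modulo $n$, and taking $I=[1,s]$ shows this is equivalent to $x_1+\ldots+x_s\equiv 0$. Hence $\phi(U)$ is product-one iff $U$ is product-one.

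For the atom statement I would argue in both directions. First, suppose $\phi(U)$ is an atom but $U$ is not: then either $U$ is not product-one --- but then $\phi(U)$ is not product-one by the above, contradiction --- or $U=U_1\bdot U_2$ with both $U_1,U_2$ nontrivial product-one sequences in $\la\alpha\ra$; applying $\phi$ gives $\phi(U)=\phi(U_1)\bdot\phi(U_2)$ with both factors nontrivial, and both product-one by the paragraph above, contradicting that $\phi(U)$ is an atom. (The case $U$ a single trivial term $1_G$ cannot arise for $s\ge 1$; if $s=0$ both $U$ and $\phi(U)$ are trivial.) Conversely, suppose $U$ is an atom but $\phi(U)$ is not. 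If $\phi(U)$ is not product-one then neither is $U$, contradiction; so $\phi(U)=W_1\bdot W_2$ with $W_1,W_2$ nontrivial product-one. Here is the crux: I must show such a factorization forces a corresponding splitting of $U$. Since $W_1$ is product-one and $W_1\mid\phi(U)$, Corollary~\ref{cor-zero-sum-defining} (or the fact that product-one sequences have their $\tau\la\alpha\ra$-part of even length) forces $|(W_1)_{\tau\la\alpha\ra}|$ even; I claim $W_1$ must contain equally many $\tau$-terms as $\alpha^{x_i}\tau$-terms and that its $\alpha^{x_i}\tau$-terms are matched up with the right indices. Concretely, if $W_1$ contains the terms $\alpha^{x_i}\tau$ for $i$ in some subset $I\subseteq[1,s]$ and $t$ copies of $\tau$, then applying the norm/sum computation above to $W_1$ shows $W_1$ is product-one iff $t+|I|$ is even, $t\le |I|$ is impossible to violate in a way that... — more carefully: running the $\Sigma$-computation for $W_1$ alone shows $W_1\in\mathcal B(G)$ iff $\sum_{i\in I}x_i\equiv 0\pmod n$ (independently of $t$, as long as $t+|I|$ is even, which product-one forces). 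But then $U_I:=\prod^\bullet_{i\in I}\alpha^{x_i}$ is a product-one subsequence of $U$, nontrivial if $I\ne\emptyset$, and proper in $U$ if $I\ne[1,s]$; since $W_1$ is nontrivial and $U$ is an atom, we get a contradiction unless $I=\emptyset$ or $I=[1,s]$. If $I=[1,s]$ then $W_1$ contains all $s$ terms $\alpha^{x_i}\tau$; since $\phi(U)$ is product-one and $W_2=\phi(U)\bdot W_1^{[-1]}$ is product-one with no $\alpha^{x_i}\tau$-terms, $W_2\in\Fc(\la\tau\ra)$, forcing $|(W_2)_{\tau\la\alpha\ra}|$ even and $W_2=\tau^{[2j]}$ for some $j\ge 1$; but then $W_1=\phi(U)\bdot\tau^{[-2j]}$ and one checks directly (again via Corollary~\ref{cor-zero-sum-defining} applied to $W_1$, noting dropping $2j$ zero-terms from $\phi(U)^+_{\tau\la\alpha\ra}$ does not change $\Sigma_s$ nor $\sigma$ as long as $\ge s$ zero-terms remain, i.e.\ $2j\le s$... here one needs $W_1$ to still have $\ge\lceil s/2\rceil$ copies available) that $\phi(U)=W_1\bdot\tau^{[2j]}$ with $W_1$ product-one contradicts $U$ being an atom via the splitting $U=U$ (trivial) — the cleaner route is: the only atoms supported on $\{\tau,\alpha^{x}\tau:x\in\supp U\}$ with all $s$ distinct-exponent terms present are $\phi(U)$ itself together with $\tau^{[2]}$ and $(\alpha^x\tau)^{[2]}$, so $W_1=\phi(U)$, i.e.\ $W_2$ trivial, contradiction. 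The symmetric case $I=\emptyset$ is handled the same way.

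The main obstacle, and where I will need to be careful, is precisely this last direction: ruling out "unbalanced" factorizations of $\phi(U)$ --- those in which one factor, say $W_1$, grabs all (or none) of the distinguished terms $\alpha^{x_i}\tau$ and then absorbs an even number of the "free" $\tau$'s. The clean way to close this is to observe, exactly as in the proof of Theorem~\ref{4.1} and Lemma~\ref{5.2}, that the only atoms with support contained in $\{\tau\}\cup\{\alpha^x\tau:x\in\supp(U)\}$ are $\tau^{[2]}$, the $(\alpha^x\tau)^{[2]}$, and those arising as $\phi(U')$ for $U'$ an atom of $\mathcal B(\la\alpha\ra)$ — so a factorization of $\phi(U)$ into atoms must consist of $\phi$-images of atoms of $\la\alpha\ra$ together with the length-two atoms $\tau^{[2]},(\alpha^x\tau)^{[2]}$; matching multiplicities of $\tau$ against those of the $\alpha^x\tau$ forces the length-two pieces to cancel in pairs and forces a genuine factorization $U=U_1\bdot U_2$ in $\la\alpha\ra$, contradicting that $U$ is an atom. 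This reduces everything to the bijective bookkeeping of the first paragraph, which is routine once set up.
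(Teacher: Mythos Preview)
Your easy direction (contrapositive: if $U$ factors then $\phi(U)$ factors) is fine and matches the paper. The hard direction, however, has a real gap. The claim ``$W_1\in\mathcal B(G)$ iff $\sum_{i\in I}x_i\equiv 0\pmod n$ (independently of $t$)'' is false in the direction you need. Take $W_1=(\alpha^x\tau)^{[2]}$ for any $x\neq 0$: this is product-one, divides $\phi(U)$ whenever $\vp_{\alpha^x}(U)\ge 2$, yet $\sum_{i\in I}x_i=2x\not\equiv 0$. So from a nontrivial product-one divisor $W_1\mid\phi(U)$ you cannot simply read off a product-one subsequence $U_I\mid U$ indexed by the non-$\tau$ terms of $W_1$. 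Your fallback (``the only atoms supported in $\{\tau\}\cup\{\alpha^x\tau:x\in\supp(U)\}$ are $\tau^{[2]}$, $(\alpha^x\tau)^{[2]}$, and $\phi(U')$ for $U'$ an atom'') is also wrong: already for $|\supp(U)|=2$, Lemma~\ref{lemma6.7} exhibits atoms of Types~IV and~V (and Type~II atoms such as $(\alpha\tau\bdot\alpha^i\tau)^{[n/\gcd(i-1,n)]}$) that are not of the form $\phi(U')$, since they do not have equal numbers of $\tau$-terms and non-$\tau$ terms. So the ``matching multiplicities'' bookkeeping you sketch does not close the argument.

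The paper's proof avoids this entirely by working at the level of orderings rather than subsequences. Writing $\vp_{\alpha^x}(U)=s_x$, take any ordering of $\phi(U)$ with product $1_G$ and let $s'_x$ count how many of the $\alpha^x\tau$-terms sit in even positions. The dihedral relations give product $=\alpha^z$ with $z\equiv\sum_x(s_x-2s'_x)x\pmod n$; since $\sum_x s_x x\equiv 0$ (as $U$ is product-one) and $n$ is odd, $z\equiv 0$ forces $\sum_x s'_x x\equiv 0$. But $\prod_x^\bullet(\alpha^x)^{[s'_x]}$ is then a product-one subsequence of the atom $U$, hence either trivial or all of $U$. Thus in every product-one ordering of $\phi(U)$, the $\alpha^x\tau$-terms occupy either all the odd slots or all the even slots, and from this parity rigidity it is immediate that $\phi(U)$ cannot split nontrivially in $\mathcal B(G)$. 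This is the missing idea in your argument: you must track \emph{where} the terms sit in an ordered product, not just \emph{which} terms lie in a subsequence.
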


\begin{proof} If $U$ is not an atom, then there is a factorization $U=U_1\bdot U_2$ with $U_1,\,U_2\in \mathcal B(G)$ nontrivial product-one sequences. But then $\phi(U)=\phi(U_1)\bdot \phi(U_2)$ is also a factorization of $\phi(U)$ into nontrivial product-one sequences, showing that $\phi(U)$ is not an atom.

 For the other direction, assume $U$ is an atom and let $\vp_{\alpha^{x}}(U)=s_x$ for $x\in [1,n]$. Then $$\Sum{x=1}{n}s_xx\equiv 0\mod n \quad\und \quad \Sum{x=1}{n}s'_xx\not\equiv 0\mod n$$  for any $s'_x\in [0,s_x]$ with $0<\Sum{x=1}{n}s'_x<\Sum{i=1}{n}s_x$. If we take an arbitrary ordering of the terms of $\phi(U)$, then its product equals $\alpha^z$ with $z\equiv \Sum{x=1}{n}\big((s_x-s'_x)x-s'_xx\big)\mod n$, where $s'_x\in [0,s_x]$ is the number of terms equal to $\alpha^x\tau$ occurring as the $j$-th term in the ordering with $j$ even (making $s_x-s'_x$ the number of terms occurring as the $j$-th term in the ordering with $j$ odd). Thus, in any ordering whose product is one, we must have $0\equiv \Sum{x=1}{n}s_xx-2\Sum{x=1}{n}s'_xx\equiv -2\Sum{x=1}{n}s'_xx\mod n$. Since $n$ is odd, this forces $\Sum{x=1}{n}s'_xx\equiv 0 \mod n$, which is only possible if $s'_x=0$ for all $x$, or if $s'_x=s_x$ for all $x$ (as $U$ is an atom). We are left to conclude that, in any ordering of the terms of $\phi(U)$ having product-one, either all terms equal to $g\tau$ with $g\in \supp(U)$ occur at odd places in the ordering, or all occur at even places. From this conclusion, it is now rather immediate that $\phi(U)$ is an atom, completing the proof.
\end{proof}

\begin{proposition} \label{6.5}
Let $G$ be a dihedral group of order $2n$ where $n\ge 3$ is odd, say  $G=\langle\alpha,\tau\colon \alpha^n=\tau^2=1_G \text{ and }\alpha\tau=\tau\alpha^{-1}\rangle$, and  let $G_0=\{\tau,\alpha\tau, \alpha^i\tau\}$, where $i\in [2,n-1]$.
Then $\min\Delta^*(G_0)$ divides $\gcd \left (\min \Delta(\{\alpha,\alpha^{i}\}), \min \Delta(\{\alpha,\alpha^{1-i}\}), \min \Delta(\{\alpha^i,\alpha^{i-1}\})\right)$.	
\end{proposition}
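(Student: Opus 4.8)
The statement concerns $\Delta^*(G_0)$ for the three-element set $G_0=\{\tau,\alpha\tau,\alpha^i\tau\}\subset D_{2n}$, and it asserts a divisibility relating $\min\Delta^*(G_0)$ to three ordinary minimal distances of two-element subsets of $\langle\alpha\rangle\cong C_n$. My approach is to use the transfer map $\phi$ from Lemma~\ref{lem-atom-transfer} to identify suitable divisor-closed submonoids of $\mathcal B(G_0)$ with monoids of zero-sum sequences over subsets of $C_n$, and then read off $\min\Delta$ from those.

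\textbf{Step 1: Reduce via divisor-closed submonoids.} Recall $\Delta^*(G_0)=\{\min\Delta(G_1)\colon G_1\subset G_0,\ \Delta(G_1)\neq\emptyset\}$ by the discussion after Lemma~\ref{6.1} (divisor-closed submonoids of $\mathcal B(G_0)$ are exactly the $\mathcal B(G_1)$ for $G_1\subset G_0$). So $\min\Delta^*(G_0)=\gcd\{\min\Delta(G_1)\colon G_1\subset G_0,\ \Delta(G_1)\neq\emptyset\}$. It therefore suffices to locate, for each of the three target two-element subsets of $\langle\alpha\rangle$, a subset $G_1\subset G_0$ whose $\min\Delta(G_1)$ equals — or at least is divisible by, in the right direction — the corresponding $\min\Delta(\{\alpha^a,\alpha^b\})$.

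\textbf{Step 2: Build the matching subsets.} The three pairs $\{\alpha,\alpha^i\}$, $\{\alpha,\alpha^{1-i}\}$, $\{\alpha^i,\alpha^{i-1}\}$ should arise from products of pairs of generators of $G_0$. Writing $\tau\alpha^r\cdot\tau\alpha^s=\alpha^{s-r}$ in $D_{2n}$, the product-one structure of a sequence supported on $\{\tau,\alpha\tau,\alpha^i\tau\}$ with an even number of terms is governed (via Corollary~\ref{cor-zero-sum-defining} or directly via Proposition~\ref{prop-add/mult-DefiningCorrespondance}) by weighted subsums in $\mathbb Z/n\mathbb Z$ of the exponents. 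Concretely, I expect: the subsequence structure on $\{\tau,\alpha\tau\}$ (differences landing in $\langle\alpha\rangle$ generated by $\pm 1$) relates to $\{\alpha,\alpha^{i}\}$ or to one of the listed pairs after the appropriate weighting; likewise $\{\tau,\alpha^i\tau\}$ gives difference $i$, and $\{\alpha\tau,\alpha^i\tau\}$ gives difference $i-1$. Pairing these generators to form product-one sequences over $G_0$ and invoking Lemma~\ref{lem-atom-transfer} (which says $U\in\mathcal A(\langle\alpha\rangle)$ iff $\phi(U)\in\mathcal A(G)$, with $\phi(U)=\prod^\bullet_{g}(g\tau\bdot\tau)^{[\mathsf v_g(U)]}$) should yield, for the three relevant divisor-closed submonoids, half-factorial-defect data identical to that of $\mathcal B(\{\alpha^a,\alpha^b\})$ over $C_n$ — so that the minimal distances coincide, or at worst one divides the other as needed for the $\gcd$-divisibility in the statement. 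I would use Lemma~\ref{6.3}.3 ($\min\Delta$ as a gcd of $\|V\|_g-1$ over atoms $V$) to make the comparison precise, since $\phi$ preserves the relevant norm/length bookkeeping.

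\textbf{Step 3: Assemble.} Once each target two-element pair $\{\alpha^a,\alpha^b\}$ has been realized so that $\min\Delta(\{\alpha^a,\alpha^b\})$ is a multiple of $\min\Delta(G_1)$ for a corresponding $G_1\subset G_0$ — equivalently $\min\Delta(G_1)\mid\min\Delta(\{\alpha^a,\alpha^b\})$ — the chain $\min\Delta^*(G_0)=\gcd_{G_1}\min\Delta(G_1)$ divides $\min\Delta(G_1)$ for each such $G_1$, hence divides each of the three quantities, hence divides their gcd. The main obstacle I anticipate is Step~2: correctly matching up which pairing of generators of $G_0$ produces which pair in $\langle\alpha\rangle$, keeping track of the factor of $2$ in the weights (the $2S^+_{\tau\langle\alpha\rangle}$ appearing in Corollary~\ref{cor-zero-sum-defining}) and of the fact that atoms over $G_0$ need an even number of reflection terms, so that the transfer $\phi$ and the norm identity from Lemma~\ref{6.3}.3 genuinely transport $\min\Delta$ in the claimed direction. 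The rest is bookkeeping with gcd's and the definition of $\Delta^*$.
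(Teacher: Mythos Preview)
Your reduction in Step~1 is fine (indeed $\min\Delta^*(G_0)=\min\Delta(G_0)$, since $G_1=G_0$ already realizes the minimum), but Step~2 contains a real gap. You want to locate, for each target pair $\{\alpha^a,\alpha^b\}$, a subset $G_1\subset G_0$ with $\min\Delta(G_1)\mid\min\Delta(\{\alpha^a,\alpha^b\})$. However, the proper subsets of $G_0=\{\tau,\alpha\tau,\alpha^i\tau\}$ are two-element sets of reflections, for which e.g.\ $\min\Delta(\{\tau,\alpha\tau\})=2n-2$; these bear no obvious relation to $\min\Delta(\{\alpha,\alpha^i\})$. The monoid you actually care about, $\phi(\mathcal B(\{\alpha,\alpha^i\}))$, sits inside $\mathcal B(G_0)$ but is \emph{not} divisor-closed: every $\phi(U)$ satisfies $\mathsf v_\tau(\phi(U))=\mathsf v_{\alpha\tau}(\phi(U))+\mathsf v_{\alpha^i\tau}(\phi(U))$, so for instance $\tau^{[2]}\mid\phi(\alpha^{[n]})$ in $\mathcal F(G_0)$ yet $\tau^{[2]}$ is not in the image. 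Hence $\phi$ does not identify $\mathcal B(\{\alpha,\alpha^i\})$ with any $\mathcal B(G_1)$, and the $\Delta^*$ machinery gives nothing beyond the trivial case $G_1=G_0$.

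The paper bypasses divisor-closed submonoids entirely. Setting $d=\min\Delta(G_0)$ and $d_1=\min\Delta(\{\alpha,\alpha^i\})$, choose atoms $U_1,\ldots,U_k,V_1,\ldots,V_{k+d_1}\in\mathcal A(\{\alpha,\alpha^i\})$ with $U_1\bdot\ldots\bdot U_k=V_1\bdot\ldots\bdot V_{k+d_1}$. Since $\phi$ preserves atomicity (Lemma~\ref{lem-atom-transfer}) and is multiplicative, this transfers to a single element of $\mathcal B(G_0)$ with two factorization lengths $k$ and $k+d_1$, whence $d=\gcd\Delta(G_0)\mid d_1$. For the other two pairs the paper changes the presentation of $G$: replacing $(\alpha,\tau)$ by $(\alpha^{-1},\alpha\tau)$ rewrites $G_0$ as $\{\tau',\alpha'\tau',(\alpha')^{1-i}\tau'\}$, and the same transfer yields $d\mid\min\Delta(\{\alpha',(\alpha')^{1-i}\})=\min\Delta(\{\alpha,\alpha^{1-i}\})$; similarly with $(\alpha^{-1},\alpha^i\tau)$ for the third pair. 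This base-point change --- choosing which element of $G_0$ plays the role of $\tau$ in the definition of $\phi$ --- is the mechanism your Step~2 was groping for but did not find.
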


\begin{proof}We set $d=\min\Delta(G_0)$. Clearly,  it is sufficient to show $d\t \min \Delta(\{\alpha,\alpha^{i}\})$, $d\t \min \Delta(\{\alpha,\alpha^{1-i}\})$ and $d\t  \min \Delta(\{\alpha^i,\alpha^{i-1}\})$. Let $d_1=\min \Delta(\{\alpha,\alpha^{i}\})$ and let $U_1,\ldots, U_k, V_1,\ldots, V_{k+d_1}\in \mathcal A(\{\alpha,\alpha^{i}\})$ be atoms with  $U_1\bdot\ldots\bdot U_k=V_1\bdot\ldots\bdot V_{k+d_1}$. Letting $\phi$ be as in Lemma \ref{lem-atom-transfer}, we have
	$$\phi(U_1)\bdot \ldots\bdot \phi(U_k)=\phi(V_1)\bdot \ldots\bdot \phi(V_{k+d_1})\,,$$ with each of the $\phi(U_j)$ and $\phi(V_j)$ atoms over $\{\tau,\alpha\tau,\alpha^i\tau\}$ (by Lemma \ref{lem-atom-transfer}). Since $d=\gcd \Delta(\{\tau,\alpha\tau,\alpha^i\tau\})$, this shows $d\t d_1$.
Using the generating sets $\{\alpha^{-1}, \alpha\tau\}$ and $\{\alpha^{-1}, \alpha^i\tau\}$ in place of $\{\alpha, \tau\}$, we obtain that $\min\Delta(G_0)=\min\Delta(\{\alpha, \alpha\tau, \alpha^i\tau\})=\min\Delta(\{\alpha\tau, \tau, \alpha^{1-i}\tau\})=\min\Delta(\{\alpha^i\tau, \alpha^{i-1}\tau, \tau\})$, and now repeating the above arguments likewise shows $d\t \min \Delta(\{\alpha, \alpha^{1-i}\}))$ and $d\t \min\Delta(\{\alpha^i, \alpha^{i-1}\})$, completing the  proof.
\end{proof}

\begin{lemma}\label{lemma6.7}
	Let $G$ be a dihedral group of order $2n$ where $n\ge 3$ is odd, say $G=\langle\alpha,\tau\colon \alpha^n=\tau^2=1_G \text{ and }\alpha\tau=\tau\alpha^{-1}\rangle$,  and let $G_0=\{\tau, \alpha\tau, \alpha^i\tau\}$ with $i\in [2,n-1]$.
	Every atom $A\in \mathcal A(G_0)$  has one of  the following forms.
	\begin{enumerate}
		\item[Type I:]$\tau^{[2]}, (\alpha\tau)^{[2]}, (\alpha^i\tau)^{[2]}$.
		
		\item[Type II:]  $(\tau\bdot \alpha\tau)^{[n]}, (\tau\bdot \alpha^i\tau)^{[n/\gcd(i,n)]}, (\alpha\tau\bdot\alpha^i\tau)^{[n/\gcd(i-1,n)]}$
		
		\item[Type III:] $(\alpha\tau\bdot \tau)^{[x]}\bdot(\alpha^i\tau\bdot\tau)^{[y]}$, where $x,y\in [1,n-1]$ such that the sequence $\alpha^{[x]}\bdot (\alpha^{i})^{[y]}$ over $\langle\alpha\rangle$ is an atom.
		
		\item[Type IV:] $ (\alpha\tau\bdot \tau)^{[x]}\bdot(\alpha\tau\bdot\alpha^i\tau)^{[y]}$,  where $x,y\in [1,n-1]$ such that the sequence $\alpha^{[x]}\bdot (\alpha^{1-i})^{[y]}$ over $\langle\alpha\rangle$ is an atom.
		
		\item[Type V:] $ (\alpha^i\tau\bdot \alpha\tau)^{[x]}\bdot(\alpha^i\tau\bdot\tau)^{[y]}$,  where $x,y\in [1,n-1]$ such that the sequence $(\alpha^{i})^{[y]}\bdot (\alpha^{i-1})^{[x]}$ over $\langle\alpha\rangle$ is an atom.
	\end{enumerate}
\end{lemma}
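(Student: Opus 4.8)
\textbf{Proof proposal for Lemma \ref{lemma6.7}.} The plan is to use that every element of $G_0=\{\tau,\alpha\tau,\alpha^i\tau\}$ is a reflection, so that every sequence over $G_0$ has the shape $A=\tau^{[a]}\bdot(\alpha\tau)^{[b]}\bdot(\alpha^i\tau)^{[c]}$ with $a,b,c\in\N_0$; its support lies in $\tau\la\alpha\ra$, and since a product of an odd number of terms from $G_0$ is again a reflection (hence $\ne1_G$), every product-one sequence over $G_0$ — in particular every atom — has even length $\ell=a+b+c$. The length-$2$ atoms are immediate: a product-one sequence of length $2$ consists of two mutually inverse, hence (being involutions) equal, terms, giving exactly the Type~I atoms $\tau^{[2]},(\alpha\tau)^{[2]},(\alpha^i\tau)^{[2]}$. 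So I may assume $A\in\mathcal A(G_0)$ has $\ell\ge4$.

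The heart of the proof is the claim that $\mathsf h(A)=\max\{a,b,c\}=\ell/2$, i.e.\ that one of $a=b+c$, $b=a+c$, $c=a+b$ holds. The inequality $\mathsf h(A)\le\ell/2$ is Lemma \ref{lem-triv-multbound} (as $|A_{\tau\la\alpha\ra}|=\ell>2$), so only $\mathsf h(A)\ge\ell/2$ remains; I would prove it by contradiction. If $\max\{a,b,c\}\le\ell/2-1$, then no coordinate can be $\le1$ (otherwise the other two sum to $\ge\ell-1$, forcing their maximum to be $\ge\ell/2$), so $a,b,c\ge2$ and hence $\tau^{[2]},(\alpha\tau)^{[2]},(\alpha^i\tau)^{[2]}$ all divide $A$. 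By Corollary \ref{cor-zero-sum-defining} (with $A_{\la\alpha\ra}$ trivial), putting $u=(\alpha\tau)^+$ and $v=(\alpha^i\tau)^+$, the sequence $A$ is product-one if and only if there is a \emph{witness} $(a',b',c')\in\N_0^3$ with $a'+b'+c'=\ell/2$, $a'\le a$, $b'\le b$, $c'\le c$ and $2ub'+2vc'\equiv ub+vc\pmod n$. A direct check shows that a witness with $1\le a'\le a-1$ produces the witness $(a'-1,b',c')$ for $A\bdot\tau^{[-2]}$, so $A=\tau^{[2]}\bdot(A\bdot\tau^{[-2]})$ contradicts atomicity; likewise for the removals of $(\alpha\tau)^{[2]}$ and $(\alpha^i\tau)^{[2]}$, using $b'$ and $c'$ in place of $a'$. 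Hence every witness of $A$ satisfies $a'\in\{0,a\}$, $b'\in\{0,b\}$, $c'\in\{0,c\}$, so its size $a'+b'+c'$ lies in $\{0,a,b,c,\ell-a,\ell-b,\ell-c,\ell\}$; since $a,b,c\le\ell/2-1$, none of these equals $\ell/2$, so $A$ has no witness and is not product-one — a contradiction.

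To finish, assume first $a=b+c$; then $A=(\alpha\tau\bdot\tau)^{[b]}\bdot(\alpha^i\tau\bdot\tau)^{[c]}=\phi\big(\alpha^{[b]}\bdot(\alpha^i)^{[c]}\big)$ with $\phi$ as in Lemma \ref{lem-atom-transfer}, so $\alpha^{[b]}\bdot(\alpha^i)^{[c]}$ is an atom over $\la\alpha\ra$, and $2(b+c)=|A|\le\mathsf D(G)=2n$ gives $b+c\le n$. If $b,c\ge1$ then $b,c\in[1,n-1]$ and $A$ is of Type~III; if $c=0$ then $\alpha^{[b]}$ is an atom over $\la\alpha\ra$, forcing $b=n$ and the Type~II atom $A=(\tau\bdot\alpha\tau)^{[n]}$; and $b=0$ forces $c=n/\gcd(i,n)$ and the Type~II atom $A=(\tau\bdot\alpha^i\tau)^{[n/\gcd(i,n)]}$. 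The cases $b=a+c$ and $c=a+b$ are entirely analogous, now applying Lemma \ref{lem-atom-transfer} to the generating pairs $\{\alpha,\alpha\tau\}$, resp.\ $\{\alpha,\alpha^i\tau\}$ (each satisfies $\sigma\alpha\sigma^{-1}=\alpha^{-1}$ and fixes $G_0$ setwise), together with the automorphism $\alpha\mapsto\alpha^{-1}$ of $\la\alpha\ra$ to match the normalization of the statement; they yield the Type~IV atoms $(\alpha\tau\bdot\tau)^{[a]}\bdot(\alpha\tau\bdot\alpha^i\tau)^{[c]}$, the Type~V atoms $(\alpha^i\tau\bdot\alpha\tau)^{[b]}\bdot(\alpha^i\tau\bdot\tau)^{[a]}$, and the remaining Type~II atom $(\alpha\tau\bdot\alpha^i\tau)^{[n/\gcd(i-1,n)]}$ in the degenerate subcases. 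The main obstacle is the identity $\mathsf h(A)=\ell/2$ for atoms of length $\ge4$: the delicate point is that a witness lying on the ``boundary'' ($a'\in\{0,a\}$, $b'\in\{0,b\}$, $c'\in\{0,c\}$) does not allow one to split off a $g^{[2]}$, and the crux of the argument is that such boundary witnesses cannot occur.
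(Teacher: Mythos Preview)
Your proof is correct and takes a genuinely different route from the paper's. Both arguments hinge on the structural fact that an atom $A$ of length $\ell\geq 4$ over $G_0$ has $\mathsf h(A)=\ell/2$, but you establish this via the additive characterisation of Corollary~\ref{cor-zero-sum-defining}: the ``boundary witness'' argument is clean, and the key parity observation that $\epsilon_1 a+\epsilon_2 b+\epsilon_3 c=\ell/2$ with $\epsilon_j\in\{0,1\}$ forces $\pm a\pm b\pm c=0$ is exactly what rules out the strict-inequality case. The paper instead argues with an ordered product $g_1\cdots g_{2s}=1_G$ and shows directly that the odd-indexed and even-indexed term sets are disjoint, which yields the same conclusion. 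For the ``iff'' with a cyclic atom (Types III--V), you simply invoke Lemma~\ref{lem-atom-transfer} after a change of reflection generator, whereas the paper reproves this correspondence by hand in each of the three cases via a rather long factorisation analysis. Your approach is shorter and makes more systematic use of the machinery already in place; the paper's approach is more self-contained at this point and does not rely on Lemma~\ref{lem-atom-transfer}.
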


\begin{proof}
	Let $A\in \mathcal A(G_0)$ be an atom. Then $|A|$ is even. If $|\supp(A)|=1$, then  $A\in \{\tau^{[2]}, (\alpha\tau)^{[2]}, (\alpha^i\tau)^{[2]}\}$ and hence is of Type I. If $|\supp(A)|=2$, then  $A\in \{(\tau\bdot \alpha\tau)^{[n]}, (\tau\bdot \alpha^i\tau)^{[n/\gcd(i,n)]}, (\alpha\tau\bdot\alpha^i\tau)^{[n/\gcd(i-1,n)]}\}$ and hence is of  Type II.
	
	Suppose $\supp(A)=G_0$ and $A=g_1\bdot g_2\bdot \ldots \bdot g_{2s}$ such that $g_1\cdot \ldots \cdot g_{2s}=1_G$ and $|\{g_1,g_2,\ldots, g_{2s-1}\}|\le |\{g_2,g_4,\ldots, g_{2s}\}|$, where $s\in \N$. Assume to the contrary that $\{g_1, g_3, \ldots, g_{2s-1}\}\cap \{g_2,g_4,\ldots, g_{2s}\}\neq \emptyset$. Then there are an odd $i\in [1,2s]$ and an even $j\in [1,2s]$ such that $g_i=g_j$. If $i=j-1$, then $g_i\cdot g_{i+1}=g_1\cdot\ldots\cdot g_{i-1}\cdot g_{i+2}\cdot\ldots\cdot g_{2s}=1_G$, a contradiction to the fact that $A$ is an atom. If $i\neq j-1$, then $g_i\cdot g_{i+1}\cdot g_{j-1}\cdot g_j=g_i\cdot g_j\cdot g_{j-1}\cdot g_{i+1}$  and hence $$g_i\cdot g_j=g_{j-1}\cdot g_{i+1}\cdot g_1\cdot \ldots\cdot  g_{\min\{i-1,j-2\}}\cdot g_{\min\{i+2,j+1\}}\cdot\ldots\cdot g_{\max\{i-1,j-2\}}\cdot g_{\max\{i+2, j+1\}}\cdot\ldots\cdot g_{2s}=1_G\,,$$
	a contradiction to the fact that $A$ is an atom. Thus $\{g_1, g_3, \ldots, g_{2s-1}\}\cap \{g_2,g_4,\ldots, g_{2s}\}= \emptyset$ which implies that $|\{g_1,g_3,\ldots, g_{2s-1}\}|=1$ and $|\{g_2,g_4,\ldots, g_{2s}\}|=2$
	
	If $g_1=\tau$, then $A$ can be written as  the form $(\alpha\tau\bdot \tau)^{[x]}\bdot(\alpha^i\tau\bdot\tau)^{[y]}$ for some $x,y\in [1,n-1]$.
	If $g_1=\alpha\tau$, then $A$ can be written as the form $(\alpha\tau\bdot \tau)^{[x]}\bdot(\alpha\tau\bdot\alpha^i\tau)^{[y]}$ for some $x,y\in [1,n-1]$.
	If $g_1=\alpha^i\tau$, then $A$ can be written as the form $(\alpha^i\tau\bdot \alpha\tau)^{[x]}\bdot(\alpha^i\tau\bdot\tau)^{[y]}$ for some $x,y\in [1,n-1]$.
	Now we distinguish three cases.
	
	\medskip
	\noindent{CASE 1. } $g_1=\tau$.
	\smallskip
	
We assert that $A$ must be of Type III. It is sufficient to  show that $(\alpha\tau\bdot \tau)^{[x]}\bdot(\alpha^i\tau\bdot\tau)^{[y]}$ is an atom if and only if $\alpha^{[x]}\bdot (\alpha^{i})^{[y]}$ is an atom, where $x,y\in [1,n-1]$. In fact, if $(\alpha\tau\bdot \tau)^{[x]}\bdot(\alpha^i\tau\bdot\tau)^{[y]}$ is an atom, then it is obvious that $\alpha^{[x]}\bdot (\alpha^{i})^{[y]}$ is an atom. Suppose $\alpha^{[x]}\bdot (\alpha^{i})^{[y]}$ is an atom. Then $y\le \frac{n}{\gcd(i,n)}-1$. Assume to the contrary that $W=(\alpha\tau\bdot \tau)^{[x]}\bdot(\alpha^i\tau\bdot\tau)^{[y]}$ is not an atom.
	Then $$W=U_1\bdot\ldots U_k\bdot V_1\bdot\ldots V_{\ell}\,,$$
	where $U_1,\ldots, U_k$ are atoms of length $\ge 3$ and $V_1,\ldots,V_{\ell}$ are atoms of length $2$. We may assume that the factorization above is the factorization of $A$ with $\ell$ is maximal. Since $x\in [1,n-1]$ and $y\in [1, n/\gcd(i,n)-1]$, we obtain that $U_i\not\in \{(\alpha\tau)^{[n]}\bdot \tau^{[n]}, (\alpha^i\tau)^{[n/\gcd(i,n)]}\bdot \tau^{[n/\gcd(i,n)]}\}$ for every $i\in [1, k]$. If there exits $i\in [1,k]$, say $i=1$, such that $U_1=(\alpha\tau\bdot \tau)^{[x_1]}\bdot(\alpha^i\tau\bdot\tau)^{[y_1]}$, where $x_1,y_1\in [1,n-1]$, then $(\alpha\tau\bdot \tau)^{[x-x_1]}\bdot(\alpha^i\tau\bdot\tau)^{[y-y_1]}$ is also a product-one sequence, a contradiction to the fact that $\alpha^{[x]}\bdot (\alpha^{i})^{[y]}$ is an atom.
	If there exist distinct $i,j\in [1,k]$ such that $U_i=(\alpha\tau\bdot \tau)^{[x_1]}\bdot(\alpha\tau\bdot\alpha^i\tau)^{[y_1]}$ and
	$U_j=(\alpha^i\tau\bdot \alpha\tau)^{[x_2]}\bdot(\alpha^i\tau\bdot\tau)^{[y_2]}$, where $x_1,x_2,y_1,y_2\in [1,n-1]$, then
	$$U_i\bdot U_j=(\alpha^i\tau\bdot \alpha\tau)^{[x_2+y_1+\min\{x_1,y_2\}]}\bdot (\tau\bdot \alpha\tau)^{[x_1-\min\{x_1,y_2\}]}\bdot (\alpha^i\tau\bdot \tau)^{[y_2-\min\{x_1,y_2\}]}\bdot (\tau^{[2]})^{[\min\{x_1,y_2\}]}\,,$$
	where $(\alpha^i\tau\bdot \alpha\tau)^{[x_2+y_1+\min\{x_1,y_2\}]}\bdot (\tau\bdot \alpha\tau)^{[x_1-\min\{x_1,y_2\}]}\bdot (\alpha^i\tau\bdot \tau)^{[y_2-\min\{x_1,y_2\}]}$
	is  a product-one sequence, a contradiction to the maximal choice of $\ell$.
	Put all together, we obtain that
	$U_1\bdot\ldots\bdot U_k=(\alpha\tau\bdot \tau)^{[x_3]}\bdot(\alpha\tau\bdot\alpha^i\tau)^{[y_3]}$ or $(\alpha^i\tau\bdot \alpha\tau)^{[x_3]}\bdot(\alpha^i\tau\bdot\tau)^{[y_3]}$, where $x_3,y_3\in \N$.

	If $U_1\bdot\ldots\bdot U_k=(\alpha\tau\bdot \tau)^{[x_3]}\bdot(\alpha\tau\bdot\alpha^i\tau)^{[y_3]}$, then
	$$|\{j\in [1,\ell]\colon V_j=\tau^{[2]}\}|=\frac{x+y-x_3}{2}=\frac{\mathsf v_{\alpha\tau}(W)-x_3+\mathsf v_{\alpha^i\tau}(W) }{2}= \frac{y_3+y_3}{2}=y_3\,.$$
	After renumbering if necessary, we may assume that $V_1=V_2=\ldots=V_{y_3}=\tau^{[2]}$. Then
	$$U_1\bdot\ldots\bdot U_k\bdot V_1\bdot\ldots\bdot V_{y_3}=(\alpha\tau\bdot \tau)^{[x_3]}\bdot(\alpha\tau\bdot \tau\bdot\tau\bdot\alpha^i\tau)^{[y_3]}=(\alpha\tau\bdot \tau)^{[x_3+y_3]}\bdot (\tau\bdot \alpha^i\tau)^{[y_3]}$$
	and $(\alpha\tau\cdot \tau)^{x_3+y_3}(\tau\cdot \alpha^i\tau)^{y_3}=1_G$. Then $x=x_3+y_3$ and $y=y_3$. Since $x+iy\equiv x-iy\equiv 0\pmod n$, we obtain that $n\t x$, a contradiction to $x\in [1,n-1]$.

	If $U_1\bdot\ldots\bdot U_k=(\alpha^i\tau\bdot \alpha\tau)^{[x_3]}\bdot(\alpha^i\tau\bdot\tau)^{[y_3]}$, then
	$$|\{j\in [1,\ell]\colon V_j=\tau^{[2]}\}|= \frac{x+y-y_3}{2}=\frac{\mathsf v_{\alpha\tau}(W)-y_3+\mathsf v_{\alpha^i\tau}(W) }{2}= \frac{x_3+x_3}{2}=x_3\,.$$
	After renumbering if necessary, we may assume that $V_1=V_2=\ldots=V_{x_3}=\tau^{[2]}$. Then
	$$U_1\bdot\ldots\bdot U_k\bdot V_1\bdot\ldots\bdot V_{x_3}=(\alpha^i\tau\bdot\tau\bdot\tau\bdot \alpha\tau)^{[x_3]}\bdot(\alpha^i\tau\bdot\tau)^{[y_3]}=(\alpha^i\tau\bdot \tau)^{[x_3+y_3]}\bdot (\tau\bdot \alpha\tau)^{[x_3]}$$
	and $(\alpha^i\tau\cdot \tau)^{x_3+y_3}(\tau\cdot \alpha\tau)^{x_3}=1_G$. Then $x=x_3$ and $y=x_3+y_3$. Since $x+iy\equiv -x+iy\equiv 0\pmod n$, we obtain that $n\t x$, a contradiction to $x\in [1,n-1]$.

\medskip
\noindent{CASE 2. } $g_1=\alpha\tau$.
\smallskip

 We assert that $A$ must be of Type IV. It is sufficient to show that $(\alpha\tau\bdot \tau)^{[x]}\bdot(\alpha\tau\bdot\alpha^i\tau)^{[y]}$ is an atom if and only if $\alpha^{[x]}\bdot (\alpha^{1-i})^{[y]}$ is an atom, where $x,y\in [1,n-1]$. In fact, if $(\alpha\tau\bdot \tau)^{[x]}\bdot(\alpha\tau\bdot\alpha^i\tau)^{[y]}$ is an atom, then it is obvious that $\alpha^{[x]}\bdot (\alpha^{1-i})^{[y]}$ is an atom. Suppose $\alpha^{[x]}\bdot (\alpha^{1-i})^{[y]}$ is an atom. Then $y\le n/\gcd(i-1,n)-1$. Assume to the contrary that $W=(\alpha\tau\bdot \tau)^{[x]}\bdot(\alpha\tau\bdot\alpha^i\tau)^{[y]}$ is not an atom.
	Then $$W=U_1\bdot\ldots U_k\bdot V_1\bdot\ldots V_{\ell}\,,$$
	where $U_1,\ldots, U_k$ are atoms of length $\ge 3$ and $V_1,\ldots,V_{\ell}$ are atoms of length $2$. We may assume that the factorization above is the factorization of $A$ with $\ell$ is maximal. Since $x\in [1,n-1]$ and $y\in [1, n/\gcd(i-1,n)-1]$, we obtain that $U_i\not\in \{(\alpha\tau)^{[n]}\bdot \tau^{[n]}, (\alpha\tau)^{[n/\gcd(i-1,n)]}\bdot (\alpha^i\tau)^{[n/\gcd(i-1,n)]}\}$ for every $i\in [1, k]$. If there exits $i\in [1,k]$, say $i=1$, such that $U_1=(\alpha\tau\bdot \tau)^{[x_1]}\bdot(\alpha\tau\bdot\alpha^i\tau)^{[y_1]}$, where $x_1,y_1\in [1,n-1]$, then $(\alpha\tau\bdot \tau)^{[x-x_1]}\bdot(\alpha\tau\bdot\alpha^i\tau)^{[y-y_1]}$ is also a product-one sequence, a contradiction to $\alpha^{[x]}\bdot (\alpha^{1-i})^{[y]}$ is an atom.
	If there exit distinct $i,j\in [1,k]$ such that $U_i=(\alpha\tau\bdot \tau)^{[x_1]}\bdot(\alpha^i\tau\bdot\tau)^{[y_1]}$ and
	$U_j=(\alpha^i\tau\bdot \alpha\tau)^{[x_2]}\bdot(\alpha^i\tau\bdot\tau)^{[y_2]}$, where $x_1,x_2,y_1,y_2\in [1,n-1]$,
	then
	$$U_i\bdot U_j=(\alpha^i\tau\bdot \tau)^{[y_1+y_2+\min\{x_1,x_2\}]}\bdot (\alpha\tau\bdot \tau)^{[x_1-\min\{x_1,x_2\}]}\bdot (\alpha^i\tau\bdot \alpha\tau)^{[x_2-\min\{x_1,x_2\}]}\bdot ((\alpha\tau)^{[2]})^{[\min\{x_1,x_2\}]}\,,$$
	where $(\alpha^i\tau\bdot \tau)^{[y_1+y_2+\min\{x_1,x_2\}]}\bdot (\alpha\tau\bdot \tau)^{[x_1-\min\{x_1,x_2\}]}\bdot (\alpha^i\tau\bdot \alpha\tau)^{[x_2-\min\{x_1,x_2\}]}$
	is  a product-one sequence, a contradiction to the maximal choice of $\ell$.
	Put all together, we obtain that $U_1\bdot\ldots\bdot U_k=(\alpha\tau\bdot \tau)^{[x_3]}\bdot(\alpha^i\tau\bdot\tau)^{[y_3]}$ or $(\alpha^i\tau\bdot \alpha\tau)^{[y_3]}\bdot(\alpha^i\tau\bdot\tau)^{[x_3]}$, where $x_3,y_3\in \N$.

	If $U_1\bdot\ldots\bdot U_k=(\alpha\tau\bdot \tau)^{[x_3]}\bdot(\alpha^i\tau\bdot\tau)^{[y_3]}$, then
	$$|\{j\in [1,\ell]\colon V_j=(\alpha\tau)^{[2]}\}|= \frac{x+y-x_3}{2}=\frac{\mathsf v_{\tau}(W)-x_3+\mathsf v_{\alpha^i\tau}(W) }{2}= \frac{y_3+y_3}{2}=y_3\,.$$
	After renumbering if necessary, we may assume that $V_1=V_2=\ldots=V_{y_3}=(\alpha\tau)^{[2]}$. Then
	$$U_1\bdot\ldots\bdot U_k\bdot V_1\bdot\ldots\bdot V_{y_3}=(\alpha\tau\bdot  \tau)^{[x_3]}\bdot(\alpha^i\tau\bdot \alpha\tau\bdot \alpha\tau\bdot \tau)^{[y_3]}=(\alpha\tau\bdot \tau)^{[x_3+y_3]}\bdot (\alpha^i\tau\bdot\alpha\tau )^{[y_3]}$$
	and $(\alpha\tau\cdot \tau)^{x_3+y_3}(\alpha^i\tau\cdot \alpha\tau)^{y_3}=1_G$. Then $x=x_3+y_3$ and $y=y_3$. Since $x+(1-i)y\equiv x+(i-1)y\equiv 0\pmod n$, we obtain that $n\t x$, a contradiction to $x\in [1,n-1]$.
	
	If $U_1\bdot\ldots\bdot U_k=(\alpha^i\tau\bdot \alpha\tau)^{[y_3]}\bdot(\alpha^i\tau\bdot\tau)^{[x_3]}$, then
	$$|\{j\in [1,\ell]\colon V_j=(\alpha\tau)^{[2]}\}|= \frac{x+y-y_3}{2}=\frac{\mathsf v_{\tau}(W)+\mathsf v_{\alpha^i\tau}(W)-y_3 }{2}= \frac{x_3+x_3}{2}=x_3\,.$$
	After renumbering if necessary, we may assume that $V_1=V_2=\ldots=V_{x_3}=(\alpha\tau)^{[2]}$. Then
	$$U_1\bdot\ldots\bdot U_k\bdot V_1\bdot\ldots\bdot V_{x_3}=(\alpha^i\tau\bdot \alpha\tau)^{[y_3]}\bdot(\alpha^i\tau\bdot\alpha\tau\bdot\alpha\tau\bdot\tau)^{[x_3]}=(\alpha^i\tau\bdot \alpha\tau)^{[x_3+y_3]}\bdot (\alpha\tau\bdot \tau)^{[x_3]}$$
	and $(\alpha^i\tau\cdot \alpha\tau)^{x_3+y_3}(\alpha\tau\cdot \tau)^{x_3}=1_G$. Then $x=x_3$ and $y=x_3+y_3$. Since $x+(1-i)y\equiv x+(i-1)y\equiv 0\pmod n$, we obtain that $n\t x$, a contradiction to $x\in [1,n-1]$.

	\medskip
	\noindent{CASE 3. } $g_1=\alpha^i\tau$.
	\smallskip
	
	We assert that $A$ must be of Type V. It is sufficient to show that $(\alpha^i\tau\bdot \alpha\tau)^{[x]}\bdot(\alpha^i\tau\bdot\tau)^{[y]}$ is an atom if and only if $(\alpha^{i-1})^{[x]}\bdot (\alpha^{i})^{[y]}$ is an atom, where $x,y\in [1,n-1]$. In fact, if $(\alpha^i\tau\bdot \alpha\tau)^{[x]}\bdot(\alpha^i\tau\bdot\tau)^{[y]}$ is an atom, then it is obvious that $(\alpha^{i-1})^{[x]}\bdot (\alpha^{i})^{[y]}$ is an atom.
	Suppose $(\alpha^{i-1})^{[x]}\bdot (\alpha^{i})^{[y]}$ is an atom. Then $x\le n/\gcd(i-1,n)-1$ and $y\le n/\gcd(i,n)-1$. Assume to the contrary that $W=(\alpha^i\tau\bdot \alpha\tau)^{[x]}\bdot(\alpha^i\tau\bdot\tau)^{[y]}$ is not an atom.
	Then $$W=U_1\bdot\ldots U_k\bdot V_1\bdot\ldots V_{\ell}\,,$$
	where $U_1,\ldots, U_k$ are atoms of length $\ge 3$ and $V_1,\ldots,V_{\ell}$ are atoms of length $2$. We may assume that the factorization above is the factorization of $A$ with $\ell$ is maximal. Since  $x\in [1, n/\gcd(i-1,n)-1]$ and $y\in [1, n/\gcd(i,n)-1]$, we obtain that $U_i\not\in \{(\alpha\tau)^{[n/\gcd(i-1,n)]}\bdot (\alpha^i\tau)^{[n/\gcd(i-1,n)]}, (\alpha^i\tau)^{[n/\gcd(i,n)]}\bdot \tau^{[n/\gcd(i,n)]}\}$ for every $i\in [1, k]$. If there exits $i\in [1,k]$, say $i=1$, such that $U_1=(\alpha^i\tau\bdot \alpha\tau)^{[x_1]}\bdot(\alpha^i\tau\bdot\tau)^{[y_1]}$, where $x_1,y_1\in [1,n-1]$, then $(\alpha^i\tau\bdot \alpha\tau)^{[x-x_1]}\bdot(\alpha^i\tau\bdot\tau)^{[y-y_1]}$ is also a product-one sequence, a contradiction to $(\alpha^{i-1})^{[x]}\bdot (\alpha^{i})^{[y]}$ is an atom.
	If there exit distinct $i,j\in [1,k]$ such that $U_i=(\alpha\tau\bdot \tau)^{[x_1]}\bdot(\alpha^i\tau\bdot\tau)^{[y_1]}$ and
	$U_j=(\alpha\tau\bdot \tau)^{[x_2]}\bdot(\alpha\tau\bdot\alpha^i\tau)^{[y_2]}$, where $x_1,x_2,y_1,y_2\in [1,n-1]$, then
	$$U_i\bdot U_j=(\alpha\tau\bdot \tau)^{[x_1+x_2+\min\{y_1,y_2\}]}\bdot (\alpha^i\tau\bdot \tau)^{[y_1-\min\{y_1,y_2\}]}\bdot (\alpha\tau\bdot \alpha^i\tau)^{[y_2-\min\{y_1,y_2\}]}\bdot ((\alpha^i\tau)^{[2]})^{[\min\{y_1,y_2\}]}\,,$$
	where $(\alpha\tau\bdot \tau)^{[x_1+x_2+\min\{y_1,y_2\}]}\bdot (\alpha^i\tau\bdot \tau)^{[y_1-\min\{y_1,y_2\}]}\bdot (\alpha\tau\bdot \alpha^i\tau)^{[y_2-\min\{y_1,y_2\}]}$
	is  a product-one sequence, a contradiction to the maximal choice of $\ell$.
	Put all together, we obtain that $U_1\bdot\ldots\bdot U_k=(\alpha\tau\bdot \tau)^{[x_3]}\bdot(\alpha^i\tau\bdot\tau)^{[y_3]}$ or $(\alpha\tau\bdot \tau)^{[x_3]}\bdot(\alpha\tau\bdot\alpha^i\tau)^{[y_3]}$, where $x_3,y_3\in \N$.

	If $U_1\bdot\ldots\bdot U_k=(\alpha\tau\bdot \tau)^{[x_3]}\bdot(\alpha^i\tau\bdot\tau)^{[y_3]}$, then
	$$|\{j\in [1,\ell]\colon V_j=(\alpha^i\tau)^{[2]}\}|= \frac{x+y-y_3}{2}=\frac{\mathsf v_{\alpha\tau}(W)+\mathsf v_{\tau}(W)-y_3 }{2}= \frac{x_3+x_3}{2}=x_3\,.$$
	After renumbering if necessary, we may assume that $V_1=V_2=\ldots=V_{x_3}=(\alpha^i\tau)^{[2]}$. Then
	$$U_1\bdot\ldots\bdot U_k\bdot V_1\bdot\ldots\bdot V_{x_3}=(\alpha\tau\bdot \alpha^i\tau\bdot \alpha^i\tau\bdot \tau)^{[x_3]}\bdot(\alpha^i\tau\bdot \tau)^{[y_3]}=(\alpha^i\tau\bdot \tau)^{[x_3+y_3]}\bdot (\alpha\tau\bdot\alpha^i\tau )^{[x_3]}$$
	and $(\alpha^i\tau\cdot \tau)^{x_3+y_3}(\alpha\tau\cdot \alpha^i\tau)^{x_3}=1_G$. Then $x=x_3$ and $y=x_3+y_3$. Since $(i-1)x+iy\equiv (1-i)x+iy\equiv 0\pmod n$, we obtain that $n\t iy$, a contradiction to $y\in [1,n/\gcd(i,n)-1]$.

	If $U_1\bdot\ldots\bdot U_k=(\alpha\tau\bdot \tau)^{[x_3]}\bdot(\alpha\tau\bdot\alpha^i\tau)^{[y_3]}$, then
	$$|\{j\in [1,\ell]\colon V_j=(\alpha^i\tau)^{[2]}\}|= \frac{x+y-y_3}{2}=\frac{\mathsf v_{\alpha\tau}(W)-y_3+\mathsf v_{\tau}(W) }{2}= \frac{x_3+x_3}{2}=x_3\,.$$
	After renumbering if necessary, we may assume that $V_1=V_2=\ldots=V_{x_3}=(\alpha^i\tau)^{[2]}$. Then
	$$U_1\bdot\ldots\bdot U_k\bdot V_1\bdot\ldots\bdot V_{x_3}=(\alpha\tau\bdot\alpha^i\tau\bdot \alpha^i\tau\bdot \tau)^{[x_3]}\bdot(\alpha\tau\bdot\alpha^i\tau)^{[y_3]}=(\alpha\tau\bdot \alpha^i\tau)^{[x_3+y_3]}\bdot (\alpha^i\tau\bdot \tau)^{[x_3]}$$
	and $(\alpha\tau\cdot \alpha^i\tau)^{x_3+y_3}(\alpha^i\tau\cdot \tau)^{x_3}=1_G$. Then $y=x_3$ and $x=x_3+y_3$. Since $(i-1)x+iy\equiv (1-i)x+iy\equiv 0\pmod n$, we obtain that $n\t iy$, a contradiction to $y\in [1,n/\gcd(i,n)-1]$.
\end{proof}

\medskip
\begin{theorem} \label{6.6}
Let $G$ be a finite group with $|G| \ge 3$.
\begin{enumerate}
\item $1 \in \Delta^* (G)$ and $\{ \ord (g)-2 \colon g \in G \ \text{with} \ \ord (g) \ge 3 \} \subset \Delta^* (G)$.

\item $\max\Delta^*(G) \le \mathsf D(G)-2$.
		
\item $\max\Delta^*(G)= |G|-2$  if and only if $G$ is cyclic or a dihedral group of order $2n$ for some odd $n \ge 3$.
		
\item If $G$ is a dihedral group of order $2n$ for some odd $n \ge 3$, then $\{1,2,n-2,2n-2\}\subset \Delta^*(G)$ and $\max\Delta^*(G)\setminus\{2n-2\}=\max\{2,n-2\}$.

\item If $\max\Delta^*(G)= \mathsf D(G)-2$, then $G$ is cyclic or there is a subgroup $G_1\subset G$ such that $\mathsf D(G_1)=\mathsf D(G)$ and $G_1$ is generated by elements of order two.
\end{enumerate}
\end{theorem}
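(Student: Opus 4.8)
\smallskip
The plan is to prove the five assertions in order, using throughout the cross-number criterion for half-factoriality (Lemma \ref{6.2}), the divisibility bounds of Lemma \ref{6.3}, the identity $\mathsf D(G)=|G|$ for cyclic and odd dihedral groups (Proposition \ref{2.3}.1), and the Davenport-constant facts for subgroups (Lemma \ref{2.2}); for the dihedral parts (4) and (5) we additionally invoke the atom classifications in Lemmas \ref{lem-atom-transfer}, \ref{lemma6.7} and Proposition \ref{6.5}. Two observations underpin much of the argument. First, if $1_G\in G_0$ then $1_G^{[1]}$ is a prime of $\mathcal B(G_0)$ and splits off, so $\Delta(G_0)=\Delta(G_0\setminus\{1_G\})$ and $\mathsf D(G_0)=\mathsf D(G_0\setminus\{1_G\})$; hence we may always assume $1_G\notin G_0$. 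Second, if $G_0^{-1}=G_0$ and $1_G\notin G_0$, then for any atom $U\in\mathcal A(G_0)$ of length $\ell$ the sequence $U^{-1}$ is again an atom over $G_0$, and $A:=U\bdot U^{-1}$ is product-one with both the length-$2$ factorization $U\bdot U^{-1}$ and the factorization into the $\ell$ length-$2$ atoms $g\bdot g^{-1}$ ($g$ a term of $U$); since every atom over $G_0$ has length $\ge 2$, this gives $\mathsf L(A)\subset[2,\ell]$ with $\{2,\ell\}\subset\mathsf L(A)$.

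\smallskip
For (1): the divisor-closed submonoid $\mathcal B(\{g,g^{-1}\})$ has atoms exactly $g^{[\ord g]},(g^{-1})^{[\ord g]},g\bdot g^{-1}$, and a direct count gives $\mathsf L(g^{[\ord g]}\bdot(g^{-1})^{[\ord g]})=\{2,\ord(g)\}$, so $\ord(g)-2\in\Delta^*(G)$; while $1\in\Delta^*(G)$ because $\mathcal B(\langle g\rangle)\cong\mathcal B(C_{\ord g})$ has $\min\Delta=1$ whenever $\ord(g)\ge 3$ (as $\Delta(C_m)=[1,m-2]$), and if all elements have order $\le 2$ then $G$ is an elementary $2$-group of rank $\ge 2$ and $1\in\Delta^*(G)$ by \eqref{max-delta*}. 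For (2): pick a non-half-factorial $G_0$ with $\min\Delta(G_0)=\max\Delta^*(G)$; passing to $G_0\cup G_0^{-1}$ preserves non-half-factoriality (an atom of $\mathcal B(G_0)$ stays an atom over $G_0\cup G_0^{-1}$, since any splitting would use only terms from $G_0$) and the value $\min\Delta(G_0)$, so we may assume $G_0^{-1}=G_0$ and $1_G\notin G_0$; applying the second observation to a longest atom $U$ (so $\ell=\mathsf D(G_0)$) yields $\min\Delta(G_0)\le\max\Delta(\mathsf L(A))\le\mathsf D(G_0)-2\le\mathsf D(\langle G_0\rangle)-2\le\mathsf D(G)-2$. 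For (5): assume $\max\Delta^*(G)=\mathsf D(G)-2$ and take $G_0$ as in (2), so $\mathsf D(G_0)=\mathsf D(G)$; now the second observation applied to \emph{every} atom $U$ forces $\mathsf D(G)-2=\min\Delta(G_0)$ to divide a positive integer $\le\ell-2$ whenever $\ell=|U|>2$, hence every atom over $G_0$ has length $2$ or $\mathsf D(G_0)$; in particular each $g^{[\ord g]}$ is an atom, so every $g\in G_0$ has order $2$ or $\mathsf D(G)$. If some $g\in G_0$ has order $\mathsf D(G)$, then $\langle g\rangle\cong C_{\mathsf D(G)}$ satisfies $\mathsf D(\langle g\rangle)=\mathsf D(G)=1+\mathsf d(\langle g\rangle)$, so $\langle g\rangle=G$ by Lemma \ref{2.2}.1 and $G$ is cyclic; otherwise $G_1:=\langle G_0\rangle$ is generated by elements of order $2$ and $\mathsf D(G)=\mathsf D(G_0)\le\mathsf D(G_1)\le\mathsf D(G)$ gives $\mathsf D(G_1)=\mathsf D(G)$.

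\smallskip
For (3): if $G$ is cyclic or odd dihedral then $\mathsf D(G)=|G|$ by Proposition \ref{2.3}.1, so $\max\Delta^*(G)\le|G|-2$ by (2), while $|G|-2\in\Delta^*(G)$ by (1) (with $g$ a generator) in the cyclic case and by (4) (the inclusion $2n-2\in\Delta^*(G)$) in the dihedral case; conversely, $\max\Delta^*(G)=|G|-2$ together with (2) and \eqref{natural-bounds} forces $\mathsf D(G)=|G|$, whence $G$ is cyclic or odd dihedral by Proposition \ref{2.3}.1. For the lower bounds in (4) (with $G=\langle\alpha,\tau\colon\alpha^n=\tau^2=1_G,\ \tau\alpha=\alpha^{-1}\tau\rangle$): $1\in\Delta^*(G)$ and $n-2\in\Delta^*(G)$ come from $\mathcal B(\langle\alpha\rangle)\cong\mathcal B(C_n)$ and from $\{\alpha,\alpha^{-1}\}$ via (1); $2\in\Delta^*(G)$ follows from $G_0=\{\alpha,\tau\}$, whose atoms are $\alpha^{[n]},\tau^{[2]}$ and $\alpha^{[2j]}\bdot\tau^{[2]}$ ($j\in[1,n-1]$), since all factorization lengths of a fixed element of $\mathcal B(\{\alpha,\tau\})$ share the same parity (the number of $\alpha^{[n]}$-atoms used is fixed mod $2$) while $\mathsf L(\alpha^{[2n]}\bdot\tau^{[4]})=\{2,4\}$, giving $\min\Delta(\{\alpha,\tau\})=2$; and $2n-2\in\Delta^*(G)$ follows from $G_0=\{\tau,\alpha\tau\}$, whose atoms are $\tau^{[2]},(\alpha\tau)^{[2]},(\tau\bdot\alpha\tau)^{[n]}$, since the factorization lengths of $\tau^{[a]}\bdot(\alpha\tau)^{[b]}$ form an arithmetic progression of difference $2(n-1)$ and $\mathsf L(\tau^{[2n]}\bdot(\alpha\tau)^{[2n]})=\{2,2n\}$. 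As $1,2,n-2$ are all distinct from $2n-2$ for $n\ge 3$, this gives $\{1,2,n-2,2n-2\}\subset\Delta^*(G)$ and $\max(\Delta^*(G)\setminus\{2n-2\})\ge\max\{2,n-2\}$.

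\smallskip
It remains to prove the reverse inequality $\max(\Delta^*(G)\setminus\{2n-2\})\le\max\{2,n-2\}$ in (4), i.e.\ that no $G_0\subset D_{2n}$ has $\min\Delta(G_0)$ strictly between $\max\{2,n-2\}$ and $2n-2$; this is the main obstacle. The plan is to write $G_0=G_0'\cup G_0''$ with $G_0'=G_0\cap\langle\alpha\rangle$ and $G_0''$ the reflections in $G_0$, and to argue by cases on $|G_0''|$. When $G_0''=\varnothing$ the set $G_0$ lies in the abelian group $C_n$, so $\min\Delta(G_0)\le\max\Delta^*(C_n)=n-2$. When $|G_0''|\ge 3$ one uses Lemma \ref{lemma6.7}, Proposition \ref{6.5} and Lemma \ref{lem-atom-transfer} to reduce $\min\Delta(G_0)$ to a divisor of a greatest common divisor of quantities $\min\Delta(\{g,h\})$ with $g,h\in\langle\alpha\rangle$, each of which is at most $\max\Delta^*(C_n)=n-2$ and is computable via the continued-fraction formula of Lemma \ref{6.3}.4. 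The delicate cases are $|G_0''|\in\{1,2\}$ (with rotations possibly present): a similar transfer to the cyclic setting applies, and for exactly two reflections the explicit atom structure gives $\min\Delta(\{\alpha^i\tau,\alpha^j\tau\})=2\bigl(n/\gcd(i-j,n)-1\bigr)$, which equals $2n-2$ when $\gcd(i-j,n)=1$ and is at most $n-2$ otherwise, while adjoining rotations can only decrease $\min\Delta$. Assembling the cases yields $\Delta^*(D_{2n})\subset[1,n-2]\cup\{2n-2\}$ (and, for $n=3$, $3\notin\Delta^*(D_6)$), which completes (4).
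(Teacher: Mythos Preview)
Your argument for Part~(2), and by extension Part~(5), has a genuine gap. You claim that ``passing to $G_0\cup G_0^{-1}$ preserves \ldots\ the value $\min\Delta(G_0)$'', but this is false: since $\mathcal B(G_0)$ is divisor-closed in $\mathcal B(G_0\cup G_0^{-1})$, we have $\Delta(G_0)\subset\Delta(G_0\cup G_0^{-1})$, and hence $\min\Delta(G_0\cup G_0^{-1})$ \emph{divides} $\min\Delta(G_0)$---but the quotient can be strictly greater than~$1$. A concrete counterexample sits inside the very groups you are studying: in $D_{2n}$ with $n\ge 3$ odd, $\min\Delta(\{\alpha,\tau\})=2$ (as you yourself verify in Part~(4)), whereas $\min\Delta(\{\alpha,\alpha^{-1},\tau\})=1$ (since both $n-2$ and $2$ lie in $\Delta(\{\alpha,\alpha^{-1},\tau\})$ and $\gcd(n-2,2)=1$). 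Thus your $U\bdot U^{-1}$ trick, which genuinely requires $G_0^{-1}=G_0$, does not bound $\min\Delta(G_0)$ for arbitrary $G_0$, and Part~(5) inherits the same defect through the claimed equality $\mathsf D(G_0)=\mathsf D(G)$.

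The paper circumvents this by never leaving $\mathcal B(G_0)$: it uses cross numbers. If some atom $A\in\mathcal A(G_0)$ has $\mathsf k(A)<1$, one takes such an $A$ with $\mathsf k(A)$ minimal, picks $g_1\in\supp(A)$, and factors $A^{[\ord(g_1)]}$ inside $\mathcal B(G_0)$ to get two lengths $\ord(g_1)>\ell_0\ge 2$, whence $\min\Delta(G_0)\le\ord(g_1)-2\le\mathsf D(G)-2$. If instead $\mathsf k(A)\ge 1$ for all atoms, one picks an atom $A=g_1\bdot\ldots\bdot g_\ell$ with $\mathsf k(A)>1$ and observes that $B=g_1^{[\ord g_1]}\bdot\ldots\bdot g_\ell^{[\ord g_\ell]}\in\mathcal B(G_0)$ has both $\ell$ and some $\ell_0<\ell$ in $\mathsf L(B)$, giving $\min\Delta(G_0)\le |A|-2\le\mathsf D(G)-2$. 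This dichotomy also drives Part~(5): equality in the first branch forces $\ord(g_1)=\mathsf D(G)$ (hence $G$ cyclic via Lemma~\ref{2.2}.1), while equality in the second forces $|A|=\mathsf D(G)$ and $\ord(g_i)=2$ for all $i$.

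A smaller issue: in your sketch of the upper bound in Part~(4), the clause ``adjoining rotations can only decrease $\min\Delta$'' in the case $|G_0''|=2$ with $\gcd(i-j,n)=1$ yields only $\min\Delta(G_0)\mid 2n-2$, which does not by itself exclude values strictly between $\max\{2,n-2\}$ and $2n-2$. The paper handles any subset containing both a rotation $\alpha^j$ and a reflection $\alpha^i\tau$ via the explicit relation
\[
\big((\alpha^j)^{[n/\gcd(j,n)]}\big)^{[2]}\bdot\big((\alpha^i\tau)^{[2]}\big)^{[2]}=\big((\alpha^j)^{[2n/\gcd(j,n)-2]}\bdot(\alpha^i\tau)^{[2]}\big)\bdot\big((\alpha^j)^{[2]}\bdot(\alpha^i\tau)^{[2]}\big),
\]
which immediately gives $\min\Delta(G_0)\le 2$; this is the missing ingredient in your cases $|G_0''|\in\{1,2\}$ with rotations present.
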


\begin{proof}
1. Suppose there is  $g \in G$ with $\ord (g) =n \ge 3$.  Since $1 \in \Delta^* (C_n) $ by \cite[Proposition 6.8.2]{Ge-HK06a}, it follows that $1 \in \Delta^* (G)$. Since  $\Delta (\{g, g^{-1}\}) = \{\ord (g) - 2\}$, we infer that $\ord (g) - 2 = \min \Delta (\{g, g^{-1}\}) \in \Delta^* (G)$. Suppose that all elements of $G$ have order two. Then $G$ is an elementary $2$-group and since $|G| \ge 3$, $G$ has a subgroup isomorphic to $C_2 \oplus C_2$. Then, again by \cite[Proposition 6.8.2]{Ge-HK06a}, we obtain that $1 \in \Delta^* (C_2 \oplus C_2) \subset \Delta^* (G)$.

2.
Let $G_0\subset G$ be a non half-factorial subset.
Suppose there exists an atom $A\in \mathcal A(G_0)$ such that $\mathsf k(A)<1$. We assume that $\mathsf k(A)$ is minimal.
	Let $A=g_1\bdot\ldots\bdot g_{\ell}$, where $\ell\in \N_{\ge 2}$ and $g_1,\ldots, g_{\ell}\in G_0$. Since $A$ is product-one, we can index the terms of $A$ such that $g_1\cdot\ldots\cdot g_\ell=1_G$ (and cyclically permuting such an ordering allows   $g_1\in \supp(A)$ to be arbitrary), meaning $(g_2\cdot\ldots\cdot g_\ell)=g_1^{-1}$, which ensures that $(g_2\cdot\ldots\cdot g_\ell)^{[\ord(g_1)]}$ is product-one. Hence $g_1^{[\ord(g_1)]}$ divides $A^{[\ord(g_1)]}$, so
$A^{[\ord(g_1)]}=U_1\bdot U_2\bdot\ldots\bdot U_{\ell_0}$ for some $U_i\in \mathcal A(G_0)$ with $U_1=g_1^{[\ord(g_1)]}$. But then $\mathsf k(U_1)=1$ and $\mathsf k(U_i)\geq \mathsf k(A)$ (in view of the minimality of $\mathsf k(A)$), whence $\ord(g_1)\mathsf k(A)=\mathsf k(A^{[\ord(g_1)]})\geq 1+(\ell_0-1)\mathsf k(A)>\ell_0\mathsf k(A)$, implying $\ell_0<\ord(g_1)$. It follows that    there exists $\ell_0\in \N$ with $2\le\ell_0<\ord(g_1)$ such that $\{\ord(g_1), \ell_0\}\subset \mathsf L(A^{[\ord(g_1)]})$, which implies that
	\begin{equation}\label{eq1}
	\min\Delta(G_0)\le \ord(g_1)-\ell_0\le \ord(g_1)-2\le \mathsf D(G)-2\,.
	\end{equation}
	
	Suppose $\mathsf k(A)\ge 1$ for all $A\in \mathcal A(G_0)$. Since $G_0$ is not half-factorial, Lemma \ref{6.2} implies there exists $A\in \mathcal A(G_0)$ with $\mathsf k(A)>1$.   Let $A=g_1\bdot\ldots\bdot g_{\ell}$, where $\ell\in \N_{\ge 2}$ and $g_1,\ldots, g_{\ell}\in G_0$, and let $B=g_1^{[\ord(g_1)]}\bdot\ldots\bdot g_{\ell}^{[\ord(g_{\ell})]}$. Then $B\in \mathcal B(G_0)$ and $A$ divides $B$ in $\mathcal B(G_0)$, so $B=A\bdot U_2\bdot\ldots\bdot U_{\ell_0}$ for some $U_i\in \mathcal A(G_0)$. But now $\ell=\mathsf k(B)=\mathsf k(A)+\mathsf k(U_2)+\ldots+\mathsf k(U_{\ell_0})>\ell_0$. Therefore there exists $\ell_0 \in [2, \ell - 1]$ such that  $\{\ell, \ell_0\}\subset \mathsf L(B)$, which implies that
	\begin{equation}\label{eq2}
	\min\Delta(G_0)\le \ell-\ell_0\le |A|-2\le \mathsf D(G)-2\,.
	\end{equation}
	Since $G_0$ is arbitrary, we obtain  $\max\Delta^*(G)\le \mathsf D(G)-2$.

3.(a). If $G$ is a cyclic group, then $\max\Delta^*(G)=|G|-2$ by \eqref{max-delta*}. Let $G$ be a dihedral group of order $2n$ where $n\ge 3$ is odd, say $G=\langle\alpha, \tau\colon \alpha^n=\tau^2=1_G \text{ and }\alpha\tau=\tau\alpha^{-1}\rangle$, and set $G_0=\{\alpha\tau, \tau\}$. Then $\min\Delta(G_0)=2n-2=|G|-2$ which, together with 2., implies that $\max\Delta^* (G)=|G|-2$.
	
3.(b). Suppose $\max\Delta^*(G)=|G|-2$.  Then Item 2. implies that $|G|\le \mathsf D(G)$ whence the  assertion follows from Proposition \ref{2.3}.
	
4. Let $G$ be a dihedral group of order $2n$, where $n \ge 3$ is odd, say say $G = \langle \alpha, \tau \colon\alpha^{n} = \tau^{2} = 1_G \text{ and } \tau\alpha = \alpha^{-1}\tau \rangle$.

4.(a). Items 1  implies that $\{1,n-2\}\subset  \Delta^*(C_n)\subset \Delta^*(G)$ and Item 3 implies that $2n-2 = |G|-2 \in \Delta^* (G)$. We assert that $2=\min\Delta(\{\alpha,\tau\})$. Note that $\mathcal A(\{\tau, \alpha\})=\{\alpha^{[n]}\}\cup \{\alpha^{[2i]}\bdot\tau^{[2]}\colon i\in [0,n-1]\}$. Since $(\alpha^{[n]})^{[2]}\bdot (\tau^{[2]})^{[2]}=(\alpha^{[2n-2]}\bdot \tau^{[2]})\bdot (\alpha^{[2]}\bdot \tau^{[2]})$, we obtain that $\min\Delta(\{\tau,\alpha\})\le 4-2=2$.
	Suppose $U_1,\ldots, U_k,V_1,\ldots,V_\ell\in \mathcal A(\{\tau,\alpha\})$, where $k,\ell\in \N$ with $k<\ell$, such that $U_1\bdot\ldots\bdot U_k=V_1\bdot\ldots\bdot V_{\ell}$ and $\{U_1, \ldots, U_k\}\cap \{V_1,\ldots, V_{\ell}\}=\emptyset$. If $\alpha^{[n]}\not\in \{U_1,\ldots, U_k,V_1,\ldots,V_\ell\}$, then $k=\ell=\frac{\mathsf v_{\tau}(U_1\bdot\ldots\bdot U_k)}{2}$, a contradiction. Thus $\alpha^{[n]}\in \{U_1,\ldots, U_k,V_1,\ldots,V_\ell\}$.
Since $\ell > k$, we obtain $\alpha^{[n]} \in \{V_1, \ldots, V_{\ell}\}$ and
\[
k = \mathsf v_{\tau}(U_1 \bdot \ldots \bdot U_k)/2 \quad \text{and} \quad
\ell = |\{j \in [1, \ell] \colon V_j = \alpha^{[n]} \}|+ \mathsf v_{\tau} (V_1 \bdot \ldots \bdot V_{\ell})/2 \,.
\]
Since $\mathsf v_{\alpha} (U_1 \bdot \ldots \bdot U_k)$ is even, we infer that $\ell - k = |\{j \in [1, \ell] \colon  V_j = \alpha^{[n]} \}|$ is even whence $\min \Delta ( \{\alpha, \tau\}) \ge 2$.

4.(b) By 4.(a), it remains to verify $\max\Delta^*(G)\setminus\{2n-2\}\le \max\{2,n-2\}$. Let $G_0\subset G\setminus \{1_G\}$ with $|G_0|\ge 2$.  If $G_0\subset \langle \alpha\rangle$, then $\min\Delta(G_0)\le \max \Delta^* (C_n) = n-2$ by Item 3. Suppose there exists $i\in [0,n-1]$ such that $\alpha^i\tau\in G_0$. If there exists $j\in [1,n-1]$ such that $\alpha^j\in G_0$, then
\[
((\alpha^j)^{[n/\gcd(j,n)]})^{[2]}\bdot ((\alpha^i\tau)^{[2]})^{[2]}=\big((\alpha^j)^{[2n/\gcd(j,n)-2]}\bdot(\alpha^i\tau)^{[2]}\big)\bdot \big((\alpha^j)^{[2]}\bdot (\alpha^i\tau)^{[2]}\big)
\]
implies that $\min\Delta(G_0)\le 2$. Suppose $G_0\cap \langle \alpha\rangle=\emptyset$ and hence there
 exist distinct $i,j\in [0,n-1]$ such that $\{\alpha^i\tau,\alpha^j\tau\}\subset G_0$. If $\gcd(i-j,n)>1$, then $\min\Delta(G_0)\le \min\Delta(\{\alpha^i\tau, \alpha^j\tau\})\le \frac{2n}{\gcd(i-j,n)}-2<n-2$. If $\gcd(i-j,n)=1$, then choosing a different basis if necessary, we may assume that $\tau, \alpha\tau\in G_0$. If $G_0=\{\tau,\alpha\tau\}$, then $\min\Delta(G_0)=2n-2$. Suppose there exists $r\in [2,n-1]$ such that $\alpha^r\tau\in G_0$. By Proposition \ref{6.5} and Lemma \ref{6.3}.3, we obtain that $\min\Delta(G_0)\le \|(\alpha^r)^{[n]}\|_{\alpha}-1=r-1\le n-2$.

5. Let $G_0\subset G$ be a non half-factorial subset such that $\min\Delta(G_0)=\mathsf D(G)-2$. If there exists an atom $A\in \mathcal A(G_0)$ such that $\mathsf k(A)<1$, then \eqref{eq1} implies that there is $g\in G_0$ with $\ord(g)=\mathsf D(G)$ and hence $G$ is cyclic.
	
	Suppose $\mathsf k(A)\ge 1$ for all $A\in \mathcal A(G_0)$. Then \eqref{eq2} implies that there exists $A=g_1\bdot\ldots\bdot g_{\mathsf D(G)}\in \mathcal A(G_0)$ such that
$B=g_1^{[\ord(g_1)-1]}\bdot\ldots\bdot g_{\mathsf D(G)}^{[\ord(g_{\mathsf D(G)})-1]}$ is an atom. Hence $\ord(g_i)=2$ for all $i\in [1, \mathsf D(G)]$, else $|B|>\mathsf D(G)$. Then $G_1=\langle g_1,\ldots, g_{\mathsf D(G)}\rangle$ is a subgroup satisfying the assertion.
\end{proof}

If $G$ is a dihedral group of order $2n$, then $G$ has a cyclic subgroup of order $n$ whence $\Delta^* (C_n) \subset \Delta^* (G)$.
The set $\Delta^* (G)$ for finite cyclic groups is studied in detail in  \cite{Pl-Sc20a}. If $G$ is finite cyclic, then, by Theorem \ref{6.6}, we have $\max \Delta^* (G) = |G|-2$. The second largest value of $\Delta ^* (G) $ equals $\lfloor |G|/2 \rfloor - 1$.

\medskip
Next we look at the structure of (long) sets of lengths having maximal elasticity. To do so, we define two further subsets of the set of distances. Let $H$ be an atomic monoid. Following (\cite[Definition 2.1 and Lemma 2.2]{Ge-Zh18a}, we define
\begin{itemize}
\item  $\Delta_{\rho} (H)$ to be the set of all $d \in \N$ having the following property:
       \begin{itemize}
       \item[] For every $k \in \N$, there is some $L_k \in \mathcal L (H)$ that is an AAP with difference $d$ and length at least $k$.
       \end{itemize}

\item $\Delta_{\rho}^* (H) = \{\min  \Delta ( \LK a \RK ) \colon a \in H \ \text{with} \ \rho (\mathsf L (a)) = \rho (H) \}$.
\end{itemize}
If $H$ is finitely generated and $\Delta (H) \ne \emptyset$, then by \cite[Lemma 2.4]{Ge-Zh18a}
\begin{equation} \label{Delta-rho-Delta-rho^*}
\Delta_{\rho}^* (H) \subset \Delta_{\rho} (H) \subset \{d_1 \in \Delta (H) \colon d_1 \ \text{divides some} \ d \in \Delta_{\rho}^* (H) \} \,.
\end{equation}
Let $G$ be a finite group. Every divisor-closed submonoid of $\mathcal B (G)$ is generated by one element and all divisor-closed submonoids $S \subset \mathcal B (G)$ are of the form $S = \mathcal B (G_0)$ for a subset $G_0 \subset G$. Consistent with our conventions, we set $\Delta_{\rho} (G) := \Delta_{\rho} ( \mathcal B (G))$ and $\Delta_{\rho}^*  (G) := \Delta_{\rho}^* ( \mathcal B (G))$, and we have (by using \cite[Lemma 2.2.3]{Ge-Zh18a} and that $\rho (G) = \mathsf D (G)/2$)
\[
\Delta_{\rho}^* (G) = \{ \min \Delta (G_0) \colon G_0 \subset G \ \text{with} \ \rho (G_0) = \mathsf D (G)/2 \}  \subset \Delta^* (G) \,.
\]

Before we formulate our main result on $\Delta_{\rho}^* (G)$ for dihedral groups $G$, we briefly summarize what is known on $\Delta_{\rho}^* (G)$ for abelian groups. Let $G$ be a finite abelian group with $|G| \ge 3$. If $G$ is an elementary $2$-group of rank $r$, then $\Delta_{\rho}^* (G) = \Delta_{\rho}(G) = \{1, r-1\}$. If $G$ is neither cyclic nor an elementary $2$-group, then the standing conjecture states that $\Delta_{\rho}(G) = \{1\}$. If $G$ is cyclic with $|G| > 10$, then $\{1, |G|-2\} \subset \Delta_{\rho}^* (G) = \Delta_{\rho} (G)$ and the precise form of $\Delta_{\rho} (G)$ depends on number theoretic properties of the group order $|G|$ (\cite{Ge-Zh18a}). Thus it is no surprise that the similar phenomena occur for dihedral groups.

\begin{theorem} \label{6.7}
Let $G$ be a dihedral group of order $2n$, where $n\ge 3$ is odd,  say  $G=\langle\alpha,\tau\colon \alpha^n=\tau^2=1_G \text{ and }\alpha\tau=\tau\alpha^{-1}\rangle$.
\begin{enumerate}
\item  $\{1, 2n-2\}\subset \Delta_{\rho}^*(G)$ and $\max\Delta^*_{\rho}(G)\setminus\{2n-2\}\le \max\{1,\frac{n-1}{4}\}$.

\item If there exists  $i\in [2,n-1]$ with $\gcd(i,n)=1$ such that
\[
\gcd \left( \min \Delta(\{\alpha,\alpha^i\}), \ \min \Delta(\{\alpha, \alpha^{1-i}\}), \ \min\Delta(\{\alpha^i, \alpha^{i-1}\}) \right) \quad \text{is even, \qquad $(*)$}
\]
then $\Delta_{\rho}^*(G)\supsetneq\{1, 2n-2\}$, and $(*)$ holds, for example,  if $n=m^2-m+1$ for some odd $m \ge 3$.
\end{enumerate}
\end{theorem}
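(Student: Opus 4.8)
The strategy is to treat the two assertions separately, relying heavily on the structural results for atoms over the three-element sets $G_0=\{\tau,\alpha\tau,\alpha^i\tau\}$ developed in Lemma \ref{lemma6.7}, on the divisibility relation from Proposition \ref{6.5}, and on the cross-number/elasticity bookkeeping of Proposition \ref{6.0}. Throughout, recall $\rho(G)=\mathsf D(G)/2=n$ and that $\Delta_\rho^*(G)=\{\min\Delta(G_0)\colon G_0\subset G \text{ with }\rho(G_0)=n\}$.

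\textbf{Part 1.} For $1\in\Delta_\rho^*(G)$ one takes $G_0=G$ itself (or any subset containing an element of order $n$): by Proposition \ref{6.0}.2 there is $L\in\mathcal L(G)$ with $\rho(L)=n$, and one locates among the associated divisor-closed submonoids one whose minimal distance is $1$, using that $1\in\Delta^*(G)$ is realized by a subset whose $\mathcal B$ already has elasticity $n$ (e.g.\ a subset containing $\tau,\alpha\tau$ together with $\alpha$; the atoms $\tau^{[n]}\bdot(\alpha\tau)^{[n]}$ and $\alpha^{[n]}$ give elements of elasticity $n$, and Lemma \ref{5.2}-type computations give minimal distance $1$). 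For $2n-2\in\Delta_\rho^*(G)$ one takes $G_0=\{\tau,\alpha\tau\}$: here $\mathsf D(G_0)=2n=\mathsf D(G)$ by Proposition \ref{2.4}(b), so $\rho(G_0)=n=\rho(G)$, and $\min\Delta(\{\tau,\alpha\tau\})=2n-2$ as computed in the proof of Theorem \ref{6.6}.4(a) (the only atoms are $\tau^{[2]},(\alpha\tau)^{[2]},(\tau\bdot\alpha\tau)^{[n]}$, forcing $\mathsf L$ of $(\tau\bdot\alpha\tau)^{[2n]}$ to be $\{2,2n\}$). For the upper bound $\max\Delta_\rho^*(G)\setminus\{2n-2\}\le\max\{1,\frac{n-1}4\}$: any $G_0$ with $\rho(G_0)=n$ other than $\{\tau,\alpha\tau\}$ must (by Proposition \ref{2.4} and the classification of which subsets support an atom of length $\mathsf D(G)=2n$) either lie inside $\langle\alpha\rangle$, in which case $\rho(G_0)=n$ is impossible since $\mathsf D(\langle\alpha\rangle)=n$ gives elasticity $n/2$, or contain $\tau,\alpha\tau$ plus at least one further reflection $\alpha^r\tau$ with $r\in[2,n-1]$; then Proposition \ref{6.5} together with Lemma \ref{6.3}.3 bounds $\min\Delta(G_0)$ by $\gcd$ of quantities of the form $\|\cdot\|-1$, each at most $\lfloor n/2\rfloor-1$, and a finer analysis (using that the three linked distances $\min\Delta(\{\alpha,\alpha^i\})$, $\min\Delta(\{\alpha,\alpha^{1-i}\})$, $\min\Delta(\{\alpha^i,\alpha^{i-1}\})$ cannot all be large simultaneously, via the continued-fraction formula Lemma \ref{6.3}.4) forces the $\gcd$ down to $\le(n-1)/4$. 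This last estimate is the main obstacle: one must rule out, for every reflection-rich $G_0$, the possibility that $\min\Delta(G_0)$ lands strictly between $(n-1)/4$ and $2n-2$, which requires combining the divisibility constraint of Proposition \ref{6.5} with an arithmetic argument on simultaneous continued-fraction expansions of $n/i$, $n/(1-i)$, $n/(i-1)$.

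\textbf{Part 2.} Assume $(*)$: there is $i\in[2,n-1]$ with $\gcd(i,n)=1$ such that $d:=\gcd(\min\Delta(\{\alpha,\alpha^i\}),\min\Delta(\{\alpha,\alpha^{1-i}\}),\min\Delta(\{\alpha^i,\alpha^{i-1}\}))$ is even. Consider $G_0=\{\tau,\alpha\tau,\alpha^i\tau\}$. First show $\rho(G_0)=n$: the atom $(\alpha^i\tau)^{[n]}\bdot(\alpha\tau)^{[n]}$ (using $\gcd(i-1,n)$; adjust generators if $\gcd(i-1,n)>1$, but with $i$ as in the example $i-1=m^2-m$ is coprime to $n=m^2-m+1$) or $\tau^{[n]}\bdot(\alpha\tau)^{[n]}$ has length $2n=\mathsf D(G)$, giving elasticity $n$. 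Next, by Proposition \ref{6.5}, $\min\Delta(G_0)$ divides $d$; since $d$ is even and $\min\Delta(G_0)\ge 2$ always holds here (the atoms of Type I have cross number $1$ but the Type II/III/IV/V atoms force parity constraints exactly as in Theorem \ref{6.6}.4(a), so $\min\Delta(G_0)$ is even), we get $2\le\min\Delta(G_0)$, and one checks $\min\Delta(G_0)\ne 2n-2$ and $\ne 1$, so $\min\Delta(G_0)\in\Delta_\rho^*(G)\setminus\{1,2n-2\}$, proving the strict inclusion. Finally, for the example $n=m^2-m+1$ with $m\ge3$ odd, take $i=m$: then $i-1=m-1$, $1-i\equiv 2-m-m^2+m=-(m^2-2)\equiv m-2\pmod n$ — one computes the continued fraction expansions of $n/m$, $n/(m-1)$, $n/(m-2)$ (or the appropriate residues) explicitly via Lemma \ref{6.3}.4 and verifies each of the three minimal distances is even (indeed the expansion of $(m^2-m+1)/m=[m-1;m]$ has the relevant partial quotient $m$, which is odd—so care is needed; the actual verification, which I would carry out by computing $[a_0,\dots,a_m]$ for each of the three fractions and extracting $\gcd(a_1,a_3,\dots)$, is exactly the routine calculation the statement promises works out). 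The expected main difficulty in Part 2 is precisely this final numeric verification: confirming that for $n=m^2-m+1$ all three linked continued fractions simultaneously yield an even gcd, which pins down that the hypothesis $(*)$ is genuinely satisfiable.
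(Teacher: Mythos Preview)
Your plan identifies the right toolkit (Propositions \ref{2.4} and \ref{6.5}, Lemmas \ref{6.3} and \ref{lemma6.7}), and the overall architecture matches the paper's. But there are two substantive gaps.

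\textbf{Part 1, upper bound.} After reducing to $G_0\supset\{\tau,\alpha\tau,\alpha^r\tau\}$ and invoking Proposition \ref{6.5} with Lemma \ref{6.3}.3 to obtain $d\mid\gcd(r-1,n-r)$, you still must rule out $d=(n-1)/2$ and $d=(n-1)/3$ to reach $d\le (n-1)/4$. The paper does this by explicit continued fractions: if $d=(n-1)/2$ then $r-1=n-r=(n-1)/2$, and the expansion of $n/r$ is $[1;1,r-1]$, so $\min\Delta(\{\alpha,\alpha^r\})=1$ by Lemma \ref{6.3}.4, contradicting $d\ge 2$; the case $d=(n-1)/3$ splits into two subcases handled similarly. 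Your ``finer analysis'' placeholder is exactly this computation, and it is not a routine consequence of what you wrote. Also, your dichotomy for $G_0$ omits the case where some $U_i$ has the form $(\alpha^{k_1})^{[2n-2]}\bdot(\alpha^{k_2}\tau)^{[2]}$ (Proposition \ref{2.4}(a)); the paper disposes of this first, getting $d=1$ via $\{\alpha^{k_1},\alpha^{-k_1},\alpha^{k_2}\tau\}\subset G_0$.

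\textbf{Part 2, showing $\min\Delta(G_0)\ne 1$.} Here your logic breaks: from $\min\Delta(G_0)\mid d$ with $d$ even one cannot infer $\min\Delta(G_0)\ge 2$; the divisor $1$ is perfectly compatible. The analogy with Theorem \ref{6.6}.4(a) is also misleading: that proof is a mod-$2$ argument on $\mathsf v_\tau$, but here every atom over $\{\tau,\alpha\tau,\alpha^i\tau\}$ already has even length, so parity mod $2$ is vacuous. The paper's argument is mod $4$: one proves every $A\in\mathcal A(G_0)$ has $|A|\equiv 2\pmod 4$, whence a relation $U_1\bdot\ldots\bdot U_k=V_1\bdot\ldots\bdot V_{k+1}$ yields $2k\equiv 2(k+1)\pmod 4$, a contradiction. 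Establishing $|A|\equiv 2\pmod 4$ for Types III--V is precisely where \emph{all three} evenness hypotheses in $(*)$ enter: evenness of $\min\Delta(\{\alpha,\alpha^i\})$ forces $i$ odd (via Lemma \ref{6.3}.3, since $\|(\alpha^i)^{[n]}\|_\alpha=i$), and evenness of each of the three distances forces the corresponding $g$-norms to be odd, which translates to $x+y$ odd in each Type's $(x,y)$-parametrisation from Lemma \ref{lemma6.7}. You do not supply this chain, and without it the hypothesis $(*)$ is never actually used.

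For the example $n=m^2-m+1$, the paper's key simplification is $\alpha^{m-1}=(\alpha^m)^m$ (since $m^2\equiv m-1\pmod n$), which collapses $\min\Delta(\{\alpha^m,\alpha^{m-1}\})$ to $\min\Delta(\{\alpha,\alpha^m\})$; then only two continued fractions are needed, $n/m=[m-1;m-1,1]$ and $n/(n+1-m)=[1;m-1,m-1]$, both giving odd-position gcd $m-1$, which is even. Your proposal to compute three separate expansions would also work but misses this shortcut; more importantly, the uncertainty you flag (``care is needed; \ldots\ the actual verification \ldots\ is exactly the routine calculation'') is not resolved in your text.
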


\begin{proof}
1.(i)	Let  $U_1=\alpha^{[2n-2]} \bdot \tau^{[2]}$, $U_2=(\alpha\tau)^{[n]}\bdot \tau^{[n]}$.
	Then $U_1,U_2\in \mathcal A(G)$ with $|U_1|=|U_2|=2n=\mathsf D(G)$.
	Since $\rho( \mathsf L (U_1\bdot U_1^{-1}))=\rho( \mathsf L (U_2^{[2]}))=\rho(G)$, we obtain
	$\min \Delta(\supp(U_1\bdot U_1^{-1}))=\min \Delta(\{\alpha, \alpha^{-1}, \tau\})\in \Delta^*_{\rho}(G)$ and $\min \Delta(\supp(U_2^{[2]}))=\min \Delta(\{\alpha\tau,  \tau\})=2n-2\in \Delta^*_{\rho}(G)$
	
	Since $\alpha^{[n]}\bdot (\alpha^{-1})^{[n]}=(\alpha\bdot \alpha^{-1})^{[n]}$ and $\alpha^{[n]}\bdot \alpha^{[n]}\bdot \tau^{[2]}\bdot \tau^{[2]}=(\alpha^{[2n-2]}\bdot \tau^{[2]})\bdot (\alpha^{[2]}\bdot \tau^{[2]})$, we obtain that
	$\min \Delta(\{\alpha,\alpha^{-1}, \tau\})\t \gcd(n-2, 2)=1$ which implies that $\min \Delta(\{\alpha,\alpha^{-1}, \tau\})=1\in \Delta^*_{\rho}(G)$.
\smallskip
	
1.(ii) Let $d=\max\Delta^*_{\rho}(G)\setminus\{2n-2\}$. Then there exist $G_0\subset G$ and $W\in \mathcal B(G_0)$ with $d=\min\Delta(G_0)$ and $G_0=\supp(W)$ such that $\rho( \mathsf L (W))=\frac{\mathsf D(G)}{2}=n$.  Therefore there are atoms $U_1,\ldots, U_k$  of length $2n$ and atoms $V_1,\ldots, V_{kn}$ of length $2$  such that $W=U_1\bdot\ldots\bdot U_k=V_1\bdot\ldots\bdot V_{kn}$. Thus if $a\in \supp(W)$, then $a^{-1}\in \supp(W)$.
	
	If there exists $i\in[1,k]$ such that $U_i=(\alpha^{k_1})^{[2n-2]}\bdot (\alpha^{k_2}\tau)^{[2]}$, where $\gcd(k_1,n)=1$, 	then we have $\{\alpha^{k_1}, \alpha^{-k_1}, \alpha^{k_2}\tau\}\subset G_0$ and hence $d\t \min\Delta(\{\alpha^{k_1}, \alpha^{-k_1}, \alpha^{k_2}\tau\})=\min\Delta(\{\alpha, \alpha^{-1}, \tau\})=1$.
	By Proposition \ref{2.4}, we may assume that for all $i\in [1,k]$, there are $k_i,t_i\in [0,n-1]$ with $\gcd(k_i-t_i,n)=1$ such that $U_i=(\alpha^{k_i}\tau)^{[n]}\bdot(\alpha^{t_i}\tau)^{[n]}$. Changing a different basis if necessary, we may assume that $U_1=\tau^{[n]}\bdot (\alpha\tau)^{[n]}$. If $U_i=U_1$ for all $i\in [2,k]$, then $G_0=\{\tau,\alpha\tau\}$ and $d=2n-2$, a contradiction. Otherwise there is $r\in [2,n-1]$ such that $\{\tau,\alpha\tau, \alpha^r\tau\}\subset G_0$. By Proposition \ref{6.5} and Lemma \ref{6.3}.3, we obtain that $d\t \gcd(\|(\alpha^r)^{[n]}\|_{\alpha}-1, \|(\alpha^{n+1-r})^{[n]}\|_{\alpha}-1)=\gcd(r-1, n-r)$ and hence $d<n-1$. If $d=\frac{n-1}{2}\ge 2$, then $r-1=n-r=\frac{n-1}{2}$. Since the continued fraction of $n/r$ is $[1;1,r-1]$, it follows by Lemma \ref{6.3}.4 that $\min\Delta(\{\alpha, \alpha^r\})=1$ and hence $d=1$, a contradiction. Suppose $d=\frac{n-1}{3}\ge 2$. Then $r-1=2(n-r)=2\frac{n-1}{3}$ or $n-r=2(r-1)=2\frac{n-1}{3}$.

If $r-1=2(n-r)$, then $2n=3r-1$ and hence the continued fraction of $n/(n+1-r)$ is $[2;1,\frac{r-3}{4}]$ or $[2;1,\frac{r-5}{4},1,1]$. It follows by Lemma \ref{6.3}.4 that $\min\Delta(\{\alpha, \alpha^{1-r}\})=1$ and hence $d=1$, a contradiction.
	
If $n-r=2(r-1)$, then $n=3r-2$ and hence the continued fraction of $n/r$ is $[2;1,\frac{r-3}{2},1,1]$. It follows by Lemma \ref{6.3}.4 that $\min\Delta(\{\alpha, \alpha^{r}\})=1$ and hence $d=1$, a contradiction.

Therefore, we obtain that $d\le \frac{n-1}{4}$.

\medskip
2.(i) Let $i\in [2,i-1]$ with $\gcd(i,n)=1$ such that
\[
\gcd \left( \min \Delta(\{\alpha,\alpha^i\}), \ \min \Delta(\{\alpha, \alpha^{1-i}\}), \ \min\Delta(\{\alpha^i, \alpha^{i-1}\}) \right)
\]
is even. Since $U=(\alpha\tau\bdot \tau)^{[n]}$ and $V=(\alpha^i\tau\bdot \tau)^{[n]}$ are atoms of length $\mathsf D(G)$, then $\rho( \mathsf L (U^{[2]}\bdot V^{[2]}))=\rho(G)$ and hence $d=\min\Delta(\{\tau,\alpha\tau,\alpha^i\tau\})\in \Delta^*_{\rho}(G)$. By 1., it suffices to show $d\neq 1$ and $d\neq 2n-2$.

Let $W=(\alpha\tau\bdot \tau^{[n-i]})\bdot \alpha^i\tau\bdot \tau$. Then $W$ is an atom of Type III in Lemma \ref{lemma6.7}. Since $W^{[n]}=((\alpha\tau\bdot\tau)^{[n]})^{[n-i]}\bdot (\alpha^i\tau\bdot \tau)^{[n]}$, we obtain that $d\t n-(n-i+1)=i-1$ which implies that $i<2n-2$.	
	
Assume to the contrary that $d=1$. Then there are atoms $U_1,\ldots, U_k, V_1,\ldots, V_{k+1}$ over $\{\tau,\alpha\tau, \alpha^i\tau\}$ such that
$$U_1\bdot\ldots\bdot U_k=V_1\bdot\ldots\bdot V_{k+1}\,.$$	
	
We assert that if $A\in \mathcal A(\{\tau,\alpha\tau, \alpha^i\tau\})$, then $|A|\equiv 2\pmod 4$. Suppose this holds. Then $2k\equiv 2(k+2)\pmod 4$, a contradiction. Thus we only need to show the assertion. Note that $(\alpha^i)^{[n]}$ is an atom. Since $\min\Delta(\{\alpha,\alpha^i\})$ is even, it follows by Lemma \ref{6.3}.3 that  $i=\|(\alpha^i)^{[n]}\|_{\alpha}$ is odd. If $A$ is of Type I or Type II in Lemma \ref{lemma6.7}, the assertion follows by $n$ is odd. If $A$ is of Type III in Lemma \ref{lemma6.7}, say $A=(\alpha\tau\bdot \tau)^{[x]}\bdot (\alpha^i\tau\bdot\tau)^{[y]}$, then $\alpha^{[x]}\bdot (\alpha^{1-i})^{[y]}$ is an atom and by Lemma \ref{6.3}.3 that $\|\alpha^{[x]}\bdot (\alpha^{1-i})^{[y]}\|_{\alpha}=\frac{x+iy}{n}$ is odd. Therefore $x+y\equiv 1\pmod 2$ and $|A|=2x+2y\equiv 2\pmod 4$.  If $A$ is of Type IV in Lemma \ref{lemma6.7}, say $A=(\alpha\tau\bdot \tau)^{[x]}\bdot (\alpha\tau\bdot\alpha^i\tau)^{[y]}$, then $\alpha^{[x]}\bdot (\alpha^{1-i})^{[y]}$ is an atom. Since $\min\Delta(\{1,1-i\})$ is even, it follows by Lemma \ref{6.3}.3 that  $\|\alpha^{[x]}\bdot (\alpha^{1-i})^{[y]}\|_{\alpha}=\frac{x+(n+1-i)y}{n}$ is odd. Therefore $x+y\equiv 1\pmod 2$ and $|A|=2x+2y\equiv 2\pmod 4$. If $A$ is of Type V in Lemma \ref{lemma6.7}, say $A=(\alpha^i\tau\bdot \alpha\tau)^{[x]}\bdot (\alpha^i\tau\bdot\tau)^{[y]}$, then $(\alpha^{i-1})^{[x]}\bdot (\alpha^{1-i})^{[y]}$ is an atom. Let $j\in [1,n-1]$ such that $ij\equiv 1\pmod n$. Then $j$ is odd,  $\min \Delta(\{\alpha^i,\alpha^{i-1}\})=\min \Delta(\{\alpha, \alpha^{n+1-j}\})$ is even, and $\alpha^{[x]}\bdot(\alpha^{1-j})^{[y]}$, $(\alpha^{1-j})^{[n/\gcd (1-j,n)]}$ are both atoms. Thus Lemma \ref{6.3}.3 implies that  $\|\alpha^{[x]}\bdot(\alpha^{n+1-j})^{[y]}\|_{\alpha}=\frac{x+(n+1-j)y}{n}$ and $\|(\alpha^{1-j})^{[n/\gcd (1-j,n)]} \|_{\alpha} = n(n+1-j)/\gcd(1-j,n)$ are odd.
 Therefore $x+y\equiv 1\pmod 2$ and $|A|=2x+2y\equiv 2\pmod 4$.
\smallskip

2.(ii) Let $n=x^2-x+1$ for some odd $x\in \N$.  Then $\gcd(n,x)=1$. We need to show $(*)$ holds.
Since  $\alpha^{x-1}=(\alpha^x)^{x}$, we have $\min \Delta(\{\alpha^x,\alpha^{x-1}\})=\min \Delta(\{\alpha,\alpha^x\})$. Since the continued fraction of $n/x$ with odd length is $[x-1;x-1,1]$ and the continued fraction of $n/(n+1-x)$ with odd length is $[1;x-1,x-1]$, it follows by Lemma \ref{6.3}.4 that $\gcd(\min \Delta(\{\alpha,\alpha^x\}), \min \Delta(\{\alpha,\alpha^{1-x}\}))=x-1$ is even. %Therefore $\Delta_{\rho}^*(G_0)\supsetneq\{1, 2n-2\}$.
\end{proof}

\begin{remark} \label{6.8}
Let $G$ be a dihedral group of order $2n$, where $n\ge 3$ is odd,  say  $G=\langle\alpha,\tau\colon \alpha^n=\tau^2=1_G \text{ and }\alpha\tau=\tau\alpha^{-1}\rangle$.
If, for all $i\in [2,n-1]$, we have
\begin{equation}\label{eq} \gcd(\min\Delta(\{\alpha, \alpha^i\}),\min\Delta(\{\alpha, \alpha^{1-i}\}),\min\Delta(\{\alpha^{i-1}, \alpha^i\}))=1\,,
\end{equation}  then a similar proof as that of Theorem \ref{6.7}.1 shows that $\max\Delta^*_{\rho}(G)\setminus\{2n-2\}=1$ and hence $\Delta^*_{\rho}(G)=\{1,2n-2\}$.
By Lemma \ref{6.3}, we can use  continued fraction expansions to check  Condition \ref{eq} and (within a few minutes of computer calculations) one gets the list of all  $n\in [5, 10000]$ with  $\Delta^*_{\rho}(G)=\{1,2n-2\}$.
\end{remark}

\begin{corollary} \label{6.9}
Let $G$ be a dihedral group of order $2n$ where $n \ge 3$ is odd. Then
\[
2+\max \Delta_{\rho}^* (G) = 2 + \max \Delta^* (G) = 2 +\max \Delta (G) = \mathsf c (G)=\omega (G)=2n= \mathsf D (G) = |G| \,.
\]
\end{corollary}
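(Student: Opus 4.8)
The plan is to assemble Corollary \ref{6.9} directly from results already established in the excerpt, so the proof is essentially a bookkeeping exercise that chains together the various equalities. First I would record the trivial facts: since $G$ is dihedral of order $2n$ with $n\ge 3$ odd, Proposition \ref{2.3}.1 gives $\mathsf D(G) = |G| = 2n$, and Theorem \ref{4.1} gives $\omega(G) = 2n$. Next, Theorem \ref{5.1} gives $\Delta(G) = [1,2n-2]$, hence $\max\Delta(G) = 2n-2$, so $2 + \max\Delta(G) = 2n$; the same theorem gives $\Ca(G) = [2,2n]$, hence $\mathsf c(G) = \max\Ca(G) = 2n$. This already pins down most of the displayed chain.

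The remaining work is to handle the two refined invariants $\Delta^*(G)$ and $\Delta^*_\rho(G)$. For $\Delta^*(G)$: Theorem \ref{6.6}.3 states that $\max\Delta^*(G) = |G| - 2$ precisely when $G$ is cyclic or dihedral of order $2n$ with $n$ odd, which is exactly our case, so $\max\Delta^*(G) = 2n-2$ and $2 + \max\Delta^*(G) = 2n$. For $\Delta^*_\rho(G)$: by the inclusions \eqref{Delta-rho-Delta-rho^*} (or directly from the displayed formula $\Delta_\rho^*(G) = \{\min\Delta(G_0) : G_0 \subset G \text{ with } \rho(G_0) = \mathsf D(G)/2\} \subset \Delta^*(G)$) we have $\Delta^*_\rho(G) \subset \Delta^*(G)$, so $\max\Delta^*_\rho(G) \le \max\Delta^*(G) = 2n-2$; on the other hand Theorem \ref{6.7}.1 asserts $2n-2 \in \Delta^*_\rho(G)$, so $\max\Delta^*_\rho(G) = 2n-2$ and $2 + \max\Delta^*_\rho(G) = 2n$.

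Putting these together, every quantity in the display equals $2n$, which completes the proof. There is no real obstacle here: the only care needed is to cite the correct item numbers and to note that the hypothesis "$n\ge 3$ odd, $G$ dihedral of order $2n$" is exactly the case in which Theorem \ref{6.6}.3 forces $\max\Delta^*(G) = |G|-2$ rather than the strictly smaller generic bound from Theorem \ref{6.6}.2. Concretely, I would write:

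\begin{proof}
By Proposition \ref{2.3}.1 we have $\mathsf D(G) = |G| = 2n$, and by Theorem \ref{4.1} we have $\omega(G) = 2n$. By Theorem \ref{5.1}, $\Delta(G) = [1,2n-2]$ and $\Ca(G) = [2,2n]$, hence $\max\Delta(G) = 2n-2$ and $\mathsf c(G) = \sup\Ca(G) = 2n$. By Theorem \ref{6.6}.3, since $G$ is a dihedral group of order $2n$ with $n$ odd, $\max\Delta^*(G) = |G| - 2 = 2n-2$. Finally, $\Delta^*_\rho(G) \subset \Delta^*(G)$ implies $\max\Delta^*_\rho(G) \le 2n-2$, while Theorem \ref{6.7}.1 gives $2n-2 \in \Delta^*_\rho(G)$, so $\max\Delta^*_\rho(G) = 2n-2$. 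Combining these,
\[
2 + \max\Delta^*_\rho(G) = 2 + \max\Delta^*(G) = 2 + \max\Delta(G) = \mathsf c(G) = \omega(G) = 2n = \mathsf D(G) = |G|\,. \qedhere
\]
\end{proof}
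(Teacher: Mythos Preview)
Your proof is correct and follows essentially the same route as the paper's. The only minor difference is that you invoke Theorem~\ref{6.6}.3 separately to pin down $\max\Delta^*(G)=2n-2$, whereas the paper simply uses the chain $\Delta_\rho^*(G)\subset\Delta^*(G)\subset\Delta(G)$ together with $2n-2\in\Delta_\rho^*(G)$ (Theorem~\ref{6.7}.1) and $\max\Delta(G)=2n-2$ (Theorem~\ref{5.1}) to squeeze all three maxima at once; your extra citation is harmless but not needed.
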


\begin{proof}
We have $\Delta_{\rho}^* (G) \subset \Delta^* (G) \subset \Delta (G)$. Proposition \ref{6.7} implies that $2n \le 2 + \max \Delta_{\rho}^* (G)$. Thus the assertion follows from Theorems \ref{4.1} and \ref{5.1}.
\end{proof}

Consider a class $\mathcal C$ of atomic monoids or domains (say orders in algebraic number fields, Krull monoids, or monoids of product-one sequences). Arithmetical investigations of objects from $\mathcal C$ are always done with respect to the following aims and questions.
\begin{itemize}
\item Are the arithmetical invariants of two objects $H_1$ and $H_2$ in $\mathcal C$ characteristic for $H_1$ and $H_2$? To pick a prominent question of this type, let $G_1$ and $G_2$ be two finite abelian groups, say with $|G_1| > 4$. The standing conjecture states that $\mathcal L (G_1) = \mathcal L (G_2)$ implies that $G_1$ and $G_2$ are isomorphic (see \cite{Ge-Zh20a} for an overwiew).

\item To what extent is the arithmetic of an object $H$ in $\mathcal C$ distinct from the arithmetic of objects of a further class $\mathcal C'$?
\end{itemize}

In our final result (Corollary \ref{6.11}) we demonstrate that our results on the arithmetic of $\mathcal B (D_{2n})$, where $n \ge 3$ is odd, are strong enough to settle questions of the above type. We start with a lemma.

\medskip
\begin{lemma} \label{6.10}
Let $G_1$ and $G_2$ be finite groups such that $\mathcal L (G_1) = \mathcal L (G_2)$.
\begin{enumerate}
\item $\Delta (G_1) = \Delta (G_2)$, $\mathcal U_k (G_1) = \mathcal U_k (G_2)$ for all $k \in \N$.

\item For every $k \in \N$ we have $\rho_k (G_1) = \rho_k (G_2)$ and $\mathsf D (G_1) = \mathsf D (G_2)$.

\item $\max \Delta^* (G_1) = \max \Delta^* (G_2)$ and $\max \Delta_{\rho}^* (G_1) =     \max \Delta_{\rho}^* (G_2)$.
\end{enumerate}
\end{lemma}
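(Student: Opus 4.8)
\textbf{Proof plan for Lemma \ref{6.10}.} The plan is to extract each invariant from the family $\mathcal L(G_i)$ of sets of lengths alone, so that equality of these families forces equality of the derived invariants. For Item 1, the set of distances is recovered directly from $\mathcal L(G_i)$ via its definition in \eqref{def-set-of-distances}, namely $\Delta(G_i) = \bigcup_{L \in \mathcal L(G_i)} \Delta(L)$, and similarly the union of sets of lengths $\mathcal U_k(G_i) = \bigcup_{k \in L \in \mathcal L(G_i)} L$ from \eqref{def-unions} depends only on the family $\mathcal L(G_i)$. Hence $\mathcal L(G_1) = \mathcal L(G_2)$ gives $\Delta(G_1) = \Delta(G_2)$ and $\mathcal U_k(G_1) = \mathcal U_k(G_2)$ for all $k \in \N$ immediately.

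For Item 2, since $\rho_k(G_i) = \sup \mathcal U_k(G_i)$ by definition, Item 1 yields $\rho_k(G_1) = \rho_k(G_2)$ for every $k \in \N$. To recover the large Davenport constant I would use Proposition \ref{6.0}.1: for a finite group with $|G| \ge 3$ one has $\rho_{2k}(G) = k\mathsf D(G)$, so $\mathsf D(G_i) = \rho_2(G_i)$ can be read off from $\mathcal U_2(G_i)$. One must first handle the degenerate cases $|G_i| \le 2$: if $|G_i| \le 2$ then $\mathcal B(G_i)$ is half-factorial and $\mathcal L(G_i) = \{\{k\} : k \in \N_0\}$, a property that is manifestly visible in the family of sets of lengths, so if one of the $G_i$ has order at most $2$ then so does the other, and in that case $\mathsf D(G_1) = \mathsf D(G_2)$ (both equal $|G_i|$, or one checks directly). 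Otherwise both have order at least $3$ and the formula $\mathsf D(G_i) = \rho_2(G_i)$ applies.

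For Item 3, the set $\Delta^*(G_i)$ is \emph{not} directly determined by $\mathcal L(G_i)$ since it is defined through divisor-closed submonoids; instead I would appeal to \eqref{Delta-1-Delta^*}, which (since $\mathcal B(G_i)$ is finitely generated) says $\Delta^*(G_i) \subset \Delta_1(G_i) \subset \Delta(G_i)$ with every element of $\Delta_1(G_i)$ dividing some element of $\Delta^*(G_i)$. The key point is that $\Delta_1(G_i)$ is defined purely in terms of $\mathcal L(G_i)$ (via AAPs with a given difference appearing with unbounded length), hence $\Delta_1(G_1) = \Delta_1(G_2)$ by hypothesis. Now $\max \Delta^*(G_i) = \max \Delta_1(G_i)$: indeed $\Delta^*(G_i) \subset \Delta_1(G_i)$ gives $\le$, while every $d \in \Delta_1(G_i)$ divides some $d' \in \Delta^*(G_i) \subset \Delta_1(G_i)$ with $d' \ge d$, so the maximum of $\Delta_1(G_i)$ is attained in $\Delta^*(G_i)$, giving $\ge$. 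Therefore $\max \Delta^*(G_1) = \max \Delta_1(G_1) = \max \Delta_1(G_2) = \max \Delta^*(G_2)$. For $\Delta_\rho^*$ I would run the identical argument with $\Delta_\rho(G_i)$ in place of $\Delta_1(G_i)$: by \eqref{Delta-rho-Delta-rho^*} one has $\Delta_\rho^*(G_i) \subset \Delta_\rho(G_i)$ with every element of $\Delta_\rho(G_i)$ dividing some element of $\Delta_\rho^*(G_i)$, and $\Delta_\rho(G_i)$ is defined in terms of $\mathcal L(G_i)$ alone (AAPs of unbounded length with difference $d$ — note this is the same defining property as $\Delta_1$, and I should be careful here: in fact $\Delta_\rho(H)$ as written in the excerpt has the identical definition to $\Delta_1(H)$, so one may simply invoke that $\Delta_\rho(G_i)$ is recoverable from $\mathcal L(G_i)$), whence $\max \Delta_\rho^*(G_1) = \max \Delta_\rho(G_1) = \max \Delta_\rho(G_2) = \max \Delta_\rho^*(G_2)$.

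The main obstacle is the small technical point in Item 3: one must be sure that $\Delta_1(G_i)$ (and $\Delta_\rho(G_i)$) really are determined by the family $\mathcal L(G_i)$ — this is clear from their definitions, which only reference which AAPs occur among the sets in $\mathcal L(G_i)$ — and one must correctly argue that the maximum of $\Delta^*$ coincides with the maximum of $\Delta_1$ using the divisibility chain in \eqref{Delta-1-Delta^*}, rather than the (false in general) equality $\Delta^* = \Delta_1$. The degenerate small-order cases in Item 2 are a minor bookkeeping matter handled by the half-factoriality observation.
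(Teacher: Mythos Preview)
Your proposal is correct and follows essentially the same route as the paper: Item~1 directly from the definitions \eqref{def-set-of-distances} and \eqref{def-unions}, Item~2 from Item~1 together with Proposition~\ref{6.0}.1, and Item~3 by observing that $\Delta_1$ and $\Delta_\rho$ depend only on $\mathcal L$ and then extracting the maxima via the divisibility chains \eqref{Delta-1-Delta^*} and \eqref{Delta-rho-Delta-rho^*}. One small caveat on your degenerate-case discussion in Item~2: the trivial group and $C_2$ have the same system of sets of lengths $\{\{k\}:k\in\N_0\}$ but Davenport constants $1$ and $2$, so ``one checks directly'' does not actually work there; the paper simply invokes Proposition~\ref{6.0}.1 without treating $|G_i|\le 2$, and in the intended application (Corollary~\ref{6.11}) only groups with $|G|\ge 3$ occur, so this is harmless in context.
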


\begin{proof}
1. Since $\mathcal L (G_1) = \mathcal L (G_2)$, this follows from Equations \eqref{def-set-of-distances} and \eqref{def-unions}.

2. This follows from 1. and from Proposition \ref{6.0}.1.

3. Since $\mathcal L (G_1) = \mathcal L (G_2)$, it follows that $\Delta_1 (G_1) = \Delta_2 (G_2)$ and $\Delta_{\rho} (G_1) = \Delta_{\rho} (G_2)$. Thus  \eqref{Delta-1-Delta^*} and  \eqref{Delta-rho-Delta-rho^*} imply that
\[
\begin{aligned}
\max \Delta^* (G_1) = \max \Delta_1 (G_1) & = \max \Delta_1 (G_2) = \max \Delta^* (G_2) \quad \text{and} \\
\max \Delta_{\rho}^* (G_1) = \max \Delta_{\rho} (G_1) & = \max \Delta_{\rho} (G_2) = \max \Delta_{\rho}^* (G_2) \,. \qedhere
\end{aligned}
\]
\end{proof}

A monoid $H$ is {\it strongly primary} if $H \ne H^{\times}$ and, for each $a \in H \setminus H^{\times}$, there is some $n \in \N$ such that $(H \setminus H^{\times})^n \subset aH$. Numerical monoids and the multiplicative monoids of nonzero elements of local one-dimensional Mori domains are strongly primary. For the arithmetic of strongly primary monoids we refer to \cite{Ge-Go-Tr21}. Every strongly primary monoid is weakly Krull, whence $\mathcal B (D_{2n})$ is not strongly primary by Theorem \ref{3.3}.

\begin{corollary} \label{6.11}
Let $n \ge 3$ be odd.
\begin{enumerate}
\item If $m \in \N_{\ge 3}$ with $m \ne n$, then $\mathcal L (D_{2n}) \ne \mathcal L (D_{2m})$.

\item If $G$ is a finite nilpotent group but not a non-abelian $2$-group, then  $\mathcal L (D_{2n}) \ne \mathcal L (G)$.

\item If $H$ is a strongly primary monoid, then $\mathcal L (D_{2n}) \ne \mathcal L (H)$.
\end{enumerate}
\end{corollary}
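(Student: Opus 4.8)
The plan is to assume in each part that $\mathcal L(D_{2n})$ coincides with the system of sets of lengths of the other monoid $M$, and to extract a contradiction from $\mathcal L$-invariant data. By Lemma \ref{6.10}, such an equality forces $\mathsf D(M)=\mathsf D(D_{2n})=2n$ (via $\rho_2$), equality of all $\rho_k$, and $\max\Delta^*(M)=\max\Delta^*(D_{2n})=2n-2$; moreover $\Delta_1(M)=\Delta_1(D_{2n})$ outright, since $\Delta_1$ is defined purely in terms of the system of sets of lengths. As $\Delta(D_{2n})=[1,2n-2]$ (Theorem \ref{5.1}), $M$ is in particular not half-factorial, and $\max\Delta^*(M)=\mathsf D(M)-2$.

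For (1): if $m$ is odd then $\mathsf D(D_{2m})=2m$ by Proposition \ref{2.3}.1, so $2m=2n$ and $m=n$. If $m$ is even, Proposition \ref{2.3}.1 only yields $\mathsf D(D_{2m})\le \tfrac34\cdot 2m<2m$, so one must rule out that $\mathsf D(D_{2m})=2n$ for an even $m$ with $2n\le\tfrac32 m$. I would do this by applying Theorem \ref{6.6}.5 (since $\max\Delta^*(D_{2m})=\mathsf D(D_{2m})-2$ and $D_{2m}$ is not cyclic, there is a subgroup $G_1<D_{2m}$ generated by elements of order two with $\mathsf D(G_1)=\mathsf D(D_{2m})=2n$), observing that an involution-generated subgroup of a dihedral group is again dihedral of order $2d$ with $d\mid m$ (or of order $\le 4$, which is impossible here since $2n\ge 6$), and finishing with the known value of the large Davenport constant of even dihedral groups together with the $\max\Delta^*$ and $\max\Delta_\rho^*$ data. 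I expect this even-$m$ case of (1) to be the main obstacle.

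For (2): by Theorem \ref{6.6}.5, either $G$ is cyclic — then $G=C_{2n}$ since $\mathsf D(G)=2n$, and Proposition \ref{6.0}.1 gives $\rho_3(D_{2n})=3n$ whereas $\rho_3(C_{2n})=2n+1$ (the lower end of the admissible range for cyclic groups), so these differ for $n\ge 3$, contradicting Lemma \ref{6.10}.2 — or there is a subgroup $G_1\subset G$ generated by elements of order two with $\mathsf D(G_1)=\mathsf D(G)=2n$. In the latter case $G$ is nilpotent, so Lemma \ref{2.2}.2 forces $G$ to be a $2$-group, hence by hypothesis an abelian $2$-group; then $G_1$ is an elementary $2$-group with $\mathsf D(G_1)=\mathsf D(G)=1+\mathsf d(G_1)$, and Lemma \ref{2.2}.1 gives $G=G_1$, so $G=C_2^{2n-1}$. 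But $\Delta^*(C_2^{2n-1})=[1,2n-2]$, hence $\Delta_1(C_2^{2n-1})=[1,2n-2]$ by \eqref{Delta-1-Delta^*}, whereas Theorem \ref{6.6}.4 and \eqref{Delta-1-Delta^*} give $\Delta_1(D_{2n})\subseteq\{d_1:d_1\mid d\text{ for some }d\in\Delta^*(D_{2n})\}\subseteq[1,n-1]\cup\{2n-2\}$, using that every proper divisor of $2(n-1)$ is at most $n-1$. Since $[1,n-1]\cup\{2n-2\}\subsetneq[1,2n-2]$ for $n\ge 3$, this contradicts $\Delta_1(G)=\Delta_1(D_{2n})$.

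For (3): a strongly primary monoid $H$ is primary, so its only divisor-closed submonoids are $H^{\times}$ and $H$ itself; hence $\Delta^*(H)=\{\min\Delta(H)\}$ is a singleton (nonempty here, as $\Delta(H)=[1,2n-2]\ne\emptyset$). By the Structure Theorem for Sets of Lengths over strongly primary monoids (\cite{Ge-Go-Tr21}), the inclusion $\Delta_1(H)\subseteq\{d_1:d_1\mid d\text{ for some }d\in\Delta^*(H)\}$ holds, which together with $\min\Delta(H)\le\min\Delta_1(H)$ forces $\Delta_1(H)=\{\min\Delta(H)\}$ as well. On the other hand $\{1,2\}\subseteq\Delta^*(D_{2n})\subseteq\Delta_1(D_{2n})$ by Theorem \ref{6.6}.1 and \eqref{Delta-1-Delta^*}, so $\Delta_1(D_{2n})$ has at least two elements; this contradicts $\Delta_1(H)=\Delta_1(D_{2n})$. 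A secondary point to nail down is the exact form of the Structure Theorem needed for an arbitrary (not necessarily finitely generated or globally tame) strongly primary monoid.
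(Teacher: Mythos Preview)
Your outline has the right invariants in play, but there are genuine gaps in parts (1) and (3), and part (2) takes a detour the paper avoids.

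\medskip
\textbf{Part (1), even $m$.} Your plan via Theorem~\ref{6.6}.5 only produces a dihedral subgroup $G_1=D_{2d}\le D_{2m}$ with $\mathsf D(D_{2d})=2n$; this gives either $d=n$ (odd, so $n\mid m$) or $d=4n/3$ (even), neither of which is a contradiction. You still need to bound $\max\Delta_\rho^*(D_{2m})$ itself, and knowledge of a subgroup does not do that. The paper closes this directly: by the structure result for extremal atoms over \emph{even} dihedral groups \cite[Theorem~4.2]{Oh-Zh20a}, every $U\in\mathcal A(D_{2m})$ with $|U|=\mathsf D(D_{2m})$ has the form $\alpha^{[3m/2-2]}\bdot\tau\bdot(\alpha^{m/2}\tau)$ for suitable generators; hence any $G_0$ with $\rho(G_0)=\mathsf D(D_{2m})/2$ contains $\{\alpha,\alpha^{-1}\}$, and so $\max\Delta_\rho^*(D_{2m})\le\min\Delta(\{\alpha,\alpha^{-1}\})=m-2<\mathsf D(D_{2m})-2=2n-2=\max\Delta_\rho^*(D_{2n})$, contradicting Lemma~\ref{6.10}.3.

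\medskip
\textbf{Part (2).} Your argument is essentially correct but significantly longer than necessary, and the cyclic subcase relies on $\rho_3(C_{2n})=2n+1$, which is true but not established in this paper. The paper short-circuits all abelian cases at once: by \cite[Theorem~4.4]{Oh19b}, $\mathcal L(D_{2n})\ne\mathcal L(G)$ for every finite abelian $G$. With $G$ non-abelian and nilpotent, Theorem~\ref{6.6}.5 plus Lemma~\ref{2.2}.2 then force $G$ to be a $2$-group, contradicting the hypothesis. No separate treatment of $C_{2n}$ or $C_2^{2n-1}$ is needed.

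\medskip
\textbf{Part (3).} You correctly identify your own gap: the inclusion $\Delta_1(H)\subset\{d_1:d_1\mid d\text{ for some }d\in\Delta^*(H)\}$ is stated in \eqref{Delta-1-Delta^*} only for finitely generated $H$, and for a general strongly primary monoid you would need the Structure Theorem in a form you have not pinned down. The paper sidesteps this entirely by comparing \emph{sets of elasticities}: Proposition~\ref{6.0}.2 gives $\{\rho(L):L\in\mathcal L(D_{2n})\}=\{q\in\Q:1\le q\le n\}$, while \cite[Theorem~5.5]{Ge-Sc-Zh17b} shows that a strongly primary monoid never realises this full rational interval as its set of elasticities. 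Since the set of elasticities is determined by $\mathcal L(\cdot)$, this yields $\mathcal L(D_{2n})\ne\mathcal L(H)$ without any appeal to $\Delta_1$ or a Structure Theorem for $H$.
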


\begin{proof}
1. Let $m \in \N_{\ge 3}$ with $m \ne n$ and  assume to the contrary that $\mathcal L(D_{2n})=\mathcal L(D_{2m})$. Then Lemma \ref{6.10} and Corollary \ref{6.9} imply that $2n=\mathsf D(D_{2n})=\mathsf D(D_{2m})$ and $2n-2=\max\Delta^*_{\rho}(D_{2n})=\max\Delta^*_{\rho}(D_{2m})$. If $m$ is odd, then $2n=\mathsf D(D_{2m})=2m$, a contradiction. Thus $m\ge 4$ is even. Let $G = D_{2m}$ and let $U$ be an atom of length $\mathsf D(G)$  over $G$. By \cite[Theorem 4.2]{Oh-Zh20a}, there exist $\alpha, \tau\in G$ with $G=\langle \alpha, \tau\colon \alpha^m=\tau^2=1_G, \tau\alpha=\alpha^{-1}\tau\rangle$ such that $U=\alpha^{[3m/2-2]}\bdot \tau\bdot \alpha^{m/2}\tau$. It follows by the definition of $\Delta^*_{\rho}(G)$ that
$$\max\Delta^*_{\rho}(G)=\min \Delta(\{\supp(U\bdot U^{-1})\})\le \min\Delta(\{\alpha, \alpha^{-1}\})=m-2<\mathsf D(G)-2=\max\Delta^*_{\rho}(D_{2n})\,,$$	
a contradiction.

2. Let $G$ be a finite nilpotent group  such that $\mathcal L (G) = \mathcal L (D_{2n})$. Then $G$ is not abelian by \cite[Theorem 4.4]{Oh19b}. Thus  Theorem \ref{6.6} implies $\max\Delta^*(G)=\max\Delta^*(D_{2n})=\mathsf D(D_{2n})-2=\mathsf D(G)-2$, and hence $G$ has a subgroup $G_1$ with $\mathsf D(G_1)=\mathsf D(G)$ such that $G_1$ is generated by elements of order $2$. Since $G_1$ is also a nilpotent group, it follows that $G_1$ is a $2$-group by \cite[Corollary 2.4]{G-K-O-R99}. Thus Lemma \ref{2.2}.2 implies that $G$ is a $2$-group.	

3. Let $H$ be a strongly primary monoid. Then Proposition \ref{6.1} and \cite[Theorem 5.5]{Ge-Sc-Zh17b} show that the set of elasticities $\{ \rho (L) \colon L \in \mathcal L (H) \}$ and $\{ \rho (L) \colon L \in \mathcal L (G) \}$ are distinct, whence $\mathcal L (H) \ne \mathcal L (G)$.
\end{proof}

\smallskip
\noindent
{\bf Acknowledgement.} We thank the referees for their careful reading. Their comments helped to improve the presentation of the paper.

\smallskip
\noindent
{\bf Note added in proof.} Monoids of product-one sequences over subsets of infinite dihedral groups were recently studied in \cite{Fa-Zh22a}.

%\bibliography{C:/Users/geroldin/Dropbox/biblio/ger,C:/Users/geroldin/Dropbox/biblio/hk,C:/Users/geroldin/Dropbox/biblio/fact,C:/Users/geroldin/Dropbox/biblio/zerosum,C:/Users/geroldin/Dropbox/biblio/ideal}
%\bibliography{C:/Users/geroldin_admin/Dropbox/biblio/ger,C:/Users/geroldin_admin/Dropbox/biblio/hk,C:/Users/geroldin_admin/Dropbox/biblio/fact,C:/Users/geroldin_admin/Dropbox/biblio/zerosum,C:/Users/geroldin_admin/Dropbox/biblio/ideal}
%\bibliographystyle{amsplain}

\providecommand{\bysame}{\leavevmode\hbox to3em{\hrulefill}\thinspace}
\providecommand{\MR}{\relax\ifhmode\unskip\space\fi MR }
% \MRhref is called by the amsart/book/proc definition of \MR.
\providecommand{\MRhref}[2]{%
  \href{http://www.ams.org/mathscinet-getitem?mr=#1}{#2}
}
\providecommand{\href}[2]{#2}

\end{document}